\theoremstyle{plain}
\newtheorem{thm}{Theorem}[section]
\newtheorem{lem}[thm]{Lemma}
\newtheorem{cor}[thm]{Corollary}
\newtheorem{prop}[thm]{Proposition}
\theoremstyle{definition}
\newtheorem{defn}[thm]{Definition}
\newtheorem{conj}[thm]{Conjecture}
\theoremstyle{remark}
\newtheorem{remark}[thm]{Remark}
\numberwithin{equation}{section}
    \newcommand{\FH}{{\mathbf{H}}} \newcommand{\FG}{{\mathbf{G}}}
    \newcommand{\FT}{{\mathbf{T}}}
    \newcommand{\FU}{{\mathbf{U}}}\newcommand{\FK}{{\mathbf{K}}}
    \newcommand{\FP}{{\mathbf{P}}}\newcommand{\FQ}{{\mathbf{Q}}}
    \renewcommand{\FT}{{\mathbf{T}}}\newcommand{\FM}{{\mathbf{M}}}
    \newcommand{\FA}{{\mathbf{A}}}
    \newcommand{\FZ}{{\mathbf{Z}}}
    \newcommand{\FL}{{\mathbf{L}}}\newcommand{\FX}{{\mathbf{X}}}
    \renewcommand{\1}{{\mathbbold{1}}}
    \newcommand{\sC}{{\mathscr{C}}}
    \newcommand{\BA}{{\mathbb {A}}} 
    \newcommand{\BC}{{\mathbb {C}}} \newcommand{\BD}{{\mathbb {D}}}
     \newcommand{\BN}{{\mathbb {N}}}
     \newcommand{\BR}{{\mathbb {R}}}
     \newcommand{\BZ}{{\mathbb {Z}}}
     \newcommand{\CB}{{\mathcal {B}}}
    \newcommand{\CC}{{\mathcal {C}}} \newcommand{\CD}{{\mathcal {D}}}
     \newcommand{\CF}{{\mathcal {F}}}
    \newcommand{\CG}{{\mathcal {G}}} \newcommand{\CH}{{\mathcal {H}}}
     \newcommand{\CJ}{{\mathcal {J}}}
     \newcommand{\CN}{{\mathcal {N}}}
    \newcommand{\CO}{{\mathcal {O}}} \newcommand{\CP}{{\mathcal {P}}}
    \newcommand{\CQ}{{\mathcal {Q}}} 
    \newcommand{\CS}{{\mathcal {S}}} 
    \newcommand{\CU}{{\mathcal {U}}} \newcommand{\CV}{{\mathcal {V}}}
     \newcommand{\CZ}{{\mathcal {Z}}}
    \newcommand{\gl}{{\mathfrak{gl}}}
    \newcommand{\fc}{{\mathfrak{c}}} \newcommand{\fd}{{\mathfrak{d}}}
    \newcommand{\fe}{{\mathfrak{e}}} \newcommand{\ff}{{\mathfrak{f}}}
    \newcommand{\fg}{{\mathfrak{g}}} \newcommand{\fh}{{\mathfrak{h}}}
     \newcommand{\fl}{{\mathfrak{l}}}
    \newcommand{\fm}{{\mathfrak{m}}} \newcommand{\fn}{{\mathfrak{n}}}
     \newcommand{\fp}{{\mathfrak{p}}}
    \newcommand{\fq}{{\mathfrak{q}}} \newcommand{\fr}{{\mathfrak{r}}}
    \newcommand{\fs}{{\mathfrak{s}}} \newcommand{\ft}{{\mathfrak{t}}}
    \newcommand{\fu}{{\mathfrak{u}}}
    \renewcommand{\L}{{\mathrm{L}}}
    \newcommand{\B}{{\mathrm{B}}}
    \newcommand{\ad}{{\mathrm{ad}}}
    \newcommand{\Ad}{{\mathrm{Ad}}}\newcommand{\A}{{\mathrm{A}}}
     \newcommand{\bc}{{\mathrm{bc}}}
    \newcommand{\cl}{{\mathrm{cl}}}
    \newcommand{\diag}{{\mathrm{diag}}}\renewcommand{\d}{{\mathrm{d}}}
    \newcommand{\D}{{\mathrm{D}}}
    \newcommand{\el}{{\mathrm{ell}}}
    \newcommand{\f}{{\mathrm{f}}}
    \newcommand{\Gal}{{\mathrm{Gal}}} 
    \newcommand{\GL}{{\mathrm{GL}}}
    \newcommand{\Hom}{{\mathrm{Hom}}}
    \newcommand{\JL}{{\mathrm{JL}}}
    \newcommand{\Lie}{{\mathrm{Lie}}}
     \newcommand{\N}{{\mathrm{N}}}
    \renewcommand{\P}{{\mathrm{P}}}
    \renewcommand{\mod}{\ \mathrm{mod}\ }
    \newcommand{\reg}{{\mathrm{reg}}}\newcommand{\Res}{{\mathrm{Res}}}
    \newcommand{\res}{{\mathrm{res}}}\newcommand{\rs}{{\mathrm{rs}}}
    \renewcommand{\ss}{{\mathrm{ss}}}
    \newcommand{\SL}{{\mathrm{SL}}}
    \newcommand{\Spec}{{\mathrm{Spec}}}
    \newcommand{\Supp}{{\mathrm{Supp}}}
    \newcommand{\tr}{{\mathrm{tr}}}
    \newcommand{\RTr}{{\mathrm{Tr}}}
    \newcommand{\vol}{{\mathrm{vol}}}
    \newcommand{\wt}{\widetilde}
    \newcommand{\wh}{\widehat}
    \newcommand{\pair}[1]{\langle {#1} \rangle}
    \newcommand{\incl}{\hookrightarrow}
    \newcommand{\sk}{\medskip}
    \newcommand{\lra}{\longrightarrow}
    \newcommand{\ra}{\rightarrow} 
    \newcommand{\bs}{\backslash}
    \newcommand{\s}{\sk\noindent}
    \newcommand{\abs}[1]{\lvert#1\rvert}
\begin{document}

\title{On the smooth transfer for Guo-Jacquet relative trace formulae}
\date{}
\author{Chong Zhang}

\maketitle

\begin{abstract}
We establish the existence of smooth transfer for Guo-Jacquet
relative trace formulae in $p$-adic case. This kind of smooth
transfer is a key step towards a generalization of Waldspurger's
result on central values of L-functions of $\GL_2$.
\end{abstract}

\section{Introduction}\label{sec: introduction}
\paragraph{History}
The periods of automorphic forms play an important role in the study
of automorphic representations and related number theoretic
problems. For example, people believe that periods of automoprhic
forms can characterize the Langlands functoriality of automorphic
representations. Recently, Y. Sakellaridis and A. Venkatesh
\cite{sv} developed an ambitious program, the so-called relative
Langlands program, on this aspect. There are several powerful tools
to study periods. The theory of relative trace formula is one of
them, which was first studied by H. Jacquet. In \cite{ja}, Jacquet
reproved a remarkable result of J.-L. Waldspurger \cite{wa1} on
central values of L-functions of $\GL_2$ by comparing relative trace
formulae on different groups. In \cite{guo}, J. Guo and Jacquet made
a conjecture (see Conjecture \ref{conj}) generalizing Waldspurger's
result to higher rank cases.

To be precise, let $k$ be a number field, $\BA$ its ring of adeles.
Consider $\FG=\GL_{2n}$ and $\FH=\GL_n\times\GL_n$ embedded into
$\FG$ diagonally, which are reductive groups over $k$. Let $k'$ be a
quadratic field extension of $k$, $\eta$ the quadratic character of
$\BA^\times/k^\times$ attached to $k'$ by class field theory. Let
$\FZ$ be the center of $\FG$. When we say a cuspidal representation
$\pi$, we always mean that $\pi$ is irreducible and automorphic. For
a cuspidal representation $\pi$ of $\FG(\BA)$, we consider the
linear forms $\ell_\FH$ and $\ell_{\FH,\eta}$ on $\pi$ defined by
periods:
$$\ell_\FH(\phi):=\int_{\FH(k)\FZ(\BA)\bs\FH(\BA)}\phi(h)\ \d h,
\quad
\ell_{\FH,\eta}(\phi):=\int_{\FH(k)\FZ(\BA)\bs\FH(\BA)}\phi(h)\eta(h)\
\d h,$$ where $\phi\in\pi$ and $\eta(h):=\eta(\det h)$. We say that
$\pi$ is $\FH$-distinguished (resp. $(H,\eta)$-distinguished) if
$\ell_\FH\neq0$ (resp. $\ell_{\FH,\eta}\neq0$). On the other hand,
for a quaternion algebra $\D$ over $k$ containing $k'$, let
$\FG'=\FG'_\D=\GL_n(\D)$ and $\FH'=\GL_n(k')$, both viewed as
reductive groups defined over $k$. View $\FH'$ as a subgroup of
$\FG'$ in the natural way and identify the center of $\FG'$ with
$\FZ$. For a cuspidal representation $\pi'$ of $\FG'(\BA)$, consider
the linear form $\ell_{\FH'}$ on $\pi'$ defined by
$$\ell_{\FH'}(\phi):=\int_{\FH'(k)\FZ(\BA)\bs\FH'(\BA)}\phi(h)\ \d
h,\quad \phi\in\pi'.$$ We say that $\pi'$ is $\FH'$-distinguished if
$\ell_{\FH'}\neq0$.

Denote by $X(k',k)$ the set of quaternion algebras $\D$ over $k$
containing $k'$. For a cuspidal representation $\pi$ of $\FG(\BA)$,
denote by $X(k',k;\pi)$ the subset of $X(k',k)$ such that the
Jacquet-Langlands correspondence $\pi'_\D:=\JL(\pi)$ of $\pi$ exists
as a cuspidal representation of $\FG'_\D(\BA)$.

Motivated by Waldspurger's result in the case $n=1$, the following
conjecture was made in \cite{guo}.

\begin{conj}[(Guo-Jacquet)]\label{conj}
Fix a cuspidal representation $\pi$ of $\FG(\BA)$.
\begin{enumerate}
\item
Fix a quaternion algebra $\D$ in $X(k',k;\pi)$. Suppose that
$\pi'_\D$ is $\FH'$-distinguished. Then $\pi$ is both
$\FH$-distinguished and $(\FH,\eta)$-distinguished.
\item Suppose that $n$ is odd and $\pi$ is both $\FH$-distinguished
and $(\FH,\eta)$-distinguished. Then there exists $\D\in
X(k',k;\pi)$ such that $\pi'_\D$ is $\FH'$-distinguished.
\end{enumerate}
\end{conj}

Moreover, when $n$ is even, with more restriction, the direction
(ii) of Conjecture \ref{conj} should also hold. We refer the reader
to \cite[Conjecture 3]{fm} and \cite[Conjecture 1.5]{fmw} for more
information.

The periods defined above can be used to study the central value
$L(\frac{1}{2},\pi_{k'})
=L(\frac{1}{2},\pi)L(\frac{1}{2},\pi\otimes\eta)$ where $\pi_{k'}$
is the base change of $\pi$ to $\FG(\BA_{k'})$. It was shown in
\cite{fj} that if $\pi$ is both $\FH$-distinguished and
$(\FH,\eta)$-distinguished then $L(\frac{1}{2},\pi_{k'})\neq0$. One
also expects that there exists a relation between this L-value and
the period $\ell_{\FH'}$ on $\pi'$.

In \cite{guo}, a relative trace formula approach called Guo-Jacquet
relative trace formulae today, which is a natural extension of
Jacquet's method in \cite{ja}, was proposed to attack the above
conjecture. The first step, that is, the fundamental lemma for unit
Hecke functions, has also been established by \cite{guo}. The smooth
transfer can be viewed as the second step on the geometric side of
Guo-Jacquet relative trace formulae. Since we only focus on the
smooth transfer, which is a local issue, we will not recall the
precise form of Guo-Jacquet relative trace formulae, which is a
global issue. We refer the reader to \cite{guo} or \cite{fmw} for
more details.

Very recently, B. Feigon, K. Martin and D. Whitehouse \cite{fmw}
obtained some partial results on Conjecture \ref{conj}, by using a
simple form of Guo-Jacquet trace formulae. They showed the existence
of smooth transfer for Bruhat-Schwartz functions satisfying certain
specific properties. Of course, one has to show the existence of
smooth transfer for the full space of Bruhat-Schwartz functions, if
one aims to prove Conjecture \ref{conj} completely. Due to our
result, one can remove some conditions of the results in \cite{fmw},
as \cite[Remark 6.2]{fmw} states.

There is also a generalization of Waldspurger's result in another
direction: the so called Gan-Gross-Prasad conjecture \cite{ggp} and
the refined version of it by Ichino-Ikeda \cite{ii} in the case of
orthogonal groups and by N. Harris \cite{ha} in the case of unitary
groups. Recently, W. Zhang (\cite{zh},\cite{zh2}) has made a great
advance towards the global Gan-Gross-Prasad conjecture for unitary
groups by using the relative trace formula developed by Jacquet and
S. Rallis. One of his achievements is his proof of the smooth
transfer conjecture in $p$-adic case for the Jacquet-Rallis relative
trace formula. His method is close to that of \cite{ja2}. The
several remarkable successes on the Gan-Gross-Prasad conjecture,
both in local and global directions, will shed some light on the
problem considered here.

\paragraph{Results of this article}
In this article, we establish the existence of smooth transfer in
$p$-adic case for Guo-Jacquet relative trace formulae. Let us
briefly explain what the smooth transfer means. From now on, let $F$
be a $p$-adic field, which is a completion of $k$ at a finite place.
Let $E$ be a quadratic field extension of $F$ and $\D$ a quaternion
algebra over $F$ containing $E$. Notice that such quaternion
algebras are parameterized by $F^\times/\N E^\times$, where $\N$ is
the norm map from $E^\times$ to $F^\times$. When we want to
emphasize the dependence of $\D$ on $\epsilon\in F^\times/\N
E^\times$, we write $\D_\epsilon$. Let $\eta$ be the quadratic
character of $F^\times$ associated to $E/F$. We define $(\FG,\FH)$
and $(\FG',\FH')$ over $F$ in the same way as the global situation.
Write $G=\FG(F),\ H=\FH(F),\ G'=\FG'(F)$ and $H'=\FH'(F)$.

The group $H\times H$ (resp. $H'\times H'$) acts on $G$ (resp. $G'$)
by left and right translations. With respect to this action, we can
talk about the notion of $H\times H$- or $H'\times H'$-regular
semisimple (cf. \S3.1) elements in $G$ or $G'$ respectively. Denote
by $G_\rs$ and $G'_\rs$ the set of the regular and semisimple
elements in $G$ and $G'$ respectively. Then there is a natural
injection (cf. Proposition \ref{prop. matching of orbits})
$$[G'_\rs]\incl [G_\rs]$$ from the set of $H'\times H'$-orbits
in $G'_\rs$ to the set of $H\times H$-orbits in $G_\rs$. We say that
$x\in G_\rs$ matches $y\in G'_\rs$ and write $x\leftrightarrow y$ if
the orbit of $y$ goes to that of $x$ under this injection. We say
that $x\in G_\rs$ comes from $G'_\rs$ if there exists $y\in G'_\rs$
such that $x\leftrightarrow y$. If $x\leftrightarrow y$, their
stabilizers denoted by $(H\times H)_x$ and $(H'\times H')_y$ are
isomorphic. Fix a Haar measure on $H$ and a Haar measure on
$(H\times H)_x$ for each $x\in G_\rs$. Note that $\eta|_{(H\times
H)_x}=1$. For each $f\in\CC_c^\infty(G)$, define the orbital
integral of $f$ at $x$ to be
$$O^\eta(x,f)=\int_{(H\times H)_x\bs H\times H}
f(h_1^{-1}xh_2)\eta(\det h_2)\ \d h_1\ \d h_2.$$ We can define a
transfer factor $\kappa$ (cf. Defintion \ref{defn. transfer factor})
which is a function on $G_\rs$ so that
$\kappa(\cdot)O^\eta(\cdot,f)$ only depends on the $H\times
H$-orbits in $G_\rs$. Similarly, fix a Haar measure on $H'$. We fix
the Haar measure on $(H'\times H')_y$ for each $y\in G'_\rs$ so that
it is compatible with that on $(H\times H)_x$ if $x\leftrightarrow
y$. For each $f'\in\CC_c^\infty(G')$, define the orbital integral of
$f'$ at $y$ to be
$$O(y,f')=\int_{(H'\times H')_y\bs H'\times H'}f'(h_1^{-1}yh_2)
\ \d h_1 \d h_2.$$ For $f\in\CC_c^\infty(G)$ and
$f'\in\CC_c^\infty(G')$, we say that $f$ and $f'$ are smooth
transfer of each other if
$$\begin{array}{lll}\kappa(x)O^\eta(x,f)=\left\{\begin{array}{ll}
\begin{aligned}O(y,f'),&\quad \textrm{if there exists }
y\in G'_\rs\textrm{ such that }x\leftrightarrow y,\\
0,&\quad \textrm{otherwise}.
\end{aligned}
\end{array}\right.
\end{array}$$
Denote by $\CC_c^\infty(G)_0$ the subspace of elements $f$ in
$\CC_c^\infty(G)$ satisfying that $O^\eta(x,f)=0$ for any $x\in
G_\rs$ that does not come from $G'_\rs$.

Our main result is the following theorem.
\begin{thm}\label{thm. partial smooth transfer groups}
For each $f'\in\CC_c^\infty(G')$, there exists $f\in\CC_c^\infty(G)$
that is a smooth transfer of $f'$. Conversely, for each
$f\in\CC_c^\infty(G)_0$, there exists $f'\in\CC_c^\infty(G')$ that
is a smooth transfer of $f$.
\end{thm}

Now we explain how to reduce the existence of smooth transfer for
functions on groups to the existence of smooth transfer for
functions on symmetric spaces. This reduction is a standard
procedure.

There is an involution $\theta$ on $\FG$ such that $\FH=\FG^\theta$
is the subgroup of $\FG$ fixed by $\theta$. Let $S:=G/H$ be the
$p$-adic symmetric space associated to $(\FG,\FH)$. The group $H$
acts on $S$ by the conjugate action. There is a symmetrization map
$s:\FG\ra\FG^\iota$, where $\iota$ is the anti-involution on $\FG$
defined by $\iota(g)=\theta(g^{-1})$ and $\FG^\iota$ is the subgroup
fixed by $\iota$. The symmetrization map is given by
$s(g)=g\iota(g)$. Via the map $s$, we view $S$ as a subset of
$\FG^\iota(F)$. An element $g\in G$ is $H\times H$-regular
semisimple if and only if $x=s(g)\in S$ is $H$-regular semisimple.
Denote by $S_\rs$ the subset of regular semisimple elements in $S$.
Let $q:\CC_c^\infty(G)\ra\CC_c^\infty(S)$ be the natural surjection
map defined by $$(qf)(x)=\int_H f(gh)\ \d h$$ if $x=s(g)$. Let
$x=s(g)\in S$ be regular semisimple. Then its stabilizer $H_x$ is
isomorphic to $(H\times H)_g$. We choose the same Haar measure on
$H$ as before and the Haar measure on $H_x$ compatible with that on
$(H\times H)_g$. For $\wt{f}\in\CC_c^\infty(S)$, define the orbital
integral of $\wt{f}$ at $x$ to be $$O^\eta(x,\wt{f})=\int_{H_x\bs
H}\wt{f}(h^{-1}xh)\eta(\det h)\ \d h.$$ We define a transfer factor
on $S_\rs$ so that $\kappa(x)=\kappa(g)$ if $x=s(g)$. Then, by a
routine computation, we have
$$\kappa(g)O^\eta(g,f)=\kappa(x)O^\eta(x,\wt{f})$$
for each $f\in\CC_c^\infty(G)$, $\wt{f}=qf\in\CC_c^\infty(S)$ and
$x=s(g)\in S_\rs$. Thus, the study of orbital integrals for
$\CC_c^\infty(G)$ with respect to $H\times H$-action is equivalent
to that of orbital integrals for $\CC_c^\infty(S)$ with respect to
$H$-action. Similarly, the study of orbital integrals for
$\CC_c^\infty(G')$ with respect to $H'\times H'$-action is
equivalent to that of orbital integrals for $\CC_c^\infty(S')$ with
respect to $H'$-action, where $S':=G'/H'$ is the $p$-adic symmetric
space associated to $(\FG',\FH')$.

There is a natural injection (cf. Proposition \ref{prop. matching of
orbits}) $$[S'_\rs]\incl [S_\rs]$$  from the set of $H'$-orbits in
$S'_\rs$ to the set of $H$-orbits in $S_\rs$. We say that $x\in
S_\rs$ matches $y\in S'_\rs$ and write $x\leftrightarrow y$ if the
orbit of $y$ goes to that of $x$. Similarly, for
$f\in\CC_c^\infty(S)$ and $f'\in\CC_c^\infty(S')$, we can define the
notion of smooth transfer for them (see \S5.1 for more details).
Then we immediately see that Theorem \ref{thm. partial smooth
transfer groups} is equivalent to Theorem \ref{thm. partial smooth
transfer 2} which claims the existence of smooth transfer at the
level of symmetric spaces.

There is also the notion of smooth transfer at the level of Lie
algebras, called the Lie algebra version of smooth transfer. Here,
for Lie algebras, we mean the tangent spaces $\fs$ and $\fs'$ of
$\FG/\FH$ and $\FG'/\FH'$ at the identity respectively. The notion
of smooth transfer in this version is determined by the orbital
integrals with respect to adjoint actions of $H$ and $H'$ on
$\fs(F)$ and $\fs'(F)$ respectively. We refer the reader to
\S\ref{section. smooth transfer} for more details.

Our method to showing the existence of smooth transfer is mainly
inspired by Zhang's work \cite{zh} on the smooth transfer for the
Jacquet-Rallis relative trace formula and Waldspurger's work
\cite{wa97} on the endoscopic smooth transfer for Arthur's stable
trace formula. First, we reduce Theorem \ref{thm. partial smooth
transfer 2} to Theorem \ref{thm. partial smooth transfer 3} which
claims the existence of smooth transfer at the level of Lie
algebras. Next we reduce Theorem \ref{thm. partial smooth transfer
3} to Theorem \ref{thm. fourier preserve transfer} which asserts
that the Fourier transform preserves smooth transfer up to a nonzero
scalar. The several reduction steps here almostly follow those of
\cite{zh}. To prove Theorem \ref{thm. fourier preserve transfer},
since the absence of a suitable partial Fourier transform, we could
not adapt the inductive argument in \cite[\S4.4]{zh} any more. Our
approach is more close to that of \cite{wa97} where a global
argument emerged. However there are still some differences between
our method and that of \cite{wa97}. These differences are caused by
the following facts. The first fact is that
$$[\fs_\rs(F)]\supseteqq\bigcup_{\epsilon\in F^\times/\N
E^\times}[\fs'_{\epsilon,\rs}(F)],$$ where $\fs'_\epsilon$ is the
Lie algebra associated to $(\FG_\epsilon'=\GL_n(\D_\epsilon),\FH')$
and $[\fs'_{\epsilon,\rs}(F)]$ (resp. $[\fs_\rs(F)]$) is the set of
$H'$- (resp. $H$)-regular semisimple orbits. The above two sets are
equal if and only if $n=1$. Even worse, the elliptic parts of the
above two sets are equal if and only $n$ is odd. These phenomenons
are unlike other cases of relative trace formulae. Now suppose that
we are in the global setting. The second fact is that if $X_0$ is a
global element in $\fs_\rs(k)$ which does not come from
$\fs'_\rs(k)$ then there exist at least two places $v_1,v_2$ such
that $X_0$ does not come from $\fs'_\rs(k_{v_1})$ or
$\fs'_\rs(k_{v_2})$. This is unlike the case of endoscopic transfer
and prevents us to use global method to prove Theorem \ref{thm. half
fourier 1} which asserts that the orbital integral
$O^\eta(X,\wh{f})=0$ for $X\in\fs_\rs(F)$ not coming from
$\fs'_\rs(F)$ where $f\in\CC_c^\infty(\fs(F))$ is a smooth transfer
of some element in $\CC_c^\infty(\fs'(F))$ and $\wh{f}$ is its
Fourier transform. Instead we will use a pure local argument, which
is due to the referee, to show Theorem \ref{thm. half fourier 1}.

To prove Theorem \ref{thm. fourier preserve transfer}, we have to
show the representability of the Fourier transform of orbital
integrals as distributions (see Theorem \ref{thm.
representability}), exhibit ``limit formulae'' for the kernel
functions (see Proposition \ref{prop. i(X,Y)}) as Waldspurger did in
\cite{wa95}, and also prove analogues of some results (see
Proposition \ref{prop. local prop} and Theorem \ref{thm.
convergence}) in \cite{wa97}. These results, which are on harmonic
analysis on certain $p$-adic symmetric spaces, maybe appear in the
literature for the first time. We expect that the techniques
developed in this paper should be probably generalized to treat some
other similar open questions concerning relative trace formulae for
symmetric pairs. Actually, we do successfully generalize this method
to prove the existence of smooth transfer for other relative trace
formula in \cite{zhc}. Here we mention some cases of symmetric pairs
where our results in \S\ref{sec: representability} and \S\ref{sec:
local calculations} should hold. Still let $E$ be a quadratic field
extension of a $p$-adic field $F$. The first class of symmetric
pairs are ``inner forms'' of $(\FG,\FH)$ or $(\FG',\FH')$. Now let
$\D$ be a central division algebra over $F$. Let $\FG=\GL_{2m}(\D)$
and $\FH=\GL_m(\D)\times\GL_m(\D)$. Then $(\FG,\FH)$ is the
symmetric pair considered in \cite{zhc}. We can also consider the
symmetric pair $(\FG,\FH)=(\GL_{2m}(\D),\GL_m(\D\otimes_FE))$, or,
more generally, the symmetric pair
$(\FG,\FH)=(\GL_m(\D),\GL_m(\D'))$ where $\D$ is a central simple
algebra over $F$ containing $E$ and $\D'$ is the centralizer of $E$
in $\D$. The second class of symmetric pairs are Galois symmetric
pairs. Now let $\FH$ be a connected reductive group over $F$, and
$\FG=\Res_{E/F}(\FH_E)$ the Weil restriction of the base change of
$\FH$ to $E$. Then $(\FG,\FH)$ is called a Galois symmetric pair.

\paragraph{Structure of this article}
In \S\ref{sec: Notations}, we introduce some notations and
conventions that are frequently used in the paper.

In \S\ref{sec: symmetric pairs 1}, since $(\FG,\FH)$ and
$(\FG',\FH')$ are symmetric pairs, we collect some basic notions and
results on symmetric pairs. In particular, we recall the analytic
Luna Slice Theorem which plays an pivotal role on the reduction
steps of the smooth transfer.

In \S\ref{sec: symmetric pairs 2}, we study our specific symmetric
pairs $(\FG,\FH)$ and $(\FG',\FH')$ more concretely. We give a
complete description of all the descendants of the corresponding
symmetric spaces and their Lie algebras. We also prove Propositions
\ref{prop. inequality for nilpotent} and \ref{prop. inequality for
nilpotent 2}, which are about two inequalities. These inequalities
are crucial for bounding the orbital integrals later (see Theorem
\ref{thm. upper bound}).

In \S\ref{section. smooth transfer}, we introduce the main issue of
this article, that is, the smooth transfer at the level of symmetric
spaces and its Lie algebra version. We explain why Theorem \ref{thm.
fourier preserve transfer} implies Theorem \ref{thm. partial smooth
transfer 2}. We also prove the fundamental lemma in the Lie algebra
version, which is crucial for our global approach to prove Theorem
\ref{thm. fourier preserve transfer}.

In \S\ref{sec: representability}, to prove Theorem \ref{thm. fourier
preserve transfer}, we pay more effort on studying the Fourier
transform of orbital integrals. One of the most important question
is to show the representability, that is, the Fourier transform of
an orbital integral considered as a distribution can be represented
by a locally integrable kernel function. We deal with this issue in
this section. The representability itself is also a fundamental
question in harmonic analysis on $p$-adic symmetric spaces.

\S\ref{sec: local calculations} is devoted to showing limit formulae
for the kernel functions of the Fourier transform, which is an
analogue of \cite[Section VIII]{wa95}. We also construct certain
good test functions which are smooth transfer of each other and
whose Fourier transforms are also smooth transfer of each other up
to a scalar. This construction is an analogue of \cite[Proposition
8.2]{wa97}. Such test functions are used in the later construction
of certain global Schwartz functions.

Finally, in \S\ref{sec: proof of the thm}, we finish the proof of
Theorem \ref{thm. fourier preserve transfer}, basing on the results
of \S7 and the fundamental lemma.

\section{Notations and conventions}\label{sec: Notations}
We now introduce some notations and conventions, which are
frequently used in \S3--\S7.

\paragraph{Fields}
Let $F$ be a non-archimedean local field of characteristic $0$, with
finite residue field. Fix an algebraic closure $\bar{F}$, and denote
by $\Gamma_F=\Gal(\bar{F}/F)$ the absolute Galois group. We denote
by $|\cdot|_F$ (resp. $v_F$) the absolute value (resp. the
valuation) of $F$, and extend them to $\bar{F}$ in the usual way.
Let $\CO_F$ be the integer ring of $F$ and fix a uniformizer
$\varpi$ of $\CO_F$. For a finite extension field $L$ of $F$, denote
by $\N_{L/F}$ and $\RTr_{L/F}$ the norm and trace maps respectively.
Throughout this article, we fix a nontrivial additive unitary
character $\psi:F\ra\BC^\times$.

\paragraph{Varieties and groups}
All the algebraic varieties and algebraic groups that we consider
are defined over $F$ except in \S8. We always use bold letter to
denote an algebraic group, italic letter to denote its $F$-rational
points, and Fraktur letter to denote its Lie algebra. For example,
let $\FG$ be a reductive group. We write $G=\FG(F)$ and denote by
$\fg$ the Lie algebra of $\FG$. By a subgroup of $\FG$, we mean a
closed $F$-subgroup. We write $N_G(\cdot)$ for the normalizer and
$Z_G(\cdot)$ for the centralizer of a certain set in $G$, and write
$\FZ$ for the center of $\FG$. For an algebraic variety $\FX,$
$X=\FX(F)$ is equipped with the natural topology induced from $F$.
Thus, $X$ is a locally compact totally disconnected topological
space. Sometimes we treat finite dimensional vector spaces defined
over $F$ as algebraic varieties over $F$.

\paragraph{Heights}
Let $\FG$ be a reductive group and $G=\FG(F)$. Following
Harish-Chandra, we define a height function $\|\cdot\|$ on $G$
valued in $\BR_{\geq1}$. If $\FT$ is a sub-torus of $\FG$ and
$T=\FT(F)$, denote by $\|\cdot\|_{T\bs G}$ the induced height
function on $G$. The precise definitions and some important
properties of height functions are well discussed in
\cite[\S18]{ko}.

\paragraph{$\ell$-spaces}
For a group $H$ acting on a topological $X$ and for a subset
$\omega\subset X$, we denote by $\omega^H$ the set $\{h\cdot x:
x\in\omega,h\in H\}$, and by $\cl(\omega)$ the closure of $\omega$
in $X$. For an element $x\in X$, we denote by $H_x$ the stabilizer
of $x$ in $H$.

For a locally compact totally disconnected topological space $X$, we
denote by $\CC_c^\infty(X)$ the space of locally constant and
compactly supported $\BC$-valued functions, and by $\CD(X)$ the
space of distributions on $X$. For $f\in\CC_c^\infty(X)$, we denote
by $\Supp(f)$ its support. Suppose that $H$ (an $\ell$-group) acts
on $X$. Then $H$ acts on $\CC_c^\infty(X)$ by
$$(h\cdot f)(x)=f(h^{-1}\cdot x),\quad\mathrm{where}\ h\in
H,\ f\in\CC_c^\infty(X),\ x\in X,$$ and acts on $\CD(X)$ by
$$\pair{h\cdot T,f}=\pair{T,h\cdot f},\quad\mathrm{where}\
T\in\CD(X),\ f\in\CC_c^\infty(X).$$ For a locally constant character
$\eta:H\ra\BC^\times$, we say that a distribution $T\in\CD(X)$ is
$(H,\eta)$-invariant if $h\cdot T=\eta(h)T$ for each $h\in H$. We
denote by $\CD(X)^{H,\eta}$ the space of $(H,\eta)$-invariant
distributions on $X$. If $X$ is a finite dimensional space and the
Fourier transform $f\mapsto\wh{f}$ on $\CC_c^\infty(X)$ has already
been defined, for $T\in\CD(X)$, we denote by $\wh{T}$ its Fourier
transform, which is a distribution on $X$ defined by
$\wh{T}(f)=T(\wh{f})$.

\paragraph{Fourier transforms}
Let $\FG$ be a reductive group, $\fg$ its Lie algebra. Fix a
nondegenerate symmetric bilinear form $\pair{\ ,\ }$ on $\fg(F)$,
which is invariant under conjugation. For each subspace $\ff$ of
$\fg(F)$ on which the restriction of $\pair{\ ,\ }$ is
nondegenerate, we always equip this subspace with the self-dual Haar
measure with respect to the bi-character $\psi\left(\pair{\ ,\
}\right)$. Define the Fourier transform $f\mapsto\wh{f}$ on
$\CC_c^\infty(\ff)$ by
$$\wh{f}(X)=\int_\ff f(Y)\psi\left(\pair{X,Y}\right)\ \d Y.$$ Then
$\hat{\hat{f}}(X)=f(-X)$.

\paragraph{Weil index}
At last, we recall the definition of Weil index $\gamma_\psi$
associated to a quadratic space. Let $q$ be a nondegenerate
quadratic form on a finite dimensional vector space $V$ over $F$. If
$L\subset V$ is an $\CO_F$-lattice, set $i(L)=\int_L\psi(q(v)/2)\ \d
v$ and $\wt{L}=\{v\in V:\forall\ \ell\in L,\psi(q(v,\ell))=1\}$. It
is well known that, if $\wt{L}\subset 2L$, then
$\abs{i(L)}=\vol(L)^{\frac{1}{2}}\vol(\wt{L})^{\frac{1}{2}}$, and
$i(L)|i(L)|^{-1}$ is independent of $L$. We denote by
$\gamma_\psi(q)$ the value $i(L)|i(L)|^{-1}$, assuming
$\wt{L}\subset 2L$. Recall that $\gamma_\psi(q)$ is an 8th root of
unity.

\section{Symmetric pairs I: general cases}\label{sec: symmetric pairs
1}

In this section, we recall some basic theory and necessary results
for general symmetric pairs. We refer the reader to \cite{ag} and
\cite{rr} for most of the contents.

\subsection{Actions of reductive groups}
Fix a reductive group $\FH$ and an affine variety $\FX$ with an
action by $\FH$, both defined over $F$. Write $H=\FH(F)$ and
$X=\FX(F)$. Then the categorical quotient $\FX/\FH$ of $\FX$ by
$\FH$ exists. In fact, $\FX/\FH=\Spec(\CO(\FX)^\FH)$. Let $\pi$
denote the natural maps $\FX\ra\FX/\FH$ and $X\ra(\FX/\FH)(F)$.

Let $x\in X$. We say that $x$ is
\begin{itemize}
\item $\FH$-semisimple or $H$-semisimple
if $\FH x$ is Zariski closed in $\FX$ (or equivalently, $Hx$ is
closed in $X$ for the analytic topology),
\item $\FH$-regular or $H$-regular
if the stabilizer $\FH_x$ has minimal dimension.
\end{itemize}
We usually say semisimple or regular without mentioning $H$ if there
is no confusion. Denote by $X_\rs$ (resp. $X_\ss$) the set of
regular semisimple (resp. semisimple) elements in $X$.

If $\FX$ is an $F$-rational finite dimensional representation of
$\FH$, say a point $x\in X$ nilpotent if $0\in\cl(Hx)$. Let $\CN$
denote the set of nilpotent elements in $X$, which is called the
null-cone of $\FX$. Note that $\CN=\pi^{-1}(\pi(0))$.

An open subset $U\subset X$ is called saturated if there exists an
open subset $V\subset(\FX/\FH)(F)$ such that $U=\pi^{-1}(V)$.

For $x\in X$ a semisimple element, we denote by $N^{X}_{Hx,x}$ the
normal space of $Hx$ at $x$. Then the stabilizer $H_x$ acts
naturally on the vector space $N^{X}_{Hx,x}$. We call
$(H_x,N^{X}_{Hx,x})$ the sliced representation at $x$, or the
descendent of $(H,X)$ at $x$. Then we have the following analytic
Luna Slice Theorem (cf. \cite[Theorem 2.3.17]{ag}): there exist
\begin{itemize}
\item an open $H$-invariant neighborhood $U_x$ of $Hx$ in $X$ with an
$H$-equivariant retract $p:U_x\ra Hx$,
\item and an $H_x$-equivariant embedding $\psi:p^{-1}(x)\incl
N^{X}_{Hx,x}$ with an open saturated image such that $\psi(x)=0$.
\end{itemize}
Write $Z_x=p^{-1}(x)$ and $N_x=N^{X}_{Hx,x}$. We call
$(U_x,p,\psi,Z_x,N_x)$ an analytic Luna slice at $x$. Let $y\in
p^{-1}(x)$ and $z:=\psi(y)$. Then we have (cf. \cite[Corollary
2.3.19]{ag}):
\begin{itemize}
\item $(H_x)_z=H_y$,
\item $N^X_{Hy,y}\simeq N^{N_x}_{H_xz,z}$ as $H_y$ spaces,
\item $y$ is $H$-semisimple if and only if $z$ is $H_x$-semisimple.
\end{itemize}

\subsection{Symmetric pairs}
A symmetric pair is a triple $(\FG,\FH,\theta)$ where
$\FH\subset\FG$ are reductive groups, and $\theta$ is an involution
of $\FG$ such that $\FH=\FG^\theta$ is the subgroup of fixed points.
For a symmetric pair $(\FG,\FH,\theta)$ we define an anti-involution
$\iota:\FG\ra\FG$ by $\iota(g)=\theta(g^{-1})$. Set
$\FG^\iota=\{g\in\FG;\ \iota(g)=g\}$ and define a symmetrization map
$$s:\FG\ra\FG^\iota,\quad s(g)=g\iota(g).$$ By this symmetrization
map we can view the symmetric space $S:=G/H$ as a subset of
$\FG^\iota(F)$. We consider the action of $\FH\times\FH$ on $\FG$ by
left and right translation and the action of $\FH$ on $\FG^\iota$ by
conjugation.

Let $\theta$ act by its differential on $\fg=\Lie(\FG)$. Write
$\fh=\Lie(\FH)$. Thus, $$\fh=\{X\in\fg:\ \theta(X)=X\}.$$ Put
$$\fs=\{X\in\fg:\ \theta(X)=-X\},$$ on which $\FH$ acts by adjoint
action. We also call $\fs$ the Lie algebra of $S$ for simplicity,
though, in fact $\fs$ is not a Lie algebra. We always write
$X^h=h^{-1}\cdot X=\Ad(h^{-1})X$ for $h\in\FH$ and $X\in\fs$. There
exists a $\FG$-invariant $\theta$-invariant nondegenerate symmetric
bilinear form $\pair{\ ,\ }$ on $\fg$. In particular,
$\fg=\fh\oplus\fs$ is an orthogonal direct sum with respect to
$\pair{\ ,\ }$.

Let $(\FG,\FH,\theta)$ be a symmetric pair. Let $g\in G$ be $H\times
H$-semisimple, and $x=s(g)$. Then the triple
$(\FG_x,\FH_x,\theta|_{\FG_x})$ is still a symmetric pair, and we
have (cf. \cite[Proposition 7.2.1]{ag})
\begin{itemize}
\item $x$ is semisimple (both as an element of $G$
and with respect to the $H$-action),
\item $H_x\simeq(H\times H)_g$ and $\fs_x\simeq N^G_{HgH,H}$ as
$H_x$-spaces, where $\fs_x$ is the centralizer of $x$ in $\fs(F)$.
\end{itemize}
A symmetric pair obtained in this way is called a descendant of
$(\FG,\FH,\theta)$. Note that $\fs_x$ can be identified with the Lie
algebra of $\FG_x/\FH_x$.

\paragraph{Weyl integration formula}
Let $(\FG,\FH,\theta)$ be a symmetric pair. Denote by $\fs_\rs$ the
regular and semisimple locus in $\fs$ with respect to the
$\FH$-action. We call a torus $\FT$ of $\FG$ $\theta$-split if
$\theta(t)=t^{-1}$ for all $t\in\FT$. Fix a Cartan subspace $\fc$ of
$\fs$, which by definition is a maximal abelian subspace of $\fs$
consisting of $\FH$-semisimple elements. We always assume that a
Cartan subspace is $F$-rational when we mention it. Then there is an
$F$-rational $\theta$-split torus denoted by $\FT^-$ whose Lie
algebra is $\fc$. Denote by $\fc_\reg$ the $\FH$-regular locus in
$\fc$. Let $\FT$ be the centralizer of $\fc$ in $\FH$, which is a
torus. Write $\ft=\Lie(\FT)$.

For $X\in\fc_\reg(F)$, we now introduce the factor $|D^\fs(X)|_F$.
Consider the morphism $$\beta:(\FT\bs\FH)\times\fc\lra\fs,\quad
(h,X)\mapsto X^h,$$ which is regular at $(1,X)$. The Jacobian of the
differential $\d\beta$ at $(1,X)$ is equal to
$$|D^\fs(X)|_F:=|\det(\ad(X);\fh/\ft\oplus\fs/\fc)|_F^{\frac{1}{2}}.$$

Denote by $S_\fc$ the set of roots of $\FT^-$ in $\fg(\bar{F})$. For
any $\alpha\in S_\fc$, since $\fc\subset\fs$, we have
$\theta(\alpha)=-\alpha$. Therefore $\theta$ interchanges the root
subspaces $\fg_{\alpha}$ and $\fg_{-\alpha}$. Fix a set of positive
abstract roots in $S_\fc$, and choose a basis $\{E_1,E_2,...,E_k\}$
of root vectors for the direct sum of $\fg_\alpha$ with $\alpha>0$.
Set $\fg_1=\oplus_{\alpha\in S_\fc}\fg_\alpha$ so that
$\fg=\ft\oplus\fc\oplus\fg_1$. Then over $\bar{F}$,
\begin{itemize}
\item $\{E_1,E_2,...,E_k\}\cup\{\theta(E_1),\theta(E_2),...,\theta(E_k)\}$
is a basis for $\fg_1$;
\item $\{E_1-\theta(E_1),E_2-\theta(E_2),...,E_k-\theta(E_k)\}$
is a basis for $\fs_1:=\fs\cap\fg_1$;
\item $\{E_1+\theta(E_1),E_2+\theta(E_2),...,E_k+\theta(E_k)\}$
is a basis for $\fh_1:=\fh\cap\fg_1$.
\end{itemize}
Under the adjoint action, elements of $\fc$ map $\fh_1$ to $\fs_1$
and vice versa. There is an involution $\varrho$ on $\fg_1$ whose
$+1$-eigenspace is $\oplus_{\alpha>0}\fg_\alpha$ and whose
$-1$-eigenspace is $\oplus_{\alpha<0}\fg_\alpha$. Then $\varrho$
interchanges $\fs_1$ and $\fh_1$, and $\varrho$ commutes with
$\ad(X)$ for $X$ in $\fc(F)$. Thus we have
$$|D^\fs(X)|_F=|\det(\varrho\circ\ad(X);\fh/\ft)|_F
=|\det(\varrho\circ\ad(X);\fs/\fc)|_F.$$

For a Cartan subspace $\fc$, let $M$ be its normalizer in $H$,
$W_\fc:=M/T$ be its Weyl group. The map $$(T\bs
H)\times\fc_\reg(F)\lra\fs_\rs(F)$$ obtained from $\beta$ by
restriction is a local isomorphism of $p$-adic manifolds and its
image, denoted by $\fs_\rs^\fc$, is open in $\fs(F)$. The fiber of
$\beta$ through $(h,X)\in(T\bs H)\times\fc_\reg(F)$ has $|W_\fc| $
elements. We have $$\fs_\rs(F)=\bigsqcup_\fc\fs^\fc_\rs,$$ where the
union runs over a (finite) set of representatives $\fc$ for the set
of $H$-conjugacy classes of $F$-rational Cartan subspaces in $\fs$.
Then, for $f\in\CC_c^\infty(\fs(F))$, we have the following Weyl
integration formula (cf. \cite[page 106]{rr})
$$\int_{\fs(F)} f(X)\ \d X=\sum_\fc\frac{1}{|W_\fc|}
\int_{\fc_\reg(F)}|D^\fs(X)|_F\int_{T\bs H} f(X^h)\ \d h\ \d X.$$

\paragraph{The null-cone}
Denote by $\CN$ the null-cone of $\fs(F)$ with respect to the
$H$-action. Then, by \cite[Theorem 7.3.8]{ag}, $\CN$ is also the set
of nilpotent elements (considered as elements in $\fg$) in $\fs(F)$.
It is known that $\CN$ consists of finitely many $H$-orbits. Denote
by $\CN_q$ the union of all $H$-orbits in $\CN$ of dimension $\leq
q$, which is closed in $\CN_{q+1}$.

Fix $X_0\neq0$ in $\CN$. Denote by $X_0^H$ the $H$-orbit of $X_0$,
and $\fh_{X_0}$ the centralizer of $X_0$ in $\fh(F)$. Write
$r=\dim\fh_{X_0}$. Then $X_0^H$ is of dimension $d-r$ where
$d=\dim\fh(F)$, and is open in $\CN_{d-r}$.

\begin{lem}\label{lem. sl2-triple}
There exists a group homomorphism $\phi:\SL_2(F)\ra G$ such that
$$\d\phi\left(\begin{pmatrix}0&1\\0&0\end{pmatrix}\right)=X_0,\
\d\phi\left(\begin{pmatrix}0&0\\1&0\end{pmatrix}\right)=:Y_0,\
\phi\left(\begin{pmatrix}t&0\\0&t^{-1}\end{pmatrix}\right)=:\D_t(X_0),$$
with $Y_0\in\fs(F)$ and $\D_t(X_0)\in H$.
\end{lem}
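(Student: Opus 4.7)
The statement is the Jacobson--Morozov theorem adapted to the symmetric pair $(\FG,\FH,\theta)$ over $F$. My plan is to establish it in three steps: existence of an $\mathfrak{sl}_2$-triple over $\bar F$ compatible with the $\theta$-decomposition, descent of the triple to $F$, and integration of the resulting Lie algebra morphism to a group homomorphism.

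First, over $\bar F$, I would invoke the Kostant--Rallis symmetric variant of Jacobson--Morozov: any nonzero nilpotent $X_0\in\fs(\bar F)$ fits into a so-called \emph{normal} $\mathfrak{sl}_2$-triple $(X_0,H_0,Y_0)$ with $H_0\in\fh(\bar F)$ and $Y_0\in\fs(\bar F)$, satisfying $[H_0,X_0]=2X_0$, $[X_0,Y_0]=H_0$, $[H_0,Y_0]=-2Y_0$. One route is to apply classical Jacobson--Morozov inside $\fg(\bar F)$ to obtain some triple, then project $H_0$ onto $\fh$ and $Y_0$ onto $\fs$ using the $\theta$-decomposition, adjusting by a conjugation in the centralizer of $X_0$ to restore the bracket relations.

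Second, and this will be the technical heart, I would descend to $F$. Since $X_0\in\fs(F)$, the set of completions of $X_0$ to a normal $\mathfrak{sl}_2$-triple with $H_0\in\fh$ and $Y_0\in\fs$ is the set of $\bar F$-points of an affine $F$-variety $\mathbf{V}$. A standard computation (expressing an arbitrary completion in terms of a fixed one via the kernel of $\ad(X_0)$) shows that $\mathbf{V}$ is a torsor under a unipotent $F$-subgroup of the centralizer of $X_0$ in $\FH$. Since unipotent groups over a field of characteristic zero have trivial Galois cohomology, $\mathbf{V}(\bar F)\neq\emptyset$ forces $\mathbf{V}(F)\neq\emptyset$, so we obtain an $F$-rational normal $\mathfrak{sl}_2$-triple $(X_0,H_0,Y_0)$.

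Third, the $F$-rational Lie algebra homomorphism $\mathfrak{sl}_2\to\fg$ sending the upper triangular nilpotent to $X_0$, the lower triangular nilpotent to $Y_0$ and the diagonal semisimple element to $H_0$ integrates to an algebraic group morphism $\phi:\SL_2\to\FG$ defined over $F$, because $\SL_2$ is simply connected and we are in characteristic zero; passing to $F$-points gives the desired $\phi:\SL_2(F)\to G$. That the diagonal torus lands in $H$ is automatic: its tangent vector at the identity is $H_0\in\fh$, and since $\FH=\FG^\theta$ is a closed $F$-subgroup of $\FG$, the one-parameter subgroup $t\mapsto\D_t(X_0)$ lies in $\FH$, hence in $H$. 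The main obstacle is the torsor/descent argument of Step 2, though it is classical in this setting.
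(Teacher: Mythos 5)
The paper supplies no proof of this lemma, citing only \cite[Lemma 7.1.11]{ag}, so there is no in-text argument against which to compare. Your three-step outline --- existence of a normal $\mathfrak{sl}_2$-triple $(X_0,H_0,Y_0)$ with $H_0\in\fh(\bar F)$, $Y_0\in\fs(\bar F)$ via the Kostant--Rallis symmetric Jacobson--Morozov theorem, descent to $F$ by observing that the completions form a torsor under a unipotent subgroup of $\FH_{X_0}$ (whence trivial Galois cohomology yields an $F$-point), and integration of the resulting $F$-rational Lie algebra map to a morphism $\SL_2\to\FG$ using simple-connectedness in characteristic zero --- is correct and is the standard route by which such statements, including the cited lemma of Aizenbud--Gourevitch, are established.
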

\begin{proof}
See \cite[Lemma 7.1.11]{ag}.
\end{proof}

We write
$\d(X_0)=\d\phi\left(\begin{pmatrix}1&0\\0&-1\end{pmatrix}\right)$,
which is in $\fh(F)$. Actually, we often write $\d=\d(X_0)$ when
there is no confusion. For any $X\in\fs(F)$, we denote by $\fs_X$
(resp. $\fg_X$) the centralizer of $X$ in $\fs(F)$ (resp. $\fg(F)$).
\begin{lem}\label{lem. nilp1} We have
$$\fs_{Y_0}\oplus[X_0,\fh(F)]=\fs(F),\quad \fs_{X_0}\oplus[Y_0,\fh(F)]
=\fs(F).$$
\end{lem}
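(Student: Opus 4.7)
The plan is to derive the assertion from the stronger Lie-algebra-level identity
$$\fg(F) = \fg_{Y_0} \oplus [X_0,\fg(F)],$$
which follows from classical $\mathfrak{sl}_2$-representation theory applied via the group homomorphism $\phi$ of Lemma \ref{lem. sl2-triple}. Under $\ad\circ\d\phi$, the Lie algebra $\fg(F)$ becomes a finite-dimensional $\mathfrak{sl}_2(F)$-module; by complete reducibility it decomposes into a sum of irreducibles, and on each irreducible $V_n$ of dimension $n+1$ a direct weight-space calculation shows that $\ad(Y_0)$ kills precisely the one-dimensional lowest-weight line while $\ad(X_0)$ maps $V_n$ onto the complementary weight spaces. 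Summing these observations over all irreducible summands yields the claimed Lie-algebra-level decomposition of $\fg(F)$.

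Next I would descend this decomposition to $\fs(F)$ by taking $-1$-eigenspaces of $\theta$. Both summands are $\theta$-stable: since $\theta$ is an involutive automorphism with $\theta(Y_0) = -Y_0$, one has $\theta(\fg_{Y_0}) = \fg_{-Y_0} = \fg_{Y_0}$, and $\theta([X_0,Z]) = -[X_0,\theta(Z)]\in [X_0,\fg(F)]$ for any $Z$. The $\theta$-eigenspace decomposition then gives
$$\fs(F) = \bigl(\fg_{Y_0}\cap\fs(F)\bigr) \oplus \bigl([X_0,\fg(F)]\cap\fs(F)\bigr),$$
and the first summand is $\fs_{Y_0}$ by definition. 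For the second, the $\BZ/2$-grading $[\fs,\fs]\subset\fh$, $[\fs,\fh]\subset\fs$ forces $[X_0,\fh(F)]\subset\fs(F)$ and $[X_0,\fs(F)]\subset\fh(F)$, whence writing $\fg(F) = \fh(F)\oplus\fs(F)$ gives $[X_0,\fg(F)]\cap\fs(F) = [X_0,\fh(F)]$. This proves the first identity.

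The second identity follows from exactly the same argument applied to the $\mathfrak{sl}_2$-triple $(Y_0,-\d(X_0),X_0)$, which satisfies precisely the same hypotheses as $(X_0,\d(X_0),Y_0)$: its two nilpotent elements lie in $\fs$ and its neutral element lies in $\fh$, so every step of the argument above applies verbatim after swapping $X_0$ and $Y_0$. The only non-trivial ingredient in the whole plan is the underlying $\mathfrak{sl}_2$-module identity $\fg(F) = \fg_{Y_0}\oplus[X_0,\fg(F)]$; this is entirely standard but is what everything ultimately rests on, so the main technical point to verify carefully would be the weight-space analysis on each irreducible $\mathfrak{sl}_2$-summand. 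Everything else is a clean restriction-to-eigenspace argument, with no interaction between the $\theta$-grading and the $\mathfrak{sl}_2$-structure required beyond the trivial observation that the two summands are $\theta$-stable.
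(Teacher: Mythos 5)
Your proof is correct and follows essentially the same route as the paper's, which likewise derives the first identity from $\fg_{Y_0}\oplus[X_0,\fg(F)]=\fg(F)$ together with the $\theta$-grading $\fg(F)=\fh(F)\oplus\fs(F)$, and then treats the second identity symmetrically. You have simply spelled out the $\theta$-stability of the two summands and the computation of $[X_0,\fg(F)]\cap\fs(F)$, which the paper leaves implicit.
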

\begin{proof}
We have the following decompositions (cf. \cite[page 73]{hc1})
$$\fg_{Y_0}\oplus[X_0,\fg(F)]=\fg_{X_0}\oplus[Y_0,\fg(F)]=\fg(F).$$
From the decomposition $$\fg(F)=\fh(F)\oplus\fs(F),$$ we see that
$$\fg_{X_0}=\fh_{X_0}\oplus\fs_{X_0},\quad \fg_{Y_0}=\fh_{Y_0}
\oplus\fs_{Y_0},$$ since
$$[X_0,\fh(F)]\subset\fs(F),\ [X_0,\fs(F)]\subset\fh(F),
\ [Y_0,\fh(F)]\subset\fs(F),\ [Y_0,\fs(F)]\subset\fh(F).$$ Thus we
have
$$\left(\fh_{Y_0}\oplus\fs_{Y_0}\right)
\bigoplus\left([X_0,\fs(F)]\oplus[X_0,\fh(F)]\right)
=\fh(F)\oplus\fs(F),$$ and
$$\left(\fh_{X_0}\oplus\fs_{X_0}\right)
\bigoplus\left([Y_0,\fs(F)]\oplus[Y_0,\fh(F)]\right)
=\fh(F)\oplus\fs(F).$$ Taking the $\fs$-parts of the above
identities, we prove the assertions of the lemma.
\end{proof}
Let $\Gamma$ be the Cartan subgroup of $H$ with the Lie algebra
$F\cdot\d(X_0)$. Let $\xi$ be the rational character of $\Gamma$
defined by
$$X_0^\gamma=\xi(\gamma)X_0,\quad Y_0^\gamma=\xi^{-1}(\gamma)Y_0,$$
which is not trivial. Let $r'=\dim\fs_{Y_0}$. The following lemma
essentially is a variant of \cite[Lemma 34]{hc1}, and the proof is
also similar to that of \cite[Lemma 34]{hc1}.

\begin{lem}\label{lem. eigen for nilp.}
We can choose a basis $Y_0=U_1,U_2,...,U_{r'}$ for $\fs_{Y_0}$ and
rational characters $\xi_1,\xi_2,...,\xi_{r'}$ of $\Gamma$ such that
\begin{enumerate}
\item $\xi_i^2=\xi^{\lambda_i},\lambda_i\geq0$,
\item $\ad(-\d)U_i=\lambda_i U_i$,
\item $U_i^\gamma=\xi_i^{-1}(\gamma)U_i$, for all $1\leq i\leq r'$.
\end{enumerate}
\end{lem}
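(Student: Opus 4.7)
The plan is to simultaneously diagonalize the commuting actions of $\Gamma$ and $\ad(\d)$ on $\fs_{Y_0}$. The key observation is that $\Gamma$ is an $F$-split one-dimensional torus: it is the image under the $F$-rational homomorphism $\phi$ of Lemma \ref{lem. sl2-triple} of the diagonal torus of $\SL_2$, parametrized by $\D_t := \D_t(X_0)$ for $t \in F^\times$. From the $\SL_2$-relations one computes directly $\xi(\D_t) = t^{-2}$. A short Jacobi-identity argument then shows that $\Gamma$ preserves $\fs_{Y_0}$: for $V \in \fs_{Y_0}$ and $\gamma \in \Gamma$, one has $[Y_0, V^\gamma] = \xi(\gamma)\,\ad(\gamma^{-1})[Y_0, V] = 0$ because $\ad(\gamma)Y_0 = \xi(\gamma)Y_0$. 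Since $\Gamma$ is split, the decomposition of $\fs_{Y_0}$ into one-dimensional $\Gamma$-character eigenspaces descends to $F$.

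Having set this up, I would choose a basis $U_1 = Y_0, U_2, \ldots, U_{r'}$ of $\fs_{Y_0}$ consisting of $\Gamma$-eigenvectors; this is possible because $\Gamma$ is split and $Y_0$ is already an eigenvector with $Y_0^\gamma = \xi^{-1}(\gamma) Y_0$, forcing $\xi_1 = \xi$. This gives (3) by construction. Writing $\xi_i(\D_t) =: t^{-n_i}$, condition (2) then follows by differentiating $U_i^{\D_t} = t^{n_i} U_i$ at $t = 1$ to obtain $\ad(-\d) U_i = n_i U_i$, so $\lambda_i = n_i$. Condition (1) is then the tautology $\xi_i^2(\D_t) = t^{-2n_i} = \xi(\D_t)^{n_i} = \xi^{\lambda_i}(\D_t)$.

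The last point is the non-negativity $\lambda_i \geq 0$. For this I would invoke standard $\SL_2$-representation theory: via $\phi$, $\fs(F)$ becomes a finite-dimensional $\SL_2(F)$-module, and the subspace $\fs_{Y_0}$ consists precisely of the lowest-weight vectors, which have non-positive $\ad(\d)$-weight and hence non-negative $\ad(-\d)$-eigenvalue. With these ingredients there is no real obstacle, consistent with the author's remark that the lemma is obvious; the only minor subtlety is ensuring that the eigenspace decomposition is defined over $F$, which is exactly what the $F$-splitness of $\Gamma$ provides.
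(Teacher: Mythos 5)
Your proof is correct; the paper itself gives no argument, simply declaring the lemma ``obvious,'' and the route you take (split-torus diagonalization of $\Gamma$ on $\fs_{Y_0}$, computing $\xi(\D_t)=t^{-2}$, differentiating to relate $\xi_i$ to the $\ad(-\d)$-eigenvalue, and invoking $\fs\fl_2$-lowest-weight theory for $\lambda_i\geq0$) is exactly the intended chain of reasoning. The one detail worth flagging is the step showing $\Gamma$ preserves $\fs_{Y_0}$; you have correctly supplied the small Jacobi-identity computation, and correctly observed that $F$-splitness of $\Gamma$ is what makes the eigenbasis $F$-rational.
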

Set $$m=\frac{1}{2}\left(\sum_{1\leq i\leq r'}\lambda_i\right)
=\frac{1}{2}\RTr\left(\ad(-\d)|_{\fs_{Y_0}}\right).$$

\section{Symmetric pairs II: specific cases}
\label{sec: symmetric pairs 2} Now we focus on the symmetric pairs
concerned in this article. The notations introduced here will be
used without mention from now on.

\subsection{$(\FG,\FH)$} Let $\FG=\GL_{2n}$ and
$\FH=\GL_n\times\GL_n$, both defined over $F$. $\FH$ is viewed as a
subgroup of $\FG$ by embedding it into $\FG$ diagonally. Let
$\epsilon=\begin{pmatrix}{\bf1}_n&0\\0&-{\bf1}_n\end{pmatrix}$ and
define an involution $\theta$ on $\FG$ by $\theta(g)=\epsilon
g\epsilon$. Then $\FH=\FG^\theta$, and the Lie algebra $\fs$
associated to $(\FG,\FH,\theta)$ is
$$\fs(F)=\left\{\begin{pmatrix}0&A\\B&0\end{pmatrix}:
\ A,B\in\fg\fl_n(F)\right\} \simeq\fg\fl_n(F)\oplus\fg\fl_n(F).$$ If
we identify $\fs(F)$ with $\fg\fl_n(F)\oplus\fg\fl_n(F)$, then $H$
acts on $\fs(F)$ by
$$(h_1,h_2)\cdot(A,B)=(h_1Ah_2^{-1},h_2Bh_1^{-1}).$$
Recall that we write $X^h=h^{-1}\cdot X$ for $h\in H,X\in\fs(F)$. We
fix a nondegenerate symmetric bilinear form $\pair{\ ,\ }$ on
$\fg(F)$ defined by
$$\pair{X,Y}=\tr(XY),\quad\mathrm{for}\ X,Y\in\fg(F).$$
Then $\pair{\ ,\ }$ is both $G$-invariant and $\theta$-invariant.

Since $H^1(F,\FH)$ is trivial, we have $S=\CS(F)$ where $S:=G/H$ and
$\CS:=\FG/\FH$. We identify $S$ with its image in $\FG^\iota(F)$ by
the the symmetrization map $s$. When we want to emphasize the index
$n$, we write $\FG_n,\FH_n,\theta_n$ and $\fs_n$.

\paragraph{Descendants}
Now we describe all the $H$-semisimple elements $x$ of $S$ and
$\fs(F)$ and the descendants $(H_x,\fs_x)$ at $x$. The results below
also hold when $F=k$ is a number field.

\begin{prop}\label{prop. descendant 1 group}
\begin{enumerate}
\item
Each semisimple element $x$ of $S$ is $H$-conjugate to an element of
the form
$$x(A,n_1,n_2):=\begin{pmatrix}A&0&0&A-{\bf1}_m&0&0\\
0&{\bf1}_{n_1}&0&0&0&0\\
0&0&-{\bf1}_{n_2}&0&0&0\\
A+{\bf1}_m&0&0&A&0&0\\
0&0&0&0&{\bf1}_{n_1}&0\\
0&0&0&0&0&-{\bf1}_{n_2}\end{pmatrix},$$ with $n=m+n_1+n_2$, $A\in
\fg\fl_m(F)$ being semisimple without eigenvalues $\pm1$ and unique
up to conjugation. Moreover, $x(A,n_1,n_2)$ is regular if and only
if $n_1=n_2=0$ and $A$ is regular in $\fg\fl_n(F)$.
\item Let $x=x(A,n_1,n_2)$ in $S$ be semisimple. Then the descendant
$(H_x,\fs_x)$ is isomorphic to the product (as a representation)
$$\left(\GL_m(F)_A,\fg\fl_m(F)_A\right)\times(H_{n_1},\fs_{n_1})
\times(H_{n_2},\fs_{n_2}).$$ Here $\GL_m(F)_A$ and $\fg\fl_m(F)_A$
are the centralizers of $A$ in $\GL_m(F)$ and $\fg\fl_m(F)$
respectively, and $\GL_m(F)_A$ acts on $\fg\fl_m(F)_A$ by
conjugation.
\end{enumerate}
\end{prop}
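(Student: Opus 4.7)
The plan is to translate both parts of the proposition into concrete linear-algebra facts about the pair $(\epsilon,x)$ acting on $V=F^{2n}$ and argue by eigenspace decomposition. Since any $x\in S$ satisfies $\epsilon x\epsilon=x^{-1}$ (from $\iota(x)=x$), for semisimple $x$ its eigenvalues over $\bar F$ are closed under $\lambda\mapsto\lambda^{-1}$ and $\epsilon$ induces isomorphisms $V_\lambda(x)\simeq V_{\lambda^{-1}}(x)$. I would decompose $V=V_1(x)\oplus V_{-1}(x)\oplus W$, where $W$ is the sum of $V_\lambda\oplus V_{\lambda^{-1}}$ for $\lambda\neq\pm1$; all three summands are $\epsilon$-stable. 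On each $V_{\pm1}(x)$, $\epsilon$ acts as an involution and splits it as $V_{\pm1}^+\oplus V_{\pm1}^-$. Combining $\dim V^+=\dim V^-=n$ with $\tr(x\epsilon)=0$ (since $x\epsilon\sim\epsilon$ in $G$) forces $\dim V_1^+=\dim V_1^-=:n_1$ and $\dim V_{-1}^+=\dim V_{-1}^-=:n_2$, so that $\dim W^\pm=m:=n-n_1-n_2$.

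Next I would normalize $x|_W$. Writing $x|_W=\begin{pmatrix}a&b\\ c&d\end{pmatrix}$ on $W^+\oplus W^-$ and using $x^{-1}|_W=\epsilon x|_W\epsilon=\begin{pmatrix}a&-b\\ -c&d\end{pmatrix}$ gives $\tfrac{1}{2}(x-x^{-1})|_W=\begin{pmatrix}0&b\\ c&0\end{pmatrix}$; since $x|_W$ has no eigenvalue $\pm1$, $x-x^{-1}$ is invertible on $W$, forcing both $b$ and $c$ to be invertible. The identity $x\theta(x)=1$ yields $a^2-bc=I=d^2-cb$, $ab=bd$, and $ca=dc$. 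A direct computation shows that conjugation by $(1,(a-I)^{-1}b)\in\GL(W^+)\times\GL(W^-)\subset H$ sends $x|_W$ to $\begin{pmatrix}A&A-I\\ A+I&A\end{pmatrix}$ with $A=a$, and $A$ is semisimple without $\pm1$ eigenvalue because it is the restriction of the semisimple operator $\tfrac{1}{2}(x+x^{-1})$ to its invariant subspace $W^+$, whose eigenvalues $\tfrac{1}{2}(\lambda+\lambda^{-1})$ avoid $\pm1$. Choosing linear isomorphisms $V^\pm\simeq F^m\oplus F^{n_1}\oplus F^{n_2}$ adapted to the three-piece decomposition then puts $x$ into the claimed form $x(A,n_1,n_2)$.

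Uniqueness and regularity come from a stabilizer analysis. If $h=(h_1,h_2)\in H$ sends $x(A,n_1,n_2)$ to $x(A',n_1,n_2)$, each $h_i$ must preserve the $x$-eigenspace decomposition, so is block-diagonal with pieces in $\GL_m,\GL_{n_1},\GL_{n_2}$; on the $A$-block the equations $h_1^A A=A h_1^A$, $h_1^A(A-I)=(A'-I)h_2^A$, together with invertibility of $A\pm I$, force $h_1^A=h_2^A$ and $h_1^A A(h_1^A)^{-1}=A'$, which proves uniqueness up to $\GL_m(F)$-conjugacy of $A$. The dimension count $\dim H_{x(A,n_1,n_2)}=\dim\GL_m(F)_A+2n_1^2+2n_2^2$ is minimized precisely when $n_1=n_2=0$ and $A$ is regular in $\fg\fl_n$, giving the regularity criterion. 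For part (2), the same block analysis applied to $Z\in\fs$ satisfying $xZ=Zx$ (with $Z$ off-diagonal with respect to $V^+\oplus V^-$) shows that all cross-block entries of $Z$ between the three eigenvalue parts vanish by invertibility of $A\pm I$, while the surviving data assemble into $\fg\fl_m(F)_A\oplus\fs_{n_1}\oplus\fs_{n_2}$ with $H_x\cong\GL_m(F)_A\times H_{n_1}\times H_{n_2}$ acting factor-wise, as advertised.

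The one real technical point throughout is the invertibility of $b$ and $c$ on $W$; once this is available the rest of the argument is systematic block-matrix algebra, and I would not expect any further obstacle.
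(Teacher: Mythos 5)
Your argument is correct, and it is also essentially the only natural argument: the paper itself gives no proof, deferring part (1) to \cite{jr} and \cite{guo} and calling part (2) a ``direct computation,'' and the eigenspace decomposition with respect to $x$ (and the compatible $\epsilon$-eigenspace splitting of each piece) is precisely the route those references take. Two small points are worth tightening. First, the conclusion $\dim V_1^+=\dim V_1^-$ and $\dim V_{-1}^+=\dim V_{-1}^-$ does not follow from $\dim V^\pm=n$ and $\tr(x\epsilon)=0$ alone; you also need $\dim W^+=\dim W^-$, which holds because $\epsilon$ interchanges $V_\lambda$ and $V_{\lambda^{-1}}$ inside $W$ (alternatively, and more cleanly, use that $x\epsilon=g\epsilon g^{-1}$ is not merely traceless but conjugate to $\epsilon$, so each $\pm1$-eigenspace of $x\epsilon$ has dimension exactly $n$, and then the count is immediate). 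Second, as written $(a-I)^{-1}b$ is a map $W^-\to W^+$, not an endomorphism of $W^-$, so $(1,(a-I)^{-1}b)$ is not literally in $\GL(W^+)\times\GL(W^-)$; the fix is to first choose arbitrary identifications $W^\pm\simeq F^m$ (after which $a,b,c,d\in\fg\fl_m(F)$ and $h_2=(a-I)^{-1}b\in\GL_m(F)$ makes sense), or equivalently to fix a basis $(e_j)$ of $W^+$ and take $f_j:=b^{-1}(a-I)e_j$ as the basis of $W^-$. With that understood, the identities $h_1bh_2^{-1}=a-I$, $h_2 d h_2^{-1}=a$ (from $bdb^{-1}=a$), and $h_2ch_1^{-1}=(a-I)^{-1}bc=a+I$ (from $bc=a^2-I$) go through exactly as you describe, and the stabilizer computation for parts (1) and (2) is routine block algebra once the cross-eigenspace blocks of any commuting element are forced to vanish.
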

\begin{proof}
See \cite[Proposition 4.1]{jr} or \cite[Proposition 1.1]{guo} for
the first assertion. The second assertion can be proved by a direct
computation.
\end{proof}

\begin{prop}\label{prop. descendant 1 lie}
\begin{enumerate}
\item Each semisimple element $X$ of $\fs(F)$ is $H$-conjugate to an
element of the form
$$X(A)=\begin{pmatrix}0&0&{\bf1}_m&0\\0&0&0&0\\A&0&0&0\\0&0&0&0
\end{pmatrix}$$
with $A\in\GL_m(F)$ being semisimple and unique up to conjugation.
Moreover, $X(A)$ is regular if and only if $m=n$ and $A\in\GL_n(F)$
is regular.
\item Let $X=X(A)$ in $\fs(F)$ be semisimple.
Then the descendant $(H_X,\fs_X)$ is isomorphic to the product (as a
representation)
$$\left(\GL_m(F)_A,\fg\fl_m(F)_A\right)\times(H_{n-m},\fs_{n-m}).$$
\end{enumerate}
\end{prop}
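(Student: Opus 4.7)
The proof splits naturally into two parts. For Part (1), the plan is to regard $\fs(F)$ as the space of off-diagonal $2n\times 2n$ block matrices, so that an element $X\in\fs(F)$ is encoded by a pair $X=(X_1,X_2)$ of $n\times n$ matrices. I invoke the standard fact from symmetric-pair theory that $X$ is $H$-semisimple if and only if it is semisimple as a matrix in $\fg\fl_{2n}(F)$ (equivalently, its nilpotent Jordan part vanishes). Since $X^2=\diag(X_1X_2,X_2X_1)$, this translates into: both $X_1X_2$ and $X_2X_1$ are semisimple, and $\Ker X=\Ker X^2$; the latter gives the kernel equalities $\Ker X_1=\Ker(X_2X_1)$ and $\Ker X_2=\Ker(X_1X_2)$, and hence $\rank X_1=\rank X_2=\rank(X_1X_2)=:m$. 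Consequently one has direct sum decompositions $F^n=\Ker X_2\oplus W_1$ and $F^n=\Ker X_1\oplus W_2$ with $W_1=\Im X_1$, $W_2=\Im X_2$ of dimension $m$, on which the restrictions $X_1\colon W_2\to W_1$ and $X_2\colon W_1\to W_2$ are isomorphisms.

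Setting $A:=X_2X_1|_{W_2}\in\GL_m(F)$, which is invertible semisimple (inherited from $X^2$), I choose a basis $f_1,\ldots,f_m$ of $W_2$, define $e_j:=X_1f_j$ (a basis of $W_1$), and extend by arbitrary bases of $\Ker X_1$ and $\Ker X_2$. In these bases $X_1$ is represented by $\diag({\bf 1}_m,0)$ and $X_2$ by $\diag(A,0)$, so $X$ takes the standard form $X(A)$. Uniqueness of $A$ up to $\GL_m(F)$-conjugation is immediate, since $A$ is the restriction of $X_2X_1$ to its image, which is manifestly an $H$-invariant.

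For Part (2) I compute the descendant by direct block matrix manipulation. Writing $h_i=\bigl(\begin{smallmatrix}a_i&b_i\\c_i&d_i\end{smallmatrix}\bigr)$ in blocks of sizes $(m,n-m)$, the stabilizer equations $h_1X_1=X_1h_2$ and $h_2X_2=X_2h_1$ (evaluated at the standard form) together with the invertibility of $A$ force $b_i=c_i=0$, $a_1=a_2\in\GL_m(F)_A$, and leave $d_1,d_2\in\GL_{n-m}(F)$ free; this yields $H_X\simeq\GL_m(F)_A\times H_{n-m}$. For the normal space, using the invariant trace form on $\fg\fl_{2n}(F)$ I identify $\fs_X$ with the centralizer $\{Y\in\fs(F):[X,Y]=0\}$, which for $Y=(Y_1,Y_2)$ amounts to the two equations $X_1Y_2=Y_1X_2$ and $X_2Y_1=Y_2X_1$. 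A parallel block analysis shows every solution is of the form $(Y_1,Y_2)=(\diag(P,S),\diag(AP,S'))$ with $P\in\fg\fl_m(F)_A$ and $(S,S')\in\fs_{n-m}$, and the $H_X$-action factors as the product of the conjugation action of $\GL_m(F)_A$ on $\fg\fl_m(F)_A$ and the natural action of $H_{n-m}$ on $\fs_{n-m}$. The regularity assertion in Part (1) follows from $\dim H_X=\dim\GL_m(F)_A+2(n-m)^2\geq m+2(n-m)^2$, which attains its minimum $n$ uniquely at $m=n$ with $A$ regular in $\GL_n(F)$.

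The main obstacle is the semisimplicity translation in Part (1): one must carefully extract the kernel/rank equalities $\Ker X_1=\Ker(X_2X_1)$ and $\Ker X_2=\Ker(X_1X_2)$ from $X$ being semisimple in $\fg\fl_{2n}(F)$, for these are precisely the hypotheses that permit inverting $X_1$ on its image and normalizing the two blocks simultaneously in compatible bases. Once this linear-algebra preparation is in place, the remaining steps are routine block bookkeeping.
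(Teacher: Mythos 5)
Your proof is correct. The paper itself simply cites Jacquet–Rallis for this proposition, so there is no in-text argument to compare against; your self-contained linear-algebra derivation follows the standard route one would expect from that source. The key steps all check out: the equivalence of $H$-semisimplicity with semisimplicity in $\fg\fl_{2n}(F)$ is a genuine theorem of Kostant–Rallis/Richardson for symmetric pairs (and is consistent with the paper's \S 3 framework); the kernel equalities $\Ker X_1=\Ker(X_2X_1)$, $\Ker X_2=\Ker(X_1X_2)$ do indeed follow from $\Ker X=\Ker X^2$, and they force $\rank X_1=\rank X_2=\rank(X_1X_2)=\rank(X_2X_1)=:m$ and the direct-sum decompositions $F^n=\Im X_1\oplus\Ker X_2=\Im X_2\oplus\Ker X_1$; the composite $A=X_2X_1|_{W_2}$ is invertible and semisimple, and the basis change you describe is an element of $H$, giving $X\sim X(A)$. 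Uniqueness is immediate since $X_2X_1$ transforms by conjugation under $H$. The block computations for $H_X$ and $\fs_X$ in Part (2) are correct (each forces the off-diagonal blocks to vanish and identifies $a_1=a_2\in\GL_m(F)_A$, $P_2=AP_1=P_1A$, etc.), the $H_X$-equivariance factorizes as claimed, and the dimension count $\dim H_X=\dim\GL_m(F)_A+2(n-m)^2\ge m+2(n-m)^2$ with strict global minimum $n$ at $m=n$, $A$ regular, settles the regularity criterion.
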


\begin{proof}
See \cite[Propositions 2.1 and 2.2]{jr}.
\end{proof}

\paragraph{The null-cone}
Fix $X_0\neq 0$ in the null-cone $\CN$ of $\fs(F)$. Let
$(X_0,\d,Y_0)$ be an $\fs\fl_2$-triple as before. Recall
$\d=\d(X_0)$.

\begin{lem}\label{lem. nilp2}
We have $\dim\fs_{Y_0}=\dim\fh_{X_0}=r$.
\end{lem}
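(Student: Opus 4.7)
The plan is to combine the direct sum decomposition from Lemma 4.4 with a dimension count, exploiting the crucial numerical coincidence that for our specific pair $\dim\fs = \dim\fh$.

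First I would observe that for $(\FG,\FH) = (\GL_{2n},\GL_n\times\GL_n)$ we have $\dim\fh = 2n^2$ and $\dim\fs = (2n)^2 - 2n^2 = 2n^2$, so $\dim\fh(F) = \dim\fs(F)$. Next, I would apply the first identity of Lemma 4.4, namely
$$\fs_{Y_0}\oplus [X_0,\fh(F)]=\fs(F),$$
to conclude that
$$\dim\fs_{Y_0}=\dim\fs(F)-\dim[X_0,\fh(F)].$$

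Then I would compute $\dim [X_0,\fh(F)]$ by viewing the bracket as the linear map $\ad(X_0)\colon \fh(F)\to \fs(F)$ (which indeed lands in $\fs$ since $[\fs,\fh]\subset \fs$ by the $\BZ/2$-grading $\fg=\fh\oplus\fs$). The kernel of this map is by definition $\fh_{X_0}$, so
$$\dim[X_0,\fh(F)]=\dim\fh(F)-\dim\fh_{X_0}=\dim\fh(F)-r.$$
Substituting and using $\dim\fh(F)=\dim\fs(F)$ then yields $\dim \fs_{Y_0}=\dim\fh_{X_0}=r$, as desired.

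This argument is essentially forced; the only real ingredient beyond Lemma 4.4 is the numerical identity $\dim\fh=\dim\fs$, which is automatic here and would in fact hold for any symmetric pair $(\FG,\FH,\theta)$ in which $\theta$ is an \emph{inner} involution conjugating a split Cartan to its inverse — as is the case for $\theta_n=\Ad(\epsilon)$. I do not foresee any obstacle; the only thing to double-check is that $\ad(X_0)$ really maps $\fh$ into $\fs$, which is immediate from $\theta[X_0,H]=[\theta X_0,\theta H]=[-X_0,H]=-[X_0,H]$ for $H\in\fh$.
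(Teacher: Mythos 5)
Your argument is exactly the paper's: apply the first decomposition from Lemma \ref{lem. nilp1}, use rank--nullity for $\ad(X_0)\colon\fh(F)\to\fs(F)$ to get $\dim\fh_{X_0}+\dim[X_0,\fh(F)]=\dim\fh(F)$, and invoke $\dim\fh(F)=\dim\fs(F)$. You have merely spelled out the bookkeeping that the paper leaves implicit.
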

\begin{proof}
It follows from Lemma \ref{lem. nilp1} and the relation
$$\dim\fh_{X_0}+\dim[X_0,\fh(F)]=\dim\fh(F)=\dim\fs(F).$$
\end{proof}

In \cite[Lemma 3.1]{jr}, $\fh_{X_0}$ is well studied, and an upper
bound for $\RTr\left(\ad(\d)|_{\fh_{X_0}}\right)$ is given there. By
a minor modification of the discussion in \cite[\S3]{jr}, we study
the structure of $\fs_{Y_0}$. For our purpose, we want to compare
$r+m$ with $n^2+\frac{n}{2}$, where $r=\dim\fs_{Y_0}$ and
$m=\frac{1}{2}\RTr\left(\ad(-\d)|_{\fs_{Y_0}}\right)$. The following
inequalities will be used in \S\ref{subsec. majorize of orb}.

\begin{prop}\label{prop. inequality for nilpotent}
We have the relations
\begin{enumerate}
\item $r\geq n$,
\item $r+m> n^2+\frac{n}{2}$.
\end{enumerate}
\end{prop}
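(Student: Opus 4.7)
I would handle the two parts separately.

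For Part 1, the plan is to bound dimensions of $H$-orbits in $\fs$. Writing $X=\bigl(\begin{smallmatrix}0&B\\C&0\end{smallmatrix}\bigr)\in\fs$ identifies $\fs$ with pairs of $n\times n$-matrices; the ring of $H$-invariants $\CO(\fs)^H$ is generated by the coefficients of the characteristic polynomial of the product $BC$, so the categorical quotient has dimension $n$. By upper semicontinuity of fiber dimensions of the quotient map, every $H$-orbit in $\fs$ has dimension at most $\dim\fs-n=2n^2-n$. Applied to the orbit of $X_0$, this gives $\dim HX_0\leq 2n^2-n$, hence $r=\dim\fh_{X_0}=\dim\fh-\dim HX_0\geq n$.

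For Part 2, the plan is to use the triple $(X_0,\d,Y_0)$ of Lemma~\ref{lem. sl2-triple} to decompose the standard representation $W=F^{2n}$ of $\FG$ as an $\fs\fl_2$-module. Write $W=\bigoplus_i B_i$ with $B_i\cong V_{\lambda_i-1}$ (the $\lambda_i$-dimensional irreducible), where $\lambda=(\lambda_1,\lambda_2,\ldots)$ is the Jordan type of $X_0\in\fg\fl_{2n}$. The $\BZ/2$-grading $W=W_0\oplus W_1$ induced by $\theta=\Ad(\diag(I_n,-I_n))$ assigns to each block $B_i$ a \emph{starting color} $a_i\in\{0,1\}$ (the parity such that the lowest $\d$-weight vector $e^{(i)}_0\in W_{a_i}$); the constraint $\dim W_0=n$ forces the odd-sized blocks to split evenly between the two colors.

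The centralizer $\End(W)_{Y_0}$ admits a standard basis $\{T_{i,j,p}:0\leq p<\min(\lambda_i,\lambda_j)\}$, where $T_{i,j,p}$ is the $Y_0$-equivariant endomorphism determined by $T_{i,j,p}(e^{(j)}_{\lambda_j-1})=e^{(i)}_p$. A direct computation shows that $T_{i,j,p}$ is simultaneously a $\theta$-eigenvector with eigenvalue $(-1)^{a_i+a_j+p+\lambda_j-1}$, and an $\ad(-\d)$-eigenvector with eigenvalue $\lambda_i+\lambda_j-2p-2$ (its position as the lowest-weight vector of the Clebsch--Gordan component $V_{\lambda_i+\lambda_j-2p-2}\subset V_{\lambda_i-1}\otimes V_{\lambda_j-1}^*$). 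This yields
\[
2(r+m)=\sum_{(i,j,p)\in\fs_{Y_0}}(\lambda_i+\lambda_j-2p),
\]
where $(i,j,p)\in\fs_{Y_0}$ iff $a_i+a_j+p+\lambda_j\equiv 0\pmod 2$. Summing over \emph{all} triples (unconstrained by $\theta$) yields the closed expression $\sum_{i,j}\min(\lambda_i,\lambda_j)(\max(\lambda_i,\lambda_j)+1)=4n^2+\sum_j(\lambda'_j)^2$, where $\lambda'$ denotes the conjugate partition.

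To conclude, I would perform a case analysis on the parities of $\lambda_i,\lambda_j$ and the values of $a_i,a_j$, splitting $2(r+m)$ into diagonal contributions $g(i,i)$, off-diagonal same-parity pairs, and mixed-parity pairs (for which $g(i,j)+g(j,i)$ collapses to $\min(\lambda_i,\lambda_j)(\max(\lambda_i,\lambda_j)+1)$ because the $p$-parity conditions are complementary). Explicit formulas for each case, combined with the constraint $\sum_i\lambda_i=2n$ and the equidistribution of odd-block colors, yield $2(r+m)>2n^2+n$, equivalent to~(2). The main obstacle is the bookkeeping in this case analysis, particularly the interplay between block-parity and starting-color for same-parity pairs. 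The extremal cases $\lambda=(2n)$ (regular nilpotent) and $\lambda=(2n-1,1)$ (subregular) saturate $r+m=n^2+n$, safely above the bound $n^2+n/2$.
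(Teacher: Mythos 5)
Your part~(2) framework is a re-parametrization of the paper's. The triples $(i,j,p)$ with the parity constraint enumerate the same basis the paper indexes by the coefficients $a_\ell^{ij}$ in Lemma~\ref{lem. nilpotent table}, and your ``starting color'' $a_i$ carries the same information as the paper's $\delta_i=(-1)^{\deg(z_i)}$ attached to the generator at the top of the block (they differ by $w_i-1\bmod2$). Your formula $2(r+m)=\sum_{(i,j,p)}(\lambda_i+\lambda_j-2p)$, the unconstrained total $4n^2+\sum_j(\lambda'_j)^2$, and the collapse to $\min\cdot(\max{+}1)$ for mixed-parity pairs are all correct. But the plan halts precisely where the substance lies: one still has to bound the constrained contributions from the diagonal and same-parity off-diagonal pairs, using the forced balance of odd-block starting colors. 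The paper does this by a closed-form computation in the all-odd case followed by induction on the number of even-sized blocks, each step requiring the added block to contribute at least $p_k^2+\frac{p_k}{2}+2\sum_{i<k}p_ip_k+2up_k$; something of comparable precision is unavoidable, since the target $2(r+m)>2n^2+n$ leaves only a linear margin at the extremal partitions.

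Part~(1) as written contains a genuine logical error. Upper semicontinuity of fiber dimension for $\pi:\fs\to\fs/\FH$ gives $\dim\pi^{-1}(0)\ge 2n^2-n$, which is the \emph{wrong} direction, and passing from fibers to orbits only yields $\dim HX_0\le\dim\pi^{-1}(0)$, which is no upper bound. What you actually want is upper semicontinuity of the \emph{stabilizer} dimension $x\mapsto\dim\fh_x$: it attains its minimum on a dense open set, so it suffices to evaluate it at a generic $(B,C)$ with $B$ invertible and $BC$ regular semisimple, where the stabilizer is $\{(h,B^{-1}hB)\,;\,h\in Z_{\GL_n(F)}(BC)\}$ of dimension $n$. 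With this fix the argument is correct, and more conceptual than the paper's, which reads $r\ge n$ directly off the table of Lemma~\ref{lem. nilpotent table}; but as stated it does not go through.
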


\begin{proof}
Write $Y=Y_0$ for short. Let $V=V_0\oplus V_1$, where
$V_i=F^{n},0\leq i\leq 1$. We identify $\fg(F)=\Hom(V,V)$,
$\fh(F)=\Hom(V_0,V_0)\oplus\Hom(V_1,V_1)$ and
$\fs(F)=\Hom(V_1,V_0)\oplus\Hom(V_0,V_1)$. Given $Y$, there is a
decomposition $V=W_1\oplus W_2\oplus\cdots\oplus W_k$, where each
$W_i$ is an indecomposable $F[Y]$-submodule. We can choose a
generator $z_i$ of $W_i$ such that $z_i$ is in either $V_0$ or
$V_1$. Define $\deg(z_i)=0$ if $z_i\in V_0$, otherwise
$\deg(z_i)=1$. Write $w_i=\dim W_i$. There is an isomorphism from
$\fs_{Y_0}$ to some space $$\CZ=\oplus_{1\leq i,j\leq k} S_{ij}.$$
Now we describe $S_{ij}$ precisely. An element $b_{ij}\in S_{ij}$ is
in $F[X]/(X^{w_j})$ of the form:
\begin{enumerate}
\item $b_{ij}(X)=\sum_{\max\{w_j-w_i,0\}\leq\ell<w_j}a_\ell^{ij}X^\ell$,
\item $a_\ell^{ij}=0$ when $\delta_i\delta_j=(-1)^\ell$, where
$\delta_i:=(-1)^{\deg(z_i)}$.
\end{enumerate}
We define an operator $\rho(\d):=X\frac{\d}{\d X}$ on $F[X]$, and an
endomorphism $\rho(\d)$ on $\CZ$ by restriction. Each $S_{ij}$ is an
invariant subspace of $\rho(\d)$. Set
$$r_{ii}=\dim S_{ii},\quad m_{ii}=\RTr\left(\rho(\d)|_{S_{ii}}\right)$$
for $1\leq i\leq k$, and
$$r_{ij}=\dim S_{ij}+\dim S_{ji},\quad m_{ij}
=\RTr\left(\rho(\d)|_{S_{ij}}\right)+\RTr\left(\rho(\d)|_{S_{ij}}\right)$$
for $1\leq i<j\leq k$. Then
$$r=\sum_{1\leq i\leq k}r_{ii}+\sum_{1\leq i<j\leq k}r_{ij},\quad
m=\sum_{1\leq i\leq k}m_{ii}+\sum_{1\leq i<j\leq k}m_{ij}.$$ The
following lemma gives a complete list of $r_{ii},r_{ij},m_{ii}$ and
$m_{ij}$. One can obtain it by the above description and a direct
computation.

\begin{lem}\label{lem. nilpotent table}
\begin{enumerate}
\item For $1\leq i\leq k$, if $w_i=2p_i$ or $2p_i+1$, we have
$$r_{ii}=p_i,\quad m_{ii}=p_i^2.$$
\item For $1\leq i<j\leq k$, we have the following table.
\begin{table}[h]
\centering
\begin{tabular}{| c | c | c | c |}
\hline $w_i,w_j$ & $\delta_i\delta_j$ & $m_{ij}$ & $r_{ij}$\\
\hline $w_i=2p_i,w_j=2p_j$ & $1$ & $2p_ip_j$ & $2\min(p_i,p_j)$\\
\hline $w_i=2p_i,w_j=2p_j$ & $-1$ & $2p_ip_j-2\min(p_i,p_j)$ &
$2\min(p_i,p_j)$\\
\hline $w_i=2p_i,w_j=2p_j+1,w_i<w_j$ & $\pm1$ & $2p_ip_j$ & $2p_i$\\
\hline $w_i=2p_i,w_j=2p_j+1,w_i>w_j$ & $1$ & $2p_ip_j+2(p_i-p_j)-1$
& $2p_j+1$\\
\hline $w_i=2p_i,w_j=2p_j+1,w_i>w_j$ & $-1$ & $2p_ip_j$ & $2p_j+1$\\
\hline $w_i=2p_i+1,w_j=2p_j+1$ & $1$ & $2p_ip_j$ &
$2\min(p_i,p_j)$\\
\hline $w_i=2p_i+1,w_j=2p_j+1$ & $-1$ & $2p_ip_j+2\sup(p_i,p_j)$ &
$2\min(p_i,p_j)+2$\\
\hline
\end{tabular}
\label{table. nilpotent}
\end{table}
\end{enumerate}
\end{lem}

Now we continue to prove the proposition.

(1) The first inequality of the proposition can be read off from the
above list. It is not hard to see that $r=n$ if and only if
$Y^{2n}=0$ and $Y^{2n-1}\neq 0$.

(2) For the second inequality, compare with the proof of \cite[Lemma
3.1]{jr}. We denote by $u$ the number of indices $i$ such that $w_i$
is odd and $\delta_i=1$, which is equal to the number of indices $j$
such that $w_j$ is odd and $\delta_j=-1$. Then
$$n=u+\sum_{1\leq i\leq k} p_i,$$ where $w_i=2p_i$ or $2p_i+1$. Thus
$$n^2+\frac{n}{2}=u^2+\frac{u}{2}+\left(2u+\frac{1}{2}\right)
\left(\sum_{1\leq i\leq k}p_i\right)+ \sum_{1\leq i\leq
k}p_i^2+2\sum_{1\leq i<j\leq k}p_ip_j.$$ On the other hand
$$r+m=\sum_{1\leq i\leq k}(r_{ii}+m_{ii})+\sum_{1\leq i<j\leq k}
(r_{ij}+m_{ij})$$ is determined by the data
$$(w_1,\delta_1,w_2,\delta_2,...,w_k,\delta_k).$$ We now induct on the
number of indices $i$ so that $w_i$ is even. First assume all the
integers $w_i$ are odd. Then it is not hard to see that
$$\begin{aligned}r+m=&\sum_{1\leq i\leq k}(p_i^2+p_i)+
2\sum_{1\leq i<j\leq k}p_ip_j +2\sum_{1\leq i<j\leq k,\
\delta_i\delta_j=1}\min(p_i,p_j)\\&+2\sum_{1\leq i<j\leq k,\
\delta_i\delta_j=-1}
\left(\sup(p_i,p_j)+\min(p_i,p_j)+1\right)\\
=&\sum_{1\leq i\leq k}(p_i^2+p_i)+2\sum_{1\leq i<j\leq k}p_ip_j
+2\sum_{1\leq i<j\leq k,\ \delta_i\delta_j=1}\min(p_i,p_j)\\&+2u
\sum_{1\leq i\leq k}p_i+2u^2\\
\geq&2u^2+\left(2u+1\right)\left(\sum_{1\leq i\leq k}p_i\right)+
\sum_{1\leq i\leq k}p_i^2+2\sum_{1\leq i<j\leq k}p_ip_j\\
>&n^2+\frac{n}{2}.\end{aligned}$$
Now we can arrange the data so that $w_k$ is even. If $k=1$, then
$w_1=2n$ and $r+m=n^2+n$ which is strictly greater than
$n^2+\frac{n}{2}$. By induction on the number of indices $i$ with
$w_i$ even, we may assume that the inequality has been proved for
the data $(w_1,\delta_1,...,w_{k-1},\delta_{k-1})$. By the induction
hypothesis, the contribution of the indices $(i,j)$ with $1\leq
i\leq j\leq k-1$ is strictly greater than $n'^2+\frac{n'}{2}$ where
$$n'=u+\sum_{1\leq i<k}p_i.$$ Therefore we have to show that the
sum of the contributions $r_{ik}+m_{ik}$ of the pairs $(i,k)$ with
$i\leq k$ is greater than or equal to
$$n^2+\frac{n}{2}-n'^2-\frac{n'}{2}
=p_k^2+\frac{p_k}{2}+2\sum_{1\leq i<k}p_ip_k+2up_k.$$ The
contribution of the pair $(k,k)$ is $p_k^2+p_k>p_k^2+\frac{p_k}{2}$.
Now consider the contribution of a pair $(i,k)$ with $i<k$. It is
always greater than or equal to $2p_ip_k$ when $w_i=2p_i$. When
$w_i=2p_i+1$, it is always greater than or equal to $2p_ip_k+2p_k$
(called good case) except when $w_k>w_i$ and $\delta_i\delta_k=-1$
(called bad case). It contributes at least $2p_ip_k$ in bad case.
However it does not matter when bad cases happen. Since if bad cases
happen $u'$ times with $\delta_i=-\delta_k$, good cases happen at
least $u'$ times with $w_j$ such that $w_j=2p_j+1$ and
$\delta_j=\delta_k$, which contribute at least
$2u'p_k+\sum_{j}2p_jp_k$. This concludes the proof of the
proposition.
\end{proof}

\subsection{$(\FG',\FH')$}
Let $E=F(\sqrt{\Delta})$ be a quadratic extension field of $F$, $\D$
a quaternion algebra over $F$ containing $E$. Let $\eta$ be the
quadratic character of $F^\times$ associated to $E$ by the local
class field theory. Denote by $\sigma$ the nontrivial element in
$\Gal(E/F)$. Sometimes we also write $x\mapsto\bar{x}$ instead of
$x\mapsto\sigma(x)$. Let $\FG'=\GL_n(\D)$, $\FH'=\GL_n(E)$, both
viewed as reductive groups defined over $F$. We can write $\FG'$ and
$\FH'$ in a more concrete form. There is a $\gamma\in F^\times$ such
that, if we denote by $\L_\gamma$ the algebra
$$\left\{\begin{pmatrix}A&\gamma B\\ \bar{B}&\bar{A}\end{pmatrix}:
\ A,B\in\fg\fl_n(E)\right\},$$ then $G'=\FG'(F)=\L_\gamma^\times$
and $H'=\FH'(F)$ consists of the ones with $B=0$. We will always
consider $\FG'$ and $\FH'$ in such a form. Note that if
$\gamma\in\N_{E/F}E^\times$, then $\FG'\simeq\GL_{2n}$. Fix a square
root $\delta$ of ${\Delta}$ in $E$. Let
$\epsilon'=\begin{pmatrix}\delta{\bf1}_n&0\\0&-\delta{\bf1}_n\end{pmatrix}$.
Define an involution $\theta$ on $\FG'$ by $\theta(g)=\epsilon'
g\epsilon'^{-1}$. Then $\FH'=\FG'^\theta$. Let $\fg'=\Lie(\FG')$,
$\fh'=\Lie(\FH')$, and $\fs'$ be the Lie algebra associated to the
symmetric pair $(\FG',\FH',\theta)$ so that $\fg'=\fh'\oplus\fs'$.
Thus, $\fs'(F)$ is
the space $$\left\{\begin{pmatrix}0&\gamma B\\
\bar{B}&0\end{pmatrix}: B\in\fg\fl_n(E)\right\}\simeq\fg\fl_n(E).$$
If we identify $\fs'(F)$ with $\fg\fl_n(E)$, then $H'=\GL_n(E)$ acts
on $\fs'(F)$ by $h\cdot X=hX\bar{h}^{-1}$, which is the
$\sigma$-twisted conjugation. We fix a nondegenerate symmetric
bilinear form $\pair{\ ,\ }$ on $\fg'(F)$ defined by
$$\pair{X,Y}=\tr(XY),\quad X,Y\in\fg'(F).$$
By definition, it is easy to see $\pair{X,Y}\in F$, and $\pair{\ ,\
}$ is both $G'$-invariant and $\theta$-invariant.

Since $H^1(F,\FH')$ is trivial, we have $S'=\CS'(F)$, where
$S':=G'/H'$ and $\CS':=\FG'/\FH'$. We identity $S'$ with its image
in $\FG'^\iota(F)$ by the symmetrization map $s$. When we want to
emphasize the index $n$, we write $\FG'_n,\FH'_n,\theta_n$ and
$\fs'_n$.

Before we continue, we recall some basic facts about the norm map in
theory of base change. If $x\in\GL_n(E)$, we write $\N(x)=x\bar{x}$,
which is called the norm of $x$. If $x\in\GL_n(E)$, $\N(x)$ is
conjugate in $\GL_n(E)$ to an element $y$ of $\GL_n(F)$, and $y$ is
uniquely determined modulo conjugation in $\GL_n(F)$. We denote by
$\N(\GL_n(E))$ the subset of elements $y$ in $\GL_n(F)$ satisfying
that there exists $x\in\GL_n(E)$ such that $y$ is conjugate to
$\N(x)$. In fact, if $y\in\N(\GL_n(E))$, there exists $x\in\GL_n(E)$
such that $y=x\bar{x}$.

\paragraph{Descendants}
Now we describe all the $H'$-semisimple elements $x$ of $S'$ and
$\fs'(F)$ and the descendants at $x$. The results below also hold
when $F=k$ is a number field.

\begin{prop}\label{prop. descendant 2 group}
\begin{enumerate}
\item Each semisimple elements $y$ of $S'$ is $H'$-conjugate to an
element of the form
$$y(A,n_1,n_2)=\begin{pmatrix}A&0&0&\gamma B&0&0\\
0&{\bf1}_{n_1}&0&0&0&0\\
0&0&-{\bf1}_{n_2}&0&0&0\\
\bar{B}&0&0&A&0&0\\
0&0&0&0&{\bf1}_{n_1}&0\\
0&0&0&0&0&-{\bf1}_{n_2}\end{pmatrix},$$ with $A\in\fg\fl_m(F)$ being
semisimple and unique up to conjugation such that
$A^2-{\bf1}_m\in\gamma\N(\GL_m(E))$ and $B\in\GL_m(E)$ is a matrix
unique up to twisted conjugation such that $A^2-{\bf1}_m=\gamma
B\bar{B}$, $AB=BA$, and $n=m+n_1+n_2$. Moreover, $y(A,n_1,n_2)$ is
regular if and only if $n_1=n_2=0$ and $A$ is regular in
$\fg\fl_n(F)$.
\item Let $y=y(A,n_1,n_2)$ in $S'$ be semisimple. Then the
descendant $(H'_y,\fs'_y)$ is isomorphic to the product (as a
representation)
$$\left(\GL_m(E)_A\cap\GL_{\sigma,m}(E)_B,
\fg\fl_m(E)_A\cap\fg\fl_m^\sigma(E)_B\right)
\times(H'_{n_1},\fs'_{n_1})\times (H'_{n_2},\fs'_{n_2}).$$ Here
$$\GL_{\sigma,m}(E)_B:=\left\{h\in\GL_m(E):\ hB=B\bar{h}\right\},$$
$$\fg\fl_m^\sigma(E)_B:=\left\{X\in\fg\fl_m(E):\ X\bar{B}=B\bar{X}\right\},$$
and $\GL_{\sigma,m}(E)_B$ acts on $\fg\fl_m^\sigma(E)_B$ by
$\sigma$-twisted conjugation.
\end{enumerate}
\end{prop}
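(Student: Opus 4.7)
The approach is to mirror the argument used for Proposition~\ref{prop. descendant 1 group} in the pair $(\FG,\FH)$, keeping track of the additional Galois twist by $\sigma$. First, I write an arbitrary $y\in S'$ in block form
$$y=\begin{pmatrix}A_0&\gamma B_0\\ \bar{B}_0&\bar{A}_0\end{pmatrix},\quad A_0,B_0\in\fg\fl_n(E),$$
and translate the defining condition $\theta(y)=y^{-1}$ of $S'\subset\FG'^\iota$ into the two relations
$$A_0^2-\gamma B_0\bar{B}_0={\bf1}_n,\qquad A_0B_0=B_0\bar{A}_0.$$
Under $h\in H'=\GL_n(E)$ embedded diagonally as $\diag(h,\bar h)$, the conjugation action becomes $(A_0,B_0)\mapsto(hA_0h^{-1},hB_0\bar h^{-1})$, so the classification is reduced to normalizing the pair $(A_0,B_0)$ modulo this twisted action.

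Next I split off the $\pm1$-eigenspaces of $y$. Since $y$ is $F$-rational and semisimple, these generalized eigenspaces are defined over $F$; on them $B_0$ must vanish, and a $\GL_n(E)$-conjugation brings the $\pm1$-blocks to the standard diagonal form $\pm{\bf1}_{n_i}$. On the complementary block of size $m=n-n_1-n_2$, the restriction $A'$ has no eigenvalue $\pm1$; using the intertwining relation $A'B'=B'\bar{A}'$, the eigenvalues of $A'$ are stable under $\sigma$, so a further $\GL_m(E)$-conjugation moves $A'$ into $\fg\fl_m(F)$. The identity $A'^2-{\bf1}_m=\gamma B'\bar B'$ then shows that such $A$ can be completed by some $B'$ precisely when $A'^2-{\bf1}_m\in\gamma\N(\GL_m(E))$, and the remaining freedom in $B'$ is exactly twisted conjugation commuting with $A'$. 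Uniqueness of $A$ up to $\GL_m(F)$-conjugacy follows because $H'$-conjugate $y$'s have conjugate $A$-blocks, and the regularity criterion $n_1=n_2=0$ with $A$ regular and without eigenvalue $\pm1$ is then immediate from a dimension count of stabilizers.

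For part (2), a block-matrix calculation shows that $h\in H'_y$ must be block-diagonal in the $(m,n_1,n_2)$-decomposition: the middle blocks give $\GL_{n_1}(E)$ and $\GL_{n_2}(E)$, contributing the factors $H'_{n_1}\times H'_{n_2}$, while the $m$-block must simultaneously conjugate $A$ to itself and $\sigma$-twist-conjugate $B$ to itself, producing $\GL_m(E)_A\cap\GL_{\sigma,m}(E)_B$. The normal space $\fs'_y=N^{\fs'}_{H'y,y}$ decomposes along the same blocks: the $n_1$ and $n_2$ parts are exactly $\fs'_{n_1}$ and $\fs'_{n_2}$ (since there $A=\pm{\bf1}$ imposes no constraint), and the $m$-block consists of infinitesimal perturbations $(X,Y)$ of $(A,B)$ satisfying $[X,A]=0$ and $XB=B\bar X$, the tangent versions of $AB=B\bar A$, that is, $\fg\fl_m(E)_A\cap\fg\fl_m^\sigma(E)_B$.

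The main obstacle I anticipate is the descent step in part (1): showing that after $\GL_m(E)$-conjugation $A$ can indeed be taken in $\fg\fl_m(F)$, and that the precise obstruction to the existence of a corresponding $B$ is the condition $A^2-{\bf1}_m\in\gamma\N(\GL_m(E))$. This couples the intertwining relation $AB=B\bar A$, which enforces $\sigma$-stability of the spectrum of $A$, with the norm equation that ties the quaternion parameter $\gamma$ to the chosen algebra $\D$. Once this descent is in place, both the remaining classification and the descendant calculation parallel the split case of Proposition~\ref{prop. descendant 1 group} and reduce to linear algebra on the individual blocks.
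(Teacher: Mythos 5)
Your overall strategy is sound and is essentially the natural reconstruction of what the paper delegates: the paper's own ``proof'' consists of a citation to \cite{guo} for part~(1) and the remark that part~(2) is a direct computation, so there is no detailed argument in the paper to compare against. Writing $y$ in the $\mathrm{L}_\gamma$-block form, extracting the relations $A_0^2-\gamma B_0\bar B_0={\bf1}$ and $A_0B_0=B_0\bar A_0$, normalizing the $\pm1$-blocks, and then descending $A$ to $\mathfrak{gl}_m(F)$ via the $\sigma$-twisted intertwining is exactly the analogue of the split case (Proposition~\ref{prop. descendant 1 group}), and the norm condition $A^2-{\bf1}_m\in\gamma\mathrm{N}(\GL_m(E))$ is the right obstruction.

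Two places in your write-up are imprecise enough to be worth tightening. First, in part~(2) you write that the $m$-block of $\fs'_y$ is cut out by $[X,A]=0$ and ``$XB=B\bar X$''; the defining relation of $\fg\fl_m^\sigma(E)_B$ is $X\bar B=B\bar X$, and a direct computation of the commutant of $y$ in $\fs'(F)$ gives $P\bar B=B\bar P$, not $PB=B\bar P$ — you state the correct final answer $\fg\fl_m(E)_A\cap\fg\fl_m^\sigma(E)_B$, so this is a slip rather than a gap, but the ``tangent version of $AB=B\bar A$'' heuristic (which naturally produces a two-variable relation in $(X,Y)$) does not by itself land on the one-variable condition; one should compute $\fs'_y$ directly as the centralizer of $y$ in $\fs'(F)$, as the paper's \S3 identification $\fs'_y\simeq N^{S'}_{H'y,y}$ prescribes. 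Second, the assertion that $B_0$ vanishes on the $\pm1$-eigenspaces of $y$ deserves one more line: one should note that $\ker(y\mp 1)$ is $\epsilon'$-stable (because $\theta(y)=y^{-1}$), hence splits as $W_1\oplus W_2$ with $W_i$ inside one of the two copies of $E^n$, and that semisimplicity of $y$ identifies $W_1$ with the full $\pm1$-eigenspace of $A_0$; on $(v_1,0)\in W_1$ the eigenvector equation then forces $\bar B_0v_1=0$. With those two points made explicit, the proposal is a faithful account of the proof the paper is pointing to.
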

\begin{proof}
See \cite[Proposition 1.2]{guo} for the first assertion. The second
assertion can be proved by a direct computation.
\end{proof}

\begin{prop}\label{prop. descendant 2 lie}
\begin{enumerate}
\item Each semisimple element $Y$ of $\fs'(F)$ is
$H'$-conjugate to an element of the form
$$Y(A)=\begin{pmatrix}0&0&\gamma B&0\\0&0&0&0\\
\bar{B}&0&0&0\\0&0&0&0\end{pmatrix}$$ where $A\in\GL_m(F)$ is
semisimple and unique up to conjugation such that
$A\in\gamma\N(\GL_m(E))$ and $B\in\GL_m(E)$ is a matrix unique up to
twisted conjugation such that $A=\gamma B\bar{B}$. Moreover, $Y(A)$
is regular if and only if $A\in\GL_n(F)$ is regular.
\item Let $Y=Y(A)$ in $\fs'(F)$ be semisimple. Then the descendant
$(H'_Y,\fs'_Y)$ is isomorphic to the product (as a representation)
$$\left(\GL_{\sigma,m}(E)_B,\fg\fl_m^\sigma(E)_B\right)
\times(H'_{n-m},\fs'_{n-m}).$$
\end{enumerate}
\end{prop}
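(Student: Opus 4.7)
The plan is to mirror the proofs of Propositions \ref{prop. descendant 1 lie} and \ref{prop. descendant 2 group}, adapting them to the $\sigma$-twisted conjugation action in the quaternionic setting. First, I would use the identification $\fs'(F) \simeq \fg\fl_n(E)$ sending $Y=\begin{pmatrix}0&\gamma B\\ \bar B & 0\end{pmatrix}\leftrightarrow B$, so that the $H'$-action becomes $h\cdot B=hB\bar h^{-1}$. Computing $Y^2 = \begin{pmatrix}\gamma B\bar B & 0 \\ 0 & \gamma\bar B B\end{pmatrix}$, the key observation is that $Y$ is $H'$-semisimple in $\fs'(F)$ iff $Y$ is semisimple as an element of $\fg'(F)$, a standard feature of symmetric pairs; hence $Y^2$ is semisimple as well, and $\gamma B\bar B$ is conjugate in $\GL_n(E)$ to an element of $\GL_n(F)$.

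For part (1), the semisimplicity of $Y^2$ furnishes an $H'$-stable decomposition of $E^n$ into the image and kernel of $B\bar B$. On the image part (of rank $m$), $B$ is invertible and $A := \gamma B\bar B\in\GL_m(F)\cap\gamma\N(\GL_m(E))$ is the canonical invariant, unique up to $\GL_m(F)$-conjugation. On the kernel part (of rank $n-m$), the restriction of $Y$ is semisimple with vanishing square, hence zero, producing the zero blocks in $Y(A)$. Uniqueness of $B$ modulo $\sigma$-twisted conjugation compatible with $A=\gamma B\bar B$ reduces to the bijection between $\sigma$-twisted conjugacy classes in $\GL_m(E)$ and ordinary conjugacy classes in $\N(\GL_m(E))$, part of the base-change formalism recalled above the proposition (cf.\ \cite{guo}). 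The regularity criterion is immediate from the descendant computation in part (2), since minimality of the stabilizer forces the $\fs'_{n-m}$-factor to vanish (so $m=n$) and $A$ to be regular in $\GL_n(F)$.

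For part (2), I would work in the $\fg\fl_n(E)$-model with $\tilde B = \mathrm{diag}(B,0)$, $B\in\GL_m(E)$. Writing $h\in\GL_n(E)$ in block form $\begin{pmatrix} h_{11}&h_{12}\\ h_{21}&h_{22}\end{pmatrix}$, the stabilizer equation $h\tilde B=\tilde B\bar h$ expands to $h_{11}B=B\bar h_{11}$, $B\bar h_{12}=0$, and $h_{21}B=0$, with $h_{22}$ unconstrained. Since $B$ is invertible, $h_{12}=h_{21}=0$, whence $H'_Y\simeq\GL_{\sigma,m}(E)_B\times H'_{n-m}$. A parallel block computation for the differential of the action on $\fs'$ yields $\fs'_Y\simeq\fg\fl_m^\sigma(E)_B\times\fs'_{n-m}$, completing the proof.

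The main obstacle is the uniqueness statement for $B$ in part (1): two matrices $B,B'\in\GL_m(E)$ with $\gamma B\bar B=\gamma B'\bar B'=A$ must be $\sigma$-twistedly conjugate by an element of $\GL_m(E)_A$. This is a Hilbert 90-style claim applied to the (product-of-general-linear) centralizer $\GL_m(E)_A$ and should follow from the base-change classification recorded in \cite{guo}; handling this cleanly at the level of orbits, rather than on representatives, is the only genuinely nontrivial input beyond the direct matrix calculations above.
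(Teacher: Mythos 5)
Your argument is correct and follows the natural route---the paper itself only cites Guo's work (\cite{guo2}) for part (1) and dispatches part (2) as a direct computation, and your steps (splitting $E^n$ using semisimplicity of $B\bar B$, the base-change bijection between $\sigma$-twisted conjugacy classes in $\GL_m(E)$ and conjugacy classes of norms in $\GL_m(F)$, a Hilbert-90 argument in $\GL_m(E)_A$ for the uniqueness of $B$, and the block computation for $H'_Y$ and $\fs'_Y$) are exactly what those references and computations carry. One wording nitpick: the decomposition of $E^n$ via $\ker(B\bar B)$ and its image, together with the analogous one via $\bar B B$, is stable under $B$ and $\bar B$ (hence under $Y$), not ``$H'$-stable''---and that $Y$-stability is what you actually use when you conjugate $B$ into block form $\mathrm{diag}(B',0)$.
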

\begin{proof}
See \cite[Lemma 2.1]{guo2} for the first assertion. The second
assertion can be proved by a direct computation.
\end{proof}

\paragraph{The null-cone}
Fix $X_0\neq0$ in the null-cone $\CN'$ of $\fs'(F)$. Let
$(X_0,\d,Y_0)$ be an $\fs\fl_2$-triple as before. By the same proof
as Lemma \ref{lem. nilp2}, we have $$\dim
\fs'_{Y_0}=\dim\fh'_{X_0}.$$ Write $r=\dim\fs'_{Y_0}$ and
$m=\frac{1}{2}\RTr\left(\ad(-\d)|_{\fs'_{Y_0}}\right)$. We still
want to compare $r+m$ with $n^2+\frac{n}{2}$, which is easier in
this case.

\begin{prop}\label{prop. inequality for nilpotent 2}
We have $r+m>n^2+\frac{n}{2}$ and $m'<n^2$ where
$m'=\frac{1}{2}\RTr\left(\ad(-\d)|_{\fh'_{Y_0}}\right)$.
\end{prop}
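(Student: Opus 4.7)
Plan. The key observation is that after base change to $\bar F$, the symmetric pair $(\FG',\FH',\theta)$ becomes isomorphic to $(\FG,\FH,\theta)$ of the previous subsection: $\D\otimes_F\bar F\simeq\M_2(\bar F)$ and $\Res_{E/F}\GL_n\otimes_F\bar F\simeq\GL_n\times\GL_n$, and the involution $\theta$ (conjugation by the diagonal matrix $\epsilon'$) transports to the standard block involution up to a central scaling. Since the quantities $r=\dim\fs'_{Y_0}$ and $m=\tfrac{1}{2}\RTr(\ad(-\d)|_{\fs'_{Y_0}})$ are invariants of the $\fs\fl_2$-triple and insensitive to the base field, the first inequality $r+m>n^2+\tfrac{n}{2}$ follows directly from Proposition \ref{prop. inequality for nilpotent}(2), whose proof is purely algebraic.

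For the second inequality $m'<n^2$, my plan is to establish the identity
$$m+m'=2n^2-r,$$
from which $m'<n^2$ is equivalent to $r+m>n^2$, which follows with room to spare from the first inequality. To prove this identity, since $\fg'_{Y_0}=\fh'_{Y_0}\oplus\fs'_{Y_0}$ we have $m+m'=\tfrac{1}{2}\RTr(\ad(-\d)|_{\fg'_{Y_0}})$. Working over $\bar F$, decompose the standard representation $V\simeq\bar F^{2n}$ of $\fg'\otimes\bar F\simeq\gl_{2n}$ into irreducible $\SL_2$-modules $V=\bigoplus_i W_i$ via $\phi$, with $\dim W_i=w_i$ and $\BZ/2$-homogeneous generator $z_i$ of parity $\delta_i$. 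Via Clebsch--Gordan, $\fg'\simeq V\otimes V^*$ decomposes into irreducible $\SL_2$-constituents $V_\mu$; each contributes a one-dimensional $Y_0$-kernel with $\ad(-\d)$-eigenvalue $\mu$. Summing, $\RTr(\ad(-\d)|_{\fg'_{Y_0}})=\dim\fg'-\dim\fg'_{Y_0}=4n^2-\dim\fg'_{Y_0}$.

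It remains to show $\dim\fg'_{Y_0}=2r$. The analogs of Lemma \ref{lem. nilp1} for $(\FG',\FH')$ (same proof) give $\fs'=\fs'_{Y_0}\oplus[X_0,\fh']$ and $\fh'=\fh'_{Y_0}\oplus[X_0,\fs']$. Combined with $\dim\fh'=\dim\fs'=2n^2$, these yield $\dim\fh'_{X_0}=\dim\fs'_{Y_0}=r$ and $\dim\fh'_{Y_0}=\dim\fs'_{X_0}$. Thus $\dim\fg'_{Y_0}=2r$ reduces to $\dim\fh'_{X_0}=\dim\fs'_{X_0}$, equivalently $\sum_V c(V)=0$, where $c(V)\in\{\pm1\}$ records the $\theta$-eigenvalue of the highest weight vector of each constituent $V\subset\fg'$. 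A direct computation using the Clebsch--Gordan formula $c(V_{\lambda_i+\lambda_j-2k})=(-1)^{\lambda_j-k}\delta_i\delta_j$ yields
$$\sum_V c(V)=\Bigl(\sum_{i:\,w_i\text{ odd}}\delta_i\Bigr)^2,$$
which vanishes because the balance condition $\dim V_+=\dim V_-=n$ is equivalent to $\sum_{i:\,w_i\text{ odd}}\delta_i=0$.

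Main obstacle. There is no serious obstacle: the argument is a short combination of base change, Proposition \ref{prop. inequality for nilpotent}(2), and an elementary Clebsch--Gordan bookkeeping exploiting the $V_+=V_-$ balance. This is consistent with the author's remark that this case is ``easier''. The only point requiring care is verifying that the $\SL_2 \rtimes \BZ/2$-structure on $\fg'$ (where $\theta$ anticommutes with $\ad X_0,\ad Y_0$) correctly tracks $c(V)$ through the tensor product; this is a routine but attention-worthy computation.
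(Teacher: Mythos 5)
Your argument is correct but takes a genuinely different route from the paper. The paper's proof reduces $Y_0 = \begin{pmatrix}0&\gamma A\\ \bar A&0\end{pmatrix}$ to Jordan normal form (with $\d$ in $\fg\fl_n(F)$) and reads off an explicit $\d$-equivariant isomorphism $\fs'_{Y_0}\simeq\fh'_{Y_0}$, which gives both $r=r'$ \emph{and} $m=m'$ at once; then from $m+m'=\tfrac12(4n^2-r-r')$ it concludes $r+m=n^2+\tfrac r2$ and $m'=n^2-\tfrac r2$, and finishes with the centralizer bound $r\geq 2n$. You instead (a) obtain the first inequality by base-changing to $\bar F$, where $(\FG',\FH',\theta)$ becomes $(\FG,\FH,\theta)$ and Proposition~\ref{prop. inequality for nilpotent}(2) applies verbatim since its proof is field-independent; and (b) for the second inequality you only need $r=r'$ (not $m=m'$), which you get by a Clebsch--Gordan count of the $\theta$-sign $c(V)$ on highest-weight lines in $\fg'_{X_0}$, yielding $\dim\fh'_{X_0}-\dim\fs'_{X_0}=\big(\sum_{w_i\text{ odd}}\delta_i\big)^2=0$ from the balance $\dim V_0=\dim V_1=n$; together with $m+m'=\tfrac12(4n^2-\dim\fg'_{Y_0})=2n^2-r$ this makes $m'<n^2$ equivalent to $r+m>n^2$, which follows from (a). Your route has the virtue of bypassing both the Jordan-form normalization and the bound $r\geq 2n$, which the paper states without justification; on the other hand it requires two nontrivial inputs (a base change argument and a representation-theoretic identity) where the paper uses one elementary observation. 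Two small points of care in (b): you assert but do not derive the key formula $c(V_{\lambda_i+\lambda_j-2k})=\pm\delta_i\delta_j$; a direct computation with $E_{ab}=f_a\otimes e_b^*\in\Hom(W_i,W_j)$ gives $\theta(E_{ab})=\delta_i\delta_j(-1)^{a+b}E_{ab}$ and hence $c(V_{\lambda_i+\lambda_j-2k})=\delta_i\delta_j(-1)^{\lambda_i-k}$ with $W_i$ the domain (your exponent $\lambda_j-k$ looks like a domain/codomain swap, though the symmetrized double sum over $(i,j)$ is unaffected and still collapses to $\big(\sum_{w_i\text{ odd}}\delta_i\big)^2$). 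I checked both the formula and the summation identity, and they hold.
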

\begin{proof}
Write $Y_0=\begin{pmatrix}0&\gamma A\\ \bar{A}&0\end{pmatrix}$. If
we change $(X_0,\d,Y_0)$ to be any triple in the $H'$-orbit of
$(X_0,\d,Y_0)$, the numbers $r$ and $m$ are unchanged. By
\cite[Lemma 2.2]{guo2}, we can choose $A$ to be of the Jordan normal
form. At the same time, we can also choose $\d$ to be in
$\fg\fl_n(F)$. In such situation, it is easy to see that there is a
$\d$-equivariant isomorphism $\fs'_{Y_0}\simeq\fh'_{Y_0}$. Thus
$r=r'$ and $m=m'$, where $r'=\dim\fh'_{Y_0}$. Since
$\fg'_{Y_0}=\fh'_{Y_0}\oplus\fs'_{Y_0}$, we have
$m+m'=\frac{1}{2}\left(4n^2-r-r'\right)$. Thus we get
$m=\frac{1}{4}\left(4n^2-2r\right)$ and $r+m=n^2+\frac{r}{2}$. The
inequality $r\geq 2n$ implies the lemma.
\end{proof}

\section{Smooth transfer}\label{section. smooth transfer}

In this section, we introduce the main object of this article: the
smooth transfer between Schwartz functions on different symmetric
spaces. By several reduction steps, we explain why Theorem \ref{thm.
fourier preserve transfer} implies Theorem \ref{thm. partial smooth
transfer 2} in details.

\subsection{Definitions}
\paragraph{Matching of orbits}
We first recall the matching between semisimple orbits in symmetric
spaces $S$ and $S'$, and then give the definition of matching
between semisimple orbits in Lie algebras $\fs(F)$ and $\fs'(F)$.
These definitions of matching orbits also hold when $F=k$ is a
number field.

\begin{prop}\label{prop. matching of orbits}
\begin{enumerate}
\item For each semisimple element $y$ of $S'$, there exists $h\in
\FH(E)$ such that $hyh^{-1}$ belongs to $S$. This establishes an
injection of the $H'$-semisimple orbits in $S'$ into the
$H$-semisimple orbits in $S$, which carries the orbit of
$y(A,n_1,n_2)$ in $S'$ to the orbit of $x(A,n_1,n_2)$ in $S$.
\item For each semisimple element $Y$ of $\fs'(F)$, there exists $h\in
\FH(E)$ such that $hYh^{-1}$ belongs to $\fs(F)$. This establishes
an injection of the $H'$-semisimple orbits in $\fs'(F)$ into the
$H$-semisimple orbits in $\fs(F)$, which carries the orbit of $Y(A)$
in $\fs'(F)$ to the orbit of $X(A)$ in $\fs(F)$.
\end{enumerate}
\end{prop}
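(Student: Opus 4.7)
The plan is to reduce both parts to an explicit calculation on the normal-form representatives already given by Propositions \ref{prop. descendant 1 group}--\ref{prop. descendant 2 lie}. These classifications provide, for each semisimple orbit on the $S'$ side (resp.\ $\fs'(F)$ side), a unique representative $y(A, n_1, n_2)$ (resp.\ $Y(A)$) indexed by a conjugacy class of semisimple $A \in \fg\fl_m(F)$ without eigenvalue $\pm 1$, together with the integers $n_1, n_2$; the analogous statement holds on the $S$ (resp.\ $\fs(F)$) side, indexed by the same data. Since the $A$-invariant is shared, the candidate injection on orbits is forced to be $[y(A, n_1, n_2)] \mapsto [x(A, n_1, n_2)]$ and $[Y(A)] \mapsto [X(A)]$, and it suffices to realize this matching by an explicit element of $\FH(E)$.

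For part (1), I would take $h = \diag(h_1, h_2) \in \FH$ with $h_1, h_2 \in \GL_n$ block-diagonal according to $n = m + n_1 + n_2$. Under the conjugation action on $\FG^\iota$, the trivial $\pm I_{n_i}$ blocks are automatically preserved, so only the $m \times m$ upper-left corner of each $n \times n$ block needs to be matched. The natural ansatz $h_1 = I_n$ and $h_2 = \diag((A - I_m)^{-1} \gamma B, I_{n_1}, I_{n_2})$ lies in $\FH(E) = \GL_n(E) \times \GL_n(E)$, because $A - I_m$ is invertible (no eigenvalue $1$) and $B \in \GL_m(E)$. Verifying $h \, y(A, n_1, n_2) \, h^{-1} = x(A, n_1, n_2)$ then reduces to the identities $(A - I_m)(A + I_m) = \gamma B \bar B$ and $A B = B A$, both of which are built into Proposition \ref{prop. descendant 2 group}; the relation $u_2 \bar B = (A + I_m)$ needed for the lower-left block drops out of $(A - I_m)(A + I_m) = \gamma B \bar B$. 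For part (2), the simpler ansatz $h_1 = I_n$, $h_2 = \diag(\gamma B, I_{n-m})$ conjugates $Y(A)$ to $X(A)$, using only $\gamma B \bar B = A$ from Proposition \ref{prop. descendant 2 lie}.

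Well-definedness of the resulting map on orbits is automatic: conjugating $y$ by $h_0 \in H' \subset \FH(E)$ and then by our explicit $h$ is the same as first conjugating by $h h_0^{-1} h^{-1} \cdot h \in \FH(E)$, so $y$ and $h_0 y h_0^{-1}$ land in the same $\FH(E)$-orbit in $\FG^\iota(E)$; uniqueness of the normal form in Proposition \ref{prop. descendant 1 group} (or \ref{prop. descendant 1 lie}) then forces them to lie in the same $H$-orbit within $S$ (or $\fs(F)$). Injectivity of $[S'_{\ss}] \hookrightarrow [S_{\ss}]$ is the same observation in reverse: if $x(A, n_1, n_2)$ and $x(A', n_1', n_2')$ are $H$-conjugate, then $(A, n_1, n_2)$ and $(A', n_1', n_2')$ agree in the sense of Proposition \ref{prop. descendant 1 group}, and hence so do the corresponding $y(\,)$'s by Proposition \ref{prop. descendant 2 group}. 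I do not anticipate a genuine obstacle; the whole content of the proposition is the observation that the $B$-part of the normal form on the $S'$ (or $\fs'(F)$) side encodes precisely the square-root relation $\gamma B \bar B = A^2 - I_m$ (or $\gamma B \bar B = A$) required to construct $h_2$, after which the verification is immediate.
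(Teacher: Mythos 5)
Your construction is correct and complete; let me record the main check. With the transfer factor computations in hand, the key is that conjugation by $h=\diag(h_1,h_2)$, $h_1=\mathbf{1}_n$, $h_2=\diag(u_2,\mathbf{1}_{n_1},\mathbf{1}_{n_2})$ with $u_2=(A-\mathbf{1}_m)^{-1}\gamma B$, does what you say: the diagonal blocks survive because $u_2$ commutes with $A$ (from $AB=BA$), the upper-right $m\times m$ block goes to $\gamma B u_2^{-1}=A-\mathbf{1}_m$, and the lower-left block goes to $u_2\bar B=(A-\mathbf{1}_m)^{-1}\gamma B\bar B=(A-\mathbf{1}_m)^{-1}(A^2-\mathbf{1}_m)=A+\mathbf{1}_m$; similarly for part (2) with $h_2=\diag(\gamma B,\mathbf{1}_{n-m})$, using $\gamma B\bar B=A$. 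These are exactly the relations built into the normal form, and $h\in\FH(E)$ because $A-\mathbf{1}_m$ is invertible and $B,\gamma B\in\GL_m(E)$.

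The paper does not actually contain a proof; it cites Guo [Gu1] for part (1) and asserts part (2) is analogous. So there is nothing in the text against which to compare line by line, but your explicit conjugating element is the natural way to realize the claim and is in the spirit of the reference. One place you could tighten the argument is the well-definedness/injectivity paragraph: what is really being used is that the $H$-orbits in $S$ and the $H'$-orbits in $S'$ are \emph{both} classified by the conjugacy class of $A$ in $\GL_m(F)$ together with $(n_1,n_2)$ (Propositions \ref{prop. descendant 1 group} and \ref{prop. descendant 2 group}), so the orbit map is forced by this shared invariant and its injectivity is immediate; the explicit $h$ merely exhibits the claimed $\FH(E)$-conjugation. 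Your version via "$h h_0^{-1} h^{-1}\cdot h$" works but implicitly uses that two elements of $S$ which are $\FH(E)$-conjugate already share the invariant $A$ up to $\GL_m(F)$-conjugacy; for $\GL$-type groups this is standard (a rational semisimple conjugacy class in $\GL$ is determined by its characteristic polynomial, and $H^1(F,\FH_x)=0$ here), but it's worth stating since it is the cohomological content hiding behind the injectivity claim.
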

\begin{proof}
See \cite[Proposition 1.3]{guo} for the first assertion. The second
assertion can be proved in the same way.
\end{proof}

\begin{defn}\label{defn. matching of orbits}
\begin{enumerate}
\item We say that $y\in S'_\ss$ (resp. $Y\in\fs'_\ss(F)$) matches $x\in
S_\ss$ (resp. $X\in\fs_\ss(F)$) and write $x\leftrightarrow y$
(resp. $X\leftrightarrow Y$) if the above map sends the orbit of $y$
(resp. $Y$) to the orbit of $x$ (resp. $X$).
\item We say that $x\in S_\ss$ (resp. $X\in\fs_\ss(F)$)
comes from $S'_\ss$ (resp. $\fs'_\ss(F)$) if there exists $y\in
S'_\ss$ (resp. $Y\in\fs'_\ss(F)$) such that $x\leftrightarrow y$
(resp. $X\leftrightarrow Y$). We denote by $S_{\ss,0}$ (resp.
$\fs_\ss(F)_0$) the subset of elements in $S_\ss$ (resp.
$\fs_\ss(F)$) coming from $S'_\ss$ (resp. $\fs'_\ss(F)$).
\end{enumerate}
\end{defn}

\begin{remark}\label{rem. matching categorical quotient}
Denote by $\CQ$ (resp. $\CQ'$) the categorical quotient $\CS/\FH$
(resp. $\CS'/\FH'$), and by $\fq$ (resp. $\fq'$) the categorical
quotient $\fs/\FH$ (resp. $\fs'/\FH'$). The maps in Proposition
\ref{prop. matching of orbits} induce natural maps
$$\CQ'\incl\CQ,\quad\mathrm{and}\quad\fq'\incl\fq.$$
Actually, $\CQ$ is isomorphic to the affine space $\FA^n$, and the
quotient map $\pi:\CS\ra\CQ$ is given by
$$\begin{pmatrix}A&B\\ C&D\end{pmatrix}\mapsto
\left(\tr\wedge^iBC\right),\quad i=1,2,...,n.$$ The natural map
$\CQ'\incl\CQ$ is induced by
$$\CS'\lra\CQ,\quad
\begin{pmatrix}A&\gamma B\\ \bar{B}&\bar{A}\end{pmatrix}\mapsto
\left(\tr\wedge^i\gamma B\bar{B}\right),\quad i=1,...,n.$$
Similarly, $\fq$ is isomorphic to the affine space $\FA^n$, and the
quotient map $\pi:\fs\ra\fq$ is given by
$$\begin{pmatrix}0&A\\ B&0\end{pmatrix}\mapsto
\left(\tr\wedge^iAB\right),\quad i=1,2,...,n.$$ The natural map
$\fq'\incl\fq$ is induced by
$$\fs'\lra\fq,\quad
\begin{pmatrix}0&\gamma B\\ \bar{B}&\bar{0}\end{pmatrix}\mapsto
\left(\tr\wedge^i\gamma B\bar{B}\right),\quad i=1,...,n.$$
\end{remark}

\begin{remark}\label{rem. matching orbit}
A semisimple element $x=x(A,n_1,n_2)$ in $S_\ss$ comes from $S'_\ss$
if and only if $A^2-{\bf1}_m\in\gamma\N(\GL_m(E))$ where
$m=n-n_1-n_2$. A semisimple element $X=X(A)$ in $\fs_\ss(F)$ comes
from $\fs'_\ss(F)$ if and only if $A\in\gamma\N(\GL_m(E))$.
\end{remark}

\begin{remark}\label{rem. matching descent}
Suppose that $x\in S_\ss$ and $y\in S'_\ss$ match. We want to
compare $(H_x,\fs_x)$ with $(H'_y,\fs'_y)$. It suffices to assume
that $x=x(A,n_1,n_2)$ and $y=y(A,n_1,n_2)$. Thus, by Propositions
\ref{prop. descendant 1 group} and \ref{prop. descendant 2 group},
we have
$$(H_x,\fs_x)\simeq\left(\GL_m(F)_A,
\fg\fl_m(F)_A\right)\times(H_{n_1},\fs_{n_1})\times
(H_{n_2},\fs_{n_2}),$$ and
$$(H'_y,\fs'_y)\simeq\left(\GL_m(E)_A\cap\GL_{\sigma,m}(E)_B,
\fg\fl_m(E)_A\cap\fg\fl_m^\sigma(E)_B\right)\times(H'_{n_1},\fs'_{n_1})\times
(H'_{n_2},\fs'_{n_2})$$ with $A^2-{\bf 1}_m=\gamma B\bar{B}$ and
$AB=BA$. By the proof of Lemma \ref{lem. compare descendants} below,
we see that $\left(\GL_m(E)_A\cap\GL_{\sigma,m}(E)_B,
\fg\fl_m(E)_A\cap\fg\fl_m^\sigma(E)_B\right)$ essentially is an
inner form of $\left(\GL_m(F)_A, \fg\fl_m(F)_A\right)$. The other
factors in the descendants are related in a similar manner as
$(H,\fs)$ and $(H',\fs')$ are. For $X\in\fs_\ss(F)$ and
$Y\in\fs'_\ss(F)$ such that $X\leftrightarrow Y$, by Propositions
\ref{prop. descendant 1 lie} and \ref{prop. descendant 2 lie} and
Lemma \ref{lem. compare descendants}, the factors of the descendants
$(H_X,\fs_X)$ and $(H'_Y,\fs'_Y)$ have the similar relations as
above.
\end{remark}

\begin{remark}\label{rem. matching stabilizer}
It is obvious that the maps in Proposition \ref{prop. matching of
orbits} send regular semisimple orbits to regular semisimple ones.
We denote by $S_{\rs,0}$ (resp. $\fs_\rs(F)_0$) the subset of
elements in $S_\rs$ (resp. $\fs_\rs(F)$) coming from $S'_\rs$ (resp.
$\fs'_\rs(F)$). Suppose that $x\in S_\rs$ (resp. $x\in\fs_\rs(F)$)
and $y\in S'_\rs$ (resp. $y\in\fs'_\rs(F)$) match. Then by the above
remark, we see that $H_x$ is an inner form of $H'_y$. Since they are
torus, we have
$$H_x\simeq H'_y.$$
\end{remark}

\paragraph{Transfer factors}
To state our results on smooth transfer, we need to define transfer
factors for the symmetric pair $(\FG,\FH,\theta)$ and its
descendants. In general, the transfer factor is defined as follows
(cf. \cite[Definition 3.2]{zh}).

\begin{defn}\label{defn. transfer factor}
Let a reductive group $\FH$ act on an affine variety $\FX$, both
defined over $F$. Let $\eta$ be a quadratic character of $H$.
Suppose that for all regular semisimple $x\in X=\FX(F)$, the
character $\eta$ is trivial on the stabilizer $H_x$. Then a transfer
factor is a smooth function $\kappa:X_\rs\ra\BC^\times$ such that
$\kappa(x^h)=\eta(h)\kappa(x)$ for any $h\in H$.
\end{defn}

\begin{defn}\label{defn. explicit transfer factor}
For convenience, we give an explicit definition of various transfer
factors in our situation as follows:
\begin{itemize}
\item type $(H,S)$: for $x=\begin{pmatrix}A&B\\C&D\end{pmatrix}\in
S$ regular semisimple, define $\kappa(x):=\eta(\det(B))$;
\item type $(H_m,\fs_m)$: for $X=\begin{pmatrix}0&A\\B&0
\end{pmatrix}\in\fs_m(F)$ regular semisimple, define
$\kappa(X):=\eta(\det(A))$;
\item type $(\GL_m(F)_A,\fg\fl_m(F)_A)$:
we define $\kappa$ to be the constant function with value $1$.
\end{itemize}
In the cases (1) and (2), $\eta$ is the non-trivial quadratic
character on $F^\times$ associated to $E$, while in the case (3)
$\eta$ is the trivial character. In all the cases, it is easy to see
that $\eta$ is trivial on the stabilizers $H_x$.
\end{defn}

\paragraph{Smooth transfer}
Now we give the definition of smooth transfer. First, we fix Haar
measures on $H$ and $H'$. Notice that, for $x\in S_\rs$ (resp.
$x\in\fs_\rs(F)$) and $y\in S'_\rs$ (resp. $y\in\fs'_\rs(F)$) such
that $x\leftrightarrow y$, their stabilizers $H_x$ and $H'_y$ are
isomorphic to each other (see Remark \ref{rem. matching
stabilizer}), and we fix such an isomorphism. Fix a Haar measure on
$H_x$ for each $x\in S_\rs$ (resp. $x\in\fs_\rs(F)$). We fix a Haar
measure on $H'_y$ for each $y\in S'_\rs$ (resp. $y\in\fs'_\rs(F)$)
which is compatible with that of $H_x$ if $x\leftrightarrow y$.

\begin{defn}\label{defn. orbital integral}
For $x\in S_\rs$ (resp. $x\in\fs_\rs(F)$) and $f\in\CC_c^\infty(S)$
(resp. $f\in\CC_c^\infty(\fs(F))$), define the orbital integral of
$f$ at $x$ to be
$$O^\eta(x,f):=\int_{H_x\bs H}f(x^h)\eta(h)\ \d h.$$
For $y\in S'_\rs$ (resp. $y\in\fs_\rs'(F)$) and
$f'\in\CC_c^\infty(S')$ (resp. $f'\in\CC_c^\infty(\fs'(F))$), define
the orbital integral of $f'$ at $y$ to be
$$O(y,f):=\int_{H'_y\bs H'}f(x^h)\ \d h.$$
\end{defn}

\begin{defn}\label{defn. transfer}
\begin{enumerate}
\item For $f\in\CC_c^\infty(S)$ and $f'\in\CC_c^\infty(S')$, we say that
$f$ and $f'$ are smooth transfer of each other if for each $x\in
S_\rs$
$$\begin{array}{lll}\kappa(x)O^\eta(x,f)=\left\{\begin{array}{ll}
\begin{aligned}O(y,f'),&\quad \textrm{if there exists }
y\in S'_\rs\textrm{ such that }x\leftrightarrow y,\\
0,&\quad \textrm{otherwise}.
\end{aligned}
\end{array}\right.
\end{array}$$
We denote by $\CC_c^\infty(S)_0$ the subspace of elements $f$ in
$\CC_c^\infty(S)$ satisfying that $O^\eta(x,f)=0$ for any $x$ in
$S_\rs$ but not in $S_{\rs,0}$.
\item For $f\in\CC_c^\infty(\fs(F))$
and $f'\in\CC_c^\infty(\fs'(F))$, we say that $f$ and $f'$ are
smooth transfer of each other if for each $X\in \fs_\rs(F)$
$$\begin{array}{lll}\kappa(X)O^\eta(X,f)=\left\{\begin{array}{ll}
\begin{aligned}O(Y,f'),&\quad \textrm{if there exists }
Y\in \fs'_\rs(F)\textrm{ such that }X\leftrightarrow Y,\\
0,&\quad \textrm{otherwise}.
\end{aligned}
\end{array}\right.
\end{array}$$
We denote by $\CC_c^\infty(\fs(F))_0$ the subspace of elements $f$
in $\CC_c^\infty(\fs(F))$ satisfying that $O^\eta(X,f)=0$ for any
$X$ in $\fs_\rs(F)$ but not in $\fs_{\rs}(F)_0$.
\end{enumerate}
\end{defn}

\begin{remark}\label{rem. transfer and measure}
The definition of smooth transfer depends on the Haar measures on
$H$ and $H'$, but the existence of smooth transfer does not depend
on them. Sometimes, we will write transfer in place of smooth
transfer for short.
\end{remark}

\begin{remark}\label{rem. transfer on descendant}
For semisimple $x\in S$ and semisimple $y\in S'$ such that
$x\leftrightarrow y$, by Remark \ref{rem. matching descent}, we can
define the notion of smooth transfer between elements in
$\CC_c^\infty(\fs_x(F))$ and those in $\CC_c^\infty(\fs'_y(F))$,
determined by the orbital integrals with respect to the action of
$H_x$ on $\fs_x(F)$, the action of $H'_y$ on $\fs'_y(F)$, and the
transfer factor $\kappa$ defined as above. Similarly, for semisimple
$X\in\fs(F)$ and semisimple $Y\in\fs'(F)$ such that
$X\leftrightarrow Y$, we can also define the notion of smooth
transfer between elements in $\CC_c^\infty(\fs_X(F))$ and those in
$\CC_c^\infty(\fs'_Y(F))$.
\end{remark}

Our main theorems are as follows.

\begin{thm}\label{thm. partial smooth transfer 2}
For each $f'\in\CC_c^\infty(S')$, there exists $f\in\CC_c^\infty(S)$
that is a smooth transfer of $f'$. Conversely, for each
$f\in\CC_c^\infty(S)_0$, there exists $f'\in\CC_c^\infty(S')$ that
is a smooth transfer of $f$.
\end{thm}

\begin{thm}\label{thm. partial smooth transfer 3}
For each $f'\in\CC_c^\infty(\fs'(F))$, there exists
$f\in\CC_c^\infty(\fs(F))$ that is a smooth transfer of $f'$.
Conversely, for each $f\in\CC_c^\infty(\fs(F))_0$, there exists
$f'\in\CC_c^\infty(\fs'(F))$ that is a smooth transfer of $f$.
\end{thm}

In the later subsections, we will show that Theorem \ref{thm.
partial smooth transfer 3} implies Theorem \ref{thm. partial smooth
transfer 2}.

\begin{lem}\label{lem. case epsilon=1}
To prove Theorem \ref{thm. partial smooth transfer 3}, it suffices
to prove it for the case $\fs=\fs_\epsilon$ when $\epsilon=1$.
\end{lem}
\begin{proof}
Let $$\fs'(F)=\left\{Y(B)=\begin{pmatrix}   0&B\\
\bar{B}&0
\end{pmatrix}:B\in\fg\fl_n(E)\right\}.$$ Choose a
representative $\gamma\in F^\times$ of the nontrivial element in
$F^\times/\N E^\times$. Let
$$\fs_\gamma'(F)=\left\{Y_\gamma(B)=\begin{pmatrix}   0&\gamma B\\
\bar{B}&0
\end{pmatrix}:B\in\fg\fl_n(E)\right\}.$$ Identify $H'$ with $\GL_n(E)$.
Then there is a natural $H'$-equivariant isomorphism
$$j:\fs'(F)\stackrel{\sim}{\ra}\fs'_\gamma(F),\ Y(B)\mapsto
Y_\gamma(B),$$ which implies the lemma.
\end{proof}

\paragraph{Fourier transform}
Define the Fourier transform $f\mapsto\wh{f}$ on
$\CC_c^\infty(\fs(F))$ (resp. $\CC_c^\infty(\fs'(F))$) with respect
to the fixed bilinear form $\pair{\ ,\ }$ and the additive character
$\psi$. The following theorem is the key point in proving the
existence of smooth transfer.

\begin{thm}\label{thm. fourier preserve transfer}
There exists a nonzero constant $c\in\BC$ such that if
$f\in\CC_c^\infty(\fs(F))$ and $f'\in\CC_c^\infty(\fs'(F))$ are
smooth transfer of each other then $\wh{f}$ and $c\wh{f}'$ are also
smooth transfer of each other.
\end{thm}

In the later subsections, we will prove the following main result of
this section.

\begin{prop}\label{prop. conjecture 2 implies conjecture 1}
Theorem \ref{thm. fourier preserve transfer} implies Theorem
\ref{thm. partial smooth transfer 3}.
\end{prop}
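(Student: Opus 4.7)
The plan is to prove Conjecture \ref{conj. fourier} $\Rightarrow$ Conjecture \ref{conj. smooth transfer 2} by a double induction on $n$. Let (A)$_n$ denote: Lie-algebra smooth transfer at ranks $\leq n$ implies symmetric-space smooth transfer at rank $n$. Let (B)$_n$ denote: Conjecture \ref{conj. fourier} plus Lie-algebra smooth transfer at all ranks $< n$ implies Lie-algebra smooth transfer at rank $n$. Together these yield the implication. The common mechanism is the analytic Luna slice theorem recalled in \S3, which descends the transfer problem near any semisimple orbit to the descendant symmetric pair, of strictly smaller rank whenever the orbit is non-central.

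For (A)$_n$, given $f' \in \CC_c^\infty(S')$, I would use an $H'$-invariant partition of unity along the fibers of $S' \to (\CS'/\!/\FH')(F)$ to reduce to the case $\Supp(f') \subset H' \cdot U$, where $U$ is a Luna slice at a closed orbit $H'y$ with $y = y(A, n_1, n_2)$. The matching $y \leftrightarrow x$ and the isomorphism $H_x \simeq H'_y$ give the parallel Luna model on the $S$-side. By Propositions \ref{prop. descendant 1 group} and \ref{prop. descendant 2 group}, the sliced representations split as products of a ``centralizer factor'' (conjugation vs.\ $\sigma$-twisted conjugation on the centralizer of $A$) with smaller symmetric pair factors of types $(H_{n_i}, \fs_{n_i})$ and $(H'_{n_i}, \fs'_{n_i})$. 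When $y$ is non-central these smaller factors have rank $n_i < n$ and are handled by the inductive hypothesis of (A)$_n$, while the identity orbit reduces via the Cayley transform (identifying a neighborhood of the identity in $S'$ with a neighborhood of $0$ in $\fs'_n$) to (B)$_n$ at the same rank $n$. Transfer on the centralizer factor is a known instance of base change / Jacquet--Langlands transfer for inner forms of $\GL_m$. Tensoring and patching via the partition of unity produces a global transfer $f \in \CC_c^\infty(S)$.

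For (B)$_n$, the same Luna slice mechanism on the Lie algebra side (Proposition \ref{prop. descendant 2 lie}) shows that every $f' \in \CC_c^\infty(\fs' \setminus \CN')$ admits a smooth transfer, since each non-zero semisimple $Y \in \fs'$ has descendant of strictly smaller rank. Let $V \subset \CC_c^\infty(\fs')$ denote the subspace of functions that admit a transfer. Conjecture \ref{conj. fourier}, together with Fourier inversion $\wh{\wh{f'}}(Y) = f'(-Y)$, implies that $V$ is stable under the Fourier transform. Hence $V$ contains the sum $\CC_c^\infty(\fs' \setminus \CN') + \wh{\CC_c^\infty(\fs' \setminus \CN')}$. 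The remaining task is the density assertion that this sum exhausts $\CC_c^\infty(\fs')$ modulo the common kernel of all regular semisimple orbital integrals, since functions in that kernel transfer trivially to $0$.

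I expect the subtlest step to be this density statement. The partial Fourier transform adapted to a bi-grading, which handles the analogous step in \cite{zh}, is unavailable in our symmetric pair setting, so I would instead exploit the $\SL_2$-triples $(X_0, \d, Y_0)$ attached to each nontrivial nilpotent orbit (Lemma \ref{lem. sl2-triple}): the one-parameter group $\D_t(X_0)$ rescales supports toward or away from $\CN'$, and Fourier duality interchanges these scales, so iterating the decomposition terminates modulo functions with vanishing orbital integrals. The dimension inequalities of Propositions \ref{prop. inequality for nilpotent} and \ref{prop. inequality for nilpotent 2} are needed to guarantee that the local integrability of the orbital integrals is preserved under the rescaling, controlling the remainder at each stage.
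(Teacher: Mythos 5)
Your overall reduction chain matches the paper's: (i) descend the symmetric-space transfer problem to the Lie-algebra problem via the analytic Luna slice theorem, packaged in the paper as Corollary \ref{cor. local transfer} together with Proposition \ref{prop. local transfer around 0} and Lemmas \ref{lem. reduction to sliced}, \ref{lem. compare descendants}; (ii) by induction on $n$, produce transfers for $f'$ supported off the nilpotent cone $\CN$; (iii) observe that Conjecture \ref{conj. fourier} makes the subspace of transferable functions Fourier-stable; (iv) conclude by a density/decomposition statement. Your (A)$_n$/(B)$_n$ framing is a stylistic repackaging of this same logic, and your handling of the centralizer factor (inner-forms transfer for $\GL_m$) and of the identity orbit (Cayley transform to a neighborhood of $0$ in $\fs'_n$) is exactly what the paper does.

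The gap is in step (iv). You correctly flag the density assertion as the subtle point, but the $\SL_2$-triple rescaling heuristic you offer is not a proof. The paper makes (iv) precise as Corollary \ref{cor. factorization of functions}: every $f\in\CC_c^\infty(\fs'(F))$ decomposes as $f_0+f_1+\wh{f_2}$ with $f_0\in\CC_0$ and $f_1,f_2\in\CC_c^\infty(\fs'(F)-\CN)$. This is derived by a Hahn--Banach style duality argument from the uncertainty principle of Theorem \ref{thm. support of distribution} (there is no nonzero $H'$-invariant, resp.\ $(H,\eta)$-invariant, distribution $T$ with both $\Supp(T)\subset\CN$ and $\Supp(\wh{T})\subset\CN$), which the paper imports from \cite[Prop.\ 3.1]{jr} and whose extension to $\fs'$ uses the inequality $m'<n^2$ of Proposition \ref{prop. inequality for nilpotent 2}. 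Your rescaling sketch resembles what goes into the proof of that support theorem, but does not by itself yield the factorization, and you misattribute the role of the dimension inequalities: Proposition \ref{prop. inequality for nilpotent} is used later, in \S\ref{subsec. majorize of orb}, to bound orbital integrals, not here. To repair the argument, state Theorem \ref{thm. support of distribution}, deduce Corollary \ref{cor. factorization of functions} by duality, and then conclude exactly as you wrote, noting that $f_0\in\CC_0$ has vanishing regular semisimple orbital integrals and therefore transfers to $0$.
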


\subsection{Fundamental lemma}
In this subsection, we prove the following fundamental lemma (Lemma
\ref{lem. fund lem}). This is an important example of the smooth
transfer and also a crucial lemma for us to prove Theorem \ref{thm.
fourier preserve transfer} by using global method.

Now assume that $\gamma=1$. Thus, $\FG'$ is isomorphic to $\FG$.
Suppose that $F$ is of odd residual characteristic and $E$ is
unramified over $F$. We choose the Haar measures on $H$ and $H'$ so
that $\vol(\FH(\CO_F))=1$ and $\vol(\FH'(\CO_F))=1$ respectively.

Let $f_0\in\CC_c^\infty(\fs(F))$ and $f'_0\in\CC_c^\infty(\fs'(F))$
be the characteristic functions of the standard lattices
$$L=\fg\fl_n(\CO_F)\oplus\fg\fl_n(\CO_F),\quad  L'=\fg\fl_n(\CO_E)$$
respectively.

\begin{lem}\label{lem. fund lem}
$f_0$ and $f_0'$ are smooth transfer of each other.
\end{lem}

\begin{remark}
The group version of the above fundamental lemma was proved in
\cite{guo} (cf. \cite[Theorem]{guo}).
\end{remark}

\begin{proof}
Let $X\in\fs_\rs(F)$. It suffices to consider $X$ of
the form $\begin{pmatrix}0&{\bf1}_n\\
A&0 \end{pmatrix}$ with $A\in\GL_n(F)$ being regular semisimple.
Then we have
$$\begin{aligned}
\kappa(X)O^\eta(X,f_0)&=\int_{H_X(F)\bs
H(F)}f_0(h_1^{-1}h_2,h_2^{-1}Ah_1)\eta(h_1h_2)\ \d h_1\ \d h_2\\
&=\int_{\left(\GL_n(F)_A\bs\GL_n(F)\right)\times\GL_n(F)}
f_0(h_2,h_2^{-1}h_1^{-1}Ah_1)\eta(h_2)\ \d h_2\ \d h_1.\\
\end{aligned}$$
Let $K=\GL_n(\CO_F)$ and $K'=\GL_n(\CO_E)$. For
$r=(r_{i,j})\in\fg\fl_n(\bar{F})$, put
$\abs{r}=\max_{i,j}\abs{r_{i,j}}_F$. Then for $r,t\in\fg\fl_n(F)$,
the value $f(r,t)\neq0$ if and only if $\abs{r}\leq1,\abs{t}\leq1$.
Let $\Phi_A$ be the characteristic function of the set of
$(r,t)\in\GL_n(F)\times\GL_n(F)$ satisfying $\abs{r}\leq
1,\abs{t}\leq1$ and $\abs{\det(rt)}_F=\abs{\det A}_F$. Then $\Phi_A$
belongs to $\CC_c^\infty(\GL_n(F)\times\GL_n(F))$ and is
bi-$K$-invariant both for the variables $r$ and $t$. Let $\Psi_A$ be
the function on $\GL_n(F)$ defined by
$$\Psi_A(g)=\int_{\GL_n(F)}\Phi_A(h,h^{-1}g)\eta(h)\ \d h.$$ Then
$\Psi_A$ belongs to $\CC_c^\infty(\GL_n(F))$, and is
bi-$K$-invariant (that is, $\Psi_A$ is a Hecke function). We have
$$\kappa(X)O^\eta(X,f_0)=\int_{\GL_n(F)_A\bs\GL_n(F)}\Psi_A(g^{-1}Ag)
\ \d g.$$

If $Y=\begin{pmatrix} 0&B \\ \bar{B}&0 \end{pmatrix}\in\fs'_\rs(F)$,
we have
$$O(Y,f_0')=\int_{\GL_{\sigma,n}(E)_B\bs\GL_n(E)}f_0'(h^{-1}B\bar{h})
\ \d h.$$ Let $\Psi_B$ be the characteristic function of the set of
$r\in\GL_n(E)$ satisfying $\abs{r}\leq1$ and $\abs{\det
r}_F=\abs{\det B}_F$. Then $\Psi_B$ belongs to
$\CC_c^\infty(\GL_n(E))$, and is bi-$K'$-invariant. We have
$$O(Y,f_0')=\int_{\GL_{\sigma,n}(E)_B\bs\GL_n(E)}\Psi_B(h^{-1}B\bar{h})
\ \d h.$$

Denote by $$\bc:\CH(\GL_n(E),K')\lra\CH(\GL_n(F),K)$$ the base
change map between the two spaces of Hecke functions. Then, in fact,
it was shown in \cite[Corollary 3.7]{guo} (can be read off from the
proof of Proposition 3.7 loc. cit.) that $\Psi_A=0$ if
$A\notin\N(\GL_n(E))$, and $\Psi_A=\bc(\Psi_B)$ if $A=B\bar{B}$.
Recall that $\Psi_A=\bc(\Psi_B)$ implies that
$$\kappa(X)O^\eta(X,f_0)=O(Y,f'_0),\quad \mathrm{if}
\ X\leftrightarrow Y.$$ Hence the lemma follows.
\end{proof}

\subsection{Reduction steps}
The main aim of this subsection is to reduce Theorem \ref{thm.
partial smooth transfer 2} to Theorem \ref{thm. partial smooth
transfer 3}. The reduction steps here are almost the same as those
in \cite[Section 3]{zh}.

\paragraph{Descent of orbital integrals}
The following proposition essentially is \cite[Proposition
3.11]{zh}, whose proof is also valid here.

\begin{prop}\label{prop. descent of orbital integral}
Let $X$ be any one of $S,S',\fs(F)$ or $\fs'(F)$. Let $x\in X$ be
semisimple and $(U_x,p,\psi,Z_x,N_x)$ an analytic Luna slice at $x$.
Then there exists a neighborhood $\xi\subset\psi(p^{-1}(x))$ of $0$
in $N_x$ satisfying the following properties:
\begin{itemize}
\item for each $f\in\CC_c^\infty(X)$, there exists
$f_x\in\CC_c^\infty(N_x)$ such that for all regular semisimple
$z\in\xi$ with $z=\psi(y)$ we have
$$\int_{H_y\bs H}f(y^h)\eta(h)\d h=\int_{H_y\bs H_x}f_x(z^h)\eta(h)\d
h;$$
\item and conversely, for each $f_x\in\CC_c^\infty(N_x)$, there exists
$f$ in $\CC_c^\infty(X)$ such that above equality holds for any
regular semisimple $z\in\xi$.
\end{itemize}
Here $H=H'$ and $\eta=\bf1$ when $X$ is $S'$ or $\fs'(F)$.
\end{prop}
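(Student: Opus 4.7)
The plan is to exploit the analytic Luna slice structure, which provides an $H$-equivariant identification $U_x\simeq H\times_{H_x} Z_x$ together with the $H_x$-equivariant open embedding $\psi:Z_x\hookrightarrow N_x$. Combined with the slice-theorem facts from \S3.1 that $(H_x)_z=H_y$ for $z=\psi(y)$ and that $y$ is $H$-regular semisimple iff $z$ is $H_x$-regular semisimple, this reduces the $H$-orbital integral of $f$ at $y$ to an $H_x$-orbital integral on the slice $Z_x$ after an outer integration over the transverse direction $R\simeq H_x\bs H$. I fix once and for all a small $H_x$-stable neighborhood $V$ of $x$ in $Z_x$ on which the regular-semisimple matching applies, and take $\xi:=\psi(V)$.

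For the forward direction, given $f\in\CC_c^\infty(X)$ I first replace $f$ by $\chi f$, where $\chi\in\CC_c^\infty(U_x)$ is chosen to equal $1$ on $\Supp(f)\cap(H\cdot V)$; since $H\cdot V\subset U_x$ by $H$-invariance of $U_x$, this substitution does not change $O^\eta(y,f)$ for $y\in V$. Fixing a measurable section $R$ of $H_x\bs H$, every $h\in H$ factors uniquely as $h=h_0 r$ with $h_0\in H_x$, $r\in R$, and $dh=dh_0\,dr$. The tower $H_y\subset H_x\subset H$ and Fubini give
$$O^\eta(y,\chi f) = \int_{H_y\bs H_x}\eta(h_0)\Bigl(\int_R (\chi f)(r^{-1}h_0^{-1}y)\eta(r)\,dr\Bigr)dh_0.$$
Setting $G(z):=\int_R (\chi f)(r^{-1}z)\eta(r)\,dr$ on $Z_x$, the compactness of $p(\Supp(\chi f))$ in $Hx\simeq H_x\bs H$ forces $r$ to lie in a compact subset of $R$, and this together with compactness of $\Supp(\chi f)\cap Z_x$ yields $G\in\CC_c^\infty(Z_x)$. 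Extending $f_x:=G\circ\psi^{-1}$ by zero off $\psi(Z_x)$ produces $f_x\in\CC_c^\infty(N_x)$, and the $H_x$-equivariance of $\psi$ together with $(H_x)_z=H_y$ rewrites the displayed identity as $\int_{H_y\bs H_x} f_x(z^{h_0})\eta(h_0)\,dh_0$, as required.

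For the converse, given $f_x\in\CC_c^\infty(N_x)$, further restrict its support to $\psi(V)$ (this changes nothing on $\xi$) and let $G:=f_x\circ\psi$ on $V$, extended by $0$ to $Z_x$. Pick $\beta\in\CC_c^\infty(R)$ with $\int_R \beta(r)\eta(r)\,dr=1$; such $\beta$ exists because $\eta|_R$ is locally constant of absolute value $1$ and $R$ carries a non-trivial Haar measure. Using the set-theoretic decomposition $U_x=\bigsqcup_{r\in R} r^{-1}\!\cdot\! Z_x$ read off from the section, I define $f\in\CC_c^\infty(X)$ by $f(r^{-1}\!\cdot\!z):=\beta(r)G(z)$ on $U_x$ and $0$ elsewhere. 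Then $\int_R f(r^{-1}z)\eta(r)\,dr=G(z)$ by construction, and the same Fubini computation as above yields the orbital-integral equality on $\xi$. The main technical obstacle is that a continuous global section $R$ of $H_x\bs H$ typically does not exist; this is handled by working with local measurable sections and patching via an $H_x$-invariant partition of unity on $U_x$, along the lines of \cite[Proposition 3.11]{zh}. The cases $X=S',\fs'(F)$ are covered by the identical argument with $\eta\equiv 1$; the hypothesis $\eta|_{H_y}=1$ on regular semisimple stabilizers (built into the transfer-factor convention) ensures well-definedness of all orbital integrals appearing throughout the proof.
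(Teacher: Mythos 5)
Your forward direction is sound: once $f$ is localized to $U_x$, the Weil integration formula for the tower $H_y\subset H_x\subset H$ together with the $H_x$-equivariance of $\psi$ and the fact that $(H_x)_z=H_y$ correctly reduces the $H$-orbital integral to an $H_x$-orbital integral of the fiber integral $G$, and your compactness and local-constancy checks for $G$ go through.

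The converse direction has a genuine gap. You define $f$ on $U_x$ by $f(r^{-1}\cdot z):=\beta(r)G(z)$ using a measurable section $R$ of $H_x\backslash H$ and then assert $f\in\CC_c^\infty(X)$. But the bijection $R\times Z_x\to U_x$, $(r,z)\mapsto r^{-1}\cdot z$, is only a measurable bijection, not a homeomorphism: the inverse involves the section $u\mapsto r(u)$, which jumps discontinuously, so the function you wrote down is not locally constant and is therefore not in $\CC_c^\infty(X)$. You flag this as ``the main technical obstacle'' and gesture at ``local measurable sections and patching via an $H_x$-invariant partition of unity,'' but you never carry this out, and it is precisely where all the substance of the converse lies. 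The cleaner way (and the one this paper actually invokes, citing \cite[Proposition 3.11]{zh}, and also employs explicitly later in \S 6 around Lemma \ref{lem. 36}) is Harish-Chandra's submersion principle: since the orbit map $\pi:H\times Z_x\to U_x$ is everywhere submersive, there is a surjective linear map $\CC_c^\infty(H\times Z_x)\to\CC_c^\infty(U_x)$, $\alpha\mapsto f_\alpha$, characterized by $\int_{U_x}f_\alpha(u)\,p(u)\,\d u=\int_{H\times Z_x}\alpha(h,z)\,p(\pi(h,z))\,\d h\,\d z$ for every locally integrable $p$. One then takes $\alpha=\beta\otimes G$ with $\beta\in\CC_c^\infty(H)$ normalized by $\int_H\beta(h)\eta(h)\,\d h=1$, and computes $O^\eta(y,f_\alpha)$ directly by a Fubini unwinding; no choice of section is ever needed, and smoothness of $f_\alpha$ is automatic from the construction. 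Rework the converse using this integration-along-fibers formalism rather than an explicit section.

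A minor remark on the same theme: in the forward direction your function $G$ is most honestly described not via a section $R$ but as the fiber integral of $\chi f$ along $p:U_x\to Hx$; framing it that way would set up the converse more cleanly and would dispense with the section $R$ altogether.
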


\paragraph{Reduction to local transfer}
Recall that we denote by $\CQ$ (resp. $\CQ'$) the categorical
quotient $\CS/\FH$ (resp. $\CS'/\FH'$), and by $\fq$ (resp. $\fq'$)
the categorical quotient $\fs/\FH$ (resp. $\fs'/\FH'$). By Remark
\ref{rem. matching categorical quotient}, we always view $\CQ'$ and
$\fq'$ as closed subsets of $\CQ$ and $\fq$ respectively. Let $\FX$
be any one of $\CS,\CS',\fs$ or $\fs'$, and $\FQ$ the quotient
$\CQ,\CQ',\fq$ or $\fq'$ of $\FX$. Let $\FQ(F)_\rs$ be the regular
semisimple locus in $\FQ(F)$. Since $H^1(F,\FH)=H^1(F,\FH')=1$, the
natural map $\pi:\FX(F)\ra\FQ(F)$ is a surjection. For
$x\in\FQ(F)_\rs$, the fiber $\pi^{-1}(x)$ consists of precisely one
orbit.

\begin{defn}\label{defn. local orbital integral}
Let $\FX$ and $\FQ$ be as above. Write $X=\FX(F)$ and $Q=\FQ(F)$.
\begin{enumerate}
\item Let $\Phi$ be a function on
$Q_\rs$ which vanishes outside a compact set of $Q_\rs$. For $x\in
Q$, we say that $\Phi$ is a local orbital integral around $x$, if
there exists a neighborhood $U$ of $x$ and a function
$f\in\CC_c^\infty(X)$ such that for all $y\in U_\rs$ and $z$ with
$\pi(z)=y$ we have
$$\Phi(y)=\kappa(z)O^\eta(z,f).$$
\item For $f\in\CC_c^\infty(X)$,
define a function $\pi_*(f)$ on $Q_\rs$ to be:
$$\pi_*(f)(x)=\kappa(y)O^\eta(y,f),\quad \textrm{for } x\in Q_\rs,
\ y\in\pi^{-1}(x).
$$
\end{enumerate}
Here $\kappa=\bf1$ and $\eta=\bf1$ when $\FX$ is $\CS'$ or $\fs'$.
\end{defn}

The following result is \cite[Proposition 3.8]{zh}.

\begin{prop}\label{prop. local orbital integral}
Let $\Phi$ be a function on $Q_\rs$ which vanishes outside a compact
set $\Xi$ of $Q$. If $\Phi$ is a local orbital integral at each
$x\in\Xi$, it is an orbital integral. Namely there exists
$f\in\CC_c^\infty(X)$ such that for all $y\in Q_\rs$, and $z$ with
$\pi(z)=y$ we have
$$\Phi(y)=\kappa(z)O^\eta(z,f).$$
\end{prop}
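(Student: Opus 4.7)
The plan is the standard patching argument: use the compactness of $\Xi$ to reduce to a finite cover, then glue the locally defined functions $f_x$ into one global function in $\CC_c^\infty(X)$ by multiplying each one by the characteristic function of a suitable $H$-saturated set. The totally disconnected topology on $Q=\CQ(F)$ (or $\fq(F)$) makes the gluing entirely combinatorial, with no need for smooth bump functions; this is essentially the method of \cite[Proposition 3.8]{zh}.

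In detail, I would proceed in four steps. First, for each $x\in\Xi$ invoke the local hypothesis and then shrink the neighborhood: since $Q$ admits a basis of compact open neighborhoods at every point, one may choose a \emph{compact open} $U_x\subset Q$ containing $x$ together with $f_x\in\CC_c^\infty(X)$ such that $\kappa(z)O^\eta(z,f_x)=\Phi(y)$ whenever $\pi(z)=y\in U_x\cap Q_\rs$. Second, by compactness of $\Xi$, extract a finite subcover $U_1,\dots,U_N$ with corresponding $f_1,\dots,f_N$, and replace it by the disjoint compact open refinement
$$V_i := U_i\setminus\bigcup_{j<i}U_j,\qquad V_i\subset U_i,\qquad \bigcup_i V_i\supset\Xi.$$
Third, observe that $V_i$ is clopen in $Q$, so $\pi^{-1}(V_i)$ is clopen and $H$-invariant in $X$; hence $\tilde f_i := \mathbf{1}_{\pi^{-1}(V_i)}\cdot f_i$ is locally constant and compactly supported, i.e. lies in $\CC_c^\infty(X)$. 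Since $\pi(z^h)=\pi(z)=y$ for every $h\in H$, the entire orbit is contained in the fibre over $y$, which gives
$$\kappa(z)O^\eta(z,\tilde f_i) = \mathbf{1}_{V_i}(y)\cdot\kappa(z)O^\eta(z,f_i) = \mathbf{1}_{V_i}(y)\,\Phi(y)$$
for $y\in Q_\rs$. Fourth, set $f=\sum_{i=1}^N\tilde f_i$; summing over $i$, using disjointness of the $V_i$ and the fact that $\Phi$ vanishes outside $\Xi\subset\bigcup_i V_i$, one obtains $\kappa(z)O^\eta(z,f)=\Phi(y)$ for every $y\in Q_\rs$ and every $z$ with $\pi(z)=y$.

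I do not expect any serious obstacle. The only points worth verifying carefully are: (i) that one may genuinely shrink the neighborhoods supplied by the hypothesis to compact open ones, which is automatic because $Q$ is the $F$-points of an affine $F$-variety and hence totally disconnected with a basis of compact opens; (ii) that $\tilde f_i$ remains in $\CC_c^\infty(X)$, which follows once $\pi^{-1}(V_i)$ is checked to be clopen; and (iii) that no value of $\Phi$ is lost at the boundary between the $V_i$, which is taken care of by the disjointness of the refinement and the assumption $\Supp(\Phi)\subset\Xi$.
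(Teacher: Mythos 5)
Your argument is correct, and it is the standard patching argument that underlies \cite[Proposition 3.8]{zh}, which the paper in fact invokes without reproducing a proof. The key points you verify are all sound: in the totally disconnected $F$-analytic topology on $Q$ one can always replace the neighborhood supplied by the local hypothesis with a compact open one; $\pi^{-1}(V_i)$ is saturated, $H$-invariant and clopen because $\pi$ is continuous and constant on $H$-orbits, so $\mathbf{1}_{\pi^{-1}(V_i)}\cdot f_i$ stays in $\CC_c^\infty(X)$; and the identity $O^\eta(z,\tilde f_i)=\mathbf{1}_{V_i}(\pi(z))\,O^\eta(z,f_i)$ is exactly the computation showing that multiplying by the indicator of a saturated clopen set commutes with taking orbital integrals. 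Summing over the disjoint refinement and using $\Supp(\Phi)\subset\Xi\subset\bigcup_i V_i$ together with the vanishing of $\Phi$ off $\Xi$ gives the global equality on all of $Q_\rs$, including at points $y$ lying in $\bigcup_i V_i$ but outside $\Xi$, where both sides are zero.
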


\begin{defn}\label{defn. local transfer}
For $x\in\CQ(F)$ (resp. $x\in\fq(F)$), we say that local transfer
around $x$ exists, if for each $f'\in\CC_c^\infty(S')$ (resp.
$f'\in\CC_c^\infty(\fs'(F))$), there exists $f\in\CC_c^\infty(S)_0$
(resp. $f\in\CC_c^\infty(\fs(F))_0$) such that in a neighborhood $U$
of $x$, the following equality holds:
$$\pi_*(f)=\pi_*(f')\ \textrm{on $U\cap\CQ(F)_\rs$
(resp. $U\cap\fq(F)_\rs$)},$$ and conversely for each
$f\in\CC_c^\infty(S)_0$ (resp. $f\in\CC_c^\infty(\fs(F))_0$), there
exists $f'\in\CC_c^\infty(S')$ (resp. $f'\in\CC_c^\infty(\fs'(F))$)
satisfying the above equality.
\end{defn}

\begin{cor}\label{cor. local transfer}
To prove Theorems \ref{thm. partial smooth transfer 2} and \ref{thm.
partial smooth transfer 3}, it suffices to prove the existence of
local transfer around all elements of $\CQ(F)$ and $\fq(F)$.
\end{cor}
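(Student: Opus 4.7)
The plan is to deduce this reduction directly from Proposition \ref{prop. local orbital integral}; the corollary is essentially a local-to-global principle for orbital integrals. Concretely, given $f'\in\CC_c^\infty(S')$ (the Lie algebra case is identical), I would set $\Phi:=\pi_*(f')$ viewed as a function on $Q_\rs=\CQ_\rs(F)$, and aim to produce an $f\in\CC_c^\infty(S)$ with $\pi_*(f)=\pi_*(f')$ on all of $Q_\rs$. Such an equality would then unfold, via Definition \ref{defn. local orbital integral}(2) and Definition \ref{defn. transfer}, to the smooth transfer identity: for each $x\in S_\rs$ one has $\kappa(x)O^\eta(x,f)=O(y,f')$ when $x\leftrightarrow y\in S'_\rs$, and $\kappa(x)O^\eta(x,f)=0$ otherwise (because in that case $\pi(x)\notin\pi(S')$ and so $\pi_*(f')(\pi(x))=0$ by the second clause of Definition \ref{defn. local orbital integral}(2)).

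Next I would verify the hypotheses of Proposition \ref{prop. local orbital integral} for $\Phi$. First, $\Phi$ vanishes outside the compact set $\Xi:=\pi(\Supp f')\subset Q$: indeed, if $x\notin\Xi$ then no representative $y\in\pi^{-1}(x)$ lies in $\Supp f'$, so $O(y,f')=0$ by the orbital integral formula, hence $\Phi(x)=0$. (Here I use that a regular semisimple fiber of $\pi$ is a single $H'$-orbit, so the value $\pi_*(f')(x)$ does not depend on the choice of $y\in\pi^{-1}(x)$, and that the transfer factor times the orbital integral is orbit-invariant by Definition \ref{defn. transfer factor}.) Second, I need to show that $\Phi$ is a local orbital integral around every $x\in\Xi$ in the sense of Definition \ref{defn. local orbital integral}(1), with respect to the space $X=S$. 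This is precisely where the hypothesis of the corollary is used: by assumption, the local transfer exists around $x\in\CQ(F)$, so there is a neighborhood $U$ of $x$ and a function $f_x\in\CC_c^\infty(S)$ such that $\pi_*(f_x)=\pi_*(f')=\Phi$ on $U\cap Q_\rs$. Unwinding Definition \ref{defn. local orbital integral}(2) for $f_x$, this says exactly that $\Phi(y)=\kappa(z)O^\eta(z,f_x)$ for all $y\in U_\rs$ and $z\in\pi^{-1}(y)$, which is the local-orbital-integral condition.

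With both hypotheses in hand, Proposition \ref{prop. local orbital integral} immediately produces the desired global $f\in\CC_c^\infty(S)$ satisfying $\Phi(y)=\kappa(z)O^\eta(z,f)$ for all $y\in Q_\rs$ and $z\in\pi^{-1}(y)$, which is the required smooth transfer identity. The argument is identical on the Lie algebra side, replacing $(\CS,\CS',\CQ)$ by $(\fs,\fs',\fq)$ and using the Lie algebra version of the local transfer hypothesis around points of $\fq(F)$.

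I do not expect any serious obstacle here: everything is driven by Proposition \ref{prop. local orbital integral}, which has been quoted from \cite{zh}. The only bookkeeping point is to make sure that the local transfer hypothesis around a point $x\in\CQ(F)$ (resp. $\fq(F)$) is formulated exactly so as to feed into the local-orbital-integral condition of Proposition \ref{prop. local orbital integral}, and that the obvious boundary case where $x\notin\pi(\Supp f')$ is handled by taking $f_x=0$ on a small neighborhood of $x$ disjoint from $\Xi$.
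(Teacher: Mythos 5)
Your argument is exactly the intended one: take $\Phi=\pi_*(f')$, check it vanishes off the compact set $\pi(\Supp f')$, invoke the local transfer hypothesis to see $\Phi$ is a local orbital integral at each point of that set, and conclude by Proposition \ref{prop. local orbital integral}. The paper records this corollary as an immediate consequence of that proposition without spelling out the details, and your write-up supplies precisely those details correctly.
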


\begin{proof}
This is a direct consequence of Proposition \ref{prop. local orbital
integral}.
\end{proof}

\paragraph{Reduction to local transfer around zero}

\begin{lem}\label{lem. reduction to sliced}
To prove the existence of local transfer around an element $z$ in
$\CQ(F)$ (resp. $\fq(F)$), it suffices to prove the existence of
smooth transfer for the sliced representations
$\left(H_x,\fs_x\right)$ and $\left(H'_y,\fs'_y\right)$ where $x$ in
$S_\ss$ (resp. $\fs_\ss(F)$) and $y$ in $S'_\ss$ (resp.
$\fs'_\ss(F)$) are such that $x\leftrightarrow y$ and
$\pi(x)=\pi(y)=z$.
\end{lem}
\begin{proof}
This result partially follows from Proposition \ref{prop. descent of
orbital integral} and the fact that for $f'\in\CC_c^\infty(S')$
(resp. $\CC_c^\infty(\fs'(F))$) and $f\in\CC_c^\infty(S)$ (resp.
$\CC_c^\infty(\fs(F))$) the functions $\pi_*(f')$ and $\pi_*(f)$ are
locally constant on $\CQ(F)_\rs$ (resp. $\fq(F)_\rs$). It remains to
prove Lemma \ref{lem. compatibility of transfer factor} ahead, which
shows the compatibility of the transfer factors under the semisimple
descent.
\end{proof}

\begin{lem}\label{lem. compare descendants}
\begin{enumerate}
\item Given semisimple $A\in\fg\fl_m(F)$ such that $A^2-{\bf1}_m
=\gamma B\bar{B}$, $AB=BA$ with $B\in\GL_m(E)$, the smooth transfer
exists for the sliced representations
$$\left(\GL_m(F)_A,\fg\fl_m(F)_A\right)\ \mathrm{and}\
\left(\GL_m(E)_A\cap\GL_{\sigma,m}(E)_B,
\fg\fl_m(E)_A\cap\fg\fl_m^\sigma(E)_B\right).$$
\item Given semisimple $A\in\GL_m(F)$ such that $A=\gamma B\bar{B}$
with $B\in\GL_m(E)$, the smooth transfer exists for the sliced
representations
$$\left(\GL_m(F)_A,\fg\fl_m(F)_A\right)\ \mathrm{and}\
\left(\GL_{\sigma,m}(E)_B,\fg\fl_m^\sigma(E)_B\right)$$

\end{enumerate}
\end{lem}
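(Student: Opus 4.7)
The plan is to identify both sliced representations, after an explicit change of variables, as the pair $(\CH(F),\Lie(\CH)(F))$ for a reductive $F$-group $\CH$ acting on its Lie algebra by conjugation, and then invoke the classical matching of regular semisimple orbital integrals between a reductive $p$-adic group and its inner forms. Since both transfer factors are trivial (Definition~\ref{defn. transfer factor}, case (3)), the smooth transfer claim is exactly this matching.

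The key structural observations are two. First, every $Y\in\fg\fl_m^\sigma(E)_B$ and every $h\in\GL_{\sigma,m}(E)_B$ automatically commutes with $A$: in part (1) this uses $AB=BA$, and in part (2) the hypothesis $A=\gamma B\bar{B}\in\GL_m(F)$ forces $A=\bar{A}=\gamma\bar{B}B$, hence $B\bar{B}=\bar{B}B$, and then the defining identities $Y\bar{B}=B\bar{Y}$ and $hB=B\bar{h}$ yield $YA=AY$ and $hA=Ah$ by a short manipulation. Second, the $\sigma$-semilinear operator $\tau_B(x):=B\bar{x}B^{-1}$ on $\GL_m(E)_A$ satisfies $\tau_B^2=\mathrm{inn}(B\bar{B})$, which is trivial on the $A$-centralizer since $B\bar{B}$ is proportional to $A$ (or to $A^2-\mathbf{1}_m$ in part (1)). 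Hence $\tau_B$ is a bona fide order-two $E/F$-descent datum with fixed-point subgroup $\GL_{\sigma,m}(E)_B$, exhibiting the latter as the $F$-points of an $E/F$-form $\CH_B$ of $\GL_m(F)_A$. In parallel, right-multiplication by $B^{-1}$ carries $\fg\fl_m^\sigma(E)_B$ bijectively onto $\Lie(\CH_B)(F)=\{Y\in\fg\fl_m(E): YB=B\bar{Y}\}$, and a direct check using $hB=B\bar{h}$ shows that this bijection intertwines the twisted action $C\mapsto hC\bar{h}^{-1}$ on the source with the adjoint action $Y\mapsto hYh^{-1}$ on the target.

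These identifications realize the right-hand sliced representation as $(\CH_B(F),\Lie(\CH_B)(F))$ with $\CH_B$ acting by the adjoint representation. Decomposing along the $F[A]$-isotypic decomposition $V=\bigoplus_i V_i$ and setting $F_i=F[A|_{V_i}]$, the left-hand pair factors as $\prod_i(\GL_{d_i}(F_i),\fg\fl_{d_i}(F_i))$, and $\CH_B$ factors correspondingly as a product of inner forms of the $\GL_{d_i}(F_i)$, with the inner twist in each factor determined by the class of the corresponding block of $B$. The lemma thus reduces, block by block, to the classical smooth transfer between $\GL_d(F')$ and its inner form $\GL_{d'}(D')$ acting on their Lie algebras by conjugation.

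The main difficulty will be this last reduction: matching of regular semisimple orbital integrals on inner forms is immediate (orbits on either side are parameterized by the same characteristic polynomials in $\fg\fl_d(\bar{F})$), but the existence of a smooth transfer of \emph{functions}, rather than just a matching on orbits, requires invoking Waldspurger's descent machinery in the Lie algebra setting (\cite{wa97}) or a direct construction based on Kazhdan's density theorem. A secondary subtlety is verifying independence of the whole construction from the choice of $B$ within its $\sigma$-twisted conjugacy class, so that different representatives produce isomorphic $F$-forms $\CH_B$ and a compatible matching of orbital integrals.
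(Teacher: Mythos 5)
Your proof follows essentially the same route as the paper's: decompose along the $F[A]$-isotypic (equivalently, block-diagonal) structure, identify the twisted centralizer $\GL_{\sigma,m}(E)_B$ as a product of inner forms of the $\GL_{m_i}(F_i)$, realize the twisted Lie-algebra representation as the adjoint representation of that inner form via right-multiplication by a power of $B$ (the paper uses $\bar B$, you use $B^{-1}$, which is equivalent since both satisfy the needed intertwining identity $hB=B\bar h$), and appeal to the known smooth transfer between $\GL_d$ over a $p$-adic field and its inner forms acting on their Lie algebras. The descent-datum language you use and your preliminary observation that elements of $\GL_{\sigma,m}(E)_B$ automatically commute with $A$ are just cleaner phrasings of what the paper does through an explicit diagonal choice of $A$ and $B$.
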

\begin{proof}
Firstly, we prove the second assertion. We can assume that
$\gamma=1$ and $A$ is of the form $\diag(A_1,A_2,...,A_k)$ such that
$$\GL_m(F)_A=\prod_{i=1}^k\GL_{m_i}(F_i),$$
where $F_i=F[A_i]$ is a field and $A_i$ is in the center of
$\GL_{m_i}(F_i)$. For each $1\leq i\leq k$, let $L_i=E\otimes_FF_i$.
Since $A\in\N(\GL_m(E))$, there exists $B_i\in\GL_{m_i}(L_i)$ such
that $A_i=\N(B_i)$ for each $i$. We can choose $B$ to be
$\diag(B_1,B_2,...,B_k)$. Then $\GL_{\sigma,m_i}(L_i)_{B_i}$ is an
inner form of $\GL_{m_i}(F_i)$, and
$\GL_{\sigma,m}(E)_B=\prod_{i=1}^k\GL_{\sigma,m_i}(L_i)_{B_i}$. For
$X\in\fg\fl_m^\sigma(E)_B$, it is easy to see that
$X\bar{B}\in\fg\fl_{\sigma,m}(E)_B$, where
$$\fg\fl_{\sigma,m}(E)_B=\left\{Y\in\fg\fl_m(E):YB=B\bar{Y}\right\},$$
which is the Lie algebra of $\GL_{\sigma,m}(E)_B$. For
$X\in\fg\fl_m^\sigma(E)_B$ and $h\in\GL_{\sigma,m}(E)_B$, we have
$h^{-1}X\bar{h}\bar{B}=h^{-1}X\bar{B}h$. Therefore, timing $\bar{B}$
on right, we get a $\GL_{\sigma,m}(E)_B$-equivariant isomorphism
$$\fg\fl_m^\sigma(E)_B\lra \fg\fl_{\sigma,m}(E)_B,$$ where
$\GL_{\sigma,m}(E)_B$ acts on $\fg\fl_{\sigma,m}(E)_B$ by
conjugation. Since the existence of smooth transfer between
$\GL_m(F_i)$ and its inner forms is known, we completes the proof.

The first assertion is proved in the same way. By the above
discussion, we know that the smooth transfer holds for
$$\left(\GL_m(F)_{A^2-{\bf1}_m},\fg\fl_m(F)_{A^2-{\bf1}_m}\right)
\ \mathrm{and}\
\left(\GL_{\sigma,m}(E)_B,\fg\fl_m^\sigma(E)_B\right).$$ We can
choose some scalar $\lambda\in F$ so that $A+\lambda\in\GL_m(F)$.
Then $A+\lambda\in\GL_m(F)_{A^2-{\bf1}_m}$ and
$A+\lambda\in\GL_{\sigma,m}(E)_B$. Hence
$$\left(\GL_{\sigma,m}(E)_B\right)_{A+\lambda}
=\GL_m(E)_A\cap\GL_{\sigma,m}(E)_B$$ is an inner form of
$$\left(\GL_m(F)_{A^2-{\bf 1}_m}\right)_{A+\lambda}=\GL_m(F)_A.$$
The rest of the proof is the same as that of the first assertion.
\end{proof}

\begin{prop}\label{prop. local transfer around 0}
To prove the existence of local transfer around all elements of
$\CQ(F)$ or $\fq(F)$, it suffices to prove the existence of local
transfer around zero of $\fq(F)$.
\end{prop}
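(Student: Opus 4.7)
The plan is a straightforward induction on $n$, reducing local transfer at an arbitrary semisimple point to local transfer around zero via the descendant structure. The two ingredients already in hand are Lemma \ref{lem. reduction to sliced}, which localizes transfer around $z\in\CQ(F)$ or $\fq(F)$ to the smooth transfer for the sliced representations at a pair of matched semisimple representatives, and Lemma \ref{lem. compare descendants}, which takes care of one of the factors that appears in every descendant.

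For the Lie algebra case at $z\in\fq_n(F)$ I would pick a semisimple representative $X=X(A)$ with $A\in\GL_m(F)$. By Propositions \ref{prop. descendant 1 lie} and \ref{prop. descendant 2 lie}, the sliced representations on the two sides are $(\GL_m(F)_A,\fg\fl_m(F)_A)\times(H_{n-m},\fs_{n-m})$ and its $\sigma$-twisted analogue. Since smooth transfer on a product factorizes as a product of transfers on each factor (pure tensors are dense in $\CC_c^\infty$, and orbit-matching, the transfer factor $\kappa$, orbital integrals, and Haar measures all respect the product structure), it is enough to handle each factor separately. Lemma \ref{lem. compare descendants}(2) handles the $\GL_m$-factor outright. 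For the $(H_{n-m},\fs_{n-m})$-factor I need local transfer around $0\in\fq_{n-m}(F)$: if $m\geq 1$ this follows from the inductive hypothesis on $n$, and if $m=0$ then $X=0$, so I am precisely in the case assumed as the hypothesis of the proposition.

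The group case is entirely analogous. At $x=x(A,n_1,n_2)$ with $n=m+n_1+n_2$ the sliced representation (Propositions \ref{prop. descendant 1 group} and \ref{prop. descendant 2 group}) is a three-fold product; Lemma \ref{lem. compare descendants}(1) disposes of the $\GL_m$-factor, and the two remaining factors $(H_{n_1},\fs_{n_1})$ and $(H_{n_2},\fs_{n_2})$ need local transfer around zero in $\fq_{n_1}(F)$ and $\fq_{n_2}(F)$. Whenever both $n_1,n_2<n$ the inductive hypothesis applies; the only other possibility is $m=0$ with $\{n_1,n_2\}=\{n,0\}$, i.e.\ $x=\pm{\bf1}_{2n}$, in which the lone nontrivial factor is literally $(H_n,\fs_n)$ at zero --- again the hypothesis. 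The base case $n=0$ is vacuous.

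There is no serious obstacle: the result is a structural consequence of the descendant calculations of Section 4 combined with the inner-form transfer recorded in Lemma \ref{lem. compare descendants}. The only point calling for mild care is the factorization of smooth transfer along products of symmetric pairs, but this is standard given that orbit-matching, the transfer factor, orbital integrals and Haar measures all multiply across direct-product factors.
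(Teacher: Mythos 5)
Your proof is correct and is the natural spelling-out of the paper's one-line argument (which just cites Lemmas \ref{lem. reduction to sliced} and \ref{lem. compare descendants} together with the descendant classification of \S4): you make explicit the underlying induction on $n$ and the factorization of smooth transfer across direct products of symmetric pairs, both of which the paper leaves implicit. One small imprecision: for $x=\pm\mathbf{1}_{2n}$ the descendant is the full $(H_n,\fs_n)$, so what you actually invoke there is the Lie-algebra case of the proposition at level $n$ that you have just established in the same inductive step (which in turn uses the hypothesis at zero), not literally ``the hypothesis'' --- but this is harmless since you correctly settle the $\fq$-case before the $\CQ$-case.
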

\begin{proof}
By Lemma \ref{lem. reduction to sliced}, it suffices to prove the
existence of smooth transfer for the sliced representations
$(H_x,\fs_x)$ and $(H'_y,\fs'_y)$ where $x\leftrightarrow y$. By
Remark \ref{rem. matching descent} and Lemma \ref{lem. compare
descendants}, it suffices to prove the existence of smooth transfer
for $(H_m,\fs_m)$ and $(H'_m,\fs'_m)$, that is, the existence of
local transfer around zero of $\fq(F)$.
\end{proof}

\begin{cor}\label{cor. lie algebra implies symmetric space}
Theorem \ref{thm. partial smooth transfer 3} implies Theorem
\ref{thm. partial smooth transfer 2}.
\end{cor}

\paragraph{Explicit analytic Luna slices}
We now describe explicit analytic Luna slices at semisimple elements
of $S$ or $\fs(F)$. We refer the reader to \cite[page 76]{jr} for
the discussions on $\fs$, and to \cite[\S5.2]{jr} for the
discussions on $S$.

First let $X\in\fs(F)$ be semisimple. Write
$\fs(F)=\fs_X\oplus\fs_X^\bot$, where $\fs_X^\bot$ is the orthogonal
complement of $\fs_X$ in $\fs(F)$ with respect to $\pair{\ ,\ }$.
Set
$$Z=\left\{\ \xi\in\fs_X:\
\det\left([\ad(X+\xi)^2]|_{\fs_X^\bot}\right)\neq0\right\},$$ which
is a non-empty open set of $\fs_X$ and invariant under $H_X$. Let
$Z_X=\{X+\xi:\ \xi\in Z\}$. Consider the map $$\phi:H\times
Z_X\lra\fs(F),\quad (h,X+\xi)\mapsto \Ad h(X+\xi),$$ which is
everywhere submersive. Let $U_X$ be the image of $\phi$, which is an
open $H$-invariant set in $\fs(F)$. Then $Z_X$ and $U_X$ are what we
want, and $\psi$ is the natural map:
$$\psi:Z_X\lra\fs_X,\quad X+\xi\mapsto\xi.$$

Next let $x\in S$ be semisimple. Write $x=s(g)$ for some $g\in G$,
where $s$ is the symmetrization map. Consider the map $$\phi:H\times
G_x\times H\lra G,\quad (h,\xi,h')\mapsto h\xi gh'.$$ Let $\CZ'$ be
the set of $\xi$ such that $\phi$ is submersive at $(1,\xi,1)$,
which is also the set of $\xi$ in $G_x$ such that
$$\det\left([{\bf1}-\Ad s(\xi g)]|_{\fg_x^\bot}\right)\neq0.$$ Let
$$W'=\{X\in\fs_x:\ \det({\bf1}+X)\det({\bf1}-X)\neq0\},$$
which is an open neighborhood of $0$ in $\fs_x$. Consider the Cayley
transform
$$\lambda: W'\lra G_x,\quad X\mapsto({\bf1}-X)({\bf1}+X)^{-1},$$
and denote by $\CV$ the image of $W'$ under $\lambda$. Put
$\CZ=\CZ'\cap\CV$ and $W=\lambda^{-1}(\CZ)$. Let $U_x$ be the image
of $\phi(H\times\CZ\times H)$ under the symmetrization map $s$, and
$Z_x$ the image of $\phi({\bf1}\times\CZ\times{\bf1})$ under $s$.
Then $Z_x$ and $U_x$ are what we want.

The lemma below follows from the above construction and a direct
computation by choosing $x=x(A,n_1,n_2)$ and $X=X(A)$ in a standard
form. We omit the proof here.

\begin{lem}\label{lem. compatibility of transfer factor}
Let $x\in S$ (resp. $X\in\fs(F)$) be semisimple. Then we may choose
an $H_x$-invariant (resp. $H_X$-invariant) neighborhood of $x$
(resp. $X$) such that for any regular semisimple $y$ in this
neighborhood, $\kappa(y)$ is equal to a non-zero constant times
$\kappa(\psi(y))$.
\end{lem}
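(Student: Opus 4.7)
\emph{Proof plan.}
The plan is to reduce to the explicit standard representatives provided by Propositions \ref{prop. descendant 1 group} and \ref{prop. descendant 1 lie} and then carry out a direct block-matrix computation. Since $\kappa$ transforms by $\eta$ under the $H$-action and the slice construction is $H_x$-equivariant, conjugating $x$ (or $X$) to a standard representative multiplies both $\kappa(y)$ and $\kappa(\psi(y))$ by the same non-zero scalar, so we may assume $X=X(A)$, resp.\ $x=x(A,n_1,n_2)$. In these coordinates both the transfer factor on $\fs_\rs$ (resp.\ $S_\rs$) and the descendant transfer factor $\kappa_{\mathrm{desc}}$ are products of $\eta\circ\det$ factors, so the task reduces to computing the upper-right $n\times n$ block of a slice element in terms of descendant coordinates and showing that the ratio $\kappa(y)/\kappa_{\mathrm{desc}}(\psi(y))$ is locally constant.

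For the Lie algebra case, I would exhibit $\fs_X$ explicitly. Viewing $X(A)$ as the pair $(P,Q)\in\fg\fl_n\oplus\fg\fl_n$ with $P=\mathrm{diag}({\bf1}_m,0)$, $Q=\mathrm{diag}(A,0)$, the centralizing conditions $PV=UQ$ and $QU=VP$ for $\xi=(U,V)\in\fs$ force $U=\mathrm{diag}(U_1,U_4)$ and $V=\mathrm{diag}(U_1 A,V_4)$ with $U_1\in\fg\fl_m(F)_A$ and $(U_4,V_4)\in\fs_{n-m}(F)$, recovering the decomposition of Proposition \ref{prop. descendant 1 lie}. The upper-right block of $X+\xi$ is then $\mathrm{diag}({\bf1}_m+U_1,U_4)$, so
$$\kappa(X+\xi)=\eta\bigl(\det({\bf1}_m+U_1)\bigr)\cdot\eta(\det U_4),$$
while $\kappa_{\mathrm{desc}}(\psi(X+\xi))=1\cdot\eta(\det U_4)$ because the $\GL_m(F)_A$-factor carries the trivial transfer factor. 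Since $\eta$ is locally constant on $F^\times$ and trivial on a small neighborhood of $1$, shrinking the slice so that $\det({\bf1}_m+U_1)$ remains in this neighborhood kills the extra factor and gives $c=1$.

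For the group case I would use the Cayley-transform slice $Z_x=\{\lambda(Y)\cdot x\cdot\lambda(Y):Y\in\fs_x \text{ near } 0\}$ already set up in the text. Using that $Y\in\fs_x$ respects the $(m,n_1,n_2)$-block decomposition of $x$, a direct computation shows that the upper-right $n\times n$ block of $\lambda(Y)\,x\,\lambda(Y)$ is, up to terms vanishing at $Y=0$, block-diagonal, with diagonal entries a perturbation of $A-{\bf1}_m$ together with the upper-right blocks of the $\fs_{n_1}$- and $\fs_{n_2}$-components of $Y$. Passing to $\eta\circ\det$ and again invoking local constancy of $\eta$ to absorb the $O(Y)$ perturbations yields $\kappa(y)/\kappa_{\mathrm{desc}}(\psi(y))\equiv\eta(\det(A-{\bf1}_m))$ on a sufficiently small $H_x$-invariant neighborhood, which is the non-zero constant $c$. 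The main technical point to verify is that the cross-block entries produced by the Cayley transform do not affect $\det$ at leading order; this should follow because they vanish to first order in $Y$ while regularity of $y$ keeps the diagonal blocks bounded away from singularity.
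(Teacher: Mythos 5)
Your plan — reduce to the standard representatives $x(A,n_1,n_2)$, $X(A)$ and do a direct block computation against the explicit slices — is exactly what the paper has in mind (the paper simply omits the computation). The Lie algebra computation is correct: $\fs_X$ consists of block-diagonal pairs $(\mathrm{diag}(U_1,U_4),\mathrm{diag}(U_1A,V_4))$, so $\kappa(X+\xi)=\eta(\det({\bf1}_m+U_1))\,\eta(\det U_4)$ and the constant is $1$ after shrinking the slice; the $H_X$-invariance of the shrunken neighborhood follows since $\det({\bf1}_m+U_1)$ is a $\GL_m(F)_A$-conjugation invariant.

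For the group case, however, the ``main technical point'' you flag rests on a justification that would not actually work, and fortunately is unnecessary. You say the cross-block entries can be ignored because ``regularity of $y$ keeps the diagonal blocks bounded away from singularity.'' But the $n_1\times n_1$ and $n_2\times n_2$ diagonal blocks of the upper-right corner of $y$ are themselves of first order in $Y$ and can have arbitrarily small determinant as $Y$ approaches the non-regular locus of the slice; there is no uniform lower bound on a neighborhood of $x$, so a Schur-complement error term of order $O(Y^2)$ could not simply be absorbed by local constancy of $\eta$. What saves you is that the issue does not arise at all: working out the commutation relations defining $\fs_x$ for $x=x(A,n_1,n_2)$, one finds that both blocks $u,v$ of $Y=(u,v)\in\fs_x$ are \emph{strictly} block-diagonal in the $(m,n_1,n_2)$ refinement (the off-diagonal entries vanish because $A\pm{\bf 1}_m$ are invertible and the $\pm 1$ eigenblocks are distinct). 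Hence $\lambda(Y)$, $x$, and $\lambda(Y)\,x\,\lambda(Y)$ all respect this refinement, the upper-right $n\times n$ block of $y$ is exactly block-diagonal, and its determinant factors exactly as a product over the three blocks with no cross terms. Each block's ratio $\kappa/\kappa_{\mathrm{desc}}\circ\psi$ is then a $\eta$-value of an explicit rational function in $Y$ that is regular and non-vanishing at $Y=0$, so local constancy of $\eta$ finishes the argument. (A small caveat: the resulting constant is not exactly $\eta(\det(A-{\bf1}_m))$; the ${\bf1}_{n_1}$- and $-{\bf1}_{n_2}$-blocks contribute additional sign factors such as $\eta(-1)^{n_1}$ coming from the Cayley transform. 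This does not affect the lemma, which only asserts some non-zero constant.)
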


\subsection{Proof of Proposition \ref{prop. conjecture 2 implies
conjecture 1}} Now we can prove Proposition \ref{prop. conjecture 2
implies conjecture 1} with the help of the following results.

\begin{thm}\label{thm. support of distribution}
Denote by $\CN$ the null-cone of $\fs(F)$, by $\CN'$ the null-cone
of $\fs'(F)$.
\begin{enumerate}
\item Let $T\in\CD(\fs(F))^{H,\eta}$ be
such that $\Supp(T)\subset\CN$ and $\Supp(\wh{T})\subset\CN$. Then
$T=0$.
\item Let $T\in\CD(\fs'(F))^{H'}$ be
such that $\Supp(T)\subset\CN'$ and $\Supp(\wh{T})\subset\CN'$. Then
$T=0$.
\end{enumerate}
\end{thm}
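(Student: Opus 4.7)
The plan is to prove both statements by the same argument; I describe (1), with (2) being parallel after setting $\eta=\mathbf{1}$ and invoking Proposition \ref{prop. inequality for nilpotent 2} in place of Proposition \ref{prop. inequality for nilpotent}. The strategy is an induction along the stratification $\CN=\bigsqcup_q(\CN_q\setminus\CN_{q-1})$, combined with a Fourier--homogeneity argument driven by the $\fs\fl_2$-triples of Lemma \ref{lem. sl2-triple}, in the spirit of Waldspurger's treatment of endoscopic transfer.

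\textbf{Base case.} If $T$ is supported at $\{0\}$, then since $\fs(F)$ is an $\ell$-space, $T$ is a scalar multiple of the Dirac measure $\delta_0$; its Fourier transform $\wh T$ is a (nonzero, unless $T=0$) constant function on $\fs(F)$. A nonzero constant is not supported on the proper closed subvariety $\CN$, so $T=0$.

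\textbf{Inductive step.} Assume the statement for all distributions supported on $\CN_{q-1}$, and let $T\in\CD(\fs(F))^{H,\eta}$ with $\Supp(T)\subset\CN_q$ and $\Supp(\wh T)\subset\CN$. I will show that $T$ vanishes on a neighborhood of each top-dimensional orbit $\CO\subset\CN_q\setminus\CN_{q-1}$; peeling off these orbits reduces the support to $\CN_{q-1}$, to which induction applies. Fix $X_0\in\CO$ and the $\fs\fl_2$-triple $(X_0,\d,Y_0)$. Restricted to the homogeneous space $\CO\simeq H/H_{X_0}$, the $(H,\eta)$-invariant distribution $T|_\CO$ spans a space of dimension at most one: if nonzero, $\eta$ must restrict trivially to $H_{X_0}$ and $T|_\CO$ is a scalar multiple of the natural invariant measure $\mu_\CO$ on $\CO$. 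Using Proposition \ref{prop. descent of orbital integral} at the semisimple element $X_0$ (descent along an analytic Luna slice with model the sliced representation $(H_{X_0},\fs_{X_0})$), the question of whether this scalar vanishes is transferred to an analogous vanishing question for a distribution on $N_{X_0}=\fs_{Y_0}$ supported at $0$, whose Fourier transform is then analyzed via homogeneity under $\D_t(X_0)$.

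\textbf{The homogeneity argument.} Conjugation by $\D_t(X_0)$ acts on $\fs_{Y_0}$ by the weights of Lemma \ref{lem. eigen for nilp.}: the sum of these weights is $-2m$ (up to sign conventions), so the invariant measure on $\CO$ transforms with homogeneity degree governed by $r$ and $m$, namely $T(f\circ\Ad(\D_t(X_0)))=|t|^{2m+r}\cdot T(f)$ for $f$ supported near $X_0$. Because the bilinear form $\pair{\,,\,}$ is $H$-invariant, the adjoint action of $\D_t(X_0)$ on the dual copy of $\fs(F)$ is its inverse, and $\wh T$ therefore satisfies the companion transformation law with exponent $\dim\fs-(2m+r)=2n^2-2m-r$. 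The inequality $r+m>n^2+\tfrac{n}{2}$ of Proposition \ref{prop. inequality for nilpotent} translates into $2n^2-2m-r<n^2-\tfrac{n}{2}<\dim(\fs)-r=\dim\CO$; this forces $\wh T$ to be non-tempered in the direction of $X_0$ in a way incompatible with having support contained in the proper subvariety $\CN$ (whose local dimension near any point is at most $\dim\fs-r$). Pushing this comparison through Proposition \ref{prop. local orbital integral} and the local structure of $\wh T$ near any point of $\fs_{X_0}$ outside $\CN$ yields a contradiction unless the scalar on $\CO$ is zero. Running this for each top orbit completes the inductive step.

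\textbf{Main obstacle.} The delicate part is the last paragraph: making the homogeneity comparison rigorous. One must track the precise power of $|t|$ acquired by $T$ and by $\wh T$ under $\D_t(X_0)$ through the Luna-slice identification, interpret these powers as asymptotic vanishing/blow-up rates, and argue that the inequality $r+m>n^2+\tfrac{n}{2}$ (respectively the analogous inequality from Proposition \ref{prop. inequality for nilpotent 2}) is exactly what is needed to conclude that $\wh T$ cannot be concentrated on $\CN$. This is where the asymmetric use of the $\fs\fl_2$-triple (the weight count on $\fs_{Y_0}$ versus the codimension of $\CO$) must be converted into a statement about local integrability of the kernel representing $\wh T$, which will likely use the representability results developed later in the paper (§6) and is the analogue in this setting of \cite[\S VIII]{wa95}.
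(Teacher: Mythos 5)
The paper's own argument is a one-line citation: it invokes \cite[Proposition~3.1]{jr} for (1) and notes that the same argument runs for the $\eta$-twisted setting and, with the bound $m'<n^2$ from Proposition~\ref{prop. inequality for nilpotent 2}, for (2). So your task was effectively to reconstruct the Jacquet--Rallis argument. Your high-level picture --- finite-dimensionality of invariant distributions on $\CN$, a dilation/homogeneity analysis driven by the $\fs\fl_2$-triple, and a numerical inequality that makes the degrees of $T$ and $\wh{T}$ incompatible --- is the correct one. But there are several concrete problems with the details.

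First, you are invoking the wrong inequality for part (1). The quantity that controls the homogeneity degree of an invariant measure on a nilpotent $H$-orbit $\CO=H\cdot X_0$ is $\tfrac12\RTr(\ad(\d)|_{\fh_{X_0}})$ (equivalently $\tfrac12\RTr(\ad(-\d)|_{\fh_{Y_0}})$), and the relevant bound is the strict inequality $<n^2$ going back to \cite[Lemma~3.1]{jr}. The paper restates this bound only in the $(\FG',\FH')$ case as $m'<n^2$ in Proposition~\ref{prop. inequality for nilpotent 2}, and for $(\FG,\FH)$ it simply defers to \cite[\S3]{jr}. By contrast, Proposition~\ref{prop. inequality for nilpotent} gives $r+m>n^2+\tfrac{n}{2}$, a statement about $\fs_{Y_0}$ (not $\fh_{X_0}$) that the paper explicitly sets up for a different purpose, namely the iterative bound on orbital integrals in \S\ref{subsec. majorize of orb}. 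The two inequalities are not interchangeable (there is no algebraic identity forcing $r\le r'_{\fh}+n$ in general), so plugging Proposition~\ref{prop. inequality for nilpotent} into a homogeneity count does not yield the needed contradiction.

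Second, your inductive step leans on the analytic Luna slice and Proposition~\ref{prop. descent of orbital integral} at the point $X_0$, which you twice call semisimple --- but $X_0$ is a nonzero nilpotent element, where the Luna slice theorem does not apply. What is available near a nilpotent is the submersion $(h,U)\mapsto(X_0+U)^h$ from $H\times\fs_{Y_0}$, as in \S\ref{subsec. majorize of orb}, but that is a different device, used there to run an iteration on orbital integrals; it is not how the support theorem is established. In the JR argument the nilpotent orbit is handled directly via the finite-dimensionality of $\CJ(\CN)^{\eta}$ and the $F^\times$-eigendecomposition, with no intermediate descent step and no reliance on the representability machinery of \S6. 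In fact Theorem~\ref{thm. support of distribution} appears in \S5 precisely because it is independent of \S6; wiring in \S6 would reverse the paper's logical order. Finally, you explicitly flag the homogeneity--support comparison as the ``main obstacle'' still to be made rigorous; that comparison \emph{is} the proof, so the proposal is an outline rather than an argument. To repair it: for an $F^\times$-eigenvector $T\in\CJ(\CN)^{\eta}$ with $\wh{T}\in\CJ(\CN)^{\eta}$, the degree of $T$ must coincide with the dilation degree of an invariant measure on some orbit, hence lie in $\{0\}\cup[0,n^2)$ by the $\fh_{X_0}$ (resp.\ $\fh'_{Y_0}$) bound; the Fourier transform exchanges degree $\alpha$ with degree $\dim\fs-\alpha=2n^2-\alpha$, and both degrees belonging to $\{0\}\cup[0,n^2)$ forces $T=0$ after separately excluding the $\delta_0$ case by the obvious computation of $\wh{\delta_0}$.
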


\begin{proof}
The first assertion is proved in \cite[Proposition 3.1]{jr} when
$\eta$ is the trivial character. The same proof goes through for the
quadratic character $\eta$. The same proof is also valid for the
second assertion, noting the relation $m'<n^2$ in Proposition
\ref{prop. inequality for nilpotent 2}.
\end{proof}

The following corollary is a direct consequence of the above theorem
(cf. \cite[Corollary 4.20]{zh}).

\begin{cor}\label{cor. factorization of functions}
\begin{enumerate}
\item Let $\CC_0=\bigcap\limits_T\ker(T)$ where $T$ runs over all
$(H,\eta)$-invariant distributions on $\fs(F)$. Then each
$f\in\CC_c^\infty(\fs(F))$ can be written as
$$f=f_0+f_1+\wh{f_2},$$ with $f_0\in\CC_0$ and
$f_i\in\CC_c^\infty(\fs(F)-\CN),\ i=1,2$.
\item Let $\CC_0=\bigcap\limits_T\ker(T)$ where $T$ runs over all
$H'$-invariant distributions on $\fs'(F)$. Then each
$f\in\CC_c^\infty(\fs'(F))$ can be written as
$$f=f_0+f_1+\wh{f_2},$$ with $f_0\in\CC_0$ and
$f_i\in\CC_c^\infty(\fs'(F)-\CN'),\ i=1,2$.
\end{enumerate}
\end{cor}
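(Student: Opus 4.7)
The plan is a Hahn--Banach style duality argument combined with Theorem \ref{thm. support of distribution}; I treat assertion (1), since (2) is formally identical. Set $V_1:=\CC_c^\infty(\fs(F)-\CN)$ and $V:=V_1+\wh{V_1}$, where $\wh{V_1}:=\{\wh{f_2}:f_2\in V_1\}$. The desired decomposition amounts to the equality $\CC_c^\infty(\fs(F))=\CC_0+V$, equivalently to the surjectivity of the map $V\hookrightarrow\CC_c^\infty(\fs(F))\twoheadrightarrow\CC_c^\infty(\fs(F))/\CC_0$.

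The first step is to identify the annihilator $\CC_0^{\perp}\subseteq\CD(\fs(F))$. The inclusion $\CD(\fs(F))^{H,\eta}\subseteq\CC_0^{\perp}$ is built into the definition of $\CC_0$. For the reverse inclusion, observe that the invariance condition $h\cdot T=\eta(h)T$ is preserved under pointwise limits of linear functionals, so $\CD(\fs(F))^{H,\eta}$ is weakly closed in $\CD(\fs(F))$; the bipolar identity then gives $\CC_0^{\perp}=\CD(\fs(F))^{H,\eta}$. Via this duality, the target surjectivity becomes equivalent to the vanishing statement: every $T\in\CD(\fs(F))^{H,\eta}$ that vanishes on $V$ is zero.

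So fix such $T$. The vanishing of $T$ on $V_1$ forces $\Supp(T)\subseteq\CN$. The vanishing of $T$ on $\wh{V_1}$ says $T(\wh{f_2})=0$ for every $f_2\in V_1$; by the defining relation $\wh{T}(f_2)=T(\wh{f_2})$, this is precisely $\wh{T}|_{V_1}=0$, i.e.\ $\Supp(\wh{T})\subseteq\CN$. Theorem \ref{thm. support of distribution}(1) then forces $T=0$, proving assertion (1). Assertion (2) is obtained by the same argument using Theorem \ref{thm. support of distribution}(2) in place of (1), where the role of $(H,\eta)$-invariance is replaced by $H'$-invariance and the weak closedness argument is unchanged.

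There is no substantive obstacle beyond Theorem \ref{thm. support of distribution} itself; the whole passage from that theorem to the corollary is a soft duality argument. The only point that deserves a line of justification is the weak closedness of the space of invariant distributions, which is automatic from the pointwise nature of weak convergence in the $\ell$-space setting and needs no approximation machinery.
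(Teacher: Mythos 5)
Your argument is correct and is essentially the standard route: reduce to showing that an $(H,\eta)$-invariant distribution vanishing on $\CC_c^\infty(\fs(F)-\CN)+\widehat{\CC_c^\infty(\fs(F)-\CN)}$ must vanish, then hand this off to Theorem \ref{thm. support of distribution}. This is the same soft duality argument used in the reference the paper cites (\cite[Cor.\ 4.20]{zh}); the one point that genuinely needs a justification, the identification $\CC_0^{\perp}=\CD(\fs(F))^{H,\eta}$ via weak-$*$ closedness and the bipolar theorem in the $\ell$-space duality between $\CC_c^\infty$ and its full algebraic dual, is exactly the point you flag and argue correctly.
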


\begin{proof}[Proof of Proposition \ref{prop. conjecture 2 implies
conjecture 1}] Now we assume that Theorem \ref{thm. fourier preserve
transfer} is true. First we consider the converse direction: given
$f\in\CC_c^\infty(\fs(F))_0$, we want to show its smooth transfer
exists in $\CC_c^\infty(\fs'(F))$. For a general element $f$ in
$\CC_c^\infty(\fs(F))$, we say that $f'\in\CC_c^\infty(\fs'(F))$ is
a smooth transfer of $f$ if
$$O(y,f')=\kappa(x)O^\eta(x,f),\quad x\leftrightarrow y,$$ for each
$y\in\fs'_\rs(F)$. We can and do assume that: there exists a nonzero
$c\in\BC$ such that if $f'$ is a smooth transfer of
$f\in\CC_c^\infty(\fs(F))$ then $c\wh{f}'$ is a smooth transfer of
$\wh{f}$. This assumption is proved in Theorem \ref{thm. half
fourier 2}. Basing on this assumption, we will show the following
stronger form of Theorem \ref{thm. partial smooth transfer 3}: for
each $f\in\CC_c^\infty(\fs(F))$, there exists
$f'\in\CC_c^\infty(\fs'(F))$ that is a smooth transfer of $f$. We
use induction argument to show this result. Suppose that the
stronger form of Theorem \ref{thm. partial smooth transfer 3} holds
for $\CC_c^\infty(\fs_m(F))$ and $\CC_c^\infty(\fs'_m(F))$ for every
$m<n$. Thus, by Corollary \ref{cor. local transfer} and Lemma
\ref{lem. compare descendants}, for each
$f\in\CC_c^\infty(\fs(F)-\CN)$, its smooth transfer exists.
Therefore, by Corollary \ref{cor. factorization of functions}, it
suffices to show the existence of smooth transfer for $\wh{f}$ with
$f\in\CC_c^\infty(\fs(F)-\CN)$, which is guaranteed by the
assumption.

For the other direction in Theorem \ref{thm. partial smooth transfer
3} the proof is the same.
\end{proof}

\section{Representability}\label{sec: representability}
For $X\in\fs_\rs(F)$, it is more convenient to consider the
normalized orbital integral
$$I^\eta(X,f):=\abs{D^\fs(X)}_F^{\frac{1}{2}}O^\eta(X,f),\quad
f\in\CC_c^\infty(\fs(F)).$$ Similarly, for $Y\in\fs'_\rs(F)$, we
consider the normalized orbital integral
$$I(Y,f'):=\abs{D^{\fs'}(Y)}_F^{\frac{1}{2}}O(Y,f'),\quad
f'\in\CC_c^\infty(\fs'(F)).$$ If $X\leftrightarrow Y$ it is not hard
to see that $\abs{D^\fs(X)}_F=\abs{D^{\fs'}(Y)}_F$. Hence it does
not matter if we consider the smooth transfer with respect to the
normalized orbital integrals instead of the orbital integrals
introduced before. The Fourier transform of the normalized orbital
integral $I^\eta_X$ is defined to be
$$\wh{I}^\eta(X,f)=I^\eta(X,\wh{f}).$$ For $Y\in\fs'_\rs(F)$,
we define $\wh{I}_Y$ similarly.

To prove Theorem \ref{thm. fourier preserve transfer}, we first need
to study the Fourier transform of orbital integrals. In this
section, we prove the following fundamental theorem on the
representability of $\wh{I}^\eta_X$ and $\wh{I}_Y$.

\begin{thm}\label{thm. representability}
\begin{enumerate}
\item For each $X\in\fs_\rs(F)$, there exists a locally constant
$H$-invariant function $\wh{i}^\eta_X$ defined on $\fs_\rs(F)$ which
is locally integrable on $\fs(F)$, such that for any
$f\in\CC_c^\infty(\fs(F))$ we have
$$\wh{I}^\eta(X,f)=\int_{\fs(F)}\wh{i}^\eta_X(Y)\kappa(Y)f(Y)
|D^\fs(Y)|_F^{-1/2}\ \d Y.$$
\item For each $X\in\fs'_\rs(F)$, there exits a locally constant
$H'$-invariant function $\wh{i}_X$ defined on $\fs'_\rs(F)$ which is
locally integrable on $\fs'(F)$, such that for any
$f\in\CC_c^\infty(\fs'(F))$ we have
$$\wh{I}(X,f)=\int_{\fs'(F)}\wh{i}_X(Y)f(Y)
|D^{\fs'}(Y)|_F^{-1/2}\ \d Y.$$
\end{enumerate}
\end{thm}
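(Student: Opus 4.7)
The plan is to follow the strategy developed by Harish-Chandra for representability of Fourier transforms of orbital integrals, in the version adapted to $p$-adic symmetric spaces by Waldspurger \cite{wa95} and to $(\GL_{2n},\GL_n\times\GL_n)$ by Jacquet-Rallis \cite{jr}. Two things must be shown for each of the two symmetric pairs: that on $\fs_\rs(F)$ (resp.\ $\fs'_\rs(F)$) there exists a locally constant invariant function with the stated equivariance, and that this function is locally integrable against $|D^\fs|_F^{-1/2}\,dY$ on all of $\fs(F)$ (resp.\ $\fs'(F)$).

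First I would establish the representability on the regular semisimple locus. Any $(H,\eta)$-equivariant distribution on the open subset $\fs_\rs(F)$ is automatically represented by a locally constant function against the Haar measure, since orbits of $H$ in $\fs_\rs(F)$ are all of top dimension and the quotient is smooth; combined with the $H$-invariance of $|D^\fs(Y)|_F$ and the transformation law of $\kappa$, this produces the function $\wh{i}^\eta_X$ on $\fs_\rs(F)$. The whole content of the theorem is therefore local integrability at non-regular points, and by induction on $n$ it suffices to treat a neighborhood of a fixed non-regular semisimple point.

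Next, for any non-zero semisimple $X_0\in\fs(F)$, I would invoke the analytic Luna slice theorem and Proposition \ref{prop. descent of orbital integral} to reduce the integrability of $\wh{i}^\eta_X$ near $X_0$ to an analogous statement for the descendant $(H_{X_0},\fs_{X_0})$. By Proposition \ref{prop. descendant 1 lie} this descendant is a product of a classical $(\GL_m(F)_A,\fg\fl_m(F)_A)$, which is Harish-Chandra's theorem, and a symmetric pair $(\FH_{n-m},\fs_{n-m})$ with $n-m<n$, which is handled by the induction hypothesis. The same argument on the $\fs'$ side uses Proposition \ref{prop. descendant 2 lie} and the fact that the transverse factor $(\GL_{\sigma,m}(E)_B,\fg\fl_m^\sigma(E)_B)$ becomes, after multiplication by $\bar{B}$ (as in the proof of Lemma \ref{lem. compare descendants}), a classical Lie-algebra conjugation problem for an inner form of $\GL_m$, again covered by Harish-Chandra.

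The main obstacle, and the only new ingredient, is local integrability at the origin, where no semisimple descent is available. Here I would use the $\fs\fl_2$-triple $(X_0,\d,Y_0)$ of Lemma \ref{lem. sl2-triple} attached to each nilpotent orbit $X_0^H\subset\CN$, together with the eigenspace decomposition $Y_0=U_1,\ldots,U_{r'}$ of $\fs_{Y_0}$ under $\ad(-\d)$ provided by Lemma \ref{lem. eigen for nilp.}, to parametrize a transverse slice along $X_0^H$. Using the homogeneity of the Fourier transform under the dilation $\D_t(X_0)$ one rescales the orbital integral along this slice, and the behaviour of $\wh{i}^\eta_X$ near $X_0^H$ is controlled by the exponents $\lambda_i$. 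The integrability of $|D^\fs(Y)|_F^{-1/2}|\wh{i}^\eta_X(Y)|$ against Haar measure on a neighborhood of $X_0^H$ then reduces to the strict inequality
$$r+m>n^2+\tfrac{n}{2},$$
which is precisely Proposition \ref{prop. inequality for nilpotent}; the corresponding estimate on the $\fs'$ side is Proposition \ref{prop. inequality for nilpotent 2}. This nilpotent estimate, which is the point where Harish-Chandra's classical argument must be reworked using the detailed $\fs\fl_2$-combinatorics compiled in Lemma \ref{lem. nilpotent table}, is by far the most delicate step, since the classical inequality for the full Lie algebra is not available in the symmetric-pair setting and has to be verified case by case on the nilpotent orbits.
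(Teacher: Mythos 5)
Your outline departs significantly from the paper, and one of its key steps does not go through as stated.

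The most serious issue is your opening claim that \emph{any} $(H,\eta)$-equivariant distribution on $\fs_\rs(F)$ is ``automatically represented by a locally constant function.'' This is false as stated: distributions on a $p$-adic manifold (and on the orbit space $\fs_\rs(F)/H$) are not automatically functions, and the fact that $\wh{I}^\eta_X$ \emph{is} absolutely continuous with respect to Haar measure is precisely what must be proved. This is where the paper makes essential use of the relative version of Howe's finiteness theorem (Theorem \ref{thm Howe's finiteness theorem}, cited from \cite{rr}), via Corollary \ref{cor. cor of Howe's finiteness}: one shows $\wh{I}^\eta_X$ lies in the closure of the span of the ``averaged'' distributions $I_\phi$ for $\phi\in\CC_c^\infty(\omega^H)$, then uses Howe finiteness to conclude that on any lattice $L$ the restriction of $\wh{I}^\eta_X$ equals the restriction of some $\wh{I}_\phi=I_{\wh\phi}$. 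Your proposal nowhere supplies a substitute for this mechanism, and without it there is no reason the Fourier transform of a point-supported orbital integral should be given by a function.

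The global architecture is also different. The paper does not perform a \emph{semisimple} descent to descendants $(H_{X_0},\fs_{X_0})$ to handle integrability near non-central semisimple points; instead §\ref{sec. parabolic induction} carries out a \emph{parabolic} (Levi) descent to reduce the entire problem to the elliptic case, using the map $f\mapsto f^{(\fr)}$ and showing in Proposition \ref{prop. parabolic induction} that it commutes with Fourier transform and with formation of orbital integrals. The representability of $I_{\wh\phi}$ itself is established by a Harish-Chandra-style truncation: one writes $\Theta_t(Y)=\int_{Z\bs H}\Phi_t(h)\phi(Y^h)\eta(h)\,\d h$, proves the limit exists for regular $Y$, and dominates $|\Theta_t(Y)|$ by $c\,|D^\fs(Y)|_F^{-1/2}\bigl(1+\log\max\{1,|D^\fs(Y)|_F^{-1}\}\bigr)^{4\ell}$, which is locally integrable (Lemma \ref{lem. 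Igusa}); dominated convergence then gives the representability. The bound itself rests on Theorem \ref{thm. upper bound}, the uniform boundedness of normalized orbital integrals along Cartan subspaces. You correctly identify that the $\fs\fl_2$-triples of Lemma \ref{lem. sl2-triple}, the eigendecomposition of Lemma \ref{lem. eigen for nilp.}, and the inequality $r+m>n^2+\frac n2$ of Propositions \ref{prop. inequality for nilpotent} and \ref{prop. inequality for nilpotent 2} are the technical heart. But in the paper they enter in §\ref{subsec. majorize of orb}, in the contraction estimate \eqref{equ. for iteration } that proves uniform boundedness of $I^\eta(\cdot,f)$ near the nilpotent cone—not as a direct germ-homogeneity control on $\wh{i}^\eta_X$ at the origin, which is what you outline. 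Your sketch at the origin is essentially a Shalika-germ scaling argument, but no germ expansion is established here, and it is unclear how to make your sketch rigorous without re-deriving the boundedness theorem and the dominated-convergence argument that the paper already supplies.

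In short: the nilpotent inequality and $\fs\fl_2$-combinatorics you single out \emph{are} the delicate point, but the route from them to the theorem goes through parabolic reduction, Howe's finiteness theorem, and a truncation/dominated-convergence scheme, none of which appear in your proposal, and the ``automatic'' representability on $\fs_\rs(F)$ that you lean on is exactly what the theorem is asking us to prove.
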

We also write $\wh{i}^\eta(X,Y)$ (resp. $\wh{i}(X,Y)$) instead of
$\wh{i}^\eta_X(Y)$ (resp. $\wh{i}_X(Y)$), which is viewed as a
function on $\fs_\rs(F)\times\fs_\rs(F)$ (resp.
$\fs'_\rs(F)\times\fs'_\rs(F)$). Then it is not hard to see that
$\wh{i}^\eta(X,Y)$ (resp. $\wh{i}(X,Y)$) is locally constant on
$\fs_\rs(F)\times\fs_\rs(F)$ (resp. $\fs'_\rs(F)\times\fs'_\rs(F)$),
$(H,\eta)$-invariant (resp. $H'$-invariant) on the first variable
and $H$-invariant (resp. $H'$-invariant) on the second variable. Our
method to prove Theorem \ref{thm. representability} follows that of
of \cite{hc1} and \cite{hc}. Some of our treatment also follows that
of \cite{ko}. We only prove the assertion for $\wh{I}^\eta_X$. The
assertion for $\wh{I}_X$ can be proved in the same way and is left
to the reader.

\subsection{Reduction to elliptic case}\label{sec. parabolic induction}
In this subsection we reduce the question of the representability of
$\wh{I}^\eta_X$ to that for elliptic elements $X\in\fs_\rs(F)$. For
$X\in\fs_\rs(F)$, we say that $X$ is elliptic if its stabilizer
$H_X$ is an elliptic torus.
Thus, if $X=\begin{pmatrix}0&A\\
B&0\end{pmatrix}$, $X$ is elliptic if and only $AB$ is elliptic in
$\GL_n(F)$ in the usual sense.

For convenience, we suppose that $X\in\fs_\rs(F)$ is of the form
$\begin{pmatrix}0&{\bf1}_n\\A&0\end{pmatrix}$. From now on, we also
suppose that $X$ is not elliptic, or equivalently, $A$ is not
elliptic. Then there exists a proper Levi subgroup $\FM_0$ of
$\GL_n$ such that $A\in\FM_0$. Let $\FP_0$ be a proper parabolic
subgroup of $\GL_n$ such that $\FM_0$ is a Levi component of
$\FP_0$. Let $\FU_0$ be the unipotent subgroup of $\FP_0$. Set
$\fm_0=\Lie(\FM_0)$, $\fp_0=\Lie(\FP_0)$ and $\fu_0=\Lie(\FU_0)$.
Then $\fp_0=\fm_0\oplus\fu_0$, and $\gl_n=\fp_0\oplus\bar{\fu}_0$
where $\bar{\fu}_0$ is the Lie algebra of the unipotent subgroup
$\bar{\FU}_0$ opposite to $\FU_0$.

Write $\fs=\fs^+\oplus\fs^-$, where
$$\fs^+=\left\{\begin{pmatrix}0&B\\0&0\end{pmatrix}:B\in\gl_n\right\},
\quad
\fs^-=\left\{\begin{pmatrix}0&0\\C&0\end{pmatrix}:C\in\gl_n\right\}.$$
Identify $\fs^+$ (resp. $\fs^-$) with $\gl_n$. Under this
identification, let $\fr^+\subset\fs^+$ (resp. $\fr^-\subset\fs^-$)
be the subspace that corresponds to $\fm_0$, $\fn^+\subset\fs^+$
(resp. $\fn^-\subset\fs^-$)  the subspace that corresponds to
$\fu_0$, $\bar{\fn}^+\subset\fs^+$ (resp. $\bar{\fn}^-\subset\fs^-$)
the subspace that corresponds to $\bar{\fu}_0$. Set
$\fr=\fr^+\oplus\fr^-$, $\fn=\fn^+\oplus\fn^-$ and
$\bar{\fn}=\bar{\fn}^+\oplus\bar{\fn}^-$. Then
$\fs=\fr\oplus\fn\oplus\bar{\fn}$ and $X\in\fr(F)$. Notice that
$\fr$ is isomorphic to a product of $\fs_{n_i}$ with $\sum n_i=n$.
Also notice that $\fn^\bot=\fr\oplus\fn$ and
$(\fr\oplus\fn)^\bot=\fn$ under the fixed pairing
$\pair{\cdot,\cdot}$ on $\fs$.

We call a subspace $\ff$ of $\fs$ a proper Levi subspace if $\ff$ is
of the form $\fr$ as above for some $\fr$.

Let $\FP=\FP_0\times \FP_0$, which is a parabolic subgroup of
$\FH=\GL_n\times\GL_n$. There is a Levi decomposition $\FP=\FM\FU$
and $\fp=\fm\oplus\fu$, with $\FM=\FM_0\times \FM_0$,
$\FU=\FU_0\times \FU_0$, $\fm=\fm_0\oplus\fm_0$ and
$\fu=\fu_0\oplus\fu_0$. Notice that
$(\FM,\fr)\simeq\prod(\FH_{n_i},\fs_{n_i})$ for some
$(\FH_{n_i},\fs_{n_i})$. We fix an open compact subgroup $K$ of $H$
such that $H=MUK$ and $\eta|_K$ is trivial. Recall that we write
$M=\FM(F)$ and $U=\FU(F)$. Here we choose the Haar measure on $H$ so
that $\vol(K)=1$, and choose Haar measures on $M$ and $U$ so that
for any $f\in\CC_c^\infty(H)$, $$\int_H f(h)\ \d
h=\int_M\int_U\int_K f(muk)\ \d m\ \d u\ \d k.$$ We choose the Haar
measure on Lie algebra $\fu(F)$ compatible with that on $U$ under
the exponential map, and choose Haar measures on
$\fr(F),\fn(F),\bar{\fn}(F)$ according to the above identifications.

For $f\in\CC_c^\infty(\fs(F))$, we define
$f^\fr\in\CC_c^\infty(\fr(F))$ to be
$$f^\fr(Y):=\int_{\fn(F)}f(Y+Z)\ \d Z,$$ define
$\wt{f}\in\CC_c^\infty(\fs(F))$ to be $$\wt{f}(Y)=\int_K f(Y^k)\ \d
k,$$ and define $f^{(\fr)}\in\CC_c^\infty(\fr(F))$ to be
$$f^{(\fr)}:=\left(\wt{f}\right)^{\fr}.$$

\begin{defn} Let $T_\fr$ be a distribution on $\fr(F)$.
We define the distribution $i^\fs_\fr(T_\fr)$ on $\fs(F)$ to be:
$$i^\fs_\fr(T_\fr)(f):=T_\fr(f^{(\fr)}),\quad\textrm{for }
f\in\CC_c^\infty(\fs(F)).$$
\end{defn}

The above process is an analogue of parabolic induction in the usual
sense (cf. \cite[\S1]{hc} or \cite[\S13]{ko}), and has the following
similar properties. Notice that $\FM$ acts on $\fr$ by the adjoint
action, which is induced from the action of $\FH$ on $\fs$. Denote
by $\fr_\rs$ the regular semisimple locus of $\fr$ with respect to
the action of $\FM$. If $Y$ is in $\fs_\rs(F)$, then it is also in
$\fr_\rs(F)$. If $Y\in\fr_\rs(F)$, put
$$|D^\fr(Y)|_F=|\det(\ad(Y);\fm/\ft\oplus\fr/\fc)|^{\frac{1}{2}},$$
where $\fc$ is the Cartan space of $\fr$ containing $Y$ and $\ft$ is
the Lie algebra of the centralizer of $Y$ in $\FM$. The normalized
orbital integral $I_X^{\eta,M}(f')$, for
$f'\in\CC_c^\infty(\fr(F))$, is defined to be
$$|D^\fr(X)|^{\frac{1}{2}}_F\int_{H_X\bs M}f'(X^m)\eta(m)\ \d m.$$
Then $I_X^{\eta,M}$ is a distribution on $\fr(F)$. In the
proposition below, we write $I_X^{\eta,H}$ instead of $I_X^\eta$ to
distinguish it from $I_X^{\eta,M}$.

\begin{prop}\label{prop. parabolic induction}
\begin{enumerate}
\item Suppose that $T_\fr$ is an $(M,\eta)$-invariant distribution
on $\fr(F)$, then $i_\fr^\fs(T_\fr)$ is an $(H,\eta)$-invariant
distribution on $\fs(F)$.
\item We have $i_\fr^\fs(I^{\eta,M}_X)
=I^{\eta,H}_X.$
\item Suppose that $T_\fr$ is an $(M,\eta)$-invariant distribution on
$\fr(F)$, which is represented by a function $\Theta_\fr$ which is
locally constant on $\fr_\rs(F)$ and locally integrable on $\fr(F)$.
In other words, for any $f\in\CC_c^\infty(\fr(F))$,
$$T_\fr(f)=\int_{\fr_\rs(F)}\Theta_\fr(Y)\kappa(Y)f(Y)|D^\fr(Y)|_F^{-1/2}\
\d Y.$$ Then the distribution $i_\fr^\fs(T_\fr)$ is represented by
the function $$\Theta_\fs(Y)=\sum_{Y'}\Theta_\fr(Y'),$$ where $Y'$
runs over a finite set of representatives for the $M$-conjugacy
classes of elements in $\fr(F)$ which are $H$-conjugate to $Y$. The
function $\Theta_\fs$ is locally constant on $\fs_\rs(F)$ and
locally integrable on $\fs(F)$, and, for any
$f\in\CC_c^\infty(\fs(F))$,
$$i_\fr^\fs(T_\fr)(f)=\int_{\fs_\rs(F)}\Theta_\fs(Y)
\kappa(Y)f(Y)|D^\fs(Y)|_F^{-1/2}\ \d Y.$$
\item The map $f\mapsto f^{(\fr)}$ commutes with the Fourier
transform, and therefore
$i_\fr^\fs(\wh{T}_\fr)=\wh{i^\fs_\fr(T_\fr)}$.
\end{enumerate}
\end{prop}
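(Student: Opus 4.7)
The plan is to establish the four parts in turn, following the template of Harish-Chandra's parabolic descent for Lie algebras \cite{hc1,hc} and the exposition in \cite{ko}. The central inputs are the Iwasawa decomposition $H=MUK$ (with normalizations so that $\eta|_K=1$), the triviality of $\eta|_U$ (as $U$ is unipotent), and the orthogonality relations $\fr\perp\fn$, $\fn^\perp=\fr\oplus\fn$ for the pairing $\pair{\ ,\ }$ restricted to $\fs$.

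I would attack (2) first, as it is the most concrete and implies (1) and (3). For $X\in\fr_\rs(F)$ I unfold $O^{\eta,H}(X,f)$ using $H=MUK$: the inner $K$-integral installs the $\wt{\phantom{f}}$ operator, the $U$-integration converts into an $\fn$-integration via $Z=\Ad(u^{-1})X^m-X^m$ with Jacobian $|\det(\ad(X);\fu)|_F$ (since $\ad(X)\colon\fu\to\fn$ is an isomorphism for $X$ regular semisimple in $\fr$), and the remaining integral over $T\bs M$ yields $O^{\eta,M}(X,f^{(\fr)})$; along the way $\eta|_U=\eta|_K=1$ is used to discard the $\eta(uk)$ factor. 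Then the numerical identity $|D^\fs(X)|^{1/2}=|D^\fr(X)|^{1/2}|\det(\ad(X);\fu)|_F$, which follows from the decompositions $\fh/\ft=\fm/\ft\oplus\fu\oplus\bar{\fu}$ and $\fs/\fc=\fr/\fc\oplus\fn\oplus\bar{\fn}$ together with the block-off-diagonal structure of $\ad(X)$ interchanging $\fu\leftrightarrow\fn$ and $\bar{\fu}\leftrightarrow\bar{\fn}$, upgrades the $O$-identity to $I_X^{\eta,H}(f)=I_X^{\eta,M}(f^{(\fr)})$.

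For (3): by representability and the Weyl integration formula on $\fr$, $T_\fr$ decomposes as a weighted superposition of orbital integrals $I_X^{\eta,M}$ against $\Theta_\fr\cdot\kappa$; applying (2) termwise identifies $i_\fr^\fs(T_\fr)$ with the analogous superposition of $I_X^{\eta,H}$'s, which via the Weyl integration formula on $\fs$ is represented by $\Theta_\fs(Y)=\sum_{Y'}\Theta_\fr(Y')$ where $Y'$ ranges over the finitely many $M$-conjugacy classes in $\fr_\rs$ meeting the $H$-orbit of $Y$. Local integrability of $\Theta_\fs$ on $\fs(F)$ transfers from that of $\Theta_\fr$ on $\fr(F)$ because the extra Jacobian factor $|D^\fs|/|D^\fr|$ is a positive smooth function on $\fr_\rs$, bounded on compacta. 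Part (1) for representable $T_\fr$ is then manifest; for general $T_\fr\in\CD(\fr)^{M,\eta}$ it follows by a density argument, noting that $i_\fr^\fs$ is continuous for the weak-$\ast$ topology and that the orbital integrals $\{I_X^{\eta,M}:X\in\fr_\rs(F)/M\}$ are weak-$\ast$ dense in $\CD(\fr)^{M,\eta}$.

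For (4): since the $K$-averaging $\wt{\phantom{f}}$ commutes with Fourier transform ($K$ preserves the pairing and hence the self-dual measures), it suffices to show $\wh{f^\fr}=(\wh{f})^\fr$. Using $\pair{Y',Z}=0$ for $Y'\in\fr,Z\in\fn$, I rewrite $\wh{f^\fr}(Y')$ as $\int_{\fr\oplus\fn}f(W)\psi(\pair{Y',W})\,dW$; conversely, swapping the order of integration in $(\wh{f})^\fr(Y')=\int_\fn\wh{f}(Y'+Z')\,dZ'$ produces an inner $Z'$-integral $\int_\fn\psi(\pair{Z',X})\,dZ'$ that collapses distributionally to the indicator of $X\in\fn^\perp=\fr\oplus\fn$, yielding the same expression. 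The main obstacle throughout is the careful bookkeeping of measure normalizations on $\fr,\fn,\bar{\fn}$ so that the constant in (4) is exactly $1$ and the Jacobian $|\det(\ad(X);\fu)|$ in (2) pairs correctly with the ratio $|D^\fs(X)|/|D^\fr(X)|$; each verification is routine modulo the paper's self-dual conventions, but this is the main place where a miscomputation could derail the clean statements.
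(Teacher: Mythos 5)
Your treatment of parts (2), (3), and (4) is essentially the argument the paper gives: for (2) you unfold the orbital integral over $T\backslash H$ using $H=MUK$, absorb $K$ via $\widetilde{\phantom{f}}$, convert the $U$-integral to an $\fn(F)$-integral via $u\mapsto Y^u-Y$ with $Y=X^m$, and match the Jacobian $|\det(\varrho\circ\ad(Y);\fu)|_F$ against $|D^\fs(Y)|_F^{1/2}/|D^\fr(Y)|_F^{1/2}$ using the block decomposition $\fh/\ft=\fm/\ft\oplus\fu\oplus\bar{\fu}$, $\fs/\fc=\fr/\fc\oplus\fn\oplus\bar{\fn}$; for (3) you combine (2) with the Weyl integration formulae on $\fr$ and $\fs$; for (4) you use $\fr\perp\fn$, $\fn^\perp=\fr\oplus\fn$ and the fact that the $K$-average commutes with Fourier transform. (A small slip: in (2) the Jacobian is a function of $Y=X^m$, not of $X$, though the two agree since $M$ normalizes $\fu$ and $\fn$ compatibly.)

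Part (1) is where you depart from the paper and where there is a genuine gap. You prove (1) only for representable $T_\fr$ (where it follows from (3)), and then appeal to weak-$\ast$ density of regular semisimple orbital integrals in $\CD(\fr(F))^{M,\eta}$ to handle general $T_\fr$. That density statement is precisely the assertion that any $f\in\CC_c^\infty(\fr(F))$ whose regular semisimple $\eta$-orbital integrals all vanish is annihilated by every $(M,\eta)$-invariant distribution; this is a substantive theorem (of Kazhdan/Shalika-germ type), is not established anywhere at this point in the paper, and in the $\eta$-twisted symmetric-space setting would itself require work comparable to the whole representability machinery of \S6. It is also entirely unnecessary: the paper proves (1) directly by observing that for $p=mu\in P$ and $Y\in\fr(F)$ one has $({}^{p}f)^{\fr}(Y)=\delta_P(p)\,f^{\fr}(Y^m)$, and then, since $\eta|_U=\eta|_K=1$ and $T_\fr$ is $(M,\eta)$-invariant, the $\delta_P$ factor cancels in the unimodular integration and the $K$-average kills the remaining $K$-dependence. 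You should replace the density argument with this elementary change-of-variables identity; as written your proof of (1) rests on an unproved and much deeper input.
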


\begin{proof}
(1) For $f\in\CC_c^\infty(\fs(F))$ and $h\in H$, define
$^hf\in\CC_c^\infty(\fs(F))$ by $^hf(Y)=f(Y^h)$. To prove (i), it
suffices to observe the following relation: for $p=mu\in P$ and
$Y\in\fr(F)$, we have
$$\begin{aligned}
(^pf)^\fr(Y)&=\int_{\fn(F)} {^pf}(Y+Z)\ \d Z
=\int_{\fn(F)} f(Y^p+Z^p)\ \d Z\\
&=\int_{\fn(F)} f(Y^m+Z^p)\ \d Z\\
&=|\det(\Ad(p);\fn)|_F\int_{\fn(F)} f(Y^m+Z)\ \d Z.
\end{aligned}$$
It is easy to verify that $$|\det(\Ad(p);\fn)|_F
=|\det(\Ad(p);\fu)|_F=\delta_P(p),$$ where $\delta_P$ is the modulus
character of $P$. Therefore $(^pf)^\fr(Y)=\delta_P(p)f^\fr(Y^m)$.
The rest arguments are routine.

(2). Write $T=H_X$ for simplicity. For $f\in\CC_c^\infty(\fs(F))$,
$$\begin{aligned}
\int_{T\bs H}f(X^h)\eta(h)\ \d h&=\int_{T\bs
M}\int_U\int_Kf(X^{muk})
\eta(m)\ \d k\ \d u\ \d m\\
&=\int_{T\bs M}\int_U\wt{f}(X^{mu})\eta(m)\ \d u\ \d m.
\end{aligned}$$
Write $Y=X^m$. Notice that the map
$$\alpha: \FU\lra \fn,\quad u\mapsto u^{-1}Yu-Y$$
is an isomorphism of algebraic varieties, whose Jacobian is
$$|\det(\varrho\circ\ad(Y);\fu)|_F.$$ Also note that
$$\begin{aligned}
|\det(\varrho\circ\ad(Y);\fu)|_F
&=|\det(\varrho\circ\ad(Y);\fu\oplus\bar{\fu})|_F^{1/2}\\
&=\frac{|\det(\varrho\circ\ad(Y);\fh/\ft)|_F^{1/2}}
{|\det(\varrho\circ\ad(Y);\fm/\ft)|_F^{1/2}}\\
&=\frac{|D^\fs(Y)|_F^{1/2}}{|D^\fr(Y)|_F^{1/2}}.
\end{aligned}$$
Therefore $$\begin{aligned}I^{\eta,H}(X,f)&=\int_{T\bs M}
\int_{\fn(F)}|D^\fr(X^m)|_F
^{1/2}\wt{f}(X^m+Z)\eta(m)\ \d Z\ \d m\\
&=|D^\fr(X)|_F^{1/2}\int_{T\bs M}f^{(\fr)}(X)\eta(m)\ \d m\\
&=I^{\eta,M}(X,f^{(\fr)}).\end{aligned}$$

The assertion (iii) is a consequence of Weyl integration formula,
and the assertion (iv) is obvious.
\end{proof}

Let $\fs_\el$ be the open subset of elliptic regular semisimple
elements in $\fs(F)$.

\begin{lem}\label{lem. elliptic cusp form}
Suppose that $\phi\in\CC_c^\infty(\fs_\el)$. Then $\phi^{(\fr)}$ and
$(\wh{\phi})^{(\fr)}$ are identically zero for every proper Levi
subspace $\fr$ of $\fs$. Moreover, for any regular semisimple
element $X$ of $\fs(F)$ lying in $\fr(F)$, we have
$$I^\eta(X,\phi)=\wh{I}^\eta(X,\phi)=0.$$
\end{lem}
\begin{proof}
The vanishing of $\phi^{(\fr)}$ is obvious. The vanishing of
$(\wh{\phi})^{(\fr)}$ is a consequence of Proposition \ref{prop.
parabolic induction} (iv). The vanishing of the orbital integrals
follows from the first assertion and Proposition \ref{prop.
parabolic induction} (ii).
\end{proof}

Let $\fc$ be an elliptic Cartan subspace of $\fs$, which means that
one (any) element of $\fc_\reg(F)$ is elliptic.  Let $T$ be the
centralizer of $\fc$ in $H$, and $Z$ the center of $G$ which is also
contained in $T$. Then $Z\bs T$ is compact since $\fc$ is elliptic.
Now we require that $\vol(Z\bs T)=1$ here, which does not matter.

Let $\fs_\rs^\fc=(\fc_\reg(F))^H$ and
$\phi\in\CC_c^\infty(\fs_\rs^\fc)$. We define the distribution
$I_\phi\in\CD(\fs(F))^{H,\eta}$ to be
$$I_\phi(f)=\int_{Z\bs H}\int_{\fs(F)} f(Y)\phi(Y^h)\eta(h)\ \d Y\ \d h.$$
This distribution is well defined:
$$\begin{aligned}&\int_{Z\bs H}\int_{\fs(F)} |f(Y)\phi(Y^h)|
\ \d Y\ \d h\\
=&\int_{Z\bs H}\d h\left(\int_{\fc(F)}|D^\fs(Y)|_F\ \d Y\int_{Z\bs
H}|f(Y^{h'})|\cdot|\phi(Y^{h'h})|\ \d h'\right)\\
=&\int_{\fc(F)}|D^\fs(Y)|_F\ \d Y\left(\int_{(Z\bs H)\times (Z\bs
H)}|f(Y^{h'})|\cdot|\phi(Y^h)|\ \d h'\ \d h\right)\\
=&\int_{\fc_\reg(F)}I\left(Y,|f|\right)\cdot I(Y,|\phi|)\ \d
Y<\infty,
\end{aligned}$$
since $I(Y,|\phi|)\in\CC_c^\infty(\fc_\reg(F))$. Here $I(\cdot,f)$
is the normalized orbital integral without twisting $\eta$. We also
define the distribution $I_{\wh{\phi}}\in\CD(\fs(F))^{H,\eta}$ to be
$$I_{\wh{\phi}}(f)=\int_{Z\bs H}\int_{\fs(F)} f(Y)
\wh{\phi}(Y^h)\eta(h)\ \d Y\ \d h.$$ We have the relation
$$\int_{\fs(F)} f(Y)\wh{\phi}(Y^h)\ \d Y=\int_{\fs(F)}\wh{f}(Y)\phi(Y^h)\ \d Y.$$
Thus $$\begin{aligned}&\int_{Z\bs H}\eta(h)\ \d h\left(\int_{\fs(F)}
f(Y)\wh{\phi}(Y^h)\ \d Y\right)\\
=&\int_{Z\bs H}\eta(h)\ \d h\left(\int_{\fs(F)}\wh{f}(Y)\phi(Y^h)\
\d Y\right),\end{aligned}$$ by the absolute convergence of the
latter one, which shows that $I_{\wh{\phi}}$ is well defined and
$I_{\wh{\phi}}=\wh{I}_\phi$. In summary, we have the following
lemma.

\begin{lem}\label{lem. Iphi}
Let $\fc$ be an elliptic Cartan subspace of $\fs$ and
$\phi\in\CC_c^\infty(\fs_\rs^\fc)$. Then $\wh{I}_\phi=I_{\wh{\phi}}$
\end{lem}

In the next subsection, we will reduce Theorem \ref{thm. Iphi} to
the following theorem whose proof will be given in \S\S\ref{subsec.
majorize of orb}--\ref{sec. proof of thm Iphi}.

\begin{thm}\label{thm. Iphi}
Let $\fc$ be an elliptic Cartan subspace of $\fs$ and
$\phi\in\CC_c^\infty(\fs_\rs^\fc)$. Then $\wh{I}_\phi$ is
represented by a locally integrable function on $\fs(F)$ which is
locally constant on $\fs_\rs(F)$.
\end{thm}

\subsection{Proof of Theorem \ref{thm. representability}}
To show the representability of the Fourier transform of orbital
integrals, we need the following relative version of Howe's
finiteness theorem (Theorem \ref{thm Howe's finiteness theorem}).
Let us introduce some notation. If $\omega$ is a compact set in
$\fs(F)$, put
$$\CJ(\omega)^\eta=\{T\in\CD(\fs(F))^{H,\eta}:\ \Supp(T)\subset\cl
(\omega^H)\}.$$ Let $L\subset\fs(F)$ be a lattice (a compact open
$\CO_F$-submodule). Denote by $\CC_c(\fs(F)/L)$ the space of
$f\in\CC_c^\infty(\fs(F))$ which is invariant under translation by
$L$. Let $j_L:\CJ(\omega)^\eta\ra\CC_c(\fs(F)/L)^*$ be the
composition of the maps:
$$j_L:\CJ(\omega)^\eta\incl\CD(\fs(F))
\stackrel{\res}{\lra}\CC_c(\fs(F)/L)^*,$$ where $\CC_c(\fs(F)/L)^*$
is the vector space dual to $\CC_c(\fs(F)/L)$ and $\res$ is the
restriction map. Then Howe's finiteness theorem is the following.
\begin{thm}\label{thm Howe's finiteness theorem}
For any lattice $L$ and any compact set $\omega$ in $\fs(F)$, we
have $$\dim j_L(\CJ(\omega)^\eta)<+\infty.$$
\end{thm}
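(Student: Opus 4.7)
The plan is to prove Theorem~\ref{thm Howe's finiteness theorem} by induction on $n$, adapting Howe's original argument and its treatment by Harish-Chandra in the reductive case, as refined by Rader-Rallis (\cite{rr}) for symmetric spaces. First I would replace $\omega$ by a slightly larger compact saturated neighborhood and cover $\mathrm{cl}(\omega^H)$ by finitely many $H$-invariant open sets attached, via the analytic Luna slice theorem recalled in \S3, to representatives $X_1,\dots,X_k$ of the $H$-semisimple orbits meeting $\mathrm{cl}(\omega^H)$. Proposition~\ref{prop. descent of orbital integral} and the compatibility of transfer factors (Lemma~\ref{lem. compatibility of transfer factor}) give a distributional version of semisimple descent that carries $T\in\CJ(\omega)^\eta$, restricted to the slice $U_{X_i}$, to an $(H_{X_i},\eta)$-invariant distribution on the descendant $\fs_{X_i}$ with support in a fixed compact subset, in a way compatible with the restriction to $\CC_c(\fs(F)/L)$.

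For each $X_i\neq 0$, the classification of descendants in Proposition~\ref{prop. descendant 1 lie} asserts that $(H_{X_i},\fs_{X_i})$ decomposes as a product of a conjugation factor $(\GL_m(F)_A,\fg\fl_m(F)_A)$, for which the classical Howe finiteness theorem of Harish-Chandra applies, with a factor of type $(H_{n-m},\fs_{n-m})$ with $n-m<n$, to which the induction hypothesis on $n$ applies. Combining these two finiteness statements for each non-zero slice, and noting that there are only finitely many slices to consider, yields that the contribution of all non-nilpotent semisimple orbits to $j_L(\CJ(\omega)^\eta)$ is finite-dimensional.

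It remains to bound the contribution of the slice at $X=0$, namely the image under $j_L$ of the subspace
\[\CJ_\CN^\eta=\{T\in\CJ(\omega)^\eta\ ;\ \Supp(T)\subset\CN\}.\]
Here I would use three ingredients: $(i)$ the nilpotent cone $\CN$ is a finite union of $H$-orbits, which can be deduced from Lemma~\ref{lem. sl2-triple} and the analysis of \S4; $(ii)$ for each nilpotent orbit with associated $\fs\fl_2$-triple $(X_0,\d,Y_0)$, the one-parameter subgroup $\{\D_t(X_0)\}\subset H$ acts on the Luna slice at $X_0$ by a cocharacter with strictly positive weights on the normal directions; and $(iii)$ since $\eta$ is trivial on the connected split torus through $\D_t(X_0)$, an $(H,\eta)$-invariant distribution on the orbit closure is quasi-homogeneous of bounded degree under this dilation. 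A standard descent-through-dilation argument, combined with the support being contained in $\omega^H$, then forces $j_L$ to factor through a finite-dimensional quotient on each orbital piece, and hence on $\CJ_\CN^\eta$.

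The main obstacle will be step $(iii)$, i.e.\ the nilpotent bound. Formally one needs a Shalika-germ type result for the symmetric pair $(H,\fs)$: the space of $(H,\eta)$-invariant distributions on $\fs(F)$ supported on $\CN$, restricted to $\CC_c(\fs(F)/L)$, is finite-dimensional. The analogue is well-known in the group case, but transporting it here requires two verifications. First, one must check that the contraction $\D_t(X_0)$ and the $\fs\fl_2$-triple machinery of Lemma~\ref{lem. sl2-triple} interact correctly with the non-trivial character $\eta$; this is expected because $\eta$ is trivial on the image of $\SL_2(F)$ in $H$ (its determinant lies in $F^{\times 2}$, hence in $\ker\eta$). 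Second, one needs that only finitely many homogeneity degrees contribute non-trivially against characteristic functions of cosets of $L$, which follows from the positivity of the weights in $(ii)$ together with compactness of $\Supp(T)$.
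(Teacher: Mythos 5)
Your sketch reconstructs exactly the Rader--Rallis argument that the paper discharges in one line by citing \cite[Theorem~6.1]{rr} for $\eta=\mathbf{1}$ and asserting the quadratic-character extension is routine: a finite cover of $\cl(\omega^H)$ by Luna slices, descent with induction on $n$ through the descendants of Proposition~\ref{prop. descendant 1 lie}, the nilpotent-contraction step, and the check that $\eta$ is trivial on $\{\D_t(X_0)\}$. Two small remarks: the clean reason for that last check is that $\det\circ\phi$ is a character of $\SL_2$ and hence identically $1$, so $\eta(\D_t(X_0))=\eta(1)=1$ exactly (no appeal to squares in $F^\times$ is needed); and your step~$(iii)$, the finiteness on nilpotent support, is precisely the technical core of \cite[Theorem~6.1]{rr}, which your outline correctly identifies as the hard point but only gestures at rather than reproves.
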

\begin{proof}
It was shown in \cite[Theorem 6.1]{rr} that Howe's finiteness
theorem holds in a more general setting when $\eta=\bf1$. It is not
hard to check that it still holds when $\eta$ is our quadratic
character.
\end{proof}

The following variant of Howe's theorem is often used, and we refer
the reader to \cite[\S26]{ko} for more details. Let
$\wh{j}_L:\CJ(\omega)^\eta\ra\CD(L)$ be the composition of the maps
$$\wh{j}_L:\CJ(\omega)^\eta\incl\CD(\fs(F))\stackrel{\CF}{\lra}\CD(\fs(F))
\stackrel{\res}{\lra}\CD(L),$$ where $\CF$ denotes the Fourier
transform.
\begin{thm}
For any lattice $L$ and any compact set $\omega$ in $\fs(F)$,
$$\dim\wh{j}_L(\CJ(\omega)^\eta)<+\infty.$$
\end{thm}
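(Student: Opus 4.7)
The plan is to deduce this Fourier-variant from Theorem \ref{thm Howe's finiteness theorem} by exploiting the standard $p$-adic Fourier duality between compact support and translation invariance. Given the lattice $L\subset\fs(F)$, I would first introduce the dual lattice
$$L^\ast:=\{Y\in\fs(F):\psi(\pair{X,Y})=1\text{ for all }X\in L\}$$
with respect to the non-degenerate pairing $\pair{\ ,\ }$ on $\fs(F)$ and the fixed additive character $\psi$. A direct computation shows that for any $f\in\CC_c^\infty(L)$, viewed as an element of $\CC_c^\infty(\fs(F))$ whose support lies in $L$, the Fourier transform $\wh{f}$ is locally constant, compactly supported (as $f$ is Schwartz--Bruhat), and invariant under translation by $L^\ast$ (since $f$ vanishes off $L$). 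By Fourier inversion the assignment $f\mapsto\wh{f}$ gives a linear isomorphism
$$\CF:\CC_c^\infty(L)\stackrel{\sim}{\longrightarrow}\CC_c(\fs(F)/L^\ast).$$

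Next, for $T\in\CJ(\omega)^\eta$ and $f\in\CC_c^\infty(L)$ I would invoke adjointness
$$\wh{j}_L(T)(f)=\wh{T}(f)=T(\wh{f}).$$
Since $\wh{f}\in\CC_c(\fs(F)/L^\ast)$, the linear form $\wh{j}_L(T)\in\CD(L)$ is entirely determined by the restriction of $T$ to $\CC_c(\fs(F)/L^\ast)$, namely by $j_{L^\ast}(T)$. More precisely, $\wh{j}_L=\CF^\vee\circ j_{L^\ast}$, where $\CF^\vee$ is the transpose of $\CF$ and thus a linear isomorphism between $\CC_c(\fs(F)/L^\ast)^\ast$ and $\CD(L)$. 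Consequently
$$\dim\wh{j}_L(\CJ(\omega)^\eta)=\dim j_{L^\ast}(\CJ(\omega)^\eta),$$
and applying Theorem \ref{thm Howe's finiteness theorem} to the same compact set $\omega$ with $L^\ast$ in place of $L$ immediately yields the desired finiteness.

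There is essentially no genuine obstacle in this argument: the substantive content has already been absorbed into the preceding Howe finiteness theorem, whose extension from trivial $\eta$ to the quadratic $\eta$ of interest was noted (following \cite{rr}) to go through unchanged. The Fourier-duality reduction itself is purely formal, resting only on the self-duality of the pairing $\pair{\ ,\ }$ on $\fs(F)$ and on the standard fact that the $p$-adic Fourier transform exchanges compact support and translation invariance on Schwartz--Bruhat functions. An identical argument would handle the analogous statement on $\fs'(F)$ with trivial character.
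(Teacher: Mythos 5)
Your proof is correct and is precisely the standard Fourier-duality argument that the paper implicitly invokes (it cites Kottwitz, \S 26, rather than spelling this out): since $\CF$ exchanges $\CC_c^\infty(L)$ with $\CC_c(\fs(F)/L^\ast)$, the identity $\wh{j}_L(T)(f)=T(\wh f)$ reduces the statement to the already-proved Theorem \ref{thm Howe's finiteness theorem} applied with $L^\ast$ in place of $L$. No gap.
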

\begin{proof}
See \cite[Theorem 26.3]{ko}.
\end{proof}

\begin{cor}\label{cor. cor of Howe's finiteness}
Let $\omega$ be compact, and let $V$ be a subspace of
$\CJ(\omega)^\eta$. Let $L$ be any lattice in $\fs(F)$. Then
$\wh{j}_L(V)=\wh{j}_L(\cl(V))$.
\end{cor}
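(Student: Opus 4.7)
The plan is to deduce the corollary from the finite-dimensionality statement in the variant of Howe's theorem by a soft topological argument, using the weak-$*$ topology on spaces of distributions.

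First I would fix the topologies: $\CD(\fs(F))$ and $\CD(L)$ are equipped with their weak-$*$ (pointwise-convergence) topologies, which make them Hausdorff locally convex topological vector spaces. Under these topologies, the Fourier transform $\CF : \CD(\fs(F)) \to \CD(\fs(F))$ is continuous (since $\wh{T}(f) = T(\wh{f})$ and $\wh{f} \in \CC_c^\infty(\fs(F))$ for $f \in \CC_c^\infty(\fs(F))$), and the restriction map $\CD(\fs(F)) \to \CD(L)$ is continuous as well. Hence the composition $\wh{j}_L$ is continuous on $\CJ(\omega)^\eta$ (equipped with the subspace topology inherited from $\CD(\fs(F))$).

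Next I would apply the variant of Howe's theorem stated just above the corollary: $\wh{j}_L(\CJ(\omega)^\eta)$ is a finite-dimensional subspace of $\CD(L)$. Since $\wh{j}_L(V) \subseteq \wh{j}_L(\CJ(\omega)^\eta)$, the subspace $\wh{j}_L(V)$ is itself finite-dimensional. A finite-dimensional subspace of any Hausdorff topological vector space is closed, so $\wh{j}_L(V)$ is closed in $\CD(L)$.

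Finally, the containment $V \subseteq \cl(V)$ gives $\wh{j}_L(V) \subseteq \wh{j}_L(\cl(V))$. For the reverse inclusion, continuity of $\wh{j}_L$ yields
$$\wh{j}_L(\cl(V)) \subseteq \cl\bigl(\wh{j}_L(V)\bigr) = \wh{j}_L(V),$$
where the last equality uses that $\wh{j}_L(V)$ is closed by the previous step. Combining the two inclusions finishes the proof. There is no real obstacle here; the entire content is supplied by Howe's finiteness theorem, and the corollary is the purely formal observation that once the ambient image is finite-dimensional, taking closures before or after applying $\wh{j}_L$ makes no difference.
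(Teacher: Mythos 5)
Your proof is correct and is exactly the standard argument the paper intends by calling the corollary ``immediate'' (and it matches the treatment in the cited source \cite[\S26]{ko}): $\wh{j}_L$ is weak-$*$ continuous, Howe's finiteness theorem makes the image finite-dimensional and hence closed, and the two inclusions follow. One could add the small remark that $\CJ(\omega)^\eta$ is itself weak-$*$ closed in $\CD(\fs(F))$ (the support condition and the $(H,\eta)$-invariance are both closed conditions), so $\cl(V)\subset\CJ(\omega)^\eta$ and the application of Howe's theorem is legitimate, but this is a minor point of bookkeeping rather than a gap.
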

\begin{proof}
See \cite[Proposition 26.1]{ko}.
\end{proof}

\begin{proof}[Proof of Theorem \ref{thm. representability}]
By Proposition \ref{prop. parabolic induction}, it suffices to show
that $\wh{I}^\eta_X$ can be represented when $X$ lies in
$\fc_\reg(F)$ for some elliptic Cartan subspace $\fc$ of $\fs$. Then
Theorem \ref{thm. representability} follows from Theorem \ref{thm.
Iphi}, Lemma \ref{lem. restriction} and the fact that
$\fs(F)=\bigcup_{\textrm{lattice}}L$.
\end{proof}

\begin{lem}\label{lem. restriction}
Let $X\in\fc_\reg(F)$ be an elliptic element and $\omega$ a compact
open neighborhood of $X$ in $\fc_\reg(F)$. Then given a lattice $L$
in $\fs(F)$, there exists $\phi\in\CC_c^\infty(\omega^H)$ such that
$\wh{I}^\eta_X$ and $\wh{I}_\phi$ have the same restriction to $L$.
\end{lem}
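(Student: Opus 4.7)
The plan is to realize $I^\eta_X$ as a weak limit of distributions of the form $I_\phi$ with $\phi\in\CC_c^\infty(\omega^H)$, and then to promote this weak convergence to exact equality on $L$ (after Fourier transform) via Corollary \ref{cor. cor of Howe's finiteness}. Note that both $I^\eta_X$ and every $I_\phi$ with $\Supp(\phi)\subset\omega^H$ belong to $\CJ(\omega)^\eta$: the former because the $H$-orbit of the semisimple element $X\in\omega$ is closed and contained in $\omega^H$, the latter by the support estimate on $I_\phi$.

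First, I would unfold the defining double integral for $I_\phi$. Changing variables $Y\mapsto Y^{h^{-1}}$ and using $\eta(h^{-1})=\eta(h)$, one gets $I_\phi(f)=\int_{\fs(F)}\phi(Y)\,O^\eta(Y,f)\,\d Y$, the integral converging because $\Supp(\phi)\subset\fs_\rs^\fc$ (here I use that $\eta|_T$ is trivial and that $\vol(Z\bs T)=1$). Applying the Weyl integration formula in the $\fc$-direction and the identity $O^\eta(Y_0^h,f)=\eta(h)O^\eta(Y_0,f)$ then gives
$$I_\phi(f)=\frac{1}{|W_\fc|}\int_{\fc_\reg(F)}|D^\fs(Y_0)|_F\,O^\eta(Y_0,f)\,\Phi(Y_0)\,\d Y_0,\qquad \Phi(Y_0):=\int_{T\bs H}\phi(Y_0^h)\eta(h)\,\d h.$$
This realizes $I_\phi$ as an average of orbital integrals $O^\eta_{Y_0}$ weighted by $\Phi$.

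Second, I would construct the approximants. Exploiting the local isomorphism $(T\bs H)\times\omega\to\omega^H$ of degree $|W_\fc|$ near the identity, fix a bump $\rho\in\CC_c^\infty(T\bs H)$ supported where $\eta\equiv 1$ and normalized so that $\int_{T\bs H}\rho(h)\,\d h=|W_\fc|/|D^\fs(X)|_F^{1/2}$. For a decreasing sequence of compact open neighborhoods $\omega_n\subset\omega$ of $X$ with $\bigcap\omega_n=\{X\}$, define $\phi_n$ by transport along this slice so that $\phi_n(Y_0^h)=\rho(h)\,\mathbf{1}_{\omega_n}(Y_0)$ on its support and $\phi_n\in\CC_c^\infty(\omega^H)$; then $\Phi_n=\big(|W_\fc|/|D^\fs(X)|_F^{1/2}\big)\,\mathbf{1}_{\omega_n}\cdot c_n$ for a suitable scalar. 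Since $Y_0\mapsto|D^\fs(Y_0)|_F\,O^\eta(Y_0,f)$ is locally constant on $\fc_\reg(F)$ for each fixed $f\in\CC_c^\infty(\fs(F))$, the representation of $I_{\phi_n}(f)$ obtained above yields $I_{\phi_n}(f)\to I^\eta(X,f)$ for every $f$, i.e.\ $I^\eta_X$ lies in the weak closure of $V:=\mathrm{span}\{I_\phi:\phi\in\CC_c^\infty(\omega^H)\}\subset\CJ(\omega)^\eta$.

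Third, I would invoke Corollary \ref{cor. cor of Howe's finiteness}: since $V\subset\CJ(\omega)^\eta$, we have $\wh{j}_L(\cl(V))=\wh{j}_L(V)$ for the given lattice $L$. In particular $\wh{j}_L(I^\eta_X)\in\wh{j}_L(V)$, so there exists $\phi\in\CC_c^\infty(\omega^H)$ with $\wh{j}_L(I^\eta_X)=\wh{j}_L(I_\phi)$, which is exactly the desired equality $\wh{I}^\eta_X\big|_L=\wh{I}_\phi\big|_L$. The only genuinely subtle ingredient is the finiteness result, which is already in place; the rest is routine $p$-adic approximation. The main technical point to verify carefully is therefore only that $\phi_n$ can be chosen globally in $\CC_c^\infty(\omega^H)$ while retaining the prescribed values on the slice, which follows from the total disconnectedness of $F$ and the local structure of the covering $(T\bs H)\times\omega\to\omega^H$.
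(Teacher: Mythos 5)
Your proposal is correct and follows essentially the same strategy as the paper: both arguments show that $I^\eta_X$ lies in the weak closure of $I_\omega=\{I_\phi:\phi\in\CC_c^\infty(\omega^H)\}\subset\CJ(\omega)^\eta$ and then invoke Corollary~\ref{cor. cor of Howe's finiteness} to pass from closure to exact agreement of $\wh{j}_L$-images, the only difference being that you exhibit an explicit approximating sequence whereas the paper argues by annihilators, using the surjectivity of $\phi\mapsto\bigl(Y\mapsto I^\eta_Y(\phi)\bigr)$ onto $\CC_c^\infty(\omega)$ after shrinking $\omega$ (the two are dual formulations of the same density fact). One small normalization slip: with $\phi_n(Y_0^h)=\rho(h)\,{\mathbf 1}_{\omega_n}(Y_0)$ and $\rho$ fixed, your own computation yields $I_{\phi_n}(f)=I^\eta(X,f)\cdot\vol(\omega_n)\to 0$ rather than $I^\eta(X,f)$, so $\phi_n$ (equivalently $\rho$) must carry an extra factor $\vol(\omega_n)^{-1}$ for the intended weak convergence.
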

\begin{proof}
The proof is similar as that of \cite[Lemma 26.5]{ko}. We first show
that $I^\eta_X$ lies in the closure of the linear space
$$I_\omega:=\{\ I_\phi:\ \phi\in\CC_c^\infty(\omega^H)\ \},$$ which
is a subspace of $\CJ(\omega)^\eta$. It suffices to show that: if
$I_\phi(f)=0$ for all $\phi\in\CC_c^\infty(\omega^H)$ then
$I_X^\eta(f)=0$. Note that
$$I_\phi(f)=\int_\omega I^\eta_Y(f)\cdot I_Y^\eta(\phi)\ \d Y.$$ We
may shrink $\omega$ so that every function
$\varphi\in\CC_c^\infty(\omega)$ arises as $Y\mapsto I^\eta_Y(\phi)$
for some $\phi\in\CC_c^\infty(\omega)$. Thus $I^\eta_X(f)=0$ if
$I_\phi(f)=0$ for all $\phi\in\CC_c^\infty(\omega^H)$. By Corollary
\ref{cor. cor of Howe's finiteness}, we see that
$\wh{j}_L(I^\eta_X)\in\wh{j}_L(I_\omega)$ for any lattice $L$. In
other words, given a lattice $L$, there exists a
$\phi\in\CC_c^\infty(\omega^H)$ such that $\wh{I}^\eta_X$ and
$\wh{I}_{\phi}$ have the same restriction to $L$.
\end{proof}

\subsection{Bounding the orbital integrals }
\label{subsec. majorize of orb} In this subsection, we will show the
boundness of the normalized orbital integrals along a Cartan
subspace (Theorem \ref{thm. upper bound}), which is crucial for
proving Theorem \ref{thm. Iphi}. We follow the same line as the
proof of \cite[Theorem 14]{hc1}, where there are no Shalika germs
involved.

\begin{thm}\label{thm. upper bound}\begin{enumerate}
\item Let $\fc$ be a Cartan subspace of $\fs$ and
$f\in\CC_c^\infty(\fs(F))$. Then
$$\sup_{X\in\fc_\reg(F)}|I^\eta(X,f)|<+\infty.$$
\item Let $\fc'$ be a Cartan subspace of $\fs'$ and
$f'\in\CC_c^\infty(\fs'(F))$. Then
$$\sup_{X\in\fc'_\reg(F)}|I(X,f')|<+\infty.$$
\end{enumerate}
\end{thm}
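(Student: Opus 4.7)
The plan is to follow Harish-Chandra's strategy from \cite[Theorem~14]{hc1}, adapted to the symmetric-space setting using the structure theory of \S4. I only describe the argument for part (1); part (2) follows along identical lines with Proposition \ref{prop. descendant 2 lie} and Proposition \ref{prop. inequality for nilpotent 2} replacing their unprimed counterparts. First, by a partition of unity on $\Supp(f)$ and the observation that $O^\eta(X,f)=0$ when the $H$-orbit of $X$ does not meet $\Supp(f)$, it is enough to prove a local statement: for every semisimple $X_0 \in \fc(F)$ lying in the closure of $\fc_\reg(F) \cap \Supp(f)^H$, there is a neighborhood $\Omega$ of $X_0$ in $\fc(F)$ on which $|I^\eta(X,f)|$ is bounded for $X \in \Omega \cap \fc_\reg(F)$. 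When $X_0$ is regular this is immediate from local constancy on $\fc_\reg(F)$, so the issue is purely local near singular semisimple points.

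For a singular semisimple $X_0 \in \fc(F)$ with $X_0 \neq 0$, I would invoke the analytic Luna slice at $X_0$ together with Proposition \ref{prop. descent of orbital integral}: for $X \in \fc(F)$ sufficiently close to $X_0$, the orbital integral $O^\eta(X,f)$ equals an orbital integral of some $f_{X_0} \in \CC_c^\infty(\fs_{X_0})$ taken with respect to the descendant action of $H_{X_0}$ on $\fs_{X_0}$. By Proposition \ref{prop. descendant 1 lie} this descendant factors as $(\GL_m(F)_A, \fg\fl_m(F)_A) \times (H_{n-m}, \fs_{n-m})$ with $m \geq 1$; the discriminant $|D^\fs(X)|_F$ and the transfer factor $\kappa$ decompose compatibly along the two factors (the latter via Lemma \ref{lem. compatibility of transfer factor}). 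Since Harish-Chandra's classical theorem for reductive Lie algebras bounds normalized orbital integrals on $(\GL_m(F)_A, \fg\fl_m(F)_A)$, and the analogous bound on $(H_{n-m}, \fs_{n-m})$ is available by induction on $n$, boundedness near such $X_0$ follows.

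The main obstacle is the base case $X_0 = 0$, where no descent is available and the majorization must be produced directly as $X \in \fc_\reg(F)$ approaches $0$. Here I would localize $\Supp(f)$ around each of the finitely many nilpotent $H$-orbits meeting its closure and apply the Jacobson-Morozov package of Lemmas \ref{lem. sl2-triple}--\ref{lem. eigen for nilp.}: using the one-parameter scaling $\D_t(X_0)$ one transports the relevant piece of $f$ onto the transversal $Y_0 + \sum_i F\cdot U_i$ of Lemma \ref{lem. eigen for nilp.}, and the orbital integral over $H_X \backslash H$ becomes an explicit integral whose majorant is governed by the weights $\lambda_i$. The sharp inequality $r + m > n^2 + \tfrac{n}{2}$ of Proposition \ref{prop. inequality for nilpotent}, balanced against the normalizing factor $|D^\fs(X)|_F^{1/2}$, is precisely what is needed to force this majorant to remain integrable and uniformly bounded as $X \to 0$ along $\fc_\reg(F)$. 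The entire substance of the theorem is concentrated in this step; everything else is bookkeeping on top of the descent machinery and the nilpotent inequality established in \S4.
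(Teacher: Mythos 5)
Your proposal has the same architecture as the paper's proof: reduce to a local statement via Lemma~\ref{lem. a compact lem1} and its corollary; handle nonzero singular semisimple $X_0$ by the analytic Luna slice, Proposition~\ref{prop. descent of orbital integral}, Proposition~\ref{prop. descendant 1 lie}, Harish-Chandra's bound on the $\GL_m$-factor, and induction on $n$; and locate the real difficulty at the origin, where the $\fs\fl_2$-triple machinery and the inequality $r+m>n^2+\tfrac{n}{2}$ of Proposition~\ref{prop. inequality for nilpotent} must enter. All of these are precisely the ingredients the paper uses, and part (2) is handled by the same substitutions you indicate.

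Where your write-up is too loose is the description of what actually happens near $0$. First, a small slip: the relevant transversal is $X_0+\fs_{Y_0}$ (the slice through the nilpotent base point, with $\{U_i\}$ a basis of $\fs_{Y_0}$), not $Y_0+\sum_i F\cdot U_i$. Second, and more substantively, the paper does not obtain boundedness near $0$ by exhibiting a single ``explicit integral whose majorant is governed by the weights $\lambda_i$.'' Instead it establishes the homogeneity identity (Lemma~\ref{lem. 36} and equation~(\ref{equ. for iteration })), namely
\[
I^\eta(t^{-1}X,f)=|t|_F^{\,n^2+\frac{n}{2}-r-m}\bigl(I^\eta(X,f)+\tau_X^\eta(\beta'-\beta)\bigr),
\]
and then argues by iterating this relation: since the exponent is strictly negative, the factor $|t|_F^{\,n^2+\frac{n}{2}-r-m}$ is a contraction and the iteration produces a convergent geometric series. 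Crucially, the error term $\tau_X^\eta(\beta'-\beta)$ is only known to be bounded because it is an orbital integral of a function supported off the orbit closure $\CN_{2n^2-r}$; this is where Lemma~\ref{lem. lem 29} (the case $\Supp(f)\cap\CN=\emptyset$) and an induction on $r=\dim\fs_{Y_0}$ (equivalently, on the dimension of the nilpotent orbit) come in, with the base case $r=n$ handled directly and the point $0\in\CN$ treated separately at the very end (Lemma~\ref{lem. bound for 0}). Your ``localize around each nilpotent orbit'' intuition is pointing at exactly this, but as stated it does not explain why the correction term is controlled or how the scaling is concatenated, and those are what make the argument close. So: right ideas, right key inequality, but the $X_0=0$ step as you describe it is a gesture toward the iteration, not the iteration itself.
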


We will prove only the first assertion with respect to $\fs$. The
second assertion can be proved in the same way. We use inductive
method to prove this theorem. In the case $n=1$, our case
essentially is the Gan-Gross-Prasad conjecture for unitary groups of
rank 1. Thus Theorem \ref{thm. upper bound} follows from the
discussions in \cite[\S4.1]{zh} (in particular, Lemma 4.1 in loc.
cit.). Now we assume that Theorem \ref{thm. upper bound} holds for
$\CC_c^\infty(\fs_m(F))$ for every $m<n$.

\begin{lem}\label{lem. a compact lem1}
Fix a compact set $\omega$ of $\fs(F)$ and a Cartan subspace $\fc$.
Then the set of all $X\in\fc(F)$ such that $X\in\cl(\omega^H)$ is
relative compact in $\fc(F)$.
\end{lem}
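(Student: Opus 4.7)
The plan is to use the $\FH$-invariant theory of $\fs$ to project the problem onto the affine quotient $\fs/\!/\FH$ and then exploit finiteness of the restriction to the Cartan subspace $\fc$.

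First I would introduce the categorical quotient $\pi\colon\fs\to\fs/\!/\FH$, whose $F$-points form a totally disconnected space, and consider the continuous map $\pi\colon\fs(F)\to(\fs/\!/\FH)(F)$. Since $\omega$ is compact, $\pi(\omega)$ is compact, hence closed, in $(\fs/\!/\FH)(F)$. Because $\pi$ is $\FH$-invariant, $\pi(\omega^\FH)=\pi(\omega)$, and continuity of $\pi$ then gives $\pi(\cl(\omega^\FH))\subset\cl(\pi(\omega))=\pi(\omega)$. Therefore the set in question is contained in $(\pi|_\fc)^{-1}(\pi(\omega))$, and it is enough to show that $\pi|_\fc$ is proper in the $p$-adic topology (i.e.\ preimages of compact sets are compact).

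The key point is the Chevalley restriction theorem for our symmetric pair: the restriction of $\FH$-invariant polynomials on $\fs$ to $\fc$ lands in $W_\fc$-invariants, and the induced morphism $\fc/\!/W_\fc\to\fs/\!/\FH$ is finite. For the specific pairs $(\FG,\FH)=(\GL_{2n},\GL_n\times\GL_n)$ and $(\FG',\FH')$ this can be read off directly from the explicit description of invariants in \S4: after the identification $\fs\simeq\gl_n\times\gl_n$, an element $(X,Y)$ is sent to its orbit which is determined by the conjugacy class of the product $XY$, and the invariants are the coefficients of the characteristic polynomial of $XY$; the analogous statement on $\fc$ recovers the same polynomial up to the Weyl group action. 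Composing with the obvious finite map $\fc(F)\to(\fc/\!/W_\fc)(F)$, which is proper since $W_\fc$ is finite, one concludes that $\pi|_\fc$ is proper on $F$-points.

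The main obstacle is just making the Chevalley-type restriction statement rigorous in a way that gives properness on $F$-points rather than merely on geometric points. The cleanest way to do this is to check finiteness of $\fc/\!/W_\fc\to\fs/\!/\FH$ as a morphism of affine $F$-varieties (so that $F$-points of the source map properly to $F$-points of the target via the standard Hilbert basis / integral dependence argument), and for our two symmetric pairs this is explicit from the description of the semisimple orbits given in Propositions \ref{prop. descendant 1 lie} and \ref{prop. descendant 2 lie}. Once this is in hand, the lemma follows immediately from the chain $\{X\in\fc(F):X\in\cl(\omega^\FH)\}\subset(\pi|_\fc)^{-1}(\pi(\omega))$ and the compactness of the right-hand side.
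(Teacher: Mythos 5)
Your proposal follows essentially the same route as the paper's proof: factor $\pi|_\fc$ through the quotient $\fc(F)\to(\fc/W_\fc)(F)$, invoke properness of the finite group quotient, and use that the morphism $(\fc/W_\fc)(F)\to(\fs/\FH)(F)$ has compact preimages of compact sets (the paper phrases this as a ``closed inclusion,'' you phrase it via the Chevalley restriction theorem and finiteness). You spell out the Chevalley-restriction input and the explicit description of invariants a bit more than the paper does, but the argument is the same.
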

\begin{proof}
It suffices to assume $\omega$ is closed. Consider the closed
inclusion $i:(\fc/W)(F)\ra(\fs/\FH)(F)$ where $W$ is the Weyl group
of $\fc$, and the natural map $\pi:\fs(F)\ra(\fs/\FH)(F)$. Then
$\pi(\omega)$ and thus $i^{-1}(\pi(\omega))$ is compact. The lemma
follows from the fact that the map $\fc(F)\ra(\fc/W)(F)$ is a proper
map between locally compact Hausdorff spaces.
\end{proof}

\begin{cor}\label{cor. a compact cor}
For $f\in\CC_c^\infty(\fs(F))$, $I^\eta(X,f)=0$ for
$X\in\fc_\reg(F)$ lying outside a compact subset of $\fc(F)$.
\end{cor}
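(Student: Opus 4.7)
The plan is to deduce this directly from Lemma \ref{lem. a compact lem1} by taking $\omega=\Supp(f)$. Set $\omega:=\Supp(f)$, which is compact by hypothesis. First I would unpack the definition of the orbital integral
\[
O^\eta(X,f)=\int_{H_X\bs H} f(X^h)\eta(h)\ \d h,
\]
and observe that if $O^\eta(X,f)\neq 0$ (equivalently $I^\eta(X,f)\neq 0$, since $|D^\fs(X)|_F$ is finite for $X\in\fc_\reg(F)$), then there exists at least one $h\in H$ with $X^h\in\omega$. By the definition of $\omega^H$ in the paper, this forces $X\in\omega^H\subseteq\cl(\omega^H)$.

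Now I would invoke Lemma \ref{lem. a compact lem1}: the set $\{X\in\fc(F):X\in\cl(\omega^H)\}$ is relatively compact in $\fc(F)$. Taking its closure in $\fc(F)$ gives a compact set $K\subset\fc(F)$ such that any $X\in\fc_\reg(F)$ with $I^\eta(X,f)\neq 0$ must lie in $K$. Equivalently, $I^\eta(X,f)=0$ for every $X\in\fc_\reg(F)$ outside the compact subset $K$, which is exactly the assertion of the corollary.

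There is no real obstacle here; the only point one has to be careful about is the direction of the action (since $X^h=h^{-1}\cdot X$), but this does not affect the argument because $\omega^H$ is stable under replacing $h$ by $h^{-1}$ in its defining expression. The proof is genuinely a one-line consequence of the preceding lemma, which is why it is stated as a corollary rather than an independent theorem.
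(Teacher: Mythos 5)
Your proof is correct and is exactly the intended one-line deduction from Lemma \ref{lem. a compact lem1} (the paper gives no separate proof, which confirms this is the expected route). The only detail worth noting is the one you already flag: $X^h=h^{-1}\cdot X\in\omega$ gives $X=h\cdot(h^{-1}\cdot X)\in\omega^H$, so membership in $\omega^H$ indeed follows.
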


We first prove Theorem \ref{thm. upper bound} in the following
situation.
\begin{lem}\label{lem. lem 29}
Let $f$ be in $\CC_c^\infty(\fs(F)-\CN)$. Then $I^\eta(\cdot,f)$ is
bounded on $\fc_\reg(F)$.
\end{lem}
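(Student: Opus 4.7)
The plan is to combine the vanishing result of Corollary~\ref{cor. a compact cor} with a semisimple descent around each non-regular point of $\fc(F)$, exploiting the support hypothesis at the origin and induction on $n$ elsewhere. By Corollary~\ref{cor. a compact cor}, $I^\eta(\cdot,f)$ vanishes outside some compact $K\subset\fc(F)$; a routine argument using the Weyl integration parameterization $\beta:(\FT\bs\FH)\times\fc\to\fs$ together with local constancy of $f$ shows that $X\mapsto I^\eta(X,f)$ is locally constant on $\fc_\reg(F)$, hence bounded on every compact subset thereof. It therefore suffices to bound $|I^\eta(X,f)|$ in a $\fc$-neighborhood of each non-regular $X_0\in K$.

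For $X_0=0$ I would use the hypothesis directly. The categorical quotient map $\pi:\fs(F)\to(\fs/\FH)(F)$ satisfies $\pi^{-1}(\pi(0))=\CN$, because the $\FH$-invariants on $\fs$ are generated by the coefficients of the characteristic polynomial of $AB$ (for $(A,B)\in\fs$), so an element has vanishing invariants precisely when $AB$ is nilpotent, which characterizes $\CN$. Since $\Supp(f)$ is compact and disjoint from $\CN=\pi^{-1}(0)$, one can choose an open neighborhood $V$ of $0$ in $(\fs/\FH)(F)$ with $\pi^{-1}(V)\cap\Supp(f)=\emptyset$; continuity of $\pi|_\fc$ then produces a $\fc$-neighborhood $W$ of $0$ such that every $X\in W\cap\fc_\reg(F)$ has $X^H\subset\pi^{-1}(V)$, forcing $O^\eta(X,f)=0$.

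For $X_0\neq0$ I would invoke Proposition~\ref{prop. descent of orbital integral} at $X_0$ to produce a descended function $f_{X_0}\in\CC_c^\infty(\fs_{X_0}(F))$ such that $O^\eta(X,f)$ equals the corresponding orbital integral of $f_{X_0}$ on $\fs_{X_0}$, for $X=X_0+Y$ regular with $Y$ near $0$ in the Cartan $\fc\subset\fs_{X_0}$. A direct comparison using Proposition~\ref{prop. descendant 1 lie} shows that $|D^\fs(X)|_F$ and $|D^{\fs_{X_0}}(Y)|_F$ differ by a bounded, non-vanishing factor on a neighborhood of $X_0$, so the question reduces to bounding the normalized orbital integral of $f_{X_0}$ along $\fc$ in the descendant. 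By Proposition~\ref{prop. descendant 1 lie}, the descendant decomposes as a product of a conjugation action on some $\fg\fl_m(F)_A$, to which the Harish-Chandra bound of \cite{hc1} applies, and a symmetric pair of the same type but of strictly smaller rank $n-m$, to which I apply the inductive hypothesis on Theorem~\ref{thm. upper bound}.

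The main obstacle I expect is the organization of the induction: since $\Supp(f_{X_0})$ need not avoid the nilpotent cone $\CN_{X_0}$ of $\fs_{X_0}$, the descent step at $X_0\neq0$ requires the \emph{full} Theorem~\ref{thm. upper bound} at strictly smaller $n$, not merely the present restricted lemma. I would therefore set up a joint induction on $n$, proving Theorem~\ref{thm. upper bound} and Lemma~\ref{lem. lem 29} together, with the latter providing the ``off-nilpotent'' piece of the argument at each rank.
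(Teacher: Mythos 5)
Your argument follows the paper's own proof essentially step by step: reduction to a neighborhood of each point of $\fc(F)$ via the compactness results, direct vanishing near $0$ from the hypothesis $\Supp(f)\cap\CN=\emptyset$, and semisimple descent plus Harish-Chandra bounds plus induction on $n$ at nonzero $X_0$. Your observation that the descent at $X_0\neq 0$ requires the full Theorem~\ref{thm. upper bound} at strictly smaller rank (rather than only this restricted lemma), and hence a joint induction, is a correct reading of the logical structure that the paper's terse ``follows from the induction on $n$'' leaves implicit.
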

\begin{proof}
By Lemma \ref{lem. a compact lem1} and Corollary \ref{cor. a compact
cor}, it suffices to prove: given $X_0\in\fc(F)$, we can choose a
neighborhood $V$ of $X_0$ in $\fc(F)$ such that  $$\sup_{X\in
V'}|I^\eta(X,f)|<+\infty,\quad\textrm{where }V'=V\cap\fs_\rs(F).$$
When $X_0\neq0$, using the descent of orbital integrals (Proposition
\ref{prop. descent of orbital integral}), we reduce to considering
the orbital integrals for $\CC_c^\infty(\fs_{X_0})$ with respect to
the action of $H_{X_0}$. Since $X_0\neq0$, $(H_{X_0},\fs_{X_0})$ is
of the form
$$(\GL_m(F)_A,\fg\fl_m(F)_A)\times(H_{n-m},\fs_{n-m}(F))$$ for some
semisimple $A$ in $\GL_n(F)$ and some integer $0<m\leq n$. Then the
result follows from the inductive hypothesis on $n-m$ and the bound
of the usual orbital integrals for $\CC_c^\infty(\gl_m(F)_A)$ by
Harish-Chandra. When $X_0=0$, since $\Supp(f)\cap\CN=\emptyset$, we
can find a neighborhood $V$ of $X_0$ such that $I^\eta(X,f)=0$ on
$V'$.
\end{proof}

Now let $\fs_0$ be the set of $Y\in\fs(F)$ such that: there exists
an open neighborhood $\omega$ of $Y$ in $\fs(F)$ so that
$\sup\limits_{X\in\fc_\reg(F)}|I^\eta(X,f)|<\infty$ for all
$f\in\CC_c^\infty(\fs(F))$ with $\Supp(f)\subset\omega$. Since $\CN$
is closed in $\fs(F)$, Lemma \ref{lem. lem 29} implies that
$\fs(F)-\CN\subset\fs_0$. To prove Theorem \ref{thm. upper bound},
it remains to show that $\CN\subset\fs_0$. We need some preparation
below.

Fix $X_0\neq0$ in $\CN$. Let $(X_0,\d(X_0),Y_0)$ be an
$\fs\fl_2$-triple as in Lemma \ref{lem. sl2-triple}. Consider the
map
$$\psi:H\times\fs_{Y_0}\lra\fs(F),\quad (h,U)\mapsto(X_0+U)^h.$$
By the same discussion as that of \cite[Part VI, \S4]{hc1}, we see
that $\psi$ is everywhere submersive. Set
$\omega=\psi(H\times\fs_{Y_0})$, which is an open and $H$-invariant
subset of $\fs(F)$. Since $\psi$ is everywhere submersive, we have a
surjective linear map
$$\CC_c^\infty(H\times\fs_{Y_0})\lra\CC_c^\infty(\omega),
\quad\alpha\mapsto f_\alpha$$ such that
$$\int_\omega f_\alpha(X)p(X)\ \d X=\int_{H\times\fs_{Y_0}}\alpha(h,u)
p\left((X_0+U)^h\right)\ \d h\ \d U$$ for every locally integrable
function $p$ on $\omega$.

Let $\Gamma$ be the Cartan subgroup of $H$ with the Lie algebra
$F\cdot\d(X_0)$. Please refer to Lemma \ref{lem. eigen for nilp.}
and Proposition \ref{prop. inequality for nilpotent} for the
notations below. Put $t=\xi(\gamma)$ and write
$U_\gamma=\xi(\gamma)U^{\gamma^{-1}}$ for
$U\in\fs_{Y_0},\gamma\in\Gamma$. We have
$$(X_0+U_\gamma)^{\gamma h}=(X_0+tU^{\gamma^{-1}})^{\gamma h}
=t(X_0+U)^h.$$ For $\gamma\in\Gamma$ and
$\alpha\in\CC_c^\infty(H\times\fs_{Y_0})$, define
$\alpha'\in\CC_c^\infty(H\times\fs_{Y_0})$ to be
$$\alpha'(h,U)=\alpha(\gamma^{-1}h,U_{\gamma^{-1}}).$$

\begin{lem}\label{lem. 36}
Fix $\gamma\in\Gamma$ and $\alpha\in\CC_c^\infty(H\times\fs_{Y_0})$.
Then
$$f_\alpha(t^{-1}X)=|t|_F^{2n^2-r-m}f_{\alpha'}(X),\quad X\in\omega.$$
\end{lem}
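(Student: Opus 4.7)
The plan is to test the claimed identity against an arbitrary locally integrable function $p$ on $\omega$, apply the defining property of $f_\alpha$, and then perform a two-variable change of variables on $H \times \fs_{Y_0}$ that matches the dilation $X \mapsto t^{-1}X$ on $\omega$.

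First, I would substitute $X = tZ$ in $\int_\omega f_\alpha(t^{-1}X)\,p(X)\,\d X$, picking up a factor of $|t|_F^{\dim_F \fs(F)} = |t|_F^{2n^2}$, and then apply the defining relation of $f_\alpha$ to the locally integrable function $Z \mapsto p(tZ)$. This rewrites the quantity as
$$|t|_F^{2n^2}\int_{H\times \fs_{Y_0}}\alpha(h,U)\,p\bigl(t(X_0+U)^h\bigr)\,\d h\,\d U.$$
The crucial algebraic identity is $t(X_0+U)^h = (X_0+U_\gamma)^{\gamma h}$, which follows from $X_0^{\gamma h} = \xi(\gamma)X_0^h = tX_0^h$ and the analogous computation $(U_\gamma)^{\gamma h} = tU^h$ using the definition $U_\gamma = \xi(\gamma)U^{\gamma^{-1}}$. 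Substituting $h' = \gamma h$ (Haar-invariant on $H$) and $U' = U_\gamma$ then transforms the integrand into $\alpha(\gamma^{-1}h',(U')_{\gamma^{-1}})\,p((X_0+U')^{h'}) = \alpha'(h',U')\,p((X_0+U')^{h'})$.

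The main obstacle, and the only nontrivial computation, is the Jacobian of the linear map $U\mapsto U_\gamma = \xi(\gamma)\,\Ad(\gamma)U$ on $\fs_{Y_0}$. Here is where Lemma~\ref{lem. eigen for nilp.} enters: in the basis $Y_0 = U_1,\ldots,U_r$ of $\fs_{Y_0}$ we have $\Ad(\gamma)U_i = \xi_i(\gamma)U_i$, so $U_i\mapsto \xi(\gamma)\xi_i(\gamma)U_i$ and the Jacobian has absolute value
$$|\xi(\gamma)|_F^{\,r}\prod_{i=1}^r |\xi_i(\gamma)|_F \;=\; |t|_F^{\,r}\cdot|t|_F^{\,m} \;=\; |t|_F^{\,r+m},$$
using $|\xi_i(\gamma)|_F^2 = |\xi(\gamma)|_F^{\lambda_i} = |t|_F^{\lambda_i}$ and the definition $m = \tfrac12\sum_i\lambda_i$ (and recalling that $\dim\fs_{Y_0}=r$ by Lemma~\ref{lem. nilp2}). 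Thus $\d U = |t|_F^{-(r+m)}\,\d U'$.

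Combining the two factors gives
$$\int_\omega f_\alpha(t^{-1}X)\,p(X)\,\d X \;=\; |t|_F^{\,2n^2-r-m}\int_{H\times\fs_{Y_0}}\alpha'(h',U')\,p\bigl((X_0+U')^{h'}\bigr)\,\d h'\,\d U',$$
and one more application of the defining relation of $f_{\alpha'}$ identifies the right-hand side with $|t|_F^{\,2n^2-r-m}\int_\omega f_{\alpha'}(X)\,p(X)\,\d X$. Since $p$ is arbitrary, the pointwise identity $f_\alpha(t^{-1}X) = |t|_F^{\,2n^2-r-m}f_{\alpha'}(X)$ follows. The content of the lemma is therefore almost entirely the Jacobian bookkeeping, which packages the discrepancy between the scaling exponent $2n^2$ of the ambient space and the weighted dimension $r+m$ of the nilpotent slice.
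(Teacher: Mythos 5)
Your proposal is correct and follows essentially the same route as the paper: test against an arbitrary locally integrable $p$, scale to pull out $|t|_F^{2n^2}$, apply the defining relation of $f_\alpha$, use the identity $t(X_0+U)^h=(X_0+U_\gamma)^{\gamma h}$, and compute the Jacobian of the $\Gamma$-twisted dilation on $\fs_{Y_0}$ via Lemma~\ref{lem. eigen for nilp.}. The only cosmetic difference is that you change variables to $(h',U')=(\gamma h, U_\gamma)$ while the paper writes the substitution in terms of $(\gamma^{-1}h, U_{\gamma^{-1}})$; the Jacobians $|t|_F^{r+m}$ and $|t|_F^{-(r+m)}$ are just the forward and inverse versions of the same map.
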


\begin{proof}
Choose any function $\alpha$ in $\CC_c^\infty(\omega)$. We have
$$\begin{aligned}
&\int_{\fs(F)}f_\alpha(t^{-1}X)p(X)\ \d X\\
=&|t|_F^{2n^2}\int_{\fs(F)}f_\alpha(X)p(tX)\ \d X\\
=&|t|_F^{2n^2}\int_{H\times\fs_{Y_0}}\alpha(h,U)p\left(t(X_0+U)^h\right)\
\d h\ \d U\\
=&|t|_F^{2n^2}\int_{H\times\fs_{Y_0}}\alpha(h,U)p\left((X_0+U_\gamma)^
{\gamma h}\right)\ \d h\ \d U\\
=&|t|_F^{2n^2}\int_{H\times\fs_{Y_0}}\alpha(\gamma^{-1}h,U_{\gamma^{-1}})
p\left((X_0+U)^h\right)\left|\frac{\d U_{\gamma^{-1}}}{\d
U}\right|_F\
\d h\ \d U.\\
\end{aligned}$$
It remains to compute the Jacobian $\left|\frac{\d
U_{\gamma^{-1}}}{\d U}\right|_F$. Choose a basis $U_1,...,U_r$ of
$\fs_{Y_0}$ as in Lemma \ref{lem. eigen for nilp.}. Write
$U=\sum_{1\leq i\leq r}a_iU_i$. Then
$$\begin{aligned}U_{\gamma^{-1}}&=t^{-1}U^\gamma=t^{-1}\sum_ia_iU_i^\gamma\\
&=t^{-1}\sum_ia_i \xi_i(\gamma^{-1})U_i.\end{aligned}.$$ Hence
$$\left|\frac{\d U_{\gamma^{-1}}}{\d U}\right|_F
=|t|_F^{-r}\prod_{1\leq i\leq r} |t|_F^{\frac{-\lambda_i}{2}}
=|t|_F^{-r-m},$$ which implies the lemma.
\end{proof}

For $X\in\fc_\reg(F)$, there is a unique distribution $\tau^\eta_X$
on $\fs_{Y_0}$ such that
$I^\eta(X,f_\alpha)=\tau_X^\eta(\beta_\alpha)$ where
$$\beta_\alpha(U)=\int_H\alpha(h,U)\eta(h)\ \d h,
\quad \alpha\in\CC_c^\infty(H\times\fs_{Y_0}).$$ For
$f\in\CC_c^\infty(\omega)$, define $f'\in\CC_c^\infty(\omega)$ to be
$f'(X)=f(t^{-1}X)$. It is easy to see that
$$I^\eta(X,f')=|t|_F^{\frac{1}{2}(2n^2-n)}I^\eta(t^{-1}X,f).$$ Now fix
$\alpha\in\CC_c^\infty(H\times\fs_{Y_0})$, and set $f=f_\alpha$,
$f'=f'_\alpha$, $\beta=\beta_\alpha$ and $\beta'=\beta_{\alpha'}$.
Note that
$$f'=|t|_F^{2n^2-r-m}f_{\alpha'}.$$
We have
$$\beta'(U)=\int_H\alpha(\gamma^{-1}h,U_{\gamma^{-1}})\eta(h)\ \d h
=\eta(\gamma)\beta(t^{-1}U^\gamma),\quad U\in\fs_{Y_0}.$$ So we
obtain $$|t|_F^{\frac{1}{2}(2n^2-n)}I^\eta(t^{-1}X,f)=
|t|_F^{2n^2-r-m}I^\eta(X,f_{\alpha'})=|t|_F^{2n^2-r-m}\tau^\eta_X(\beta'),$$
or
\begin{equation}\label{equ. for iteration }I^\eta(t^{-1}X,f)
=|t|_F^{n^2+\frac{n}{2}-r-m}\left(I^\eta(X,f)
+\tau_X^\eta(\beta'-\beta)\right).\end{equation} By Proposition
\ref{prop. inequality for nilpotent}, we know
$n^2+\frac{n}{2}-r-m<0$.

Now we continue to prove Theorem \ref{thm. upper bound}. Let
$X_0\in\CN$ and suppose $X_0\neq 0$. We want to construct an open
neighborhood $\omega_0$ of $X_0$ such that $I^\eta(\cdot,f)$ is
bounded on $\fc_\reg(F)$ as soon as $\Supp(f)\subset\omega_0$.
Recall that we denote by $\CN_q$ the union of all $H$-orbits in
$\CN$ of dimension $\leq q$, and notice that $X_0\in\CN_{2n^2-r}$
and $\CN_{2n^2-n}=\CN$. So we can choose an open neighborhood
$\omega_1$ of $X_0$ in $\omega$ such that
$\omega_1\cap\CN_{2n^2-r}\subset X_0^H$, and can assume
$\omega_1=\omega_1^H$. By \cite[Lemma 37]{hc1}, we can choose an
open neighborhood $\CU$ of zero in $\fs_{Y_0}$ such that
$X_0+\CU\subset\omega_1$ and $\left(X_0+\CU\right)\cap
X_0^H=\{X_0\}$.

Fix $\gamma\in\Gamma$ such that $\eta(\gamma)=1$ and
$|t|_F=|\xi(\gamma)|_F>1$. Choose an open neighborhood $\CU_0$ of
zero in $\CU$ such that $t^{-1}\CU_0^\gamma\cup
t\CU_0^{\gamma^{-1}}\subset\CU$. Put $\CN^*=\CN-\{0\}$.

\begin{lem}\label{lem. 38}
$\CN^*\subset\fs_0$.
\end{lem}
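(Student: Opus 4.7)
The plan is to prove Lemma \ref{lem. 38} by a decreasing induction on the orbit dimension $\dim X_0^H$ for $X_0\in\CN^*$; the inductive hypothesis is that every $X_1\in\CN^*$ with $\dim X_1^H>\dim X_0^H$ already lies in $\fs_0$ (the base case being when $X_0^H$ has maximal dimension, so that the hypothesis is vacuous). The candidate neighborhood will be the open, $H$-invariant set $\omega_0:=\psi(H\times\CU_0)=H\cdot(X_0+\CU_0)$, and it suffices to bound $I^\eta(X,f)$ uniformly in $X\in\fc_\reg(F)$ for every $f\in\CC_c^\infty(\omega_0)$. Writing $f=f_\alpha$ for some $\alpha\in\CC_c^\infty(H\times\CU_0)$, I will iterate equation~(\ref{equ. for iteration }) to convert the problem into a bound on the auxiliary distribution $\tau^\eta_X(\beta'-\beta)$.

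Rewriting (\ref{equ. for iteration }) as $I^\eta(X,f)=|t|_F^{c}\bigl(I^\eta(tX,f)+\tau^\eta_{tX}(\beta'-\beta)\bigr)$ with $c:=n^2+\tfrac{n}{2}-r-m<0$ by Proposition \ref{prop. inequality for nilpotent}, and iterating $k$ times, the remainder $|t|_F^{kc}I^\eta(t^k X,f)$ vanishes for $k$ sufficiently large by Corollary \ref{cor. a compact cor}, giving
$$I^\eta(X,f)=\sum_{j=1}^{\infty}|t|_F^{jc}\,\tau^\eta_{t^j X}(\beta'-\beta).$$
The decisive observation is that the map $U\mapsto t^{-1}U^\gamma$ is a contraction on $\fs_{Y_0}$: by Lemma \ref{lem. eigen for nilp.} its eigenvalue norms are $|t|_F^{-1-\lambda_i/2}<1$. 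Since $\beta\in\CC_c^\infty(\fs_{Y_0})$ is locally constant, the difference $\beta'(U)-\beta(U)=\beta(t^{-1}U^\gamma)-\beta(U)$ vanishes on some open neighborhood $\CW$ of $0\in\fs_{Y_0}$.

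Now choose $\chi\in\CC_c^\infty(H)$ with $\int_H\chi(h)\eta(h)\,\d h=1$ and set $\alpha^*(h,U):=\chi(h)(\beta'-\beta)(U)$, so that $\beta_{\alpha^*}=\beta'-\beta$ and $\tau^\eta_{t^j X}(\beta'-\beta)=I^\eta(t^j X,f_{\alpha^*})$. Then $\Supp(f_{\alpha^*})\subset\Supp(\chi)\cdot\bigl(X_0+(\CU\setminus\CW)\bigr)$, which lies in $\omega_1$ (as $X_0+\CU\subset\omega_1$ and $\omega_1$ is $H$-invariant) and is disjoint from $X_0^H$ thanks to $(X_0+\CU)\cap X_0^H=\{X_0\}$. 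By the choice of $\omega_1$, any nilpotent point of $\omega_1\setminus X_0^H$ has orbit dimension strictly greater than $\dim X_0^H$ and so lies in $\fs_0$ by the inductive hypothesis, while non-nilpotent points of $\omega_1\setminus X_0^H$ are in $\fs_0$ by Lemma \ref{lem. lem 29}. Covering the compact set $\Supp(f_{\alpha^*})$ by finitely many such neighborhoods and splitting $f_{\alpha^*}$ via a partition of unity produces a uniform bound $|I^\eta(\cdot,f_{\alpha^*})|\le M_f$ on $\fc_\reg(F)$, whence $|I^\eta(X,f)|\le\tfrac{|t|_F^c}{1-|t|_F^c}\,M_f<\infty$ uniformly in $X$, as required.

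The main obstacle is that the whole scheme relies essentially on the strict geometric decay $|t|_F^c<1$, i.e.\ on the inequality $r+m>n^2+\tfrac{n}{2}$ of Proposition \ref{prop. inequality for nilpotent}; without it the series above diverges and no partition-of-unity/$\fs_0$-argument can rescue the bound. All remaining care is a matter of choosing $\CU_0$ small enough to keep $\Supp(f_{\alpha^*})\subset\omega_1$, and using the local constancy of $\beta$ at $0$ to push $\Supp(f_{\alpha^*})$ off $X_0^H$, so that the inductive hypothesis is applicable to $f_{\alpha^*}$.
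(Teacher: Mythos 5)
Your proposal is correct and follows essentially the same route as the paper's proof. The paper phrases the induction as an increasing induction on $r=\dim\fs_{Y_0}$ starting from the minimal value $r=n$, which is precisely your decreasing induction on orbit dimension $\dim X_0^H=\dim\fh(F)-r$; the construction of the auxiliary function (your $\alpha^*$, the paper's $\alpha_0$), the observation that $\beta'-\beta$ vanishes near $0$ (hence $\Supp(f_{\alpha^*})\cap\CN_{2n^2-r}=\emptyset$), the appeal to Lemma~\ref{lem. lem 29} on the non-nilpotent part and the inductive hypothesis on the nilpotent part, and the geometric-series summation using $n^2+\tfrac{n}{2}-r-m<0$ from Proposition~\ref{prop. inequality for nilpotent} all match the paper's argument step by step.
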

\begin{proof}
We induct on $r=\dim\fs_{Y_0}$ for $X_0\in\CN^*$. Put
$\omega_0=(X_0+\CU_0)^H$, which is an open invariant neighborhood of
$X_0$. Consider the surjective map
$$H\times\CU_0\lra\omega_0,\quad (h,U)\mapsto(X_0+U)^h,$$ which is
everywhere submersive. Consider the surjective linear map
$$\CC_c^\infty(H\times\CU_0)\lra\CC_c^\infty(\omega_0),
\quad \alpha\mapsto f_\alpha,$$ which is the restriction of the map
$\CC_c^\infty(H\times\fs_{Y_0})\ra\CC_c^\infty(\omega)$ as before.
Let $f\in\CC_c^\infty(\omega_0)$ and choose
$\alpha\in\CC_c^\infty(H\times\CU_0)$ such that $f=f_\alpha$. Set
$\beta=\beta_\alpha,\beta'=\beta_{\alpha'}$ and $f'=f'_\alpha$ as
before. Then $\beta-\beta'\in\CC_c^\infty(\CU)$, and
$0\notin\Supp(\beta-\beta')$. Define
$\alpha_0(h,U)=\alpha_1(h)\left(\beta(U)-\beta'(U)\right)$ for $h\in
H,U\in\CU$, where $\alpha_1\in\CC_c^\infty(H)$ and
$\int_{H}\alpha_1(h)\eta(h)\ \d h=1$. For $X\in\fc_\reg(F)$, we have
$$I^\eta(X,f_{\alpha_0})=\tau^\eta_X(\beta_{\alpha_0})
=\tau^\eta_X(\beta-\beta'),$$ and $\Supp
(f_{\alpha_0})\cap\CN_{2n^2-r}=\emptyset$. Now we start the
induction on $r=\dim\fs_{Y_0}$.

First assume that $r=n$. Note that $r=n$ is the initial step.  In
such case we have $n^2+\frac{n}{2}-r-m=-\frac{n}{2}$ and
$$I^\eta(t^{-1}X,f)=|t|_F^{-\frac{n}{2}}\left(I^\eta(X,f)
+\tau^\eta_X(\beta'-\beta)\right),$$ by (\ref{equ. for iteration }).
Put $c=|t|_F^{-\frac{n}{2}}<1$. Since $\Supp
(f_{\alpha_0})\cap\CN=\emptyset$ ($\CN=\CN_{2n^2-n}$), by Lemma
\ref{lem. lem 29}, we have
$$a=\sup_{X\in\fc_\reg(F)}|\tau^\eta_X(\beta'-\beta)|<+\infty.$$
Iteration gives
$$I^\eta(t^{-d}X,f)=|t|_F^{-\frac{dn}{2}}I^\eta(X,f)+\sum_{1\leq k\leq d}
|t|_F^{-\frac{kd}{2}}\tau^\eta_{t^{k-d}X}(\beta'-\beta),\quad
(d\geq1),$$ or
$$I^\eta(X,f)=c^dI^\eta(t^dX,f)+\sum_{1\leq k\leq d}c^d
\tau^\eta_{t^kX}(\beta'-\beta),\quad (d\geq1).$$ Since
$\lim\limits_{d\ra+\infty}I^\eta(t^dX,f)=0$, we get
$$|I^\eta(X,f)|\leq a\sum_{1\leq k<\infty}c^k\leq a\frac{c}{1-c}.$$

Now assume $r>n$. Since
$\Supp(f_{\alpha_0})\cap\CN_{2n^2-r}=\emptyset$, by the inductive
hypothesis and Lemma \ref{lem. lem 29}, $I^\eta(X,f_{\alpha_0})$ is
bounded on $\fc_\reg(F)$ and so is $\tau^\eta_X(\beta-\beta')$.
Applying the same argument as the case $r=n$, we complete the proof
of the lemma.
\end{proof}

Applying the same arguments as those of \cite[Part VI \S7]{hc1}, we
have the following lemma.
\begin{lem}\label{lem. bound for 0}
$0\in\fs_0$.
\end{lem}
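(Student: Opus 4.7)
The plan is to adapt the iterative scaling argument of Lemma \ref{lem. 38}, replacing the adjoint action of $\gamma \in \Gamma$ coming from an $\fs\fl_2$-triple at a non-zero nilpotent element by the scalar dilation $\delta_t : Y \mapsto tY$ on $\fs(F)$. The operator $\delta_t$ commutes with the $H$-action, preserves the Cartan subspace $\fc$ and its regular locus, and fixes the point $0$, so iteration with $|t|_F > 1$ drives the support of any test function toward $0$.

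The first step is to record the homogeneity of the normalized orbital integral under $\delta_t$. Writing $f_t(Y) := f(tY)$, a direct change of variables in the defining integral together with the fact that $|D^\fs|_F^{1/2}$ is homogeneous of a positive degree on $\fc$ (the degree being $\tfrac12(\dim(\fh/\ft)+\dim(\fs/\fc))$, exactly as in the computation in the proof of Proposition \ref{prop. parabolic induction}(2)) gives a relation of the form $I^\eta(X, f_t) = |t|_F^{c}\, I^\eta(tX, f)$ for an explicit positive exponent $c$. The positivity of $c$ here plays the role analogous to the strict inequality $n^2 + n/2 - r - m < 0$ of Proposition \ref{prop. inequality for nilpotent} that made the iteration in Lemma \ref{lem. 38} convergent.

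Next I would iterate. Fix a compact open neighborhood $\omega_0$ of $0$ and $f \in \CC_c^\infty(\omega_0)$. The strategy is to decompose $f = f^* + f^{\sharp}$ where $\Supp(f^*) \cap \CN = \emptyset$ and $f^{\sharp}$ is supported in a prescribed small neighborhood of $\CN \cap \omega_0$. By Lemma \ref{lem. lem 29}, $I^\eta(\cdot, f^*)$ is already bounded on $\fc_\reg(F)$. For the piece $f^{\sharp}$, by a partition of unity one reduces to the case that $\Supp(f^{\sharp})$ is concentrated near a single point of $\CN$; if that point is non-zero, Lemma \ref{lem. 38} gives the desired bound, so the only remaining case is that $f^{\sharp}$ is supported in an arbitrarily small neighborhood of $0$. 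Here the homogeneity relation, iterated against a suitable telescoping, estimates $I^\eta(X, f^{\sharp})$ by a geometric sum $\sum_{k \ge 0} |t|_F^{-kc}$ multiplied by orbital integrals of functions whose support has been pushed off $\{0\}$, which are already controlled.

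The main obstacle is the decomposition step: ensuring that the extraction of the ``pure $0$-nilpotent'' part of $f$ can be carried out compatibly with the iteration and uniformly in $X$. This is precisely where Howe's finiteness theorem (Theorem \ref{thm Howe's finiteness theorem}) enters, guaranteeing that modulo a fixed lattice the relevant space of invariant distributions is finite-dimensional, so the accumulated error can be absorbed. The whole argument then runs parallel to that of \cite[Part VI, \S7]{hc1}, transposed to our symmetric-pair setting with the quadratic character $\eta$.
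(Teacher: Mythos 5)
Your core idea — replacing the $\fs\fl_2$-action of Lemma~\ref{lem. 38} by the scalar dilation and iterating the resulting homogeneity relation for the normalized orbital integral — is exactly the Harish-Chandra argument that the paper defers to by citing Part~VI \S7 of [hc1]; the paper itself supplies no further detail, so your outline is the ``right'' expansion of that one-line reference.

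Two technical slips and one detour. With your normalization $f_t(Y)=f(tY)$ the correct relation is $I^\eta(X,f_t)=|t|_F^{-(2n^2-n)/2}\,I^\eta(tX,f)$: the exponent is negative, and its magnitude is $\tfrac14\dim(\fh/\ft\oplus\fs/\fc)=(2n^2-n)/2$, not $\tfrac12(\dim(\fh/\ft)+\dim(\fs/\fc))$, because $|D^\fs(\cdot)|_F$ is already defined with a square root; this matches the identity $I^\eta(X,f')=|t|_F^{\frac12(2n^2-n)}I^\eta(t^{-1}X,f)$, $f'(Y)=f(t^{-1}Y)$, recorded just before equation~(\ref{equ. for iteration }). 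Taking $|t|_F>1$, the resulting geometric ratio is still $<1$, so the iteration converges as you intend. More substantively, the appeal to Howe's finiteness theorem~\ref{thm Howe's finiteness theorem} is unnecessary, and so is the preliminary $f=f^*+f^\sharp$ split along $\CN$: after Lemmas~\ref{lem. lem 29} and \ref{lem. 38} one has $\fs(F)\setminus\{0\}\subset\fs_0$, and the only remaining issue is the point~$0$. One then simply writes $f=g+f_t$ with $f_t(Y)=f(t^{-1}Y)$, $|t|_F>1$, and $g:=f-f_t$; since $f$ is constant on some lattice $L_f$ near $0$ and $\Supp(f_t)=t\Supp(f)$, the function $g$ vanishes near $0$ and has compact support, so $I^\eta(\cdot,g)$ is uniformly bounded by a finite partition of unity over $\fs_0$. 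Iterating $I^\eta(X,f)=I^\eta(X,g)+|t|_F^{-(2n^2-n)/2}I^\eta(tX,f)$ terminates for each $X$ because $I^\eta(\cdot,f)$ is supported in a compact subset of $\fc(F)$ (Corollary~\ref{cor. a compact cor}), giving $|I^\eta(X,f)|\le\sup_X|I^\eta(X,g)|/(1-|t|_F^{-(2n^2-n)/2})$. There is no residual ``accumulated error'' to absorb, so no finiteness theorem is required at this step — indeed [hc1] predates Howe's theorem and cannot invoke it.
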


At last, Theorem \ref{thm. upper bound} follows from Lemma \ref{lem.
lem 29}, Lemma \ref{lem. 38} and Lemma \ref{lem. bound for 0}.

\subsection{Proof of Theorem \ref{thm. Iphi}}\label{sec. proof of thm
Iphi} Now we continue to prove Theorem \ref{thm. Iphi}. Let $\fc_0$
be an elliptic Cartan subspace of $\fs$ and
$\phi_0\in\CC_c^\infty(\fs_\rs^{\fc_0})$. For simplicity, we write
$\phi=\wh{\phi_0}$, and denote by $\Theta$ the distribution
$I_\phi$, that is, for $f\in\CC_c^\infty(\fs(F))$,
$$\Theta(f):=\int_{Z\bs H}\int_{\fs(F)}f(Y)\phi(Y^h)\eta(h)\ \d Y\ \d h.$$
Our goal is to prove that the distribution $\Theta$ can be
represented by a locally integrable function on $\fs(F)$ which is
locally constant on $\fs_\rs(F)$. We follow the strategy of the
proof of \cite[Theorem 16]{hc1}.

For $t\geq1$, let $\Omega_t$ denote the set of all $h\in H$ such
that $1+\log\|h\|_{Z\bs H}\leq t$. Then $\Omega_t$ is a compact set
modulo $Z$. Let $\Phi_t$ denote the characteristic function of
$\Omega_t$. Then we have
$$\begin{aligned}
\Theta(f)&=\lim_{t\ra+\infty}\int_{Z\bs
H}\Phi_t(h)\int_{\fs(F)}f(Y)\phi(Y^h)\eta(h)\ \d Y\ \d h\\
&=\lim_{t\ra+\infty}\int_{\fs(F)}f(Y)\Theta_t(Y)\ \d Y,
\end{aligned}$$
where $$\Theta_t(Y)=\int_{Z\bs H}\Phi_t(h)\phi(Y^h)\eta(h)\ \d h.$$
We will first show that $\lim\limits_{t\ra+\infty}\Theta_t(Y)$
exists for all $Y\in\fs_\rs(F)$, and then will give an estimation on
$\Theta_t$ to apply Lebesgue's Theorem.

\begin{lem}\label{lem. norm}
Given a compact subset $\omega$ of $\fs(F)$, we can choose
$c_0\geq0$ such that
$$1+\log\|h\|_{T\bs H}\leq c_0
\left(1+\log(\max\{1,|D^\fs(X)|_F^{-1}\})\right)$$ for $h\in
H,X\in\fc_\reg(F)$ such that $X^h\in\omega$.
\end{lem}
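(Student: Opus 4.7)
The plan is to follow Harish-Chandra's classical estimate relating the height of $h$ to a power of the discriminant of its orbital image, adapted to our symmetric-space setting.

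As a preliminary reduction, since $\omega$ is compact, every $H$-invariant polynomial on $\fs(F)$ is bounded on $\omega$, and the composition $\fc(F)\to(\fc/W_\fc)(F)\incl(\fs/\FH)(F)$ is a finite morphism of affine quotients. Consequently the hypothesis $X^h\in\omega$ forces $X$ into a fixed compact subset $\omega_0\subset\fc(F)$ depending only on $\omega$, and I may restrict attention to $X\in\omega_0$ throughout.

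The heart of the argument is to invert the orbit map with denominators of controlled order along the discriminant locus. Fix a closed $F$-embedding $T\bs H\incl\BA^N$ given by right-$T$-invariant regular functions $f_1,\ldots,f_N$ on $H$, normalized so that $\|h\|_{T\bs H}$ is comparable to $\max_i\max(|f_i(h)|_F,1)$. The orbit morphism
$$\beta\colon(T\bs H)\times\fc_\reg\lra\fs_\rs^\fc,\qquad (h,X)\mapsto X^h,$$
is a finite \'etale cover of degree $|W_\fc|$ with Jacobian $|D^\fs(X)|_F^{1/2}$. The key algebraic claim I would establish is that for each $i$ there exist an integer $N_i\geq 0$ and a regular function $Q_i$ on $\fs\times\omega_0$ satisfying
$$D^\fs(X)^{N_i}\,f_i(h)=Q_i(X^h,X)\quad\text{for all }(h,X)\in H\times\omega_0\text{ with }X\in\fc_\reg.$$
For our explicit pair this is concrete: writing $X=(A,B)\in\gl_n\oplus\gl_n$, the invariants are generated by the coefficients of the characteristic polynomial of $AB$, and components of $h$ modulo $T$ are expressible as symmetric functions in the eigenvalues of $X^h$ divided by the corresponding Vandermonde, which is (a power of) $D^\fs(X)$.

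Granting the claim, the right-hand side $Q_i(X^h,X)$ is uniformly bounded as $(X^h,X)$ ranges over $\omega\times\omega_0$, yielding $|f_i(h)|_F\leq C\,|D^\fs(X)|_F^{-N_0}$ with $C,N_0$ depending only on $\omega$. Taking $\max_i$ and then $1+\log(\cdot)$, and absorbing $\log C$ into the constant, gives the desired inequality. The main obstacle is the algebraic step of inverting $\beta$ with bounded pole order along $\{D^\fs=0\}$; this can be deduced either from a general normality/ramification argument (the discriminant being essentially the relative different of the \'etale cover $\beta$), or, for the case at hand, by direct invariant theory of $(\gl_n\oplus\gl_n,\GL_n\times\GL_n)$.
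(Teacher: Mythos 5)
Your overall framework---pass to a compact $\omega_0\subset\fc(F)$ containing $X$ via Lemma \ref{lem. a compact lem1}, then show that affine coordinates on $T\bs H$ become polynomial in $(X^h,X)$ after clearing a power of $D^\fs(X)$---is exactly the Harish-Chandra/Kottwitz argument that the paper invokes; the paper's own proof is the one-line citation of \cite[Lemma 20.3]{ko}. The issue is that the key algebraic claim is asserted rather than established, and the substantiation offered is not correct as written. The phrase ``components of $h$ mod $T$ are expressible as symmetric functions in the eigenvalues of $X^h$ divided by the corresponding Vandermonde'' does not make sense: symmetric functions of the eigenvalues of $X^h$ are $H$-invariant, so they cannot possibly determine the coset $Th$. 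What is actually needed is a formula recovering $h$ modulo $T$ from spectral data of $Y=X^h$, e.g.\ over a splitting field of $\FT$ the eigenprojections
$$P_\alpha(Y)=\prod_{\beta\ne\alpha}\frac{\ad(Y)-\beta(X)}{\alpha(X)-\beta(X)},$$
whose numerators are polynomial in $Y$ (hence bounded on $\omega$) and whose denominators are root differences bounded below by a power of $|D^\fs(X)|_F$; these projections recover the eigenvector matrix and hence $h$ up to $T$, and descending back to $F$ gives the bound on $\|h\|_{T\bs H}$. This is the content of Kottwitz's lemma.

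There are two further loose points worth flagging. First, ``a regular function $Q_i$ on $\fs\times\omega_0$'' does not parse since $\omega_0$ is merely a compact subset; you want a polynomial on $\fs\times\fc$ (restricted to the fiber product $\fs\times_{\fs/\FH}\fc$). Second, the proposed ``general normality/ramification argument'' is handwaved: that the fiber product is normal, and that the discriminant of a finite \'etale cover literally bounds the pole order of its local inverse, each require an argument (tameness or an explicit computation). The claim $D^\fs(X)^{N_i}\,f_i(h)=Q_i(X^h,X)$ is true and is the right statement to aim for, but as it stands you have identified the obstacle without surmounting it; carrying out the spectral-projection (equivalently Cramer's-rule) computation is precisely what closes the gap.
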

\begin{proof}
The proof is the same as that of \cite[Lemma 20.3]{ko}.
\end{proof}

We choose a compact set $\omega\subset\fs(F)$ such that
$\Supp(\phi)\subset\omega,\Supp(f)\subset\omega$. Fix a Cartan
subspace $\fc\subset\fs$. Let $T$ be the centralizer of $\fc$ in
$H$, and $A$ the maximal split torus in $T$. Notice that $A$
consists of elements of the form $\diag(a,a)$ where $a\in A_0$ for
some split torus $A_0$ contained in $\GL_n(F)$. Let $\omega_\fc$ be
the set of $X\in\fc(F)$ such that $X^h\in\omega$ for some $h\in H$.
Then $\omega_\fc$ is compact. For $X\in\omega_\fc,h\in H$, set
$$\phi_X(h)=\phi(X^h)\eta(h).$$
Note that $\phi_X$ has the following properties:\\
(i) $\Supp(\phi_X)\subset C_X$ for some subset $C_X\subset H$ which
is compact modulo $A$ and
$\phi_X(ah)=\phi_X(h)$ for $h\in H,a\in A$;\\
(ii) if $P'_0$ is a proper parabolic subgroup in $\GL_n(F)$ with
Levi decomposition $P'_0=M'_0U'_0$, and $A'_0\subset A_0$ where
$A'_0$ is the center of $M'_0$, then $$\int_{U'}\phi_X(uh)\ \d
u=0,\quad\textrm{for each}\ h\in H,$$ where $U'=U'_0\times U'_0$ is
a unipotent subgroup of $H$.

Let $K'_1$ be an open subgroup of $K'=\GL_n(\CO_F)$ such that
$\|k\|=1,\eta(k)=1$ for all $k\in K'_1$. Here we choose the Haar
measure on $H$ so that $\vol(K'_1\times K'_1)=1$. Fix an open
compact subgroup $K'_0$ of $\GL_n(F)$ such that
$$K'_0\subset (\bar{U}\cap K'_1)(M\cap K'_1)(U\cap K'_1)$$
for any parabolic subgroup $P'=M'U'$ in $\CP(A_0)$, where we denote
by $\CP(A_0)$ the set of all parabolic subgroups $P'=M'U'$ of
$\GL_n(F)$ such that $A_0$ is the center of $M'$. Set
$K_0=K'_0\times K'_0\subset H$. For an element $y\in H$, put
$K_0(y)=K_0\cap K_0^y$. Set
$$\|C_X\|_{T\bs H}=\sup_{h\in C_X}\|h\|_{T\bs H}.$$ The following
lemma is an analogue of \cite[Theorem 20]{hc1}, and we omit the
details of the proof since it is the same as that of \cite[Theorem
20]{hc1}

\begin{lem}\label{lem. thm20}
There exists a number $c\geq1$ with the following property. Let
$y\in H$, and $\Omega=\Omega(C_X,y)$ be the set of $h\in H$ such
that
$$1+\log\|h\|_{Z\bs H}\leq
c(1+\log\|C_X\|_{T\bs H})(1+\log\|y\|_{T\bs H}).$$ Then
$$\int_{K_0(y)}\phi_X(ykh)\ \d k=0$$ unless $h\in\Omega$.
\end{lem}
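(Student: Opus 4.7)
The plan is to follow Harish-Chandra's proof of \cite[Theorem 20]{hc1}, adapted to the symmetric pair setting. The essential vanishing input is property (ii) of $\phi_X$, which says the left integral of $\phi_X$ over the unipotent radical of any proper parabolic $P'\in\CP(A)$ is zero; the quantitative input is Lemma \ref{lem. norm} together with standard comparisons between the norms on $H$, $Z\bs H$, and $T\bs H$.

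First I would introduce a KAK-type decomposition relative to $A$: for $h\in H$, write $h=k_1 a k_2$ with $k_1,k_2$ in a fixed maximal compact subgroup and $a\in A^+$ in a chosen positive chamber, and note that $1+\log\|h\|_{Z\bs H}$ is, up to a multiplicative constant, the maximum over simple roots $\alpha$ of $|v_F(\alpha(a))|$. By taking $c$ sufficiently large, the assumption $h\notin\Omega(C_X,y)$ forces $a$ to lie so deep in the chamber of some proper parabolic $P=MU\in\CP(A)$ that conjugation by $a$ contracts $U\cap K_1$ strictly while expanding $\bar U\cap K_1$ to exhaust a prescribed large neighborhood of the identity in $\bar U$, and similarly after twisting by an element $y$ of bounded size.

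Next, using the inclusion $K_0\subset(\bar U\cap K_1)(M\cap K_1)(U\cap K_1)$ and a Fubini decomposition of Haar measure on $K_0(y)$, I would write
\[
\int_{K_0(y)}\phi_X(ykh)\,\d k=\iiint \phi_X\bigl(y\,k_{\bar U}k_M k_U\,h\bigr)\,\d k_{\bar U}\,\d k_M\,\d k_U
\]
over the appropriate factors of $K_0(y)$. Substituting $h=k_1 a k_2$ and using the left $A$-invariance of $\phi_X$ together with its compact-mod-$A$ support (property (i)), a change of variables $u\mapsto a u a^{-1}$ in the $k_U$-integration rewrites the inner integral, after absorbing the bounded factor coming from $y$ and from $k_{\bar U}, k_M$, as the integral of $\phi_X(z\,\cdot\,)$ over a subset of $U$ depending on the remaining variables. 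The depth of $a$ ensures this subset exhausts the full $U$-support of $\phi_X(z\,\cdot)$, so the integral equals $\int_U\phi_X(zu)\,\d u$ and vanishes by property (ii).

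The main technical obstacle is the quantitative bookkeeping: tracing how the required depth of $a$ translates into the precise bound $1+\log\|h\|_{Z\bs H}\leq c(1+\log\|C_X\|_{T\bs H})(1+\log\|y\|_{T\bs H})$. The product structure on the right reflects that $y$ enters through $K_0^y$ (shifting the effective Iwasawa factorization by an amount controlled by $\|y\|_{T\bs H}$) while $C_X$ enters through the size of the support of $\phi_X$ (controlled by $\|C_X\|_{T\bs H}$). Choosing $c$ large enough so that both perturbations are simultaneously absorbed is the same combinatorial juggling as in \cite[\S20]{hc1}, and apart from the replacement of group orbital integrals by their symmetric-space analogues the proof of loc.~cit.\ transfers with only cosmetic modifications.
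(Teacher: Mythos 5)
Your proposal is correct and follows precisely the route the paper itself indicates: the paper states that this lemma ``is an analogue of \cite[Theorem 20]{hc1}, whose proof is similar and we omit it,'' and you reconstruct exactly that argument (Cartan decomposition of $h$, the inclusion $K_0\subset(\bar U\cap K_1)(M\cap K_1)(U\cap K_1)$, conjugation by the split part $a$ to make a unipotent factor exhaust the relevant support, and the cuspidality property (ii) to produce the vanishing). The only cosmetic caution is that property (ii) kills a left unipotent average $\int_{U'}\phi_X(u\,\cdot\,)\,\d u$, so the change of variables must be arranged, as in Harish-Chandra, to land the exhausted integral on the left of the argument of $\phi_X$ rather than on the right as your notation $\int_U\phi_X(zu)\,\d u$ momentarily suggests.
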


Now suppose that $X\in\fc_\reg(F)$ and $y\in H$ are such that
$X^y\in\omega$. Then $X\in\omega_\fc$. By Lemma \ref{lem. norm},
there is a positive constant $c_0$, only depending on $\omega$ and
$\fc$, such that
$$1+\log\|y\|_{T\bs H}\leq
c_0\left(1+\log(\max\{1,|D^\fs(X)|_F^{-1}\})\right).$$ Set
$\omega'_\fc=\omega_\fc\cap\fc_\reg(F)$. Then for any
$X\in\omega'_\fc$
we can choose a subset $C_X$ of $H$ such that\\
(1) $\Supp(\phi_X)\subset C_X$ and $C_X$ is compact modulo $A$;\\
(2) $1+\log\|C_X\|_{T\bs H}\leq
c_0\left(1+\log(\max\{1,|D^\fs(X)|_F^{-1}\})\right)$.\\
Let $\Omega_X$ ($X\in\omega'_\fc$) be the set of $h\in H$ such that
$$1+\log\|h\|_{Z\bs H}\leq
c_1\left(1+\log(\max\{1,|D^\fs(X)|_F^{-1}\})\right)^2,$$ where
$c_1=c\cdot c_0^2$ with $c$ as in Lemma \ref{lem. thm20}. Let
$\Phi_X$ denote the characteristic function of $\Omega_X$. Then we
have
$$\begin{aligned}
\Theta_t(X^y)&=\int_{Z\bs H}\Phi_t(h)\phi(X^{yh})\eta(h)\ \d h\\
&=\int_{Z\bs H}\Phi_t(h)\int_{K_1}\phi(X^{ykh})\eta(h)\ \d k\ \d h.
\end{aligned}$$
Note that $\|kh\|=\|h\|$ for $k\in K_1$. By Lemma \ref{lem. thm20}
we have
$$\int_{K_1}\phi(X^{ykh})\ \d k=\int_{K_1}\phi_X(ykh)\ \d k=0,$$
unless: $$1+\log\|k_1h\|_{Z\bs H}\leq c(1+\log\|C_X\|_{T\bs
H})(1+\log\|y\|_{T\bs H}),$$ where $k_1$ runs over a set of
representatives of $K_1/K_0(y_0)$ in $K_1$. Since $\|kh\|=\|h\|$ and
$$c(1+\log\|C_X\|_{T\bs H})(1+\log\|y\|_{T\bs H})\leq
cc_0^2\left(1+\log(\max\{1,|D^\fs(X)|_F^{-1}\})\right)^2,$$ the
integral $\int_{K_1}\phi(X^{ykh})=0$ unless $h\in\Omega_X$. Thus, if
$$t\geq c_1\left(1+\log(\max\{1,|D^\fs(X)|_F^{-1}\})\right)^2,$$ we get
$$\begin{aligned}
\Theta_t(X^y)&=\int_{Z\bs
H}\Phi_t(h)\Phi_X(h)\eta(h)\int_{K_1}\phi(X^{ykh})
\ \d k\ \d h\\
&=\int_{Z\bs H}\Phi_X(h)\int_{K_1}\phi(X^{ykh})\ \d k\ \d h\\
&=\int_{Z\bs H}\int_{K_1}\phi(X^{ykh})\eta(h)\ \d k\ \d h.
\end{aligned}$$
Therefore $\lim\limits_{t\ra+\infty}\Theta_t(X^y)$ exists for
$X^y\in\omega\cap\fs_\rs(F)$. By enlarging $\omega$,
$\lim\limits_{t\ra+\infty}\Theta_t(X)$ exists for all
$X\in\fs_\rs(F)$.

Now we estimate $\Theta_t(X)$. All the notations are the same as
above. We have $$\begin{aligned}
|\Theta_t(X^y)|&\leq\int_{Z\bs H}\Phi_X(y^{-1}h)|\phi(X^h)|\ \d h\\
&=\int_{A\bs H}|\phi(X^h)|\ \d h\int_{Z\bs A}\Phi_X(y^{-1}ah)\ \d a.
\end{aligned}$$
Recall that $\phi(X^h)\eta(h)=\phi_X(h)=0$ unless $h\in C_X$.
Suppose $h\in C_X$. We can assume $\log\|h\|\leq\log\|C_X\|$ and
$\log\|y\|=\log\|y\|_{T\bs H}$. Then $\Phi_X(y^{-1}ah)=0$ unless
$y^{-1}ah\in\Omega_X$. Since
$$1+\log\|a\|_{Z\bs H}\leq\left(1+\log\|h\|\right)
\left(1+\log\|y^{-1}ah\|_{Z\bs H}\right)\left(1+\log\|y\|\right),$$
we have $\Phi_X(y^{-1}ah)=0$ unless
$$1+\log\|a\|_{Z\bs H}\leq c_2
\left(1+\log(\max\{1,|D^\fs(X)|_F^{-1}\})\right)^4,$$ where
$c_2=c_1c_0^2$. Therefore
$$\begin{aligned}\int_{Z\bs A}\Phi_X(y^{-1}ah)\ \d a&\leq
\int_{1+\log\|a\|_{Z\bs H}\leq c_2
\left(1+\log(\max\{1,|D^\fs(X)|_F^{-1}\})\right)^4}\d a\\
&\leq
c_3\left(1+\log(\max\{1,|D^\fs(X)|_F^{-1}\})\right)^{4\ell}\end{aligned}$$
where $c_3$ is a positive constant, independent of the choice of
$X\in\omega_\fc'$, and $\ell=\dim Z\bs A$. This shows that
$$|\Theta_t(X^y)|\leq c_3\left(1+\log(\max\{1,|D^\fs(X)|_F^{-1}\})\right)
^{4\ell}\int_{A\bs H}|\phi(X^h)|\ \d h.$$ Notice that Theorem
\ref{thm. upper bound} also holds when $\eta={\bf1}$. Then we have:
$$\sup_{X\in\omega'_\fc}|D^\fs(X)|_F^{\frac{1}{2}}
\int_{A\bs H}|\phi(X^h)| \ \d h<+\infty.$$ Hence
$$|\Theta_t(X^y)|\leq c_4|D^\fs(X)|_F^{-\frac{1}{2}}
\left(1+\log(\max\{1,|D^\fs(X)|_F^{-1}\})\right)^{4\ell}$$ for all
$X\in\fc(F)$ and $y\in H$ such that
$X^h\in\omega'=\omega\cap\fs_\rs(F)$. Since there are only finitely
many non-conjugate Cartan subspaces in $\fs$, there exists a
constant $c_5$ such that
$$|\Theta_t(X)|\leq c_5|D^\fs(X)|_F^{-\frac{1}{2}}
\left(1+\log(\max\{1,|D^\fs(X)|_F^{-1}\})\right)^{4\ell}$$ for all
$X\in\omega'$ and all $t\geq1$.

It follows from the lemma below that the function
$$X\mapsto |D^\fs(X)|_F^{-\frac{1}{2}}
\left(1+\log(\max\{1,|D^\fs(X)|_F^{-1}\})\right)^{4\ell}$$ is
locally integrable on $\fs(F)$. Then Theorem \ref{thm. Iphi} follows
from Lebesgue's Theorem.

\begin{lem}\label{lem. Igusa}
There exists $\epsilon>0$ such that the function
$|D^\fs(X)|_F^{-\epsilon}$ is locally integrable on $\fc(F)$ for any
Cartan subspace $\fc$ of $\fs$.
\end{lem}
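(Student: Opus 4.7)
The plan is to interpret $D^\fs(X)$ as a non-zero polynomial on the finite-dimensional $F$-vector space $\fc(F)$ and then invoke the standard $p$-adic local integrability theorem for polynomial functions.

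First I would unravel the definition. Let $\FT^-$ be the $\theta$-split torus with Lie algebra $\fc$, and let $S_\fc$ be the set of roots of $\FT^-$ acting on $\fg$. As recalled in the Weyl integration section, on $\fc_\reg(F)$ one has, up to a non-zero constant,
\[
\det\bigl(\ad(X);\,\fh/\ft\oplus\fs/\fc\bigr) \;=\; \prod_{\alpha\in S_\fc}\alpha(X)^{m_\alpha},
\]
where the product is taken with appropriate multiplicities coming from the decomposition $\fg_1=\oplus_\alpha\fg_\alpha$. In particular $P(X):=\det(\ad(X);\fh/\ft\oplus\fs/\fc)$ is a polynomial on $\fc$ in the sense of algebraic varieties (the determinant of a linear endomorphism depending polynomially on $X\in\fc$), and it is not identically zero since it is non-vanishing on the non-empty open set $\fc_\reg$. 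Thus the question reduces to showing that, for a non-zero polynomial $P$ on a finite-dimensional $F$-vector space $V$, there is some $\epsilon>0$ such that $|P(x)|_F^{-\epsilon/2}$ is locally integrable on $V(F)$.

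Next I would cite the standard $p$-adic local integrability theorem. For any non-zero polynomial $P$ on $V$, Igusa's local zeta function $Z(s,P)=\int_{\omega}|P(x)|_F^{s}\,\d x$ (with $\omega$ a compact open set) admits meromorphic continuation to the complex plane, and the poles of $Z(s,P)$ are located at negative rational numbers bounded away from $0$; this is proved via Hironaka's resolution of singularities applied to the divisor $\{P=0\}$, see Igusa's monograph on local zeta functions. In particular there exists $\epsilon_0>0$ such that $Z(s,P)$ is holomorphic for $\Re(s)>-\epsilon_0$, which gives the local integrability of $|P|_F^{-\epsilon}$ for any $0<\epsilon<\epsilon_0$ on any compact subset of $V(F)$. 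Since $|D^\fs(X)|_F=|P(X)|_F^{1/2}$, taking $\epsilon$ to be $\epsilon_0/2$ (or any smaller positive number) yields the lemma.

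The only genuine content is the cited Igusa/Hironaka result; once this is in hand the argument is a direct translation of the statement, and there is no serious obstacle. Finiteness of the set of $H$-conjugacy classes of Cartan subspaces $\fc$ then yields a uniform $\epsilon>0$ valid for all of them, which is all that is needed to complete the dominated convergence argument preceding the lemma.
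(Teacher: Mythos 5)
Your proof is correct, and it follows the same route that the paper implicitly takes: the paper's own ``proof'' is a bare citation to W.~Zhang's Lemma~4.3, whose argument is precisely the one you outline, namely that $D^\fs(X)^2 = \det(\ad(X);\fh/\ft\oplus\fs/\fc)$ is a non-zero polynomial function on $\fc$ and Igusa's theory of the local zeta function (via Hironaka resolution) forces the poles of $\int_\omega |P|_F^s\,\d x$ to lie in $\Re(s)\le -\epsilon_0<0$. The observation that finiteness of the set of $H$-conjugacy classes of Cartan subspaces yields a uniform $\epsilon>0$ is also exactly what is needed, so there is no gap.
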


\begin{proof}
See \cite[Lemma 4.3]{zh}.
\end{proof}

\section{Local calculations}\label{sec: local calculations}
\subsection{Limit formulae}
In this subsection, we obtain formulae for $\wh{i}^\eta(X,Y)$
($X,Y\in\fs_\rs(F)$) and $\wh{i}(X,Y)$ ($X,Y\in\fs'_\rs(F)$) at
``infinity", which are analogues of \cite[VIII.1 Proposition]{wa95}.
The proof of \cite[VIII.1 Proposition]{wa95} is very technical. Here
we modify Waldspurger's proof a little to make it available in our
situation.

\paragraph{Statement}
Let $\fc$ be a Cartan subspace of $\fs$, and $\FT^-$ the maximal
$\theta$-split torus in $\FG$ whose Lie algebra is $\fc$. Let $\FT$
be the centralizer of $\fc$ in $\FH$, and write $\ft=\Lie(\FT)$. For
$X,Y\in\fc_\reg(F)$, define a bilinear form $q_{X,Y}$ on
$\fh(F)/\ft(F)$ by
$$q_{X,Y}(Z,Z')=\pair{[Z,X],[Y,Z']},$$ where the pairing
$\pair{\cdot,\cdot}$ is the one as before. One can check that the
form $q_{X,Y}$ is nondegenerate and symmetric. One can also verify
that $q_{X,Y}=q_{Y,X}$. We write
$\gamma_\psi(X,Y)=\gamma_\psi(q_{X,Y})$ for simplicity. Recall that,
by conventions, $T=\FT(F)$, $H=\FH(F)$.

Let $\fc'$ be a Cartan subspace of $\fs'$. Similarly, we denote by
$\FT'^-$ the maximal $\theta$-split torus in $\FG'$ whose Lie
algebra is $\fc'$, by $\FT'$ the centralizer of $\fc'$ in $\FH'$,
and by $\ft'$ the Lie algebra of $\FT'$. For $X,Y\in\fc'_\reg(F)$,
we also define a nondegenerate, bilinear and symmetric form
$q_{X,Y}$ on $\fh'(F)/\ft'(F)$ in the same way as above.

The following formulae depend on the choices of the Haar measures on
$T$ and $H$ (also on $T'$ and $H'$). Here we equip $H$ or $T$ with
the Haar measure so that the exponential map preserve the measure in
a neighborhood of 0 in $\fh(F)$ or $\ft(F)$. We make the similar
choices for the Haar measures on $T'$ and $H'$.

\begin{prop}\label{prop. i(X,Y)}
Let the notations be as above.
\begin{enumerate}
\item Let $X\in\fs_\rs(F)$ and $Y\in\fc_\reg(F)$. Then there exists
$N\in\BN$ such that if $\mu\in F^\times$ satisfying $v_F(\mu)<-N$,
we have the equality
$$\wh{i}^\eta(\mu X,Y)=\kappa(Y)\sum_{h\in T\bs H,\ h\cdot X\in\fc}\eta(h)
\gamma_\psi \left(\mu h\cdot X,Y\right)\psi\left(\pair{\mu h\cdot
X,Y}\right),$$ and
$$\wh{i}^\eta(X,\mu Y)=\kappa(\mu Y)\sum_{h\in T\bs H,\ h\cdot X\in\fc}\eta(h)
\gamma_\psi\left(\mu h\cdot X,Y\right)\psi\left(\pair{\mu h\cdot X,
Y}\right).$$
\item Let $X\in\fs'_\rs(F)$ and
$Y\in\fc'_\reg(F)$. Then there exists $N\in\BN$ such that if $\mu\in
F^\times$ satisfying $v_F(\mu)<-N$, we have the equality
$$\wh{i}(\mu X,Y)=\wh{i}(X,\mu Y)=\sum_{h\in T'\bs H',\ h\cdot X\in\fc'}
\gamma_\psi \left(\mu h\cdot X,Y\right)\psi\left(\pair{(\mu h\cdot
X, Y}\right).$$
\end{enumerate}
\end{prop}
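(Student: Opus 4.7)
The strategy I shall adopt is the $p$-adic stationary phase method used by Waldspurger in \cite[\S VIII]{wa95}: I will express $\wh{i}^\eta(\mu X,Y)$ via the integral of $\wh{f}$ at $\mu X$ tested against a function $f\in\CC_c^\infty(\fs(F))$ supported near $Y$, and then localize to the critical points of the oscillatory phase $h\mapsto\psi(\mu\pair{X^h,Z})$ as $|\mu|_F\to\infty$. The localization will produce the finite sum indexed by the $T$-cosets of elements in the $H$-orbit of $X$ lying on the Cartan subspace $\fc$, while the Gaussian integration in the transverse directions will generate the Weil index $\gamma_\psi(\mu q_{X_0,Z})$.

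Concretely, pick $f\in\CC_c^\infty(\fs_{\rs}^{\fc})$ supported in a small neighborhood of $Y$. By Fubini and the definition of the Fourier transform,
\begin{equation*}
\wh{I}^\eta(\mu X,f)=\int_{H_X\bs H}\eta(h)\int_{\fs(F)}f(Z)\,\psi\bigl(\mu\pair{X^h,Z}\bigr)\,\d Z\,\d h.
\end{equation*}
Applying the Weyl integration formula in $Z$ restricts the analysis to $Z\in\fc_\reg(F)$. The critical set of $h\mapsto\pair{X^h,Z}$ is precisely $\{h:X^h\in\fc\}$: the variational equation $\pair{[X^h,W],Z}=-\pair{W,[X^h,Z]}=0$ for all $W\in\fh(F)/\ft(F)$ forces $[X^h,Z]=0$, hence $X^h\in\fc$ by regularity of $Z$. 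Near each critical $h_0$ with $X_0:=X^{h_0}\in\fc$, parametrise $h=h_0\exp(W)$ with $W\in\fh(F)/\ft(F)$ small; the invariance of the pairing yields
\begin{equation*}
\pair{\Ad(e^{-W})X_0,Z}=\pair{X_0,Z}+\tfrac{1}{2}q_{X_0,Z}(W,W)+O(\|W\|^3),
\end{equation*}
the linear term vanishing because $[X_0,Z]=0$. For $v_F(\mu)<-N$ with $N$ large enough, the cubic remainder is negligible on the natural scale $|\mu|_F^{-1/2}$ of the Gaussian, so the local contribution collapses to $\gamma_\psi(\mu q_{X_0,Z})\,\psi(\mu\pair{X_0,Z})$ by the very definition of the Weil index; summing over the finitely many $T$-cosets of critical points and multiplying by $\eta(h_0)$ gives the first formula after comparing with the representability expression $\wh{I}^\eta(\mu X,f)=\int\wh{i}^\eta(\mu X,Y')\kappa(Y')f(Y')|D^\fs(Y')|_F^{-1/2}\,\d Y'$, from which the factor $\kappa(Y)$ emerges.

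The second identity follows from the same computation written for the phase $\pair{X^h,\mu Y}$, where the extraction step produces $\kappa(\mu Y)$ in place of $\kappa(Y)$; the two formulae are mutually consistent via $\kappa(\mu Y)=\eta(\mu)^n\kappa(Y)$ together with the corresponding rescaling on the Weil-index side. Part (2) on $\fs'$ is strictly parallel, with the trivial character replacing $\eta$ and $\kappa\equiv1$; all structural ingredients (Weyl integration, representability from Theorem~\ref{thm. representability}, non-degeneracy of $q_{X,Y}$ on $\fh'(F)/\ft'(F)$) are in place. The principal obstacle I anticipate is making the $p$-adic stationary phase rigorous: one must show that for $v_F(\mu)<-N$ the cubic and higher-order remainders contribute nothing to the Gaussian integral after change of variables, and that no spurious contributions arise from far-away critical points. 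This is essentially a volume/ultrametric estimate that follows from \cite[\S VIII]{wa95} once the non-degeneracy of $q_{X,Y}$ on $\fh(F)/\ft(F)$ for $X,Y\in\fc_\reg(F)$ is invoked (a consequence of regularity, since $\ad(X)$ acts invertibly on $\fh/\ft$), combined with the Harish-Chandra-type bounds on orbital integrals already established in \S6.
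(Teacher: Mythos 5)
Your high-level strategy is the same as the paper's: express $\wh{I}^\eta(\mu X,f)$ against a test function supported near $Y$, use the Fourier inversion to obtain an oscillatory integral in $h\in T\bs H$ with phase $\psi(\pair{Y,\mu X^h})$, identify the critical set as $\{h: X^h\in\fc\}$, and extract a Weil index $\gamma_\psi(q_{X^h,Y})$ from the quadratic part of the phase. Your computation of the critical set (regularity of $Y$ forcing $[X^h,Y]=0$ and hence $X^h\in\fc$) and your Taylor expansion $\pair{\Ad(e^{-W})X_0,Z}=\pair{X_0,Z}+\tfrac12 q_{X_0,Z}(W,W)+O(\|W\|^3)$ are both correct.

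However, there is a genuine gap exactly where you say ``the principal obstacle I anticipate is making the $p$-adic stationary phase rigorous.'' You do not address it: you state that ``no spurious contributions arise from far-away critical points'' is ``essentially a volume/ultrametric estimate that follows from \cite[\S VIII]{wa95},'' but this localization is the substantive core of the proof and cannot be waved off. In the non-archimedean setting there is no mechanism by which ``far'' from the critical set makes the integrand ``oscillate rapidly'' in the archimedean sense; one must instead show that for each coset $ThK_a$ outside the critical set the inner integral over a small compact-open subgroup vanishes exactly. The paper does this by fixing a system of lattice conditions (the conditions (C-1)--(C-7)), decomposing the integral over nested subgroups $K_a\supset K_b$ with carefully tuned exponents $a,b$ depending on $v_F(\mu)$, $c(h)$, $c$, $c'$ (see (\ref{equ. i.1}), (\ref{equ. i.4}), (\ref{equ. i.5})), and then proving the claim $(*)$: if the first derivative of the phase lies in the appropriate lattice, then $X^h$ can be $K_a$-conjugated into $\fc(F)$. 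The proof of $(*)$ is an arithmetic argument on the coefficients $s_i(P_{X'})$ of the characteristic polynomial of $\ad X'$, comparing $P_{X'}$ with $P_{X'_\fc}$ modulo $\varpi^c\CO_F$, and then invoking the technical Lemma \ref{lem technique 8.3.}. None of this appears in your sketch.

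A related, equally non-routine issue is your dismissal of ``cubic and higher-order remainders.'' You assert they are ``negligible on the natural scale $|\mu|_F^{-1/2}$ of the Gaussian,'' an archimedean heuristic that is meaningless over a $p$-adic field. What actually has to hold is that, for $Z$ in the lattice over which one integrates, the third-order term of $\Ad(e^{-Z})Y$ lands in a lattice on which $\psi(\pair{\cdot,\mu X'})$ is trivial; this is exactly the second bullet of condition (C-2) combined with the inequality $2b-c\geq -v_F(\mu)+c(h)+c$ (and similarly at the next step, $3a-c$). Your argument has no mechanism for producing these inequalities, nor for choosing $N$ in terms of them. Until these are supplied, the argument is a plausible blueprint but not a proof.
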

In particular, the above expression is zero if $X$ is not conjugate
to any element of $\fc(F)$ (or $\fc'(F)$).

\paragraph{Proof of Proposition \ref{prop. i(X,Y)}}
We now prove the formula for $\wh{i}^\eta(\mu X,Y)$. The formula for
$\wh{i}^\eta(X,\mu Y)$ can be deduced from it. We leave the proof of
the formulae for $\wh{i}(\mu X,Y)$ and $\wh{i}(X,\mu Y)$ to the
reader. They can be proved in the same way.

Firstly, we introduce some notations. Let $\fq$ (resp. $\fp$) be the
unique complement of $\ft$ (resp. $\fc$) in $\fh$ (resp. $\fs$)
which is stable under the adjoint action of $\FT$. Denote by $S_\fc$
the set of roots of $\FT^-$ in $\fg(\bar{F})$. For each subspace
$\ff\subset\fg(F)$ such that the restriction of $\pair{\cdot,\cdot}$
to $\ff$ is nondegenerate and for each $\CO_F$-lattice
$L\subset\ff$, set $\wt{L}=\{\ell\in \ff:\ \forall\ell'\in
L,\psi(\pair{\ell',\ell})=1 \}$. We denote by $L_\fc$ the
$\CO_F$-lattice of $\fc(F)$ such that
$$\wt{L}_\fc=\{Z\in\fc(F):\
\forall\alpha\in S_\fc,v_F(\alpha(Z))\geq0\}.$$ Fix $\CO_F$-lattices
$L_\fp\subset\fp(F)$, $L_\ft\subset\ft(F)$ and $L_\fq\subset\fq(F)$.
Set $L_\fs=L_\fc\oplus L_\fp$, $L_\fh=L_{\ft}\oplus L_\fq$,
$L=L_\fs\oplus L_\fh$.

For simplicity, write $d=\dim_F(\fg(F))=4n^2$. Denote by $F[U]_d$
the set of monic polynomials of degree $d$ with coefficients in $F$.
For $P\in F[U]_d$, write
$$P(U)=\sum_{i=0}^d s_i(P)U^{d-i}.$$ For $a\in\BZ$ and
$P_1,P_2\in F[U]_d$, we write $P_1\equiv P_2\mod\varpi^a\CO_F$ if
$v_F(s_i(P_1)-s_i(P_2))\geq a$ for each $i=0,1,...,d$. For each
$Z\in\fg(F)$, denote by $P_Z$ the characteristic polynomial of
$\ad(Z)$ acting on $\fg(F)$. Then $P_Z\in F[U]_d$.

Fix an integer $c\in\BN$ satisfying the following conditions.
\begin{enumerate}
\item
For each $a\in\BN$, $a\geq c$, we have
\begin{itemize}
\item $\varpi^a L_\fh\subset V_\fh$ and $\varpi^a L\subset V_\fg$;
\item $K_a:=\exp(\varpi^aL_\fh)$ is a subgroup of $K=\GL_n(\CO_F)
\times\GL_n(\CO_F)$, and $\eta|_{K_a}=1$;
\item the action of $K_a$ stabilizes $L_\fs$ (hence stabilizes
$\wt{L}_\fs$).
\end{itemize}
\item
For each $a\in\BN,\ a\geq c$, and each $Z\in\varpi^aL_\fh$, we have
\begin{itemize}
\item $(\exp Z)\cdot Y-Y-[Z,Y]\in\varpi^{2a-c}L_\fs$;
\item $(\exp Z)\cdot Y-Y-[Z,Y]-\frac{1}{2}[Z,[Z,Y]]
\in\varpi^{3a-c}L_\fs$.
\end{itemize}
\item
Denote by $C(X)$ the set of $X'\in\fc(F)$ satisfying that there
exists $h\in H$ such that $h\cdot X'=X$, which is a finite set. We
require that:
\begin{itemize}
\item if $a\in\BN,\ a\geq c$, $X',X''\in C(X)$, and $\gamma\in
K_a$ satisfying $\gamma\cdot X'=X''$, then $X'=X''$;
\item for each $X'\in C(X)$, denote by $\wt{L}_\fq^{X'}$ the
dual of $L_\fq$ in $\fq(F)$ with respect to the form $q_{X',Y}$;
then require $\varpi^c\wt{L}_\fq^{X'} \subset 2\varpi^{-c}L_\fq$.
\end{itemize}
\item
If $Z\in\fp(F)$ satisfying $[Y,Z]\in\wt{L}_\fh$, then
$Z\in\varpi^{-c}\wt{L}_\fp$.
\item
For each $h\in H$, denote by $c(h)$ the unique element of $\BZ$ such
that
$$X^h\in\varpi^{-c(h)}\wt{L}_\fs-\varpi^{-c(h)+1}\wt{L}_\fs.$$
Since $X\in\fs_\rs(F)$, the set $\{c(h),h\in H\}$ has a lower bound.
We require that
\begin{itemize} \item for each $h\in H$, $c(h)\geq-c$.\end{itemize}
\item
Fix a basis $\CB$ of $\fg(\bar{F})$ formed of basis of
$\fc(\bar{F})$ and $\ft(\bar{F})$, and root vectors associated to
$S_\fc$. We require that
\begin{itemize}
\item for each $Z\in\wt{L}_\fp$, the coefficients of the
matrix representation of $\ad(Z)$ with respect to the basis $\CB$
are of valuation $\geq -c$;
\item for each $i=0,1,...,d$, $v_F(s_i(P_X))\geq-c$.
\end{itemize}
\item
There exists an open compact set $\Omega\subset\fc_\reg(F)$ such
that if $Z\in\fc_\reg(F)$ satisfying $P_Z\equiv
P_X\mod\varpi^c\CO_F$, then $Z\in\Omega$.
\end{enumerate}

The integer $c$ is fixed from now on. We also fix an open compact
$\Omega$ satisfying condition (vii). The following lemma actually is
\cite[VIII.3 Lemme]{wa95}, and whose proof can be applied in our
situation.

\begin{lem}\label{lem technique 8.3.}
There exists $c'\in\BN$, $c'\geq c$, such that if $a\in\BN,\ a\geq
c'$, and $Z\in\Omega+\varpi^{a+c'}\wt{L}_\fp$, then there exists
$\gamma\in K_a$ such that $\gamma\cdot Z\in\Omega$.
\end{lem}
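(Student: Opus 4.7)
The plan is to prove this by a $p$-adic Newton iteration, running entirely inside the group $K_a$ to cancel the $\fp$-component of the perturbation order by order. Write $Z = Z_0 + \varpi^{a+c'}\Pi$ with $Z_0 \in \Omega$ and $\Pi \in \wt{L}_\fp$. Since $Z_0$ is regular semisimple in $\fc$, $\ad(Z_0)$ kills $\ft(F)\oplus\fc(F)$, and the identity $\pair{[W,Z_0],V} = \pair{W,[Z_0,V]} = 0$ for $V\in\fc(F)$, $W\in\fq(F)$, shows that $\ad(Z_0)$ restricts to an $F$-linear isomorphism $\fq(F)\to\fp(F)$. Condition (C-4), applied uniformly as $Z_0$ ranges over the compact set $\Omega$, bounds the inverse: $\ad(Z_0)^{-1}(\wt{L}_\fp) \subset \varpi^{-N_0}L_\fq$ for some integer $N_0$ depending only on $\Omega$ and the fixed lattices.

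Fix $c'$ large enough that $c' > N_0+c$ and that the errors from (C-2) can be absorbed. Set $W_1 := \varpi^{c'}\ad(Z_0)^{-1}(\Pi) \in L_\fq$ and $\gamma_1 := \exp(\varpi^a W_1) \in K_a$. The expansion of $\Ad(\gamma_1)Z$, controlled by (C-2), reads
\[
\gamma_1 \cdot Z \;=\; Z_0 + \bigl[\varpi^a W_1, Z_0\bigr] + \varpi^{a+c'}\Pi + R_1,
\]
where $R_1$ consists of terms of valuation at least $\varpi^{2(a+c')-c}$; note that $[\varpi^a W_1,\varpi^{a+c'}\Pi]\in[\fq,\fp]\subset\fc(F)\oplus\fp(F)$, so $R_1$ a priori has both $\fc$- and $\fp$-components. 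By construction $[\varpi^a W_1,Z_0] = -\varpi^{a+c'}\Pi$, so that first-order term cancels, and the projection of $R_1$ onto $\fc(F)$ can be absorbed into a new base $Z_0' := Z_0 + (\text{$\fc$-part of }R_1)$, leaving $\gamma_1\cdot Z = Z_0' + \varpi^{a+c''}\Pi'$ with $c''>c'$, $Z_0' \in Z_0 + \varpi^{a+c''}\wt{L}_\fc$, and $\Pi'\in\wt{L}_\fp$.

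The new base $Z_0'$ lies in $\fc_\reg(F)$ for $a$ large (small perturbation of a regular element) and satisfies $P_{Z_0'}\equiv P_{Z_0}\equiv P_X\pmod{\varpi^c}$ by the bounds packaged in (C-6), hence $Z_0'\in\Omega$ by (C-7). Iterating the construction produces $\gamma_1,\gamma_2,\ldots\in K_a$ whose partial products $\gamma_{(k)} := \gamma_k\gamma_{k-1}\cdots\gamma_1$ remain in $K_a$, while the residuals $\gamma_{(k)}\cdot Z - Z_{0,k}$ lie in ever-smaller powers of $\varpi$ times $\wt{L}_\fp$. By $p$-adic completeness and the quadratic decrease in valuation, $\gamma_{(k)}$ converges to some $\gamma\in K_a$ and $\gamma\cdot Z$ converges to a point of $\fc(F)$; since $P_{\gamma\cdot Z} = P_Z\equiv P_X\pmod{\varpi^c}$, a final application of (C-7) places $\gamma\cdot Z$ in $\Omega$.

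The principal obstacle is the bookkeeping. At each Newton step one must check simultaneously that (i) the newly computed element $W_k$ still lies in $L_\fq$ so that $\gamma_k\in K_a$, using the uniform bound on $\ad(Z_0')^{-1}$ from (C-4); (ii) the BCH remainder is genuinely of strictly higher valuation, using (C-2); and (iii) the new base $Z_0'$ remains in $\Omega$, using (C-6) and (C-7). All three estimates must hold uniformly over the moving base $Z_0'\in\Omega$. The integer $c$ was precisely engineered to supply these uniform constants, so the single remaining choice is the value of $c'$: taking $c'$ large enough relative to $c$ and $N_0$ forces the geometric/quadratic convergence of the iteration and yields the lemma.
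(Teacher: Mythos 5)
The paper itself does not give a proof here; it simply cites \cite[VIII.3 Lemme]{wa95}. Your successive-approximation (Newton iteration) argument is the standard method for this type of statement and is essentially what Waldspurger does, so the overall approach is the right one. However, two steps in your bookkeeping are not justified as written.

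First, condition (C-4) is a statement \emph{about the fixed element $Y$ only}: it says that $Z\in\fp(F)$ with $[Y,Z]\in\wt{L}_\fh$ forces $Z\in\varpi^{-c}\wt{L}_\fp$. There is no version of (C-4) indexed by an arbitrary $Z_0\in\Omega$, so you cannot invoke (C-4) ``applied uniformly as $Z_0$ ranges over $\Omega$.'' What you actually need is a uniform bound $\ad(Z_0)^{-1}(\wt{L}_\fp)\subset\varpi^{-N_0}L_\fq$ for all $Z_0\in\Omega$; this is true, but it comes from the compactness of $\Omega\subset\fc_\reg(F)$ and continuity of $Z_0\mapsto\ad(Z_0)^{-1}|_{\fp(F)}$ (the inverse exists everywhere on $\Omega$ since each $Z_0$ is regular semisimple), not from (C-4).

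Second, your repeated appeal to (C-7) to keep the running base point in $\Omega$ is not correct. Condition (C-7) only gives the implication $P_Z\equiv P_X\bmod\varpi^c\Rightarrow Z\in\Omega$; the converse is not asserted, and the hypothesis of the lemma contains no assumption that $P_{Z_0}\equiv P_X\bmod\varpi^c$ for $Z_0\in\Omega$. So the chain $P_{Z'_0}\equiv P_{Z_0}\equiv P_X$ has no starting point, and similarly $P_{\gamma\cdot Z}=P_Z\equiv P_X$ is unjustified. The fix is simpler and avoids characteristic polynomials entirely: $\Omega$ is open and compact in $\fc(F)$, hence there is an integer $N$ with $\Omega+\varpi^N\wt{L}_\fc\subset\Omega$, and your BCH estimates show that at each step the $\fc$-component of the new base point lies in $Z_0+\varpi^{N}\wt{L}_\fc$ once $c'$ is chosen large enough; this keeps each base point, and the final limit, inside $\Omega$. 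With those two repairs the Newton scheme closes up and the lemma follows.
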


From now on, we fix an integer $c'$ as in the above lemma. Set
\begin{equation}\label{equ. i.1}
N=2(d+8)c+6c'+12.
\end{equation}
Let $\mu\in F^\times$ be such that $v_F(\mu)<-N$. Choose $m\in\BN$
such that
\begin{itemize}
\item the functions $Y'\mapsto\wh{i}^\eta(\mu X,Y')$,
$Y'\mapsto|D^\fs(Y')|_F$ and $\kappa(Y)$ are constant on
$Y+\varpi^mL_\fs$;
\item for each $X'\in C(X)$, $\mu X'\in\varpi^{-m}\wt{L}_\fs$.
\end{itemize}

Let $f$ be the characteristic function of $Y+\varpi^mL_\fs$, and
$f'$ be the characteristic function of $\varpi^{-m}\wt{L}_\fs$. Then
we have
\begin{equation}\label{equ. i.3}
\begin{aligned}
\wh{I}^\eta(\mu X,f)&=\int_{\fs(F)}\wh{i}^\eta(\mu
X,Y')\kappa(Y')f(Y')
|D^\fs(Y')|_F^{-1/2}\ \d Y'\\
&=\vol(\varpi^mL_\fs)|D^\fs(Y)|_F^{-1/2}\kappa(Y)\wh{i}^\eta(\mu
X,Y).
\end{aligned}
\end{equation}
On the other hand, it is easy to verify that
$$\wh{f}(Y')=\vol(\varpi^mL_\fs)\psi(\pair{Y,Y'})f'(Y').$$
Hence
$$\wh{I}^\eta(\mu X,f)=|D^\fs(\mu X)|_F^{1/2}\vol(\varpi^mL_\fs)
\int_{T\bs H}f'(\mu X^h)\psi \left(\pair{Y,\mu X^h}\right)\eta(h)\
\d h.$$ Set
\begin{equation}\label{equ. i.4}
a=[-v_F(\mu)/2]-2c-c'-1.
\end{equation}
By (\ref{equ. i.1}), $a\geq c$. Fix a set of representatives
$\Gamma$ in $H$ for the double coset $T\bs H/K_a$. By condition
(iii), we can suppose that if there exist $h\in\Gamma$ and $h'\in
ThK_a$ such that $X^{h'}\in\fc(F)$, then $X^h\in\fc(F)$. Then we
have
$$\wh{i}^\eta(\mu X,Y)=|D^\fs(\mu X)D^\fs(Y)|_F^{1/2}\kappa(Y)
\sum_{h\in\Gamma}\vol(T\bs ThK_a)f'(\mu X^h)\eta(h)i(h),$$ where
$$i(h)=\int_{K_a}\psi(\pair{Y,\mu X^{h\gamma}})\ \d\gamma.$$
Fix $h\in\Gamma$. Choose $b\in\BN$ such that
\begin{equation}\label{equ. i.5}
\begin{aligned}&\bullet \left(c+c(h)-v_F(\mu)\right)/2\leq b
\leq c(h)-v_F(\mu)-1-2c;\\
&\bullet \textrm{ if }c(h)\leq c,
\begin{array}{lll}b\leq\left\{\begin{array}{ll}
-(d+2)c-1-v_F(\mu),\\
-c-c'-1-a-v_F(\mu),
\end{array}\right.
\end{array}
\end{aligned}
\end{equation}
which implies $b\geq a$. Fix a set of representatives $\Gamma'$ of
$K_a/K_b$. Then we have
$$i(h)=\sum_{g\in\Gamma'}i(h,g),$$ where
$$i(h,g)=\int_{K_b}\psi\left(\pair{Y,\mu X^{hg\gamma}}\right)\ \d\gamma.$$
Fix $g\in\Gamma'$, and set $X'=X^{hg}$. Then
$$i(h,g)=\int_{\varpi^bL_\fh}\psi\left(\pair{\exp Z\cdot Y,\mu X'}\right)\
\d Z.$$ Notice that since $K_b$ stabilizes $L_\fs$ and $\wt{L}_\fs$,
then $c(hg)=c(h)$. In particular, $X'\in\varpi^{-c(h)}\wt{L}_\fs$.
By (\ref{equ. i.5}), we have
$$\psi\left(\pair{Z,\mu X'}\right)=1$$ for each $Z\in\varpi^{2b-c}L_\fs.$
Notice that $b\geq c$. For $Z\in\varpi^bL_\fh$, by condition (ii),
we have
$$\begin{aligned}
\psi\left(\pair{\exp Z\cdot Y,\mu
X'}\right)&=\psi\left(\pair{Y+[Z,Y],\mu X'}\right)\\
&=\psi\left(\pair{Y,\mu X'}\right)\psi\left(\pair{Z,[Y,\mu
X']}\right).
\end{aligned}$$
Therefore we see that $i(h,g)=0$ if $[Y,\mu
X']\notin\varpi^{-b}\wt{L}_\fh$. We make the following claim:
$$(*)\quad \textrm{ if }[Y,\mu X']\in\varpi^{-b}\wt{L}_\fh,
\textrm{ then }X^h\in\fc(F).$$ Now we prove this claim. Suppose
$[Y,\mu X']\in\varpi^{-b}\wt{L}_\fh$, in other words,
$[Y,X'_\fp]\in\mu^{-1}\varpi^{-b}\wt{L}_\fq$, where
$X'=X'_\fc+X'_\fp$ is the decomposition of $X'$ with respect to
$\fs=\fc\oplus\fp$. Thus, by condition (iv),
\begin{equation}\label{equ. i.6}
X'_\fp\in\mu^{-1}\varpi^{-b-c}\wt{L}_\fp.
\end{equation}
Moreover, by (\ref{equ. i.5}),
$X'_\fp\in\varpi^{-c(h)+1}\wt{L}_\fp$. By the definition of $c(h)$
and that $c(hg)=c(h)$, we deduce that
$$X'_\fc\in\varpi^{-c(h)}\wt{L}_\fc-\varpi^{-c(h)+1}\wt{L}_\fc.$$
Set $R=\{\alpha\in S_\fc:\ v_F(\alpha(X'_\fc))<-c(h)+1\}$. The above
relation and the definition of $\wt{L}_\fc$ imply that
$R\neq\emptyset$. Set $r=\#R$, we calculate the coefficient
$s_r(P_{X'})$. This is a sum of products of the coefficients of the
matrix representations of $\ad X'_\fc$ and $\ad X'_\fp$ with respect
to the basis $\CB$. By (\ref{equ. i.5}), (\ref{equ. i.6}) and
condition (vi), the coefficients of $\ad X'_\fp$ are of valuation
$\geq -c(h)+1$. The same relation holds for the coefficients of $\ad
X'_\fc$ other than that of $\alpha(X'_\fc)$ for $\alpha\in R$. The
term $\prod_{\alpha\in R}\alpha(X'_\fc)$, which occurs in
$s_r(P_{X'})$, is of the valuation strictly less than that of any
other term. Thus
$$v_F(s_r(P_{X'}))=v_F(\prod_{\alpha\in
R}\alpha(X'_\fc))<r(-c(h)+1).$$ Since $X'$ is conjugate to $X$ by
the action of $H$, then $P_{X'}=P_X$. By condition (vi), we have
$$-c<r(-c(h)+1),$$ therefore
\begin{equation}\label{equ. i.7}
c(h)\leq c.
\end{equation}
Let $i\in\{1,2,...,d\}$. We now compare the coefficients
$s_i(P_{X'})$ and $s_i(P_{X'_\fc})$. Their difference is a sum of
products of coefficients of the matrix representations of $\ad
X'_\fc$ and $\ad X'_\fp$ with respect to the basis $\CB$, and at
least one coefficient of $\ad X'_\fp$ is involved in these products.
By (\ref{equ. i.7}), the coefficients of $\ad X'_\fc$ are of
valuation $\geq-c(h)\geq-c$. By (\ref{equ. i.5}), (\ref{equ. i.6}),
(\ref{equ. i.7}) and condition (vi), the coefficients of $\ad
X'_\fp$ are of valuation $\geq dc$. Therefore
$$v_F\left(s_i(P_{X'})-s_i(P_{X'_\fc})\right)\geq-(i-1)c+dc\geq c.$$
In other words, $P_{X'_\fc}\equiv P_{X'}\mod\varpi^c\CO_F$. Thus, by
condition (vii), $X'_\fc\in\Omega$. By (\ref{equ. i.5}), (\ref{equ.
i.6}) and (\ref{equ. i.7}), $X'_\fp\in\varpi^{a+c'}\wt{L}_\fp$. By
(\ref{equ. i.1}) and (\ref{equ. i.4}), $a\geq c'$. By Lemma \ref{lem
technique 8.3.}, there exists $\gamma\in K_a$ such that $\gamma\cdot
X'\in\fc(F)$. By the choice of $\Gamma$, we have $X^h\in\fc(F)$. Now
we have finished the proof the claim.

From now on, we suppose that $X^h\in\fc(F)$. Thus $f'(\mu X^h)=1$ by
the condition on $f'$. Notice that the multiplication by $h^{-1}$
induces an isomorphism from $T\bs ThK_a$ to $T\bs TK_a$. Now we have
\begin{equation}\label{equ. i.8}\begin{aligned}
\wh{i}^\eta(\mu X,Y)=&\kappa(Y)|D^\fs(\mu
X)D^\fs(Y)|_F^{1/2}\vol(K_a)^{-1}\vol(T\bs TK_a)\\
&\times \sum_{X'=X^h\in C(X)}\eta(h)j(X'),\end{aligned}
\end{equation}
where
$$\begin{aligned}j(X')&=\int_{K_a}\psi\left(\pair{Y,\mu X'^\gamma}
\right)\ \d\gamma\\
&=\int_{\varpi^aL_\fh}\psi\left(\pair{\exp Z\cdot Y,\mu X'}\right)\
\d Z.
\end{aligned}$$
Fix $X'\in C(X)$. By (\ref{equ. i.1}) and (\ref{equ. i.4}),
$\psi\left(\pair{Y',\mu X'}\right)=1$ for $Y'\in\varpi^{3a-c}L_\fs$.
Since $Y,X'\in\fc(F)$, then for any $Z\in\fg(F)$,
$\pair{[Z,Y],X'}=\pair{Z,[Y,X']}=0$. By condition (ii), we have
$$\begin{aligned}
j(X')&=\psi\left(\pair{Y,\mu
X'}\right)\int_{\varpi^aL_\fh}\psi\left(\frac{1}{2}\pair{[Z,[Z,Y]],\mu
X'}\right)\ \d Z\\
&=\psi\left(\pair{Y,\mu
X'}\right)\int_{\varpi^aL_\fh}\psi\left(\frac{1}{2}\pair{[Z,Y],[\mu
X',Z]}\right)\ \d Z\\
&=\psi\left(\pair{Y,\mu
X'}\right)\vol(\varpi^aL_\ft)\int_{\varpi^aL_\fq}\psi\left(\frac{1}{2}
q_{\mu X',Y}(Z)\right)\ \d Z.
\end{aligned}$$
Since $a\leq-c-v_F(\mu)/2$ and by condition (iii), we obtain
\begin{equation}\label{equ. i.9}
j(X')=\vol(\varpi^aL_\ft)\vol(\varpi^aL_\fq)^{1/2}\vol(\varpi^{-a}\check{L}_\fq)
^{1/2}\gamma_\psi(q_{\mu X',Y})\psi\left(\pair{Y,\mu X'}\right),
\end{equation}
where $\check{L}_\fq$ is the dual lattice of $L_\fq$ with respect to
the form $q_{\mu X',Y}$. There is a relation:
\begin{equation}\label{equ. i.10}\vol(K_a)=\vol(T\bs TK_a)\vol(T\cap K_a)
=\vol(T\bs TK_a)\vol(\varpi^aL_\ft).
\end{equation}
By definition
$$\begin{aligned}
\check{L}_\fq&=\left\{Z\in\fq(F):\ \forall Z'\in L_\fq,\
\psi\left(\pair{[Z,\mu X'],[Y,Z']}\right)=1\right\}\\
&=\left\{Z\in\fq(F):\ \forall Z'\in L_\fq,\ \psi\left(\pair{[[Z,\mu
X'],Y],Z']}\right)=1\right\}\\
&=\left\{Z\in\fq(F):\ [[Z,\mu X'],Y]\in\wt{L}_\fq\right\}.
\end{aligned}$$
In other words, $$(\ad Y)\circ(\ad\mu
X')(\check{L}_\fq)=\wt{L}_\fq,$$ and
\begin{equation}\label{equ. i.11}
\vol(\check{L}_\fq)=|D^\fs(Y)D^\fs(\mu X')|_F^{-1}\vol(\wt{L}_\fq).
\end{equation}
On the other hand, we have the relation
\begin{equation}\label{equ. i.12}
\vol(L_\fq)\vol(\wt{L}_\fq)=1.
\end{equation}
Then Proposition \ref{prop. i(X,Y)} follows.

\subsection{Formulae for $\gamma_\psi(X,Y)$}
For $X,Y\in\fc_\reg(F)$ or $\fc'_\reg(F)$, since $\gamma_\psi(X,Y)$
appears in the expression of $\wh{i}^\eta(X,Y)$ or $\wh{i}(X,Y)$ as
in Proposition \ref{prop. i(X,Y)}, we need to know an explicit
formula of $\gamma_\psi(X,Y)$. In this subsection, we show a formula
(see Proposition \ref{prop. gamma1}) of $\gamma_\psi(X,Y)$ for $X,Y$
lying in a Cartan subspace of the Lie algebra associated to a
general symmetric pair. This result is an analogue of \cite[VIII.5
Lemme]{wa95}.

Now we introduce some notations. Assume that $(\FG,\FH,\theta)$ is a
general symmetric pair, as introduced in \S\ref{sec: symmetric pairs
1}. Let $\fs$ be the Lie algebra associated to $(\FG,\FH,\theta)$,
and $\fc$ a Cartan subspace of $\fs$. Let $\FT$ be the centralizer
of $\fc$ in $\FH$ and write $\ft=\Lie(\FT)$. Fix a $\FG$-invariant
and $\theta$-invariant nondegenerate symmetric bilinear form
$\pair{\ ,\ }$ on $\fg(F)$. Then, for $X,Y\in\fc_\reg(F)$, the
bilinear form $q_{X,Y}$ on $\fh(F)/\ft(F)$ defined by
$$q_{X,Y}(Z,Z')=\pair{[Z,X],[Y,Z']}$$ is nondegenerate and
symmetric. Write $\gamma_\psi(X,Y)=\gamma_\psi(q_{X,Y})$. For any
subspace $\ff$ of $\fg(F)$ such that the restriction of $\pair{\ ,\
}$ on $\ff$ is nondegenerate, we write $\gamma_\psi(\ff)$ for the
Weil index associated to $\psi$ and the form $\pair{\ ,\ }$ on
$\ff$.

Let $\FT^-$ be the maximal $\theta$-split torus in $\FG$ whose Lie
algebra is $\fc$. Denote by $S_\fc$ the set of roots of $\FT^-$ in
$\fg(\bar{F})$. Write $\Gamma_F$ for the absolute Galois group
$\Gal(\bar{F}/F)$. Then $\Gamma_F$ acts on $S_\fc$. For $\alpha\in
S_\fc$, denote by $m_\alpha$ its multiplicity in $\fg(\bar{F})$.
Since $\fc\subset\fs$, for $\alpha\in S_\fc$, we have
$\theta(\alpha)=-\alpha$ and $m_\alpha=m_{-\alpha}$. For $\alpha\in
S_\fc$, denote by $\Gamma_{\pm\alpha}$ the stabilizer of
$\{\alpha,-\alpha\}$ in $\Gamma_F$, by $F_{\pm\alpha}$ the fixed
field of $\Gamma_{\pm\alpha}$ in $\bar{F}$, and by $S_\fc^*$ a fixed
set of representatives of orbits $\{\alpha,-\alpha\}$. Notice that,
if $X,Y\in\fc_\reg(F)$, $\alpha(X)\alpha(Y)\in F_{\pm\alpha}$.

For $\alpha\in S_\fc^*$, denote by $\psi'$ the character
$\psi\circ\RTr_{F_{\pm\alpha}/F}$ of $F_{\pm\alpha}$. Set
$$\gamma_{F_{\pm\alpha}}(\alpha(X)\alpha(Y),\psi')=\frac{\gamma_{\psi'}
\left(\alpha(X)\alpha(Y)q\right)} {\gamma_{\psi'}(q)}$$ where $q$ is
the quadratic form on $F_{\pm\alpha}$ defined by
$q(\lambda)=\lambda^2$.

\begin{prop}\label{prop. gamma1}
Let the notations be as above. Then, for $X,Y\in\fc_\reg(F)$, we
have
$$\begin{aligned}\gamma_\psi(X,Y)=&\gamma_\psi(\ft(F))^{-1}\gamma_\psi
(\fh(F))\\
&\times\prod_{\alpha\in
S_\fc^*}\left((\alpha(X)\alpha(Y),2)_{F_{\pm\alpha}}
\gamma_{F_{\pm\alpha}}(\alpha(X)\alpha(Y),\psi')\right)^{m_\alpha}.
\end{aligned}$$
where $(\ ,\ )_{F_{\pm\alpha}}$ is the Hilbert symbol on
$F_{\pm\alpha}$.
\end{prop}

\begin{proof}
Notice that for $\alpha\in S_\fc$ we have
$m_{\sigma\alpha}=m_\alpha$ for every $\sigma\in\Gamma_F$. For each
root space $\fg_\alpha$ associated to $\alpha\in S_\fc$ we can
choose its basis $\{E_\alpha^1,...,E_\alpha^{m_\alpha}\}$ so that:
(1) $\sigma(E^i_\alpha)=E_{\sigma\alpha}^i$ for each
$\sigma\in\Gamma_F$; (2) $\theta(E_\alpha^i)=E_{-\alpha}^i$; (3)
$\pair{E^i_\alpha,E^j_{-\alpha}}=\delta_{ij}$.

Consider the homomorphism
$$\tau:\prod_{S^*_\fc}m_\alpha F_{\pm\alpha}\lra\fg(\bar{F}),\quad
(\lambda^i_\alpha)\mapsto\sum_{\alpha}\sum_{i=1}^{m_\alpha}
\sum_{\sigma\in\Gamma/\Gamma_{\pm\alpha}}\sigma(\lambda_\alpha^i)
\left(E^i_{\sigma\alpha}+E^i_{-\sigma\alpha}\right).$$ In fact the
image of $\tau$ lies in $\fg(F)$ and $\tau$ defines an isomorphism
$$\prod_{S_\fc^*}m_\alpha F_{\pm\alpha}\stackrel{\sim}{\lra}\fq(F),$$
where $\fq$ is the unique complement of $\ft$ in $\fh$ which is
stable under the adjoint action of $\FT$. For
$(\lambda_\alpha^i)\in\prod_{S_\fc^*}m_\alpha F_{\pm\alpha}$, we
have
$$\begin{aligned}
q_{X,Y}\left(\tau\left((\lambda_\alpha^i)\right)\right)&=
\sum_{\alpha\in
S_\fc^*}\sum_{i=1}^{m_\alpha}\sum_{\sigma\in\Gamma/\Gamma_\alpha}
\sigma(\lambda^i_\alpha)^2\pair{[E^i_{\sigma\alpha}+E^i_{-\sigma\alpha},X],
[Y,E^i_{\sigma\alpha}+E^i_{-\sigma\alpha}]}\\
&=\sum_{\alpha,i,\sigma}\sigma(\lambda_\alpha^i)^2
\left(-\sigma\alpha(X)\sigma\alpha(Y)\right)
\pair{E^i_{\sigma\alpha}-E^i_{-\sigma\alpha},
E^i_{\sigma\alpha}-E^i_{-\sigma\alpha}}\\
&=\sum_{\alpha,i,\sigma}\sigma(\lambda_\alpha^i)^2\sigma\alpha(X)\sigma\alpha(Y)
\pair{E^i_{\sigma\alpha}+E^i_{-\sigma\alpha},
E^i_{\sigma\alpha}+E^i_{-\sigma\alpha}}\\
&=\sum_{\alpha\in
S_\fc^*}\sum_{i=1}^{m_\alpha}q_{X,Y,\alpha}(\lambda_\alpha^i),
\end{aligned}$$
where $q_{X,Y,\alpha}(\lambda)$ is the quadratic form on
$F_{\pm\alpha}$ defined by
$$q_{X,Y,\alpha}(\lambda)=\RTr_{F_{\pm\alpha}/F}\left(2
\alpha(X)\alpha(Y)\lambda^2\right).$$ Therefore
$$\gamma_\psi(X,Y)=\prod_{\alpha\in S_\fc^*}
\gamma_\psi(q_{X,Y,\alpha})^{m_\alpha}.$$ For $\alpha\in S_\fc^*$,
let $q'_{X,Y,\alpha}$ be the quadratic form on $F_{\pm\alpha}$
defined by:
$$q'_{X,Y,\alpha}(\lambda)=2\alpha(X)\alpha(Y)\lambda^2.$$ Then
$\gamma_\psi(q_{X,Y,\alpha})=\gamma_{\psi'}(q'_{X,Y,\alpha})$, and
$$\gamma_{\psi'}(q'_{X,Y,\alpha})=\left(\alpha(X)\alpha(Y),2\right)_{F_{\pm\alpha}}
\gamma_{F_{\pm\alpha}}(\alpha(X)\alpha(Y),\psi')\gamma_{\psi'}(q'_\alpha),$$
where $q'_\alpha$ is the quadratic form on $F_{\pm\alpha}$ defined
by $q'_\alpha(\lambda)=2\lambda^2$. Therefore
$$\gamma_\psi(q_{X,Y,\alpha})=\left(\alpha(X)\alpha(Y),2\right)_{F_{\pm\alpha}}
\gamma_{F_{\pm\alpha}}(\alpha(X)\alpha(Y),\psi')\gamma_\psi(q_\alpha),$$
where
$$q_\alpha(\lambda)=\RTr_{F_{\pm\alpha}/F}(2\lambda^2)=\RTr_{F_{\pm\alpha}/F}
\left(\pair{E_\alpha^i+E_{-\alpha}^i,E_\alpha^i+E_{-\alpha}^i}\lambda^2\right).$$
In summary, we deduce that
$$\gamma_\psi(X,Y)=\prod_{\alpha\in S_\fc^*}\left((\alpha(X)\alpha(Y),2)_{F_{\pm\alpha}}
\gamma_{F_{\pm\alpha}}(\alpha(X)\alpha(Y),\psi')\gamma_\psi(q_\alpha)\right)^{m_\alpha}.$$
On the other hand, by the same argument as above, we can show that
$$\gamma_\psi(\fq(F))=\prod_{\alpha\in S_\fc^*}\gamma_\psi(q_\alpha)^{m_\alpha}.$$
Together with the obvious relation
$$\gamma_\psi(\fq(F))=\gamma_\psi(\ft(F))^{-1}\gamma_\psi(\fh(F)),$$
we complete the proof.
\end{proof}

\subsection{Comparison lemma}
To obtain the main result of this subsection, we need the following
lemma.

\begin{lem}\label{lem. inner form of t}
Let $X\in\fc_\reg(F)$ and $Y\in\fc'_\reg(F)$ be such that
$X\leftrightarrow Y$. Then there exists an element $x\in\GL_{2n}(E)$
such that $\Ad(x)Y=X$, and $\Ad(x)$ induces isomorphisms
$\Ad(x):\ft'\ra\ft$ and $\Ad(x):\fc'\ra\fc$ over $F$.
\end{lem}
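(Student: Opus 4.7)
The plan is to construct $x$ explicitly after putting $X$ and $Y$ in standard form, then to verify $F$-rationality of the induced maps via the regular semisimple structure. By Propositions~\ref{prop. descendant 1 lie} and~\ref{prop. descendant 2 lie}, after replacing $X$ by an $H$-conjugate and $Y$ by an $H'$-conjugate---which only modifies $x$ by a left factor in $H$ and a right factor in $H'$ and does not affect the claim---I may assume $X=X(A)=\begin{pmatrix}0&{\bf1}_n\\A&0\end{pmatrix}$ and $Y=Y(A)=\begin{pmatrix}0&\gamma B\\\bar{B}&0\end{pmatrix}$ with $A\in\GL_n(F)$ regular and $A=\gamma B\bar{B}$. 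The natural candidate is $x=\begin{pmatrix}{\bf1}_n&0\\0&\gamma B\end{pmatrix}\in\GL_{2n}(E)$, and a direct block computation will confirm $\Ad(x)Y=X$.

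Next, since $A$ is regular in $\GL_n$, both $X$ and $Y$ have characteristic polynomial $\det(U^2{\bf1}_n-A)$ with $2n$ distinct roots in $\bar{F}$, so they are regular semisimple as elements of $\gl_{2n}(\bar{F})$, with centralizers $\bar{F}[X]$ and $\bar{F}[Y]$. Because $\Ad(x)Y=X$, the map $\Ad(x)$ sends $Y^k$ to $X^k$ for every $k\geq 0$ and hence restricts to an $F$-linear algebra isomorphism $F[Y]\stackrel{\sim}{\to}F[X]$. Since $\theta(X)=-X$ and $\theta'(Y)=-Y$, the even powers of $X$ (resp.\ $Y$) lie in $\fh(F)$ (resp.\ $\fh'(F)$) and the odd powers lie in $\fs(F)$ (resp.\ $\fs'(F)$).

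A dimension count then identifies $\fc(F)=X\cdot F[X^2]$ and $\ft(F)=F[X^2]$, and analogously $\fc'(F)=Y\cdot F[Y^2]$ and $\ft'(F)=F[Y^2]$: in each case the $n$ relevant powers are $F$-linearly independent (being already $\bar{F}$-linearly independent in $\bar{F}[X]$ or $\bar{F}[Y]$) and lie in the correct subspace, which has $F$-dimension $n$ by regularity together with the descriptions in Propositions~\ref{prop. descendant 1 lie} and~\ref{prop. descendant 2 lie}. Transporting these identifications through $\Ad(x)$ yields the desired $F$-rational isomorphisms $\Ad(x):\ft'\to\ft$ and $\Ad(x):\fc'\to\fc$. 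The main point requiring care will be the identification $\fc'(F)=Y\cdot F[Y^2]$ on the $Y$-side, which combines the $\theta'$-parity observation with the dimension equality $\dim_F\fc'(F)=n$ supplied by the matching $X\leftrightarrow Y$.
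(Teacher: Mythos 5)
Your proof is correct and reaches the same conclusion via the same choice of $x=\begin{pmatrix}{\bf1}_n&0\\0&\gamma B\end{pmatrix}$, but the verification of $F$-rationality is genuinely different from the paper's. The paper works out explicit descriptions of $\fc(F),\ft(F),\fc'(F),\ft'(F)$ as spaces of block matrices involving auxiliary parameters $C,D,P,Q$, then checks by direct multiplication that $\Ad(x)$ carries the one set of constraints to the other; this step relies on the auxiliary observation that $P$, $Q$, and $B$ all commute with $A$ (hence with one another, $A$ being regular), which requires a little care about how $B$ sits relative to $A$. Your argument sidesteps all of that: once $\Ad(x)Y=X$ is verified, the identity $\Ad(x)Y^k=X^k$ together with the fact that $X,Y$ are regular semisimple with $2n$ distinct eigenvalues yields an $F$-algebra isomorphism $F[Y]\stackrel{\sim}{\to}F[X]$ directly, and the $\theta$-parity grading on powers ($\theta(X^k)=(-1)^kX^k$, and likewise for $Y$) identifies $\ft(F)=F[X^2]$, $\fc(F)=X\cdot F[X^2]$, $\ft'(F)=F[Y^2]$, $\fc'(F)=Y\cdot F[Y^2]$ by a dimension count. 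This is cleaner and more conceptual: it avoids the coordinate-heavy matrix bookkeeping and the commutativity lemma about $B$, at the small cost of invoking the standard structure theory of centralizers of regular semisimple elements. One minor point worth making explicit when you write it up: the minimal polynomial of $Y$ (as an element of $\gl_{2n}(E)$) is $\det(U^2{\bf1}_n-A)\in F[U]$ of degree $2n$, which is what guarantees $\dim_F F[Y]=2n$ and hence that $\Ad(x):F[Y]\to F[X]$ is an isomorphism rather than merely an injection into a larger space.
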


\begin{proof}
It suffices to prove this for
$X=\begin{pmatrix}0&{\bf1}_n\\A&0\end{pmatrix}$ and
$Y=\begin{pmatrix}0&\gamma B\\ \bar{B}&0\end{pmatrix}$, where
$A\in\GL_n(F)$ is regular semisimple and $A=\gamma B\bar{B}$. Then
we have
$$\fc(F)=\left\{\begin{pmatrix}0&C\\AC&0\end{pmatrix}:\ C\in
\fg\fl_n(F), AC=CA\right\},$$
$$\ft(F)=\left\{\begin{pmatrix}D&0\\0&D\end{pmatrix}:\ D\in
\fg\fl_n(F),AD=DA\right\},$$
$$\fc'(F)=\left\{\begin{pmatrix}0&\gamma P\\ \bar{P}&0\end{pmatrix}:\
P\in \fg\fl_n(E), B\bar{P}=P\bar{B}\right\},$$ and
$$\ft'(F)=\left\{\begin{pmatrix}Q&0\\0&\bar{Q}\end{pmatrix}:\ Q\in
\fg\fl_n(E),B\bar{Q}=QB\right\}.$$ Take
$x=\begin{pmatrix}{\bf1}_n&0\\0&\gamma
B\end{pmatrix}\in\GL_{2n}(E)$. We claim that $\Ad(x)$ satisfies the
required condition. By the above relation, it is easy to see that:
\begin{enumerate}
\item $\Ad(x)\cdot\begin{pmatrix}0&\gamma P\\ \bar{P}&0\end{pmatrix}
=\begin{pmatrix}0&\gamma PB^{-1}\\ APB^{-1}&0\end{pmatrix}$,
$APB^{-1}=PB^{-1}A$;
\item $\Ad(x)\cdot\begin{pmatrix}Q&0\\0&\bar{Q}\end{pmatrix}
    =\begin{pmatrix}Q&0\\0&Q\end{pmatrix}$, $AQ=QA$.
\end{enumerate}
Therefore we have to show that $PB^{-1}\in\fg\fl_n(F)$,
$Q\in\fg\fl_n(F)$.

Note that since $A=B\bar{B}$, $A$ commutes with $B$. It is easy to
see that $P$ and $Q$ also commute with $A$. Hence $P$ and $Q$
commute with $B$, since $A$ is regular. Therefore the relation
$B\bar{P}=P\bar{B}$ implies that $PB^{-1}=\bar{P}\bar{B}^{-1}$; the
relation $B\bar{Q}=QB$ implies that $\bar{Q}=Q$, which concludes the
proof.
\end{proof}

Now let $X\in\fc_\reg(F)$ and $Y\in\fc'_\reg(F)$ be such that
$X\leftrightarrow Y$. Then we can take an $x\in\GL_{2n}(E)$ as in
the above lemma. For any $V\in\fc'_\reg(F)$, put $U=\Ad(x)V$.

\begin{lem}\label{lem. compare gamma}
Let $X,Y,U,V$ be as above. Then we have the following relations
$$\pair{X,U}=\pair{Y,V},$$ and
$$\gamma_\psi(X,U)=\gamma_\psi(\fh(F))\gamma_\psi(\fh'(F))^{-1}
\gamma_\psi(Y,V).$$
\end{lem}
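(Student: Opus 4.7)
The plan is to reduce both equalities to straightforward consequences of the hypothesis that $\Ad(x)$ carries $(Y,V,\ft',\fc')$ to $(X,U,\ft,\fc)$ and that the bilinear form $\pair{\,,\,}=\tr(\cdot\cdot)$ is $\GL_{2n}$-invariant, even over $E$. First, I would observe that the trace form on $\fg\fl_{2n}$ extends to an $E$-bilinear, $\GL_{2n}(E)$-invariant form; since $X,U\in\fs(F)$ lie in $\fg\fl_{2n}(E)$, applying $\Ad(x)$ gives
$$\pair{X,U}=\pair{\Ad(x)Y,\Ad(x)V}=\pair{Y,V},$$
which settles the first claim.

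For the second claim, I would invoke Proposition \ref{prop. gamma1} on both sides and match factors. The hypothesis that $\Ad(x):\fc'\to\fc$ is defined over $F$ means it is Galois-equivariant when restricted to $\fc'$, hence it induces an $F$-isomorphism of the $\theta$-split tori whose Lie algebras are $\fc'$ and $\fc$. Pulling back roots therefore yields a $\Gamma_F$-equivariant bijection $S_\fc\leftrightarrow S_{\fc'}$ preserving multiplicities $m_\alpha$ and the fields $F_{\pm\alpha}$. For a matched pair $\alpha\leftrightarrow\alpha'$, since $\Ad(x)Y=X$ and $\Ad(x)V=U$, one has $\alpha(X)=\alpha'(Y)$ and $\alpha(U)=\alpha'(V)$. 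Consequently the product
$$\prod_{\alpha\in S_\fc^*}\bigl((\alpha(X)\alpha(U),2)_{F_{\pm\alpha}}\,\gamma_{F_{\pm\alpha}}(\alpha(X)\alpha(U),\psi')\bigr)^{m_\alpha}$$
appearing in the formula for $\gamma_\psi(X,U)$ is literally equal to its analogue in the formula for $\gamma_\psi(Y,V)$.

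It remains to handle the prefactors $\gamma_\psi(\ft(F))^{-1}\gamma_\psi(\fh(F))$ and $\gamma_\psi(\ft'(F))^{-1}\gamma_\psi(\fh'(F))$. Here I would use again that $\Ad(x):\ft'\to\ft$ is an $F$-isomorphism, and that it preserves the trace form by the same argument as in the first claim. Hence it is an isometry of the $F$-quadratic spaces $(\ft'(F),\pair{\,,\,})$ and $(\ft(F),\pair{\,,\,})$, so the associated Weil indices coincide: $\gamma_\psi(\ft'(F))=\gamma_\psi(\ft(F))$. Dividing the two expressions produced by Proposition \ref{prop. gamma1} then collapses everything to
$$\frac{\gamma_\psi(X,U)}{\gamma_\psi(Y,V)}=\frac{\gamma_\psi(\fh(F))}{\gamma_\psi(\fh'(F))},$$
which is the desired identity.

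The only subtlety to watch is verifying that the bijection $S_\fc\leftrightarrow S_{\fc'}$ is genuinely $\Gamma_F$-equivariant even though $x\in\GL_{2n}(E)\setminus\GL_{2n}(F)$: the point is that equivariance of the induced map on $\fc'\to\fc$ alone suffices to pull back the characters of $\FT^-$ to those of $\FT'^-$ compatibly with Galois, which is exactly what the proof of Lemma \ref{lem. inner form of t} builds in. No further obstacle is expected; the argument is essentially a bookkeeping verification once Proposition \ref{prop. gamma1} is in hand.
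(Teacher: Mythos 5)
Your argument is correct and is precisely the unwinding of ``direct consequence of Proposition~\ref{prop. gamma1} and the above lemma'' that the paper leaves implicit: the $\GL_{2n}(E)$-invariance of the trace form gives the first equality and makes $\Ad(x)\colon\ft'(F)\to\ft(F)$ an isometry, while the $\Gamma_F$-equivariance of $\Ad(x)|_{\fc'}$ (which Lemma~\ref{lem. inner form of t} guarantees) yields the Galois-equivariant, multiplicity- and $F_{\pm\alpha}$-preserving bijection $S_\fc\leftrightarrow S_{\fc'}$ matching the product factors term by term. Nothing is missing; the subtle point you flag at the end is exactly the step that needs checking and you handle it correctly.
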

\begin{proof}
The first relation follows directly from the above lemma. The second
relation follows from the above lemma, Proposition \ref{prop.
i(X,Y)} and the similar arguments of equation (6) in \cite[page
96]{wa1}.
\end{proof}

\subsection{Test functions}
This subsection is devoted to showing that we can construct specific
$\CC_c^\infty$-functions satisfying certain ``good" matching
conditions. Such functions will play an important role in proving
Theorem \ref{thm. fourier preserve transfer} by global method. The
result below is an analogue of \cite[Proposition in \S8.2]{wa97}.

\begin{prop}\label{prop. local prop}
Let $Y_0\in\fc'_\reg(F)\subset\fs'_\rs(F)$ and
$X_0\in\fc_\reg(F)\subset\fs_\rs(F)$ be such that
$X_0\leftrightarrow Y_0$. Then there exist functions
$f\in\CC_c^\infty(\fs(F))$ and $f'\in\CC_c^\infty(\fs'(F))$
satisfying the following conditions.
\begin{enumerate}
\item If $X\in\Supp(f)$, there exists $Y\in\fc'_\reg(F)$ such that
$X\leftrightarrow Y$.
\item If $Y\in\Supp(f')$, $Y$ is
$H'$-conjugate to an element in $\fc'_\reg(F)$.
\item $f$ and $f'$ are smooth transfer of each other.
\item There is an equality
$$\kappa(X_0)\wh{I}^\eta(X_0,f)=c\wh{I}(Y_0,f')\neq0,$$
where $c=\gamma_\psi(\fh(F))\gamma_\psi(\fh'(F))^{-1}$.
\end{enumerate}
\end{prop}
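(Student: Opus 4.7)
The strategy is to arrange $f$ and $f'$ so that their normalized orbital integrals $I^\eta(\cdot,f)$ and $I(\cdot,f')$ are supported in a tiny neighborhood of a rescaled point $\mu X_0$ (respectively $\mu Y_0$) lying inside a single Weyl chamber of $\fc_\reg(F)$ (respectively $\fc'_\reg(F)$), where $\mu\in F^\times$ has sufficiently negative valuation. On such a support the limit formula of Proposition \ref{prop. i(X,Y)} applies, and computes $\wh{I}^\eta(X_0,f)$ and $\wh{I}(Y_0,f')$ essentially as finite sums of terms $\gamma_\psi(\cdot,\cdot)\,\psi(\pair{\cdot,\cdot})$ over Weyl translates. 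Lemma \ref{lem. compare gamma} will then match the two sides up to the constant $c=\gamma_\psi(\fh(F))\gamma_\psi(\fh'(F))^{-1}$.

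Concretely, I will first invoke Lemma \ref{lem. inner form of t} to fix $x\in\GL_{2n}(E)$ so that $\xi:=\Ad(x)\colon\fc'\xrightarrow{\sim}\fc$ is an $F$-isomorphism sending $Y_0$ to $X_0$; this induces a compatible identification $W_{\fc'}\simeq W_{\fc}$ of the corresponding Weyl groups. Choose $\mu\in F^\times$ with $v_F(\mu)$ less than the $N$ appearing in Proposition \ref{prop. i(X,Y)} for both the unprimed and the primed setting, and then small open compact neighborhoods $\Omega\subset\fc_\reg(F)$ of $\mu X_0$ and $\Omega'\subset\fc'_\reg(F)$ of $\mu Y_0$, each contained in a single Weyl chamber and matched by $\xi$. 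Shrink further so that for each $w\in W_\fc$ the functions $U\mapsto\gamma_\psi(\mu w\cdot X_0,U)$, $U\mapsto\psi(\pair{\mu w\cdot X_0,U})$, and $\kappa$ are constant on $\Omega$, and analogously on $\Omega'$. Take $\phi\in\CC_c^\infty(\fc_\reg(F))$ supported in $\Omega$, set $\phi':=\kappa(\mu X_0)\,\phi\circ\xi$ on $\fc'_\reg(F)$, and use a standard Weyl-integration-style construction to produce $f\in\CC_c^\infty(\fs(F))$, $f'\in\CC_c^\infty(\fs'(F))$ whose normalized orbital integrals equal $\phi$, $\phi'$ on the chosen Cartans and vanish on all other Cartan orbit types. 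The matching in Proposition \ref{prop. matching of orbits} restricted to $\xi$, together with the definition of the transfer factor, yields properties (1) and (2).

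For (3), Theorem \ref{thm. representability} combined with Weyl integration and the $(H,\eta)$-invariance of $\wh{i}^\eta(X_0,\cdot)\kappa(\cdot)$ gives
$$
\wh{I}^\eta(X_0,f)\;=\;\frac{1}{|W_\fc|}\int_{\fc_\reg(F)}\wh{i}^\eta(X_0,Z)\,\kappa(Z)\,\phi(Z)\,dZ,
$$
and similarly for $\wh{I}(Y_0,f')$ without the $\kappa$ factor. Substituting the second formula of Proposition \ref{prop. i(X,Y)}(1) and using $\kappa^2\equiv 1$, the integrand reduces on the support of $\phi$ to a locally constant sum over $w\in W_\fc$ of $\eta(w)\gamma_\psi(\mu w\cdot X_0,X_0)\psi(\pair{\mu w\cdot X_0,X_0})$, and analogously for the primed side. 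Lemma \ref{lem. compare gamma}, applied term by term through the Weyl-equivariant identification induced by $\xi$, together with the identity $\pair{\mu w\cdot X_0,X_0}=\pair{\mu w'\cdot Y_0,Y_0}$ for matched Weyl elements, converts the unprimed sum into $c$ times the primed sum. The normalization $\phi'(\mu Y_0)=\kappa(\mu X_0)\phi(\mu X_0)$ together with the prefactor $\kappa(X_0)$ then yields $\kappa(X_0)\wh{I}^\eta(X_0,f)=c\,\wh{I}(Y_0,f')$.

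Non-vanishing will be ensured by picking $\mu$ generically: up to positive constants the right-hand side is $c\cdot\wh{i}(Y_0,\mu Y_0)\cdot\int\phi'$, and $\wh{i}(Y_0,\cdot)$ cannot vanish identically since the distribution $I_{Y_0}$ is nonzero, so it is nonzero for generic $\mu$ satisfying the earlier constraints. The main obstacle I expect is the bookkeeping of the Weyl-group identification $W_{\fc'}\simeq W_\fc$ through $\xi$ and the compatibility of the transfer factor $\kappa$ with this identification, since the clean constant $c$ emerges only after every term matches up exactly. A secondary technical point is showing that one can prescribe $I^\eta(\cdot,f)$ supported in a single Weyl chamber of a single Cartan; this can be done by symmetrizing a characteristic function using $(H,\eta)$-equivariance and the fact that $\eta$ is trivial on a sufficiently small open subgroup.
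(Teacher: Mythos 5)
Your overall plan is the same as the paper's: fix the $F$-linear identification $\xi\colon\fc'\to\fc$ from Lemma \ref{lem. inner form of t}, build test functions whose orbital integrals are supported near a large dilation by $\mu$ of a fixed point in the Cartan, substitute the limit formula of Proposition \ref{prop. i(X,Y)}, and compare the two sides with Lemma \ref{lem. compare gamma}. Where you diverge is exactly at the two points you yourself flag as ``the main obstacle'' and a ``secondary technical point,'' and as written the argument has a genuine gap there.

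The more serious issue is the nonvanishing in (3). By supporting the prescribed orbital integral near $\mu X_0$ itself, you arrive (after the limit formula) at the full Weyl sum $\sum_{w\in W_\fc}\eta(w)\gamma_\psi(\mu\,w\cdot X_0,X_0)\psi(\pair{\mu\,w\cdot X_0,X_0})$ and the analogous primed sum, and you then claim these are nonzero ``for generic $\mu$.'' That is not a proof: these are finitely many unimodular terms that can cancel completely, and knowing that $\wh{I}(Y_0,\cdot)$ is a nonzero distribution (equivalently, that the kernel $\wh{i}(Y_0,\cdot)$ is not a.e.\ zero on $\fs'(F)$) does not tell you that $\wh{i}(Y_0,\mu Y_0)\neq 0$ for some $\mu$ in the specific asymptotic regime $v_F(\mu)<-N$ that the limit formula requires. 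The paper's proof is engineered precisely to avoid this: it introduces an auxiliary direction $U_0\in\fc_\reg(F)$ (with $V_0=\xi^{-1}(U_0)$) satisfying $\pair{X-X_0,U_0}\neq 0$ for all $X\in C(X_0)\setminus\{X_0\}$, replaces the ``constant'' prescribed orbital integral with the oscillating function $f_\omega(U)=\psi(-\pair{X_0,U_\fd})$, and integrates over the annulus $\omega_0=\mu(1+\varpi^r\CO_F)U_0$; the resulting inner integral over $\CO_F$ kills every term of the Weyl sum except $w=\mathrm{id}$ (since $\alpha\mapsto\psi(\varpi^r\mu\alpha\pair{X-X_0,U_0})$ is a nontrivial additive character of $\CO_F$ for $X\neq X_0$), giving the explicit value $\vol(\omega)\,\gamma_\psi(X_0,\mu U_0)\neq 0$.

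The second issue, the interaction of the transfer factor $\kappa$ with the $\xi$-induced identification of the Weyl orbits, is also handled by this choice: the paper requires $\kappa(U_0)=\kappa(X_0)$, $1+\varpi^r\CO_F\subset F^{\times 2}$, $\eta(\mu)=1$, and that $\kappa$ be constant on $\omega$, which makes the remaining $\kappa$-factors cancel cleanly in the single surviving term. You identify both obstacles but do not supply constructions that overcome them; these are not minor bookkeeping but the substance of the paper's argument, so the proposal as written is incomplete.
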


\begin{proof}
Let $W_\fc$ (resp. $W_{\fc'}$) be the Weyl group associated to $\fc$
(resp. $\fc'$), i.e. $W_\fc=N_H(\fc)/Z_H(\fc)$ (resp.
$W_{\fc'}=N_{H'}(\fc')/Z_{H'}(\fc')$). Set
$$C(X_0)=\{X\in\fc_\reg(F):\ X=i(X_0)\textrm{ for some }i\in W_\fc\},$$
and $$C(Y_0)=\{Y\in\fc'_\reg(F):\ Y=i(Y_0)\textrm{ for some }i\in
W_{\fc'}\}.$$

By Lemma \ref{lem. inner form of t}, we fix an isomorphism
$\varphi:\fc'(F)\ra\fc(F)$ such that $\varphi(Y_0)=X_0$. Fix
$V_0\in\fc'_\reg(F)$ and $U_0:=\varphi(V_0)\in\fc_\reg(F)$ so that
if $X\in C(X_0)-X_0$ (resp. $Y\in C(Y_0)-Y_0$), we have
$\pair{X-X_0,U_0}\neq0$ (resp. $\pair{Y-Y_0,V_0}\neq0$), and
moreover, $\kappa(U_0)=\kappa(X_0)$. We make the following choices.
\begin{enumerate}
\item Fix an integer $r\geq1$ such that:
\begin{itemize}
\item $1+\varpi^r\CO_F\subset F^{\times2}$;
\item the sets $i((1+\varpi^r\CO_F)U_0)$ (resp.
$i((1+\varpi^r\CO_F)V_0)$), for $i\in W_\fc$ (resp. $i\in
W_{\fc'}$), are mutually disjoint.
\end{itemize}
\item There exists an integer $N$ such that if $\mu\in F^\times$
satisfying $v_F(\mu)<-N$, we have
\begin{itemize}
\item for each $X\in C(X_0)-X_0$ (resp. $Y\in C(Y_0)-Y_0$), the
character
$\alpha\mapsto\psi\left(\varpi^r\mu\alpha\pair{X-X_0,U_0}\right)$
(resp.
$\alpha\mapsto\psi\left(\varpi^r\mu\alpha\pair{Y-Y_0,Y_0}\right)$)
is nontrivial on $\CO_F$.
\end{itemize}
\item Fix $N$ and $\mu\in F^\times$ with $v_F(\mu)<-N$ such that
\begin{itemize}
\item $\eta(\mu)$=1;
\item the condition (ii) above is satisfied;
\item the formulae of Proposition \ref{prop. i(X,Y)} hold for
$\wh{i}^\eta\left(X_0,i(\mu U_0)\right)$ and $\wh{i}(Y_0,i'(\mu
V_0))$ for all $i\in W_\fc$ and $i'\in W_{\fc'}$.
\end{itemize}
\item Set $\omega_0=\mu(1+\varpi^r\CO_F)U_0$ and
$\omega'_0=\mu(1+\varpi^r\CO_F)V_0$. Denote by $\fd$ (resp. $\fd'$)
the $F$-vector space generated by $U_0$ (resp. $V_0$), and fix a
complement $\fe$ (resp. $\fe'$) in $\fc(F)$ (resp. $\fc'(F)$), i.e.
$\fc(F)=\fd\oplus\fe$ (resp. $\fc'(F)=\fd'\oplus\fe'$). If
$U\in\fc(F)$ (resp. $V\in\fc'(F)$), denote by $U_\fd$ (resp.
$V_{\fd'}$) its projection on $\fd$ (resp. $\fd'$). We choose open
compact neighborhoods $\omega_\fe$ and $\omega'_{\fe'}$ of 0 in
$\fe$ and $\fe'$ small enough so that: if we set
$\omega=\omega_0\oplus\omega_\fe$ and
$\omega'=\omega'_0\oplus\omega'_{\fe'}$, then
\begin{itemize}
\item the sets $i(\omega)$ (resp. $i'(\omega')$), for $i\in
W_\fc$ (resp. $i'\in W_{\fc'}$), are mutually disjoint;
\item $\omega\subset\fc_\reg(F),\ \omega'\subset\fc'_\reg(F)$,
$\varphi(\omega')=\omega$, and therefore, for $U\in\omega,\
V\in\omega'$, they match with each other if and only if
$\varphi(V)=U$;
\item for each $X\in C(X_0)$ (resp. $Y\in C(Y_0)$), and each
$U\in\omega$ (resp. $V\in\omega'$),
$$\wh{i}^\eta(X,U)=\wh{i}^\eta(X,U_\fd),\quad
\wh{i}(Y,V)=\wh{i}(Y,V_{\fd'});$$
\item the function $\kappa$ is constant on $\omega$, which hence
equals to $\kappa(X_0)$.
\end{itemize}
\end{enumerate}

Define a function $f_\omega$ (resp. $f'_{\omega'}$) on $\omega$
(resp. $\omega'$) by
$$\begin{aligned}&f_\omega(U)=\psi\left(-\pair{X_0,U_\fd}\right),
\quad&\textrm{for }U\in\omega,\\
&f'_{\omega'}(V)=\psi\left(-\pair{Y_0,V_{\fd'}}\right),
\quad&\textrm{for }V\in\omega'. \end{aligned}$$ Now we fix a
function $f\in\CC_c^\infty(\fs(F))$ (resp.
$f'\in\CC_c^\infty(\fs'(F))$) such that
$$
\begin{aligned}
\Supp(f)\subset \omega^H,\quad &\textrm{and
}\kappa(U)I^\eta(U,f)=f_\omega(U) &\textrm{ for each }U\in\omega,\\
\Supp(f')\subset \omega'^{H'},\quad &\textrm{and
}I(V,f')=f'_{\omega'}(V) &\textrm{ for each }V\in\omega'.
\end{aligned}
$$
Then, we have, for $X\in\fs_\rs(F)$,
$$\begin{array}{lll}\kappa(X)I^\eta(X,f)=\left\{\begin{array}{ll}
 f_\omega(U)&\textrm{ if $X$ is $H$-conjugate to some $U\in\omega$} ,\\
 0&\textrm{ otherwise},\\
\end{array}\right.
\end{array}$$
and for $Y\in\fs'_\rs(F)$,
$$\begin{array}{lll}I(Y,f')=\left\{\begin{array}{ll}
 f'_{\omega'}(V)&\textrm{ if $Y$ is $H'$-conjugate to some $V\in\omega'$} ,\\
 0&\textrm{ otherwise}.\\
\end{array}\right.
\end{array}$$
Thus the assertions (i), (ii) and (iii) of the proposition follow
from the above construction and Lemma \ref{lem. compare gamma}.

To prove the assertion (iv), we observe that
$$\begin{aligned}
\kappa(X_0)\wh{I}^\eta(X_0,f)&=\kappa(X_0)\int_{\fs(F)}\wh{i}^\eta(X_0,U)
\kappa(U)f(U)|D^\fs(U)|^{-1/2}\ \d U\\
&=\abs{W_\fc}^{-1}\kappa(X_0)\int_{\fc(F)}\wh{i}^\eta(X_0,U)
\kappa(U)I^\eta(f,U)\ \d U\\
&=\kappa(X_0)\int_\omega\wh{i}^\eta(X_0,U)f_\omega(U)\ \d U\\
&=\sum_{i\in
W_\fc}\kappa(X_0)\int_\omega\eta(i)\kappa(U)\gamma_\psi(i(X_0),U)
\psi\left(\pair{i(X_0),U}\right)
f_\omega(U)\ \d U\\
&=\sum_{X\in
C(X_0)}\vol(\omega_\fe)\kappa(X)\kappa(U)\int_{\omega_0}\gamma_\psi(X,U_\fd)
\psi\left(\pair{X-X_0,U_\fd}\right)\ \d U_\fd\\
&=\sum_{X\in
C(X_0)}\vol(\omega)\kappa(X)\kappa(U)\\
&\times\int_{\CO_F}\gamma_\psi
\left(X,\mu(1+\varpi^r\alpha)U_0\right)
\psi\left(\pair{X-X_0,\mu(1+\varpi^r\alpha)U_0}\right)\ \d\alpha.
\end{aligned}$$
By condition (i),
$$\gamma_\psi\left(X,\mu(1+\varpi^r\alpha)U_0\right)=\gamma_\psi(X,\mu U_0),
\quad\textrm{for any }\alpha\in\CO_F.$$ If $X\neq X_0$, by condition
(ii),
$$\int_{\omega_0}
\psi\left(\pair{X-X_0,\mu(1+\varpi^r\alpha)U_0}\right)\
\d\alpha=0.$$ Therefore,
$$\kappa(X_0)\wh{I}^\eta(X_0,f)=\vol(\omega)\gamma_\psi(X_0,\mu U_0)\neq0.$$
The same computation goes for $\wh{I}(Y_0,f')$ and we get
$$\wh{I}(Y_0,f)=\vol(\omega')\gamma_\psi(Y_0,\mu V_0)\neq0.$$
Then the conclusion follows from Lemma \ref{lem. compare gamma} and
$\vol(\omega)=\vol(\omega')$.
\end{proof}

\section{Proof of Theorem \ref{thm. fourier preserve transfer}}
\label{sec: proof of the thm} In this section, we will prove Theorem
\ref{thm. fourier preserve transfer}.  We divide this theorem into
two parts, i.e., Theorems \ref{thm. half fourier 1} and \ref{thm.
half fourier 2} below.

\begin{thm}\label{thm. half fourier 1}
If $f$ is in $\CC_c^\infty(\fs(F))_0$, so is $\wh{f}$.
\end{thm}

\begin{thm}\label{thm. half fourier 2}
There exists a nonzero constant $c\in\BC$ satisfying that: if
$f\in\CC_c^\infty(\fs(F))$ and $f\in\CC_c^\infty(\fs'(F))$ are
smooth transfer of each other, then
$$\kappa(X)\wh{I}^\eta(X,f)=c\wh{I}(Y,f')$$ for any $X\in\fs_\rs(F)$
and $Y\in\fs'_\rs(F)$ such that $X\leftrightarrow Y$.
\end{thm}

We will use a local method to prove Theorem \ref{thm. half fourier
1}, and a global method to prove Theorem \ref{thm. half fourier 2},
as we have said before. The global method is a modification of that
of \cite{wa97}.

\subsection{Proof of Theorem \ref{thm. half fourier 1}}

By Lemma \ref{lem. case epsilon=1}, it suffices to only consider the
case $\fs'=\fs'_\epsilon$ when $\epsilon=1$. Throughout this
subsection, we assume that $\epsilon=1$.

Recall that $\fs_\rs(F)_0$ is the subset of elements in $\fs_\rs(F)$
coming from $\fs'_\rs(F)$. Let $\sC_0$ be the set of Cartan
subspaces of $\fs$ coming from those of $\fs'$, and $|\sC_0|$ a set
of representatives for $\FH$-conjugacy classes of Cartan subspaces
in $\sC_0$.

Let $f$ be in $\CC_c^\infty(\fs(F))_0$. Then, by the Weyl
integration formula, we have
$$\begin{aligned}
\wh{I}^\eta(X,f)&=\int_{\fs(F)}\wh{i}^\eta(X,Y)\kappa(Y)f(Y)
\abs{D^\fs(Y)}^{-1/2}\
\d Y\\
&=\sum_\fc\abs{W_\fc}^{-1}\int_{\fc_\reg(F)}\wh{i}^\eta(X,Y)
\kappa(Y)I^\eta(Y,f)\
\d Y\\
&=\sum_{\fc\in|\sC_0|}\abs{W_\fc}^{-1}\int_{\fc_\reg(F)}
\wh{i}^\eta(X,Y)\kappa(Y)I^\eta(Y,f)\ \d Y.\end{aligned}$$ Thus, to
show $\wh{I}^\eta(X,f)=0$ for any $X\notin\fs_\rs(F)_0$, it suffices
to show the following lemma.

\begin{lem}\label{lem. i=0}
For any $X\notin\fs_\rs(F)_0$ and any $Y\in\fs_\rs(F)_0$, we have
$\wh{i}^\eta(X,Y)=0$.
\end{lem}

\begin{proof}
First we need some preparation. Define an involution $\tau$ on
$\CC_c^\infty(\fs(F))$: $f^\tau(X):=f(X^t)$, where
$X=\begin{pmatrix}   0&A\\B&0
\end{pmatrix}\in\fs(F)$ and $X^t$ is its transpose. The following
two properties can be easily checked:
\begin{enumerate}
\item $\tau$ commutes with Fourier transform, i.e.,
$(\wh{f})^\tau=\wh{f^\tau}$;
\item for $X=\begin{pmatrix}   0&A\\B&0
\end{pmatrix}\in\fs_\rs(F)$, $I^\eta(X,f^\tau)=\eta(\det
AB)I^\eta(X,f)$ for any $f\in\CC_c^\infty(\fs(F))$.
\end{enumerate}
In particular, if $Y\in\fs_\rs(F)_0$, then
$I^\eta(Y,f^\tau)=I^\eta(Y,f)$; if an elliptic $X$ is not in
$\fs_\rs(F)_0$, then $I^\eta(X,f+f^\tau)=0$.

Now let $X\notin\fs_\rs(F)_0$ be an elliptic element. For any
$f\in\CC_c^\infty(\fs(F))_0$, by the above discussion, we see that
$$0=\wh{I}^\eta(X,f+f^\tau)=2\sum_{\fc\in|\sC_0|}
\abs{W_\fc}^{-1}\int_{\fc_\reg(F)}
\wh{i}^\eta(X,Y)\kappa(Y)I^\eta(Y,f)\ \d Y.$$ For any
$Y_0\in\fs_\rs(F)_0$ we may choose a specific
$f_0\in\CC_c^\infty(\fs(F))_0$ so that
$$\sum_{\fc\in|\sC_0|}\abs{W_\fc}^{-1}\int_{\fc_\reg(F)}
\wh{i}^\eta(X,Y)\kappa(Y)I^\eta(Y,f)\ \d Y=\wh{i}^\eta(X,Y_0).$$
Therefore $\wh{i}^\eta(X,Y)=0$ for any elliptic
$X\notin\fs_\rs(F)_0$ and any $Y\in\fs_\rs(F)_0$.

Now let $X\notin\fs_\rs(F)_0$ be a non-elliptic element. It suffices
to assume that $X$ is of the form $X(A)=\begin{pmatrix} 0&{\bf1}_n\\
A&0
\end{pmatrix}$ for some $A\in\GL_{n,\rs}(F)$. Since
$X\notin\fs_\rs(F)_0$ is non-elliptic, we can assume that $A$ is of
the form $\begin{pmatrix}   A_1&0\\ 0&A_2
\end{pmatrix}$ where $A_1\in\GL_{n_1,\rs}(F)$ is elliptic and not in
$\N(\GL_{n_1}(E))$, and $A_2$ is in $\GL_{n_2,\rs}(F)$. Recall the
discussions in \S\ref{sec. parabolic induction}. There is a subspace
$\fr\simeq\fs_{n_1}\times\fs_{n_2}$ of $\fs$ such that $X\in\fr$.
Moreover, under the natural isomorphism
$\iota:\fr\stackrel{\sim}{\ra}\fs_{n_1}\times\fs_{n_2}$, the image
of $X$ is $(X_1,X_2)$ where $X_i=X(A_i)$ for $i=1,2$. Write
$\fs_i=\fs_{n_i}$ for $i=1,2$. Let $M=H_1\times H_2$ where
$H_i=H_{n_i}$ for $i=1,2$. Then $M$ acts on $\fr$ naturally. For
$Z\in\fr_\rs(F)$, let $\wh{i}^{\eta,\fr}(Z,\cdot)$ be the kernel
function that represents the distribution
$f\mapsto\wh{I}^{\eta,M}(Z,f)$ for $f\in\CC_c^\infty(\fr(F))$. It is
obvious that
$$\wh{i}^{\eta,\fr}(Z,Y)=\wh{i}^{\eta,\fs_1}(Z_1,Y_1)
\wh{i}^{\eta,\fs_2}(Z_2,Y_2)$$ where $(Z_1,Z_2)$ and $(Y_1,Y_2)$ are
the images of $Z$ and $Y$ under $\iota$ in $\fs_1\times\fs_2$
respectively, and $\wh{i}^{\eta,\fs_i}(Z_i,\cdot)$ is the kernel
function that represents the distribution
$f\mapsto\wh{I}^{\eta,H_i}(Z_i,f)$ for $f\in\CC_c^\infty(\fs_i(F))$
for $i=1,2$.

By Proposition \ref{prop. parabolic induction}, we have
$$\wh{i}^\eta(X,Y)=\sum_{Y'}\wh{i}^{\eta,\fr}(X,Y'),$$ where $Y'$
runs over a set of representatives for the finitely many
$M$-conjugacy classes of elements of $\fr(F)$ which are
$H$-conjugate to $Y$. Therefore we can and do assume that
$Y\in\fs_\rs(F)_0$ is in $\fr(F)$ and of the form $Y=(Y_1,Y_2)$
under the natural map $\iota$ where $Y_i\in\fs_{i,\rs}(F)_0$. Then
$$\wh{i}^{\iota,\fr}(X,Y)=\wh{i}^{\eta,\fs_1}(X_1,Y_1)
\wh{i}^{\eta,\fs_2}(X_2,Y_2)=0,$$ since $X_1\notin\fs_{1,\rs}(F)_0$
is elliptic and $Y_1\in\fs_{1,\rs}(F)_0$. We complete the proof of
the lemma.
\end{proof}

\subsection{A result on convergence}
Now let $k$ be a number field, $\BA=\BA_\infty\times\BA_\f$ its ring
of adeles. Let $k'$ be a quadratic field extension of $k$, $\BD$ a
quaternion algebra over $k$ containing $k'$, and $\eta$ the
quadratic character of $\BA^\times/k^\times$ attached to $k'$ by the
class field theory. We define the global symmetric pairs $(\FG,\FH)$
and $(\FG',\FH')$ over $k$ with respect to $k'$ and $\BD$ similarly
as the local cases. Let $\fs,\fs'$ be the corresponding global ``Lie
algebras'' associated to $(\FG,\FH)$ and $(\FG',\FH')$ respectively,
which are defined over $k$. Denote by $\CS(\fs(\BA))$ (resp.
$\CS(\fs'(\BA))$) the space of Schwartz functions on $\fs(\BA)$
(resp. $\fs'(\BA)$). Denote by $\FH(\BA)^1$ the set of
$(h_1,h_2)\in\FH(\BA)$ such that $\abs{\det h_1}=\abs{\det h_2}=1$,
and by $\FH'(\BA)^1$ the set of $h\in\FH'(\BA)$ such that $\abs{\det
h}=1$. The groups $\FH(\BA)^1$ and $\FH'(\BA)^1$ are subgroups of
$\FH(\BA)$ and $\FH'(\BA)$ respectively. We have the following
theorem concerning the issue about convergence.

\begin{thm}\label{thm. convergence}
For each $\phi\in\CS(\fs(\BA))$,
$$\int_{\FH(k)\bs\FH(\BA)^1}\sum_{X\in\fs_\el(k)}|\phi(X^h)|\ \d
h<\infty.$$ Similarly, for each $\phi'\in\CS(\fs'(\BA))$,
$$\int_{\FH'(k)\bs\FH'(\BA)^1}\sum_{Y\in\fs'_\el(k)}|\phi'(Y^h)|
\ \d h<\infty.$$
\end{thm}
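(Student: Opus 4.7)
The plan is to reduce the assertion to the classical convergence of elliptic orbital sums on $\fg\fl_n$ via the invariant-theoretic map $\pi\colon\fs\to\fg\fl_n$, $(A,B)\mapsto AB$, which is $\FH$-equivariant when $\FH=\GL_n\times\GL_n$ acts on the target through conjugation by the first factor. I focus on the assertion for $\fs$; the primed case is parallel, with $\GL_n(E)$ and the $\sigma$-twisted conjugation action replacing $\GL_n$ and ordinary conjugation.

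First, since any $\phi\in\CS(\fs(\BA))$ is dominated by a finite sum of pure tensors $\Phi_1(A)\Phi_2(B)$ with $\Phi_i\geq 0$ Schwartz on $\fg\fl_n(\BA)$, it suffices to prove the bound for such $\phi$. For $(A,B)\in\fs_\el(k)$ the product $M := AB$ is elliptic regular semisimple in $\fg\fl_n(k)$, so in particular $A,B\in\GL_n(k)$; I parametrize the elliptic locus by $(A,M)\in\GL_n(k)\times\fg\fl_n(k)_\el$ via $B=A^{-1}M$. Under $(h_1,h_2)$, the coordinates transform as $A\mapsto h_1^{-1}Ah_2$ and $M\mapsto h_1^{-1}Mh_1$. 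I then unfold the sum over $A\in\GL_n(k)$ against the $h_2$-integration over $\GL_n(k)\bs\GL_n(\BA)^1$ (legitimate by Tonelli, thanks to non-negativity): setting $u=Ah_2$, the inner sum-integral collapses to
\[
\int_{\GL_n(\BA)^1}\Phi_1(h_1^{-1}u)\Phi_2(u^{-1}Mh_1)\,\d u = (\Phi_1\ast\Phi_2^\vee)(h_1^{-1}Mh_1),
\]
where $(\Phi_1\ast\Phi_2^\vee)(M) := \int_{\GL_n(\BA)^1}\Phi_1(v)\Phi_2(v^{-1}M)\,\d v$. The original quantity thus equals
\[
\int_{\GL_n(k)\bs\GL_n(\BA)^1}\sum_{M\in\fg\fl_n(k)_\el}(\Phi_1\ast\Phi_2^\vee)(h_1^{-1}Mh_1)\,\d h_1,
\]
reducing the problem to classical elliptic convergence on $\fg\fl_n$, provided $\Phi_1\ast\Phi_2^\vee$ is dominated by a Schwartz function on $\fg\fl_n(\BA)$.

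The hard part will be this last convolution estimate. At non-archimedean places, $\Phi_1$ is compactly supported in $\GL_n(F_v)$ and $\Phi_2$ compactly supported and locally constant in $\fg\fl_n(F_v)$, so that $v^{-1}M\in\Supp(\Phi_2)$ together with $v\in\Supp(\Phi_1)$ forces $M$ into a compact set; local constancy in $M$ follows from the finite number of translates $v^{-1}L$ of any lattice of invariance as $v$ ranges over a compact. At archimedean places, rapid decay of $(\Phi_1\ast\Phi_2^\vee)(M)$ follows from a bound of the shape $\|v\|\cdot\|v^{-1}M\|\gtrsim\|M\|$ combined with the Schwartz decay of $\Phi_1,\Phi_2$, but uniform control of the Schwartz seminorms across all archimedean places is the step that I expect to require the most care (and is presumably what requires Binyong Sun's input mentioned in the acknowledgements). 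Granting that the convolution is Schwartz, the residual convergence
\[
\int_{\GL_n(k)\bs\GL_n(\BA)^1}\sum_{M\in\fg\fl_n(k)_\el}|\Psi(h_1^{-1}Mh_1)|\,\d h_1<\infty
\]
for $\Psi$ Schwartz on $\fg\fl_n(\BA)$ is a standard fact: elliptic stabilizers are anisotropic tori modulo center with finite-volume arithmetic quotients, only finitely many elliptic characteristic polynomials meet any compact subset of the invariant quotient, and the local orbital integrals converge.
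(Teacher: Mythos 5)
Your approach is genuinely different from the paper's. The paper argues directly on a Siegel domain: it proves a polynomial inequality (Lemma~\ref{lem. for convergence}) showing that ellipticity of $Z\in L_\el$ forces the entries below the diagonal of $XY$ to be bounded away from zero, so that on translating by $a\in A_c$ a polynomial in $a\cdot Z$ dominates all the ratios $a_i/a_{i+1}$, $b_i/b_{i+1}$, and then pairs this against the decay of the Schwartz function to swallow the $\delta_P(a)^{-1}$ factor. You instead exploit the determinant map $(A,B)\mapsto AB$ and unfold the $h_2$-integral, reducing the problem to a convolution-type integral on $\fg\fl_n(\BA)$. The unfolding step itself (swap of sum over $A\in\GL_n(k)$ against $\int_{\GL_n(k)\bs\GL_n(\BA)^1}dh_2$, justified by non-negativity) is correct and is a nice observation.

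However, there is a genuine gap where you say ``Granting that the convolution is Schwartz.'' The function $\Psi(M)=\int_{\GL_n(\BA)^1}\Phi_1(v)\Phi_2(v^{-1}M)\,\d v$ is \emph{not} dominated by a Schwartz function on $\fg\fl_n(\BA)$, and your non-archimedean justification is wrong on this point. For a fixed nonsingular $M$, the set of $v\in\GL_n(k_v)$ with $v\in\Supp\Phi_{1,v}$ and $v^{-1}M\in\Supp\Phi_{2,v}$ is indeed compact in $\GL_n(k_v)$; but its $\GL_n$-Haar volume is \emph{not} uniformly bounded as $M$ approaches the singular locus in $\fg\fl_n(k_v)$, because the Haar measure $|\det v|^{-n}\d v_{\mathrm{Leb}}$ concentrates mass there and the lower bound $|\det v|\geq c\,|\det M|$ degenerates. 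Concretely, taking $\Phi_{1,v}=\Phi_{2,v}=\mathbf 1_{\fg\fl_n(\CO_v)}$, the scalar case $n=1$ already gives $\Psi_v(m)=(v(m)+1)\vol(\CO_v^\times)$ for $m\in\CO_v\setminus\{0\}$, which blows up as $m\to 0$; for $n\geq 2$ one gets similar divergence. Likewise at an archimedean place, $\int_{|u|\le 1}e^{-m^2/u^2}\frac{\d u}{|u|}\asymp\log(1/|m|)$, so $\Psi$ again fails to be bounded near $\det M=0$. The claim that ``the finite number of translates $v^{-1}L$ as $v$ ranges over a compact'' gives local constancy is only valid if $v$ is confined to a compact subset of $\GL_n(k_v)$; in your setting $v$ ranges over $\Supp\Phi_{1,v}\cap\GL_n(k_v)$, which is a compact subset of $\fg\fl_n(k_v)$ that meets every neighborhood of the singular locus, so the family of translates is infinite. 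The consequence is that you cannot invoke the classical elliptic convergence for Schwartz functions on $\fg\fl_n$ as a black box: you would need to strengthen it to allow a function with at most $\log|\det M|^{-1}$-type (or $|\det M|^{-\epsilon}$) growth near the boundary, combined with an Igusa-type local integrability estimate, and that refinement is not provided. Until that is supplied, the reduction to the ``standard fact'' does not close.
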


\begin{proof}
(1) Now we prove the assertion for $\phi\in\CS(\fs(\BA))$. Here we
still write $Z=(X,Y)\in\fs=\fg\fl_n\oplus\fg\fl_n$ and $h\cdot
Z=(\Ad h)Z$ where $h\in\FH$ for convenience. Recall that $Z=(X,Y)$
is in $\fs_\el(k)$ if and only if neither $XY$ nor $YX$ is contained
in a proper parabolic subgroup of $\GL_n(k)$. Let $\FP_0$ be the
minimal parabolic subgroup of $\GL_n$ consisting of the
upper-triangular one. Put $\FP=\FP_0\times\FP_0\subset\FH$. Identify
$\BR^\times_+$ with the subgroup of $\BA^\times_\infty$ consisting
of elements whose components at each place are the same and belong
to $\BR^\times_+$. For each real number $c>0$, put $A_c^0$ the set
of $a=\diag(a_1,...,a_n)\in\SL_n(\BR)$ such that
$\frac{a_i}{a_{i+1}}\geq c$ for all $1\leq i\leq n-1$ and
$a_i\in\BR_+^\times$ for all $1\leq i\leq n$, and set
$$A_c=A_c^0\times A_c^0\subset\FH(\BR)\subset\FH(\BA_\infty).$$ By
reduction theory, we know that there exists a maximal compact
subgroup $\FK$ of $\FH(\BA)$, a compact subset
$\omega\subset\P(\BA)\cap\FH(\BA)^1$ and a $c>0$ such that, if we
set $$\CG=\{\ pak;\ p\in\omega,a\in A_c,k\in\FK\},$$ we have the
equality $\FH(\BA)^1=\FH(k)\CG$, and thus, for each measurable
function $\phi$ on $\FH(k)\bs\FH(\BA)^1$ with valued $\geq0$, the
integral $$\int_{\FH(k)\bs\FH(\BA)^1}\phi(x)\ \d x$$ is convergent
if and only if the integral
$$\int_{\CG}\phi(x)\ \d x$$ is such so. Fix such $\FK,\omega,c$. Then
the integral is convergent if there exists $C\geq0$ such that for
each $p\in\omega,k\in\FK$,
$$\int_{A_c}\sum_{Z\in\fs_\el(k)}|\phi\left((pak)\cdot Z\right)|
\delta_\FP(a)^{-1} \ \d a\leq C,$$ where $\delta_\FP$ is the modulus
character of $\FP$. There exists a compact set
$\Omega\subset\FH(\BA)^1$ such that for all $ p\in\omega,a\in
A_c,k\in \FK$, $a^{-1}pak\in\Omega$. Then there exists
$\phi'\in\CS(\fs(\BA))$ such that for all $Z\in\fs(\BA)$ and
$h\in\Omega$, we have $|\phi(h\cdot Z)|\leq\phi'(Z)$. It suffices to
consider $\phi'$ of the form $\phi'=\phi'_\infty\otimes\phi'_\f$,
where
$\phi'_\infty\in\CS(\fs(\BA_\infty)),\phi'_\f\in\CS(\fs(\BA_\f))$
both with $\geq0$ valued, and suffices to consider the integral
$$\int_{A_c}\sum_{Z\in\fs_\el(k)}\phi'(a\cdot Z)\delta_\FP(a)^{-1}\ \d
a.$$ Choose an $\CO_k$-lattice $L$ in $\fs(k)$ such that
$\fs(k)\cap\Supp(\phi_\f)\subset L$. Denote $L_\el=L\cap\fs_\el(k)$.
Since $\phi'_\f(a\cdot Z)=\phi'_\f(Z)$, it suffices to consider the
integral $$\int_{A_c}\sum_{Z\in L_\el(k)}\phi'_\infty(a\cdot
Z)\delta_\FP(a)^{-1}\ \d a.$$ If $x_v\in k_v$ and $v$ is an infinite
place of $k$, write $\abs{x_v}$ for the usual absolute value of
$x_v$. For every $x=(x_v)\in\BA_\infty$, put
$\abs{x}=\max_v\abs{x_v}$. For $X=(x_{i,j})\in\fg\fl_n(\BA_\infty)$,
put $\abs{X}=\max_{i,j}\abs{x_{i,j}}$. For
$Z=(X,Y)\in\fs(\BA_\infty)$, write
$\abs{Z}=\max\{\abs{X},\abs{Y}\}$. Then the following lemma implies
the theorem.

\begin{lem}\label{lem. for convergence}
Assume that $n\geq2$. There is a positive valued polynomial function
$P$ on the real vector space $\fs(\BA_\infty)$, which depends on $L$
and $c$, such that
$$P(a\cdot Z)\geq\left(\prod_{i=1}^{n-1}\frac{a_i}{a_{i+1}}
\cdot\frac{b_i}{b_{i+1}}\right)\abs{Z},$$ for all
$a=\diag(a_1,...,a_n,b_1,...,b_n)\in A_c$ and all $Z\in L_\el$.
\end{lem}

\begin{proof} Take a positive valued polynomial
function $P_1$ on $\fs(\BA_\infty)$ such that
\[
  P_1(X,Y)\geq \max\{\abs{XY},\abs{YX}\},\quad \textrm{for all }
  (X,Y)\in \fs(\BA_\infty).
\] Take a positive number $c_L$ such that
\[
  (X,Y)\in L,\ d \textrm{ is a nonzero entry of $XY$ or $YX$}
  \Rightarrow\abs{d}\geq c_L.
\] Let $a=(a_1,a_2,\cdots, a_n, b_1, b_2,\cdots, b_n)$ in $A_c$ and let
$Z=(X,Y)=\left((x_{i,j}), (y_{i,j})\right)$ in $L_\el$. Write
$(u_{i,j})$ for $XY$. Fix $i_0=1,2,\cdots, n-1$. Since $XY$ is not
contained in a proper parabolic subalgebra of $\gl_n(k)$, there are
$i\geq i_0+1$ and $j\leq i_0$ such that
\[
  u_{i,j}\neq 0.
\]
Then
\[
   \abs{u_{i,j}}\geq c_L,
\]
and we have $$\begin{aligned}
  P_1(a\cdot Z)&\geq \abs{a_i u_{i,j} a_j^{-1}}\geq c_L a_i a_j^{-1}
  \geq c_L c^{i-i_0-1} c^{i_0-j} a_{i_0} a_{i_0+1}^{-1}\\
  &\geq c_L c^{n-2}a_{i_0} a_{i_0+1}^{-1}.
\end{aligned}$$ This implies that
\[
  a_n^{-1}=\prod_{i=1}^{n-1} (a_i/a_{i+1})^{\frac{i}{n}}\leq
  \left(\frac{1}{c_L c^{n-2}} P_1(a\cdot Z)\right)^{\frac{n-1}{2}},
\]
and
\[
  a_1=\prod_{i=1}^{n-1} (a_i/a_{i+1})^{\frac{n-i}{n}}\leq
  \left(\frac{1}{c_L c^{n-2}} P_1(a\cdot Z)\right)^{\frac{n-1}{2}}.
\]
Similarly,
\[
  P_1(a\cdot Z)\geq  c_L c^{n-2} b_{i_0} b_{i_0+1}^{-1},
\]
and
\[
  b_1, b_n^{-1}\leq \left(\frac{1}{c_L c^{n-2}}
  P_1(a\cdot Z)\right)^{\frac{n-1}{2}}.
\]
For all $i,j=1,2,\cdots, n$, we have
\[
  \abs{a\cdot Z}\geq a_i \abs{ a_{i,j}} b_j^{-1}.
\]
Therefore, $$\begin{aligned}
 \abs{a_{i,j}}&\leq a_i^{-1}b_j \abs{a\cdot Z}\leq c^{-(n-i)-(j-1)} a_n^{-1} b_1
 \abs{a\cdot Z}\\
 &\leq c^{-(2n-2)} \left(\frac{1}{c_L c^{n-2}}\right)^{n-1}P_1(a\cdot Z)^{n-1}
 \abs{a\cdot Z}.
\end{aligned}$$ Similarly,
\[
   \abs{b_{i,j}}\leq c^{-(2n-2)} \left(\frac{1}{c_L c^{n-2}}\right)^{n-1}
   P_1(a\cdot Z)^{n-1} \abs{a\cdot Z}.
\]
By timing $\frac{a_i}{a_{i+1}}$ and $\frac{b_i}{b_{i+1}}$ on both
sides of the above inequality, we get the lemma.
\end{proof}

(2) The convergence of the second integral (for
$\phi'\in\CS'(\fs(\BA))$) can be deduced easily from Lemma 10.8 of
\cite{wa97}, since the twisted conjugation by $A_c$ is the usual
conjugation.
\end{proof}

By the above theorem, we have a well-defined distribution $I^\eta$
on $\fs(\BA)$, defined by
$$I^\eta(\phi)=\int_{\FH(k)\bs\FH(\BA)^1}\sum_{X\in\fs_\el(k)}
\phi(X^h)\eta(h)\ \d h,\quad \phi\in\CS(\fs(\BA)),$$ and a
well-defined distribution $I$ on $\fs'(\BA)$, defined by
$$I(\phi')=\int_{\FH'(k)\bs\FH'(\BA)^1}\sum_{Y\in\fs'_\el(k)}\phi'(Y^h)
\ \d h,\quad \phi'\in\CS(\fs'(\BA)).$$ If $\phi=\prod_v \phi_v,
\phi'=\prod_v \phi'_v$, it is routine to see that
$$I^\eta(\phi)=\sum_{X\in[\fs_\el(k)]}\tau(\FH_{X})\prod_v\kappa_v(X)
I^\eta(X,\phi_v),$$
$$I(\phi')=\sum_{Y\in[\fs'_\el(k)]}\tau(\FH'_{Y})\prod_vI(Y,\phi'_v),$$
where
$$\tau(\FH_X)=\vol(\FH_X(k)\bs(\FH_X\cap\FH(\BA)^1)),\quad
\tau(\FH'_Y)=\vol(\FH_Y'(k)\bs(\FH_Y'\cap\FH'(\BA)^1),$$
$[\fs_\el(k)]$ denotes the set of $\FH(k)$-orbits in $\fs_\el(k)$,
and $[\fs'_\el(k)]$ denotes the set of $\FH'(k)$-orbits in
$\fs'_\el(k)$. If $X\in\fs_\rs(k)$ and $Y\in\fs'_\rs(k)$ so that
$X\leftrightarrow Y$, then $\FH_X\simeq\FH'_Y$ (same reason as the
local case). We choose Haar measures on $\FH_X(\BA)$ and
$\FH'_Y(\BA)$ so that they are compatible. Thus, if
$X\in\fs_\el(k),Y\in\fs'_\el(k)$ such that $X\leftrightarrow Y$, we
have
$$\tau(\FH_X)=\tau(\FH'_Y).$$

\subsection{Proof of Theorem \ref{thm. half fourier 2}}
Now, we fix $f\in\CC_c^\infty(\fs(F))$ and
$f'\in\CC_c^\infty(\fs'(F))$ so that they are smooth transfer of
each other. Here we allow that $f$ may not lie in
$\CC_c^\infty(\fs(F))_0$, as we have mentioned in the proof of
Proposition \ref{prop. conjecture 2 implies conjecture 1}. We also
refer the reader to the proof of Proposition \ref{prop. conjecture 2
implies conjecture 1} to see the definition of smooth transfer in
this more general situation.

Fix $X_0\in\fs_\rs(F),Y_0\in\fs'_\rs(F)$ such that
$X_0\leftrightarrow Y_0$. Our aim is to search for a nonzero
constant $c$ which is independent of $f,f',X_0$ and $Y_0$ such that
$$\kappa(X_0)\wh{I}^\eta(X_0,f)=c\wh{I}(Y_0,f').$$ In the following,
we choose some global data.

\paragraph{Fields}
We choose a number field $k$, a quadratic field extension $k'$ of
$k$, and a quaternion algebra $\BD$ over $k$ containing $k'$ so
that:
\begin{enumerate}
\item $k$ is totally imaginary;
\item there exists a finite place $w$ of $k$ such that $k_w\simeq F,\ k'_w\simeq
E$ and $\BD(k_w)\simeq \D$;
\item there exists another finite place $u$ of $k$ such that $u$ is inert
in $k'$.
\end{enumerate}
Such a number field $k$ and a quaternion algebra $\BD$ do exist (cf.
\cite[Proposition in \S11.1]{wa97}). From now on, we identify $k_w$
with $F$, $k'_w$ with $E$ and $\BD(k_w)$ with $\D$. Denote by $\BA$
the ring of adeles of $k$, by $\CO_k$ the ring of integers of $k$,
and $\CO_{k'}$ the ring of integers of $k'$. Fix a continuous
character $\BA/k$ whose local component at $w$ is our fixed
character $\psi$ of $k_w$. Denote by $\psi$ this global character,
when there is no confusion.

\paragraph{Groups} We define the global symmetric pairs $(\FG,\FH)$
and $(\FG',\FH')$ over $k$ with respect to $k'$ and $\BD$ similarly
as the local case. We still use $\fh$ and $\fh'$ to denote the Lie
algebras of $\FH$ and $\FH'$ respectively, use $\fs$ and $\fs'$ to
denote the global Lie algebras corresponding to $(\FG,\FH)$ and
$(\FG',\FH')$ respectively, if there is no confusion. Thus
$X_0\in\fs_\rs(k_w)$ and $Y_0\in\fs'_\rs(k_w)$.

\paragraph{Places}
Denote by $V$ (resp. $V_\infty,\ V_\f$) the set of all (resp.
archimedean, non-archimedean) places of $k$. Fix two
$\CO_k$-lattices:
$\FL=\fg\fl_n(\CO_k)\oplus\fg\fl_n(\CO_k)\subset\fs(k)$ and
$\FL'=\fg\fl_n(\CO_{k'})\subset\fs'(k)$. For each $v\in V_\f$, put
$\FL_v=\FL\otimes_{\CO_k}\CO_{k,v},
\FL'_v=\FL'\otimes_{\CO_k}\CO_{k,v}$. We fix a finite set $S\subset
V$ such that:
\begin{enumerate}
\item $S$ contains $u,w$ and $V_\infty$;
\item for each $v\in V-S$, everything is unramified, i.e. $\FG$ and
$\FG'$ are unramified over $k_v$, $\FL_v$ and $\FL'_v$ are self-dual
with respect to $\psi_v$ and $\pair{\ ,\ }$.
\end{enumerate}
We denote by $S'$ the subset $S-V_\infty-\{w\}$ of $S$.

\paragraph{Orbits}
For each $v\in V_\f$, we choose an open compact subset
$\Omega_v\subset\fs'(k_v)$ such that:
\begin{enumerate}
\item if $v=w$, we require that: $Y_0\in\Omega_w\subset\fs'_\rs(k_w)$,
$\wh{I}(\cdot,f')$ is constant on $\Omega_w$, and
$\kappa(\cdot)\wh{I}^\eta(\cdot,f)$ is constant and hence equal to
$\kappa(X_0)\wh{I}^\eta(X_0,f)$ on the set of $X\in\fs_\rs(k_w)$
which matches an element $Y$ in $\Omega_w$,
\item if $v=u$, we require $\Omega_u\subset\fs'_\el(k_u)$;
\item if $v\in S$ but $v\neq w,u$,
choose $\Omega_v$ to be any open compact subset;
\item if $v\in V_\f-S$, let $\Omega_v=\FL'_v$.
\end{enumerate}
Then by the strong approximation theorem, there exists
$Y^0\in\fs'(k)$ such that $Y^0\in\Omega_v$ for each $v\in V_\f$.
Furthermore, by the condition (ii) above, $Y^0\in\fs'_\el(k)$. Take
an element $X^0\in\fs_\el(k)$ such that $X^0\leftrightarrow Y^0$.

\paragraph{Functions}
For each $v\in V$, we choose functions $\phi_v\in\CS(\fs(k_v))$ and
$\phi'_v\in\CS(\fs'(k_v))$ as follows:
\begin{enumerate}
\item if $v=w$, let $\phi_v=f$ and $\phi'_v=f'$;
\item if $v\in S'$, by Proposition \ref{prop. local prop}, we
require that:
\begin{itemize}
\item if $X_v\in\Supp(\phi_v)$, there exists
$Y_v\in\fc'_{Y^0}(k_v)$ such that $X_v\leftrightarrow Y_v$, where we
denote by $\fc'_{Y^0}$ the Cartan subspace in $\fs'$ containing
$Y^0$;
\item if $Y_v\in\Supp(\phi'_v)$, there exists
$Y'_v\in\fc'_{Y^0}(k_v)$ such that $Y_v$ and $Y'_v$ are
$\FH'(k_v)$-conjugate;
\item $\phi_v$ is a transfer of $\phi'_v$;
\item $\kappa_v(X^0)\wh{I}^\eta(X^0,\phi_v)=c_v\wh{I}(Y^0,\phi'_v)\neq0$,
where $c_v=\gamma_\psi(\fh(k_v))\gamma_\psi(\fh'(k_v))^{-1}$;
\end{itemize}
\item for $v\in V-S$, set $\phi_v={\bf1}_{\FL_v},\phi_v'={\bf1}_{\FL'_v}$;
then $\phi_v=\wh{\phi}_v,\phi'_v=\wh{\phi'_v}$, and by Lemma
\ref{lem. fund lem} we have
$$\kappa_v(X^0)\wh{I}^\eta(X^0,\phi_v)=\kappa_v(X^0)I^\eta(X^0,\phi_v)
=I(Y^0,\phi'_v)=\wh{I}(Y^0,\phi'_v);$$
\item for $v\in V_\infty$, identifying $(\FH(k_v),\fs(k_v))$ with
$(\FH'(k_v),\fs'(k_v))$, we choose $\phi_v=\phi'_v\in\CS(\fs(k_v))$
such that:
\begin{itemize}
\item $\wh{I}^\eta(X^0,\phi_v)=\wh{I}(Y^0,\phi'_v)\neq0$;
\item if $X\in\fs(k)$ is $\FH(k_v)$-conjugate to an element in
the support of $\wh{\phi_v}$ at each place $v\in V$, then $X$ is
$\FH(k)$-conjugate to $X^0$;
\item if $Y\in\fs'(k)$ is $\FH'(k_v)$-conjugate to an element in
the support of $\wh{\phi'_v}$ at each place $v\in V$, then $Y$ is
$\FH'(k)$-conjugate to $Y^0$.
\end{itemize}
This is possible. The key point is that, by invariant theory, we
have natural maps (cf. Remark \ref{rem. matching categorical
quotient})
$$\fs'/\FH'\incl\fs/\FH\lra\FA_k^n,$$ where $\FA_k^n$ is the
$n$-dimensional affine space over $k$ so that
$\FA_k^n=\Spec(\CO(\fs)^\FH)$. We refer the reader to \cite[Lemme in
\S10.7]{wa97} for the proof in the endoscopic case, and a similar
argument is also valid here.
\end{enumerate}
Set $\phi\in\CS(\fs(\BA))$ and $\phi'\in\CS(\fs'(\BA))$ to be:
$$\phi=\prod_{v\in V}\phi_v,\quad \phi'=\prod_{v\in V}\phi'_v.$$

\paragraph{Final proof}
According to the conditions on $\phi_u$ (resp. $\phi'_u$), we know
that if $X\in\fs(k)$ (resp. $Y\in\fs'(k)$) is such that
$X\in\Supp(\phi)^{\FH(\BA)}$  (resp.
$Y\in\Supp(\phi')^{\FH'(\BA)}$), then $X\in\fs_\el(k)$ (resp.
$Y\in\fs'_\el(k)$). Here we use $\Supp(\phi)^{\FH(\BA)}$ to denote
the union of $\FH(\BA)$-orbits intersecting $\Supp(\phi)$, and
$\Supp(\phi')^{\FH'(\BA)}$ to denote the union of $\FH'(\BA)$-orbits
intersecting $\Supp(\phi')$. Suppose that $X\in\fs_\el(k)$ is such
that
$$I^\eta(X,\phi)=\prod_{v\in V} I^\eta(X,\phi_v)\neq0.$$ Then, by
the conditions on $\phi_v$, $X$ comes from $\fs'(k_v)$ at each place
$v$ not equal to $w$. We claim that $X$ must come from $\fs'(k)$. If
not, there exists at least two places $v_1$ and $v_2$ such that $X$
does not come from $\fs'(k_v)$, which is a contradiction.  Therefore
we have
$$I^\eta(\phi)=I(\phi'),$$
since $\phi_v$ is a transfer of $\phi'_v$ at each place $v$ not
equal to $w$ and is a partial transfer of $\phi'_v$ at the place
$v=w$ by the requirements we have imposed.

On the other hand, according to the conditions on $\wh{\phi_v}$ and
$\wh{\phi'_v}$, we know that if $X\in\fs(k)$ (resp. $Y\in\fs'(k)$)
is such that $X\in\Supp(\wh{\phi})^{\FH(\BA)}$ (resp.
$Y\in\Supp(\wh{\phi'})^{\FH'(\BA)}$) then $X$ is $\FH(k)$-conjugate
to $X^0$ (resp. $Y$ is $\FH'(k)$-conjugate to $Y^0$).

By Poisson summation formula, we have
$$\sum_{X\in\fs(k)}\phi(X^h)=\sum_{X\in\fs(k)}\wh{\phi}(X^h),\quad
\forall\ h\in\FH(\BA),$$ and
$$\sum_{Y\in\fs'(k)}\phi'(Y^h)=\sum_{Y\in\fs'(k)}\wh{\phi'}(Y^h),
\quad \forall\ h\in\FH'(\BA).$$ Therefore, by the conditions on
$\phi$ and $\phi'$, we have
$$I^\eta(\phi)=I^\eta(\wh{\phi}),\quad I(\phi')=I(\wh{\phi'}).$$
Hence we obtain
$$I^\eta(\wh{\phi})=I(\wh{\phi'}),$$
or equivalently,
$$\tau(\FH_{X^0})\prod_{v\in V}\kappa_v(X^0)\wh{I}^\eta(X^0,\phi_v)
=\tau(\FH'_{Y^0})\prod_{v\in V}\wh{I}(Y^0,\phi'_v).$$ Note that for
$v\in V-S$ , we have
$$\kappa_v(X^0)\wh{I}^\eta(X^0,\phi_v)=\wh{I}(Y^0,\phi'_v)\neq0,$$
and for almost all $v\in V-S$,
$$\kappa_v(X^0)\wh{I}^\eta(X^0,\phi_v)=\wh{I}(Y^0,\phi'_v)=1.$$
For $v\in S'$ and $v\in V_\infty$, we have
$$\kappa_v(X^0)\wh{I}^\eta(X^0,\phi_v)=c_v\wh{I}(Y^0,\phi'_v)\neq0.$$
Therefore
$$\kappa_w(X^0)\wh{I}^\eta(X^0,f)=c\wh{I}(Y^0,f'),$$ where
$$c=(\prod_{v\in S'}c_v)^{-1}=\prod_{v\in S'}\gamma_\psi(\fh(k_v))^{-1}
\gamma_\psi(\fh'(k_v)).$$ Notice that if $v\in V_\infty$ or $v\in
V-S$, $$\gamma_\psi(\fh(k_v))=\gamma_\psi(\fh'(k_v))=1.$$ Also
notice that $$\prod_{v\in V}\gamma(\fh(k_v))=\prod_{v\in
V}\gamma_\psi(\fh'(k_v))=1.$$ Therefore
$$c=\gamma_\psi(\fh(k_w))\gamma_\psi(\fh'(k_w))^{-1.}.$$ Since
$$\kappa_w(X_0)\wh{I}^\eta(X_0,f)=\kappa_w(X^0)\wh{I}^\eta(X^0,f),
\quad \wh{I}(Y_0,f')=\wh{I}(Y^0,f'),$$ we complete the proof of the
theorem.

\paragraph{Acknowledgements} This work was supported by the National Key
Basic Research Program of China (No. 2013CB834202). Needless to say,
the remarkable work of Jean-Loup Waldspurger on endoscopic transfer
has a huge influence on this article. The author is very grateful to
Wei Zhang for suggesting this problem and sharing the idea that
Waldspurger's method might apply to this situation, to Binyong Sun
for proving Theorem \ref{thm. convergence} which is crucial for our
method. He also thanks Kimball Martin for communicating their work
\cite{fmw} and sending their preprint, and thanks Wen-Wei Li for
helpful discussions. He expresses gratitude to Ye Tian and Linsheng
Yin for their constant encouragement and support. He would like to
thank the anonymous referee for explaining how to prove Theorem
\ref{thm. half fourier 1}, which improves the main result of the
article greatly, and many other useful comments.

\s{\small Chong Zhang\\
School of Mathematical Sciences, Beijing Normal University,\\
Beijing 100875, P. R. China.\\
E-mail address: \texttt{zhangchong@bnu.edu.cn}}

\end{document}